\numberwithin{equation}{section}
\newtcolorbox{authornotebox}{
  colback=red!5,
  colframe=red!75!black,
  coltitle=white,
  colbacktitle=red!75!black,
  fonttitle=\bfseries,
  title={ACTION NEEDED},
  breakable,
  boxrule=1pt,
  arc=2pt
}
\definecolor{change}{rgb}{0,.55,.55}
\let\eps\varepsilon  
\newcommand{\N}{{\mathbb N}}  
\newcommand{\R}{{\mathbb R}}
\newcommand{\D}{{\mathcal D}}
\newcommand{\Law}{{\mathcal L}}
\newcommand{\Prob}{\mathbb{P}}
\newcommand{\E}{\mathbb{E}}
\newcommand{\Fil}{\mathbb{F}}
\newcommand{\Filx}{\mathcal{F}^{X}}
\renewcommand{\d}{\,\textnormal{d}}
\newcommand{\dt}{\,\textnormal{d}t}
\newcommand{\ds}{\,\textnormal{d}s}
\newcommand{\dr}{\,\textnormal{d}r}
\newcommand{\dx}{\,\textnormal{d}x}
\newcommand{\du}{\,\textnormal{d}u}
\newcommand{\dW}{\,\textnormal{d}W}
\newcommand{\ball}{B}
\def\XXint#1#2#3{{\setbox0=\hbox{$#1{#2#3}{\int}$ }
\vcenter{\hbox{$#2#3$ }}\kern-.6\wd0}}
\newcommand{\drift}{{b}}
\newcommand{\diffusion}{{\sigma}}
\newcommand{\nudrift}{\nu^{\drift}}
\newcommand{\nudiffusion}{\nu^{\diffusion}}
\newcommand{\Wplusdualn}{(\Wplusdual)^{n_{\textnormal{dim}}}}
\newcommand{\Wmiddualn}{(\Wmiddual)^{n_{\textnormal{dim}}}}
\newcommand{\Wminusdualn}{(\Wminusdual)^{n_{\textnormal{dim}}}}
\newcommand{\Wmidtwodualn}{(\Wmidtwodual)^{n_{\textnormal{dim}}}}
\newcommand{\Wminustwodualn}{(\Wminustwodual)^{n_{\textnormal{dim}}}}
\newcommand{\opn}{\mathcal{L}(\R^{n_{\textnormal{dim}}},\Wplusdualn)}
\newcommand{\opnmid}{\mathcal{L}(\R^{n_{\textnormal{dim}}},\Wmidtwodualn)}
\newcommand{\opnminus}{\mathcal{L}(\R^{n_{\textnormal{dim}}},\Wminusdualn)}
\newcommand{\weight}{w}
\newcommand{\weightbar}{w^{\prime}}
\newcommand{\weightc}{w_{c}}
\newcommand{\weighttilde}{\widetilde{w}}
\newcommand{\weightplus}{{w^{+}}}
\newcommand{\weightminus}{{w^{-}}}
\newcommand{\weightsim}{{w^{(0)}}}
\newcommand{\weightplusinverse}{\frac{1}{\weightplus}}
\newcommand{\weightminusinverse}{\frac{1}{\weightminus}}
\newcommand{\weightprime}{w^{\prime}}
\newcommand{\weightinverse}{\frac{1}{w}}
\newcommand{\weighttildeinverse}{\frac{1}{\weighttilde}}
\newcommand{\etaplus}{\eta_{+}}
\newcommand{\etamid}{\eta_{\sim}}
\newcommand{\etaminus}{\eta_{-}}
\newcommand{\Wplus}{{W^{1,2}_{\weightplus}}}
\newcommand{\Wplusdual}{{W^{-1,2}_{\frac{1}{\weightplus}}}}
\newcommand{\Wplustwodual}{{W^{-2,2}_{\frac{1}{\weightplus}}}}
\newcommand{\Wmid}{{W^{1,2}_{\weightsim}}}
\newcommand{\Wmiddual}{{W^{-1,2}_{\frac{1}{\weightsim}}}}
\newcommand{\Wmidtwodual}{{W^{-2,2}_{\frac{1}{\weightsim}}}}
\newcommand{\Wminus}{{W^{1,2}_{\weightminus}}}
\newcommand{\Wminusdual}{{W^{-1,2}_{\frac{1}{\weightminus}}}}
\newcommand{\Wminustwodual}{{W^{-2,2}_{\frac{1}{\weightminus}}}}
\newcommand{\WeightclassB}{\mathcal{B}}
\newcommand{\dualweight}{\frac{1}{w}}
\newcommand{\pivotweight}{\bar{\mathrm{w}}}
\newtheorem{theorem}{Theorem}  
\newtheorem{lemma}[theorem]{Lemma}   
\newtheorem{proposition}[theorem]{Proposition}   
\newtheorem{remark}[theorem]{Remark}   
\newtheorem{corollary}[theorem]{Corollary}  
\newtheorem{definition}[theorem]{Definition}  
\newtheorem{example}[theorem]{Example} 
\newtheorem{notation}[theorem]{Notation}  
\newtheorem{assumption}[theorem]{Assumption} 
\newcommand\titletext{Markovian Lifts of Stochastic Volterra Equations in Sobolev Spaces: Solution theory, an Itô Formula and Invariant Measures}
\title{\titletext}
\title[Markovian Lifts of Stochastic Volterra Equations]{Markovian Lifts of Stochastic Volterra Equations in Sobolev Spaces: Solution theory, an Itô Formula and Invariant Measures}
\author{Florian Huber}
\address{Independent Researcher}
\email{dr.huber.florian@gmail.com}
\date{}
\thanks{The author gratefully acknowledges financial support
through grant Y 1235 of the START-program during his employment at the University of Vienna, where parts of this paper were written. Parts of this paper were completed during the author's affiliation with the \'Ecole Polytechnique F\'ed\'erale de Lausanne (EPFL), Switzerland. The author thanks Christa Cuchiero for bringing this topic to his attention.}
\subjclass[2020]{60H15,60H20,60G22,37A25}
\keywords{stochastic partial differential equations, Volterra processes, Markovian lift, ergodic behaviour, It{\^o} formula}
\begin{document}

\begin{abstract}
We investigate Markovian lifts of stochastic Volterra equations (SVEs) with completely monotone kernels and general coefficients within the framework of weighted Sobolev spaces. Exploiting the Laplace representation of completely monotone kernels, we recast the SVE as an infinite-dimensional, non-local stochastic evolution equation (SEE) whose state space is a scale of weighted Sobolev spaces constructed algorithmically from the decay of the associated Laplace measure. This construction makes separability, duality, and Rellich--Kondrachov-type compact embeddings available systematically rather than case by case. Our primary focus is developing a comprehensive solution theory for this class of SEEs: we establish existence and uniqueness of probabilistically weak mild solutions, covering general nonlinear coefficients of linear growth well beyond the affine or Lipschitz settings treated previously, and we show that the lift is equivalent to the original Volterra equation. Building on the compactness of the state space, we provide conditions for the existence of invariant measures for both the lifted process and the corresponding SVE, and, for uniformly elliptic diffusion coefficients, we establish uniqueness and exponential ergodicity in a weighted Wasserstein distance via a generalized Harris theorem. A further key contribution is an It{\^o}-type formula, derived directly at the level of the stochastic Volterra equation, from which we develop a range of applications: a finite-time blow-up criterion, a Feynman--Kac representation together with the associated backward Kolmogorov equation on the lift space, and a pricing equation for rough volatility models.
\end{abstract}
\maketitle
\tableofcontents
\newpage

\section{Introduction}

Stochastic Volterra equations (SVEs) have emerged as a central modeling framework for systems with memory, with applications ranging from turbulence \cite{barndorff2008time} and biology \cite{duval2022interacting, reynaud2010adaptive, verma2021self} to energy markets \cite{barndorff2013modelling} and, more recently, rough volatility models in mathematical finance (see \cite{el2019characteristic,gatheral2022volatility}). A prototypical example is given by
\begin{align}\label{eqn:SVE_introduction}
    X_{t}=X_{0}+\int_{0}^{t}k_{\drift}(t-s)\drift(s,X_{s})\ds + \int_{0}^{t}k_{\diffusion}(t-s)\sigma(s,X_{s})\dW_{s},
\end{align}
where $W$ is a multidimensional Brownian motion and the coefficients $\drift,\diffusion$, as well as the convolution kernels $k_{\drift}, k_{\diffusion}$ satisfy suitable integrability and regularity conditions. The presence of the kernels allows one to model memory effects whose influence typically decays over time.

A fundamental difficulty in the analysis of \eqref{eqn:SVE_introduction} is that its solution is, in general, \emph{neither a Markov process nor a semimartingale}. As a consequence, classical tools from stochastic analysis, such as Markov semigroup techniques, It{\^o} calculus, or ergodic theory, are not directly applicable. In particular, questions concerning long-time behavior, invariant measures, and stability become significantly more challenging in the Volterra setting.

A powerful strategy to overcome this obstacle is to embed the original equation into a higher-dimensional Markovian system. This idea, commonly referred to as a \emph{Markovian lift}, consists in representing the solution of the SVE as a functional of an infinite-dimensional Markov process, typically given as the solution to a stochastic evolution equation (SEE). Such representations have been developed in various forms in recent years and have proven to be an effective tool for analyzing Volterra-type models.

Early systematic constructions of Markovian lifts were obtained in the affine setting, where the lifted process takes values in spaces of measures or functionals and allows one to exploit the structure of affine processes. In particular, the measure-valued framework developed in \cite{cuchiero2020generalized, cuchiero2019markovian} provides an infinite-dimensional Markovian representation that restores the semigroup structure and enables the use of generalized Feller theory. These ideas have been successfully applied to matrix-valued Volterra processes and play a central role in applications.

Beyond the affine case, several approaches have been proposed to construct and analyze Markovian lifts in more general settings. For instance, Hilbert space formulations have been introduced in which the singularity of the kernel is incorporated into the definition of the state space, allowing one to establish existence, uniqueness, and regularity properties of the lifted stochastic evolution equation \cite{hamaguchi2023markovian}. More recently, such infinite-dimensional formulations have been used to study ergodicity and long-time behavior via coupling methods and Harris-type theorems \cite{hamaguchi2026exponential}. Since this approach and ours address closely related questions on closely related lifts, we adapt it to our setting in Section~\ref{subsec:harris_ergodicity} and, in Remark~\ref{rem:hamaguchi_comparison}, compare the two choices of state space in detail: the weighted-$L^{2}$ construction of \cite{hamaguchi2023markovian,hamaguchi2026exponential} admits a compact embedding only for finite-rank (sum-of-exponentials) kernels, whereas our weighted \emph{Sobolev} spaces are designed to remain compactly embedded for genuinely singular kernels as well, via a Rellich--Kondrachov-type mechanism rather than a flat $L^{2}$ one. A second, and for applications arguably more consequential, difference concerns initial conditions. In the framework of \cite{hamaguchi2023markovian,hamaguchi2026exponential}, the forcing term of the SVE is tied to the lift's own initial state $Y_0$ via $x(t)=\int_{[0,\infty)}e^{-\theta t}Y_0(\theta)\,\mu(\mathrm d\theta)$, so that a \emph{constant} initial state $Y_0\equiv x_0$ produces the \emph{transient} forcing $x_0K(t)\to0$ as $t\to\infty$, not the persistent constant $x_0$ that defines the standard stochastic Volterra equation used throughout the present paper (and in \cite{abi2019affine} and the rough-volatility literature more broadly). Recovering a genuine persistent constant this way would require the reference measure $\mu$ to carry an atom at the origin, which is precisely excluded by the tempering condition $\inf\operatorname{supp}\mu>0$ that the ergodicity results of \cite{hamaguchi2026exponential} themselves require, so the limitation binds exactly where that theory is applicable. Our own state space, by contrast, is a genuine space of distributions rather than of functions, so the standard initial condition $X_0=x_0$ lifts directly to $\mu_0=\delta_0x_0\in\Wplusdual$; since $S^{*}_{t}\delta_0=\delta_0$ identically, this correctly reproduces $X_t\equiv x_0$ whenever there is no forcing, for every kernel considered in this paper, tempered or not. In parallel, approximation approaches based on finite-dimensional Markovian systems have been developed, in particular for fractional kernels arising in rough volatility models \cite{bayer2023markovian}.

Numerous works have contributed to the theory of stochastic Volterra equations; see, for example, \cite{berger1980volterra_I,berger1980volterra_II,coutin2001stochastic,wang2008existence}. A comprehensive weak existence theory in the convolutional setting was developed in \cite{abi2021weak} (see also \cite{hamaguchi2023weak}), while \cite{promel2023existence} treats the non-convolutional case. Equations with affine \cite{abi2019affine,abi2021weak_L1,bondi2024affine} or polynomial coefficients \cite{jaber2024polynomial} have received particular attention due to their tractable structure.

The uniqueness of solutions to SVEs remains challenging in many cases. For singular kernels, pathwise uniqueness without drift was first established in \cite{mytnik_15_uniqueness_stochastic_volterra} via an infinite-dimensional lift, and later extended to include drift terms in \cite{promel2022pathwise}. For regular kernels, pathwise uniqueness has been studied in \cite{promel2023stochastic,alfonsi2024non} under H{\"o}lder continuity assumptions on the coefficients.

The study of qualitative behavior is even more delicate. To the best of our knowledge, a direct analysis at the level of SVEs is only available in \cite{friesen2024volterra}, while most approaches rely on Markovian lifts; see, for example, \cite{jacquier2022large_time,benth2022stochastic,hamaguchi2026exponential}.

The construction of Markovian lifts itself has been developed along several directions. A first systematic approach arises in the affine setting, where the lift is formulated in measure-valued or dual Banach spaces. This idea can be traced back to \cite{coutin1998fractional} and has been further developed in \cite{cuchiero2020generalized, cuchiero2019markovian}, as well as in approximation schemes \cite{abi2019lifting,abi2019multifactor}. The framework has been extended and analyzed in various functional settings in \cite{hamaguchi2023markovian, harms2019affine,abi2021linear}. While mathematically elegant, these spaces can be difficult to handle analytically.

An alternative viewpoint is given by forward curve formulations, originating from term structure theory. In this approach, the state variable is interpreted as a curve and evolves according to a stochastic transport equation. This perspective has been used in \cite{abi2019markovian,benth2022stochastic} and, more recently applied in  \cite{gasteratos2025kolmogorov} to obtain Kolmogorov equations for SVEs, depending on the lift. Due to the term structure motivation, the lifted transport SPDE and associated weighted function spaces are already well understood (see for example \cite{carmona2006interest}). For the interested reader, we leave further references regarding different approaches to similar first-order SPDEs, which have been studied to quite an extent: \cite{turo1996stochastic,grecksch2000parabolic,hamza2005solutions,gikhman1983cauchy,kunita1990stochastic,lv2021kinetic,holden1997conservation,chen2012nonlinear,kim2003stochastic,debussche2010scalar,feng2008stochastic}.

A third type of lift, conceptually related to catalytic superprocesses, has been used to establish pathwise uniqueness results for singular kernels, notably in \cite{mytnik_15_uniqueness_stochastic_volterra} and \cite{promel2022pathwise}. The precise relationship between this construction and the present framework is not clear, since the semigroup, as well as operator used in this approach require a more delicate construction in more general cases. A deeper comparison is delayed to future research.

It should be noted that the first and second lifts exhibit similar properties, as the principal operators in the lifted equations are Laplace transforms of each other, which is also reflected in the corresponding semigroups.

Despite these developments, existing approaches often rely on state spaces tailored to specific kernel representations, which makes it difficult to obtain a unified treatment of stochastic Volterra equations with general nonlinear coefficients and to systematically exploit analytic tools such as embeddings, duality, or compactness. While these constructions are often elegant and sufficient under strong assumptions, the underlying functional-analytic structure is not always made explicit. In particular, properties such as separability, compactness of embeddings, or the availability of tightness criteria are not systematically addressed, which can limit the applicability and generality of these results.

The weighted-Sobolev framework developed in the present paper was introduced in \cite{huber2024markovian}. A closely related weighted-Sobolev perspective, formulated for the forward-curve (transport) lift rather than the measure-valued lift used here, is developed in \cite{gasteratos2025kolmogorov}. Since the principal operators of the two lifts are Laplace transforms of one another, as noted above, the two settings are essentially equivalent; we compare them in more detail below.

The guiding idea is that the kernels appearing in the SVE have a one-to-one correspondence with certain Radon measures, where, roughly speaking, the singularity of the kernel translates to decay properties of the tails of the measure and decay properties of the kernel translate to singularities in the measure. These measures can be used directly to define Lebesgue and Sobolev function spaces on which the equation could be treated. This, however, has several caveats as mentioned before: analytic properties of these spaces are not clear, neither are compact embeddings, which are very useful for approximation techniques. A proper analysis would result in a case-by-case study of the explicit weight.
We suggest that it is easier to find a weight in a reasonably large class (which could easily be extended in the case of some very ``exotic'' weights) and interpret these Radon measures as elements in the dual of a weighted Sobolev space. Once we have established the necessary functional analytic properties, this approach becomes very algorithmic and leads to an easy applicability.

The aim of this work is to develop a systematic functional-analytic framework for Markovian lifts of stochastic Volterra equations based on weighted Sobolev spaces. Our approach is motivated by the observation that completely monotone kernels admit a Laplace representation in terms of measures, which naturally suggests a lifting procedure. By encoding the decay properties of these measures into suitable Sobolev weights, we obtain a class of state spaces that combines probabilistic and analytic features and allows for the use of tools from the theory of partial differential equations.

A central insight of our analysis is that the behavior of the lifted process is governed by a precise interplay between
\begin{itemize}
    \item the singularity of the Volterra kernel,
    \item the decay of the associated Laplace measure, and
    \item the choice of weighted Sobolev space.
\end{itemize}
This relationship provides a systematic way to construct state spaces in which the lifted stochastic evolution equation is well-posed and exhibits favorable regularity properties.

Within this framework, we establish the following main results:
\begin{enumerate}
    \item \textbf{Solution theory.}\\
    We prove existence and uniqueness of (probabilistically weak) mild solutions to the lifted stochastic evolution equation in weighted Sobolev spaces. This extends existing results beyond affine or linear settings and accommodates general nonlinear coefficients. The spaces used are more common in studying (S)PDEs and provide more flexibility compared to spaces used in previous studies  \cite{cuchiero2020generalized, cuchiero2019markovian}, or \cite{hamaguchi2023markovian}. In particular, this choice gives us easy access to embedding theorems, extending existing results to more general coefficients.

    \item \textbf{It{\^o} formula for stochastic Volterra equations, and applications.}\\
    Using the Markovian lift, we derive an It{\^o}-type formula directly at the level of the Volterra equation. In contrast to functional It{\^o} calculus approaches, such as \cite{viens2019martingale}  (see also \cite{bonesini2023rough}), our formula is explicit and relies only on the structure of the lifted dynamics. From this formula we develop a range of applications: a finite-time blow-up criterion, a Feynman--Kac representation together with the associated backward Kolmogorov equation on the lift space, and a pricing equation for rough volatility models.

    \item \textbf{Invariant measures and long-time behavior.}\\
    We establish the existence of invariant measures for both the lifted process and the original SVE under general conditions, significantly generalizing previous results that are often restricted to specific assumptions or specific state space constructions. For the sub-class of SEEs with uniformly elliptic diffusion coefficient, we further establish uniqueness and exponential ergodicity in a weighted Wasserstein distance, by adapting the generalized Harris theorem of \cite{HaMaSc11} following the recent approach of \cite{hamaguchi2026exponential}.
\end{enumerate}

\medskip
\noindent\textbf{Comparison with related constructions.}
Our framework shares with \cite{gasteratos2025kolmogorov} the idea of casting the singular kernel into a weighted-Sobolev environment, and, the two lifts being Laplace transforms of one another, the settings are essentially equivalent. The present construction nonetheless differs in several respects, which we highlight without diminishing the elegance of the transport formulation. First, our lift is governed by the multiplication semigroup $S^{*}_{t}=e^{-tx}$, whose generator acts as a bounded multiplier on the relevant scale of spaces; in contrast to the shift semigroup of the forward-curve lift, whose generator $\partial_{x}$ is unbounded, this regularizes the generator direction, so that the backward Kolmogorov equation and the pathwise It{\^o} formula hold on the \emph{entire} state space rather than on a dense invariant subspace, with the second-order terms identified explicitly (Section~\ref{subsec:feynman_kac}, Remark~\ref{rem:no_K1_needed}). Second, our state spaces are weighted \emph{Sobolev} spaces and therefore enjoy Rellich--Kondrachov-type compact embeddings; we exploit this compactness in a tightness argument to obtain well-posedness of the lifted equation for merely \emph{continuous} coefficients of linear growth, rather than only Lipschitz ones. The weights and spaces are constructed algorithmically, so that separability, duality, and compactness are available systematically rather than case by case, while the hypotheses on the kernel reduce to a decay and duality condition on its Laplace measure, with no regularity or shift-compatibility condition imposed on the kernel itself. On this basis we develop a range of applications, namely a finite-time blow-up criterion, a Feynman--Kac representation, and a pricing equation for rough volatility models. Third, since our state space consists of genuine distributions, finite-rank (sum-of-exponentials) kernels lift to finitely supported measures, and hence to genuinely finite-dimensional states, which connects the framework directly to the Markovian approximation schemes used in mathematical finance \cite{bayer2023markovian}. Relative to the affine, generalized-Feller lifts of \cite{cuchiero2019markovian,cuchiero2020generalized}, which are more probabilistic in nature but confined to affine coefficients, our analytic framework accommodates general nonlinear coefficients; the relationship to the weighted-$L^{2}$ lift of \cite{hamaguchi2023markovian,hamaguchi2026exponential} is examined in detail in Remark~\ref{rem:hamaguchi_comparison}.

Our results provide a unified framework that bridges existing approaches to Markovian lifts and highlights the role of functional-analytic structures in the study of stochastic Volterra equations.
The remainder of the paper is organized as follows. Section~\ref{sec:main_results} fixes the setup and states the main theorems. Section~\ref{section:Functional_framework} introduces the weighted Sobolev spaces and establishes their functional-analytic properties. Section~\ref{sec:well_posedness} develops the solution theory for the lifted stochastic evolution equation, and Section~\ref{sec:equivalence} establishes its equivalence with the original stochastic Volterra equation. Section~\ref{section:Invariant_measure} studies invariant measures and long-time behavior. Finally, Section~\ref{sec:ito_formula} derives the It{\^o} formula and develops its applications, including a finite-time blow-up criterion, a Feynman--Kac representation and backward Kolmogorov equation, and option pricing in rough volatility models.
\section{Main Results}\label{sec:main_results}

\subsection{Setup}

The guiding equation we want to consider is the following SVE
\begin{align}\label{eqn:SVE_main_results}
    X_{t}=X_{0}+\int_{0}^{t}k_{\drift}(t-s)\drift(s,X_{s})\ds + \int_{0}^{t}k_{\diffusion}(t-s)\diffusion(s,X_{s})\dW_{s},
\end{align}
where $W$ is an $m_{W}$-dimensional Brownian motion, $\drift : \R^{n_{\textnormal{dim}}} \to \R^{n_{\textnormal{dim}}}$ and $\diffusion: \R^{n_{\textnormal{dim}}} \to\R^{n_{\textnormal{dim}} \times m_{W}}$ are measurable maps, $k_{\drift} : \R_{+}\rightarrow  \R^{n_{\textnormal{dim}} \times n_{\textnormal{dim}}}$ is locally integrable,  $k_{\diffusion}: \R_{+}\rightarrow  \R^{n_{\textnormal{dim}} \times n_{\textnormal{dim}}}$ is locally square integrable and $X_{0} \in \R^{n_{\textnormal{dim}}}$ is an $\mathcal{F}_{0}$ measurable random variable (it can also be chosen as an initial random curve). Here $n_{\textnormal{dim}}\geq1$ denotes the dimension of the SVE's state space; it is unrelated both to the dimension $d$ of the spatial domain underlying the weighted Sobolev spaces introduced in Section~\ref{section:Functional_framework} (always the domain of the Laplace-dual variable, playing no role in the state dimension) and to any use of the bare letter $n$ as a generic sequence or approximation index elsewhere in the paper (e.g.\ $\mu^{n}\to\mu$ as $n\to\infty$ in convergence/compactness arguments).
The primary tool in this work is an infinite-dimensional Markovian representation of the stochastic Volterra equation \eqref{eqn:SVE_main_results}. This construction, often referred to as a \emph{Markovian lift}, goes back to \cite{coutin1998fractional} and allows one to recover the Markov property by embedding the original process into a suitable infinite-dimensional state space.

\subsection{Laplace representation of the kernel}

The key observation underlying the lift is that completely monotone kernels admit a Laplace representation.

\begin{definition}
Let $k\colon \R_{+}\rightarrow \R_{+}$. We say that $k$ is \emph{completely monotone} if $k$ is infinitely differentiable on $(0,\infty)$ and satisfies
\[
(-1)^{n}\partial_{t}^{n}k(t)\geq 0, \quad \text{for all } n\in \N_{0},\ t>0.
\]
\end{definition}
\begin{theorem}[Bernstein--Hausdorff--Widder]\label{thm:berstein_hausdorf_widder}
\cite[Proposition 1.2 and Theorem 4.8]{schilling2009bernstein}
The following are equivalent:
\begin{enumerate}
\item $k$ is completely monotone  on $(0, \infty)$ (respectively on $[0, \infty)$);
\item there exists a unique Radon (respectively finite) measure $\nu$ on $\R_{+}$ such that
\begin{align}\label{eqn:kernel_representation_measure}
k(t)=\int_{0}^{\infty} e^{-tx}\,\nu(\dx), \quad t>0;
\end{align}
\item $k$ is infinitely differentiable on $(0, \infty)$ (respectively continuous on $[0, \infty)$, infinitely differentiable on $(0, \infty))$ and satisfies $(-1)^n k^{(n)} \geq 0$ for all $n\in\mathbb{N}_0$.
\end{enumerate}
\end{theorem}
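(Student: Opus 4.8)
\emph{Proof proposal.} Since the statement is quoted verbatim from \cite{schilling2009bernstein}, in the paper I would simply cite it; but if I had to reconstruct a proof, I would proceed as follows. First I would note that the equivalence of (1) and (3) is, for functions on $(0,\infty)$, essentially the definition of complete monotonicity recalled just above, the only additional content in the version on $[0,\infty)$ being the demand of continuity up to the boundary, which is unaffected by the arguments below. Hence the substance of the theorem is the equivalence of (2) with (1)/(3), which I would split into the two implications.

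For (2)$\Rightarrow$(1) (including the ``respectively'' clause), I would differentiate under the integral sign: for $\lambda$ in a compact subset of $(0,\infty)$ and $\lambda\ge a>0$ one has $x^{n}e^{-\lambda x}\le C_{n,a}\,e^{-\frac{a}{2}x}$, which is $\nu$-integrable because $k(a/2)<\infty$, so dominated convergence justifies $n$-fold differentiation and yields $k\in C^{\infty}((0,\infty))$ together with $(-1)^{n}k^{(n)}(\lambda)=\int_{0}^{\infty}x^{n}e^{-\lambda x}\,\nu(\dx)\ge0$. If $\nu$ is finite, dominated convergence also gives $\lim_{\lambda\downarrow0}k(\lambda)=\nu([0,\infty))<\infty$, so $k$ extends continuously to $[0,\infty)$. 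Uniqueness of $\nu$ follows from injectivity of the Laplace transform (via Stone--Weierstrass, since $x\mapsto e^{-\lambda x}$, $\lambda>0$, together with constants separate points of the one-point compactification of $[0,\infty)$).

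The hard direction is (1)$\Rightarrow$(2), i.e.\ Bernstein's theorem, and here I would use a Post--Widder inversion together with a Helly-type selection argument. One first observes that each $(-1)^{j}k^{(j)}$ is non-negative and non-increasing, because $\frac{d}{dt}\bigl[(-1)^{j}k^{(j)}(t)\bigr]=-(-1)^{j+1}k^{(j+1)}(t)\le0$; in particular $k$ is bounded on every $[a,\infty)$. For $n\ge1$ one defines the non-negative functions
\[
f_{n}(x):=\frac{(-1)^{n}}{n!}\Bigl(\frac{n}{x}\Bigr)^{n+1}k^{(n)}\!\Bigl(\frac{n}{x}\Bigr),\qquad x>0 ,
\]
and the measures $\nu_{n}(\dx):=f_{n}(x)\dx$ on $(0,\infty)$. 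The plan is then: (i) using Taylor's formula with integral remainder for $k$ expanded at a point $a$, together with the sign and monotonicity properties of the $(-1)^{j}k^{(j)}$, show that $\int_{0}^{\infty}e^{-\lambda x}\,\nu_{n}(\dx)\to k(\lambda)$ as $n\to\infty$, locally uniformly in $\lambda\in(0,\infty)$; (ii) deduce from this a uniform bound $\sup_{n}\nu_{n}([0,R])<\infty$ for every $R>0$ and control the mass of $\nu_{n}$ near $+\infty$, so that Helly's selection theorem yields a subsequence converging vaguely on $[0,\infty)$ to a Radon measure $\nu$; (iii) pass to the limit, checking that no mass escapes to $+\infty$, to conclude $k(\lambda)=\int_{0}^{\infty}e^{-\lambda x}\,\nu(\dx)$ for all $\lambda>0$. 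Finally, for the ``respectively'' refinement, monotone convergence gives $\nu([0,\infty))=\lim_{\lambda\downarrow0}k(\lambda)$, finite precisely when $k$ is continuous at $0$.

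I expect steps (ii)--(iii) to be the main obstacle: the delicate points are the uniform mass estimate and the justification that the vague limit $\nu$ still reproduces $k$, i.e.\ that no mass of $\nu_{n}$ leaks to infinity, both of which rely on quantitative use of the Taylor remainder $\frac{1}{(n-1)!}\int_{\lambda}^{a}(t-\lambda)^{n-1}(-1)^{n}k^{(n)}(t)\dt$ and the monotonicity of $(-1)^{n-1}k^{(n-1)}$. A more economical alternative avoids the explicit inversion kernel altogether: restrict $k$ to an arithmetic progression $\lambda_{m}=mh$ to obtain a discrete completely monotone sequence, apply Hausdorff's solution of the moment problem on $[0,1]$, and let $h\downarrow0$ so that finite differences converge to derivatives; or one may invoke Choquet's theorem, identifying the maps $x\mapsto e^{-\lambda x}$ as the extreme rays of the convex cone of completely monotone functions. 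In each route the analytic core is the same passage from infinitely many inequalities on the derivatives (or differences) of $k$ to a single positive measure, which is exactly what forces a compactness argument.
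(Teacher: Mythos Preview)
Your proposal is correct and matches the paper's treatment: the paper does not prove this theorem at all but simply cites it from \cite{schilling2009bernstein}, exactly as you anticipate in your first sentence. Your additional sketch of the Bernstein--Hausdorff--Widder argument is sound and goes beyond what the paper provides, but it is not needed for the comparison.
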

\begin{remark}
 For the $\mathbb{R}^{n_{\textnormal{dim}}}$-valued and matrix-valued kernels required in our Volterra SDE context, the theorem naturally extends component-wise or via partial orderings on positive semi-definite matrices. In the abstract setting where the kernel maps into a general Banach space $X$, the corresponding characterization is known as the Widder--Arendt theorem. Formally established by Arendt \cite{arendt1987vector}, the representation of a function as a Laplace transform of an $X$-valued bounded variation vector measure holds if and only if the underlying Banach space $X$ possesses the Radon--Nikod\'ym property (RNP). Since finite-dimensional spaces like $\mathbb{R}^{n_{\textnormal{dim}}}$ and the space of $n_{\textnormal{dim}} \times n_{\textnormal{dim}}$ matrices trivially enjoy the RNP, this abstract framework rigorously validates the structural assumptions placed on our drift and diffusion kernel matrices. For further multidimensional and locally convex space formulations of this principle, we refer the reader to \cite{arendt2026multidim}.
\end{remark}

\subsection{Heuristic derivation of the Lifted Equation}

Assume that the entries of the kernels (kernel-matrices) $k_{\drift}$ and $k_{\diffusion}$ in \eqref{eqn:SVE_main_results} are completely monotone, with associated measures (matrices of measures)  $\nudrift$ and $\nudiffusion$. Then Theorem \ref{thm:berstein_hausdorf_widder} yields the representation
\begin{align}
X_{t}=X_{0}+\int_{0}^{t}\int_{\R_{+}}e^{-x(t-s)}\nudrift(\dx)\drift(s,X_{s})\ds 
+ \int_{0}^{t}\int_{\R_{+}}e^{-x(t-s)}\nudiffusion(\dx)\diffusion(s,X_{s})\dW_{s}.
\end{align}

This suggests introducing an auxiliary process $(\mu_t)_{t\geq 0}$ such that
\[
X_t = \langle \mu_t,1\rangle,
\]
since testing the above equation with the constant $1$ function restores the SVE.
Formally applying Fubini's theorem and identifying the resulting expression, one is led to the representation
\begin{align}
\mu_{t} =\mu_{0}+\int_{0}^{t}e^{-x(t-s)}\nudrift\drift(s, \langle\mu_{s},1\rangle)\ds 
+ \int_{0}^{t}e^{-x(t-s)}\nudiffusion\diffusion(s, \langle\mu_{s},1\rangle)\dW_{s}.
\end{align}

This corresponds (up to a temporal correction of the initial value) to the mild formulation of the stochastic evolution equation
\begin{align}\label{eqn:SEE_eqn_strong_form_introduction}
\d \mu_{t} =-x\mu_{t}\dt +\nudrift\drift(t, \langle\mu_{t},1\rangle)\dt
+ \nudiffusion\diffusion(t,\langle\mu_{t},1\rangle)\dW_{t}.
\end{align}
Since $X_{t}\in\R^{n_{\textnormal{dim}}}$, the lifted process $\mu_{t}$ takes values in
$\Wplusdual\otimes\R^{n_{\textnormal{dim}}}\cong(\Wplusdual)^{n_{\textnormal{dim}}}$, i.e.\ it is an
$n_{\textnormal{dim}}$-tuple (equivalently, an $\R^{n_{\textnormal{dim}}}$-valued distribution) of elements
of the scalar weighted-Sobolev-dual space constructed in
Section~\ref{section:Functional_framework}. The pairing
$\langle\mu_{t},\psi\rangle$ for a scalar test function $\psi$ is then
understood componentwise, producing an element of $\R^{n_{\textnormal{dim}}}$, and the
multiplication operator $\mu\mapsto x\mu$, as well as the semigroup
$S^{*}_{t}$ introduced later, act diagonally on each of the $n_{\textnormal{dim}}$
components. All of the scalar functional-analytic machinery developed
in Section~\ref{section:Functional_framework} therefore applies
unchanged component-by-component; only the coupling introduced by
$\drift,\diffusion$ (now genuinely $\R^{n_{\textnormal{dim}}}$-valued and
$\R^{n_{\textnormal{dim}}\times n_{\textnormal{dim}}}$-valued, respectively $\R^{n_{\textnormal{dim}}\times m_{W}}$-valued,
maps) is new.

\subsection{Main Theorems}
Before we state the main results, we need to clarify what we mean by a solution to the lifted equation. We will introduce the exact spaces in a following chapter. For now, consider them simply as Sobolev spaces.
\begin{definition}\label{def:prbabilistically_weak_mild_solution}
 Let $(\tilde{\Omega},\tilde{\mathcal{F}},\tilde{\mathbb{F}},\tilde{\Prob})$, where $\tilde{\mathbb{F}}=(\tilde{\mathcal{F}}_{s})_{s\in[0,T]}$ is a stochastic basis, $\tilde{\mu}$ is a $\mathbb{F}$-progressively measurable process with laws supported on $C([0,T],\Wmidtwodual)$ and $\tilde{W}$ is an $m_{W}$-dimensional Wiener process. We call $(\tilde{\Omega},\tilde{\mathcal{F}},\tilde{\mathbb{F}},\tilde{\Prob},\tilde{\mu},\tilde{W})$ a probabilistically weak (or martingale) mild solution of
 \begin{align}\label{eqn:SEE_strong_formulation}
   \d \mu_{t} =-x\mu_{t}\dt +\nudrift\drift(t, \langle\mu_{t},\psi\rangle)\dt +\nudiffusion\diffusion(t, \langle\mu_{t},\psi\rangle)\dW_{t},
\end{align}
with $\mu_{t}\mid_{t=0}=\mu_{0}\in \Wmiddual$, $\psi \in \Wmid$, if
    \begin{align}\label{eqn:SEE_eqn_mild_formulation}
            \widetilde{\mu}_{t} = e^{-xt}\widetilde{\mu}_{0} +\int_{0}^{t} e^{-x(t-s)}\nudrift(x)\drift(s,\langle \widetilde{\mu}_{s},\psi\rangle)\ds +\int_{0}^{t}e^{-x(t-s)}\nudiffusion(x)\diffusion(s,\langle \widetilde{\mu}_{s},\psi\rangle)\d \tilde{W}_{s},
        \end{align}
        $\widetilde{\Prob}$-a.s. for each $t\in [0,T]$.
\end{definition}
\begin{theorem}[Existence]\label{thm:weak_mild_solution_SEE}
    Let  
    Assumptions \ref{A:assumption_general_linear_growth}, \ref{A:assumption_general_uniformly_continuous} 
 and \ref{A:A_3:Narrower_assumption_nu} be satisfied and let $p>2$.
       For $\mu_{0}\in L^{p}(\Omega,\Wplusdual)$, equation \eqref{eqn:SEE_strong_formulation} has a probabilistically weak, analytically mild solution $\mu \in L^{p}(\Omega,C([0,T],\Wmidtwodual))\cap L^{2}(\Omega,L^{\infty}([0,T],\Wplusdual))$. (Proved in Section~\ref{sec:existence_for_the_SEE}; see the proof of Theorem~\ref{thm:weak_mild_solution_SEE} concluded after Lemma~\ref{lem:convergence_of_approximate_mild_solution_terms}.)
\end{theorem}
In the case where the weights are chosen such that one can set $\psi=1$, we have equivalence between the solutions of the SVE and the SEE (see Theorem \ref{thm:equivalence_SVE_SEE}).

\begin{remark}
    Throughout, $X_{t}\in\R^{n_{\textnormal{dim}}}$ for a state dimension $n_{\textnormal{dim}}\geq1$ fixed
    once and for all, as introduced in the Setup subsection; the
    Laplace-dual variable $x\in\R_{+}$ appearing inside $\mu_{t}$ is,
    by contrast, always scalar, regardless of $n_{\textnormal{dim}}$ (see the discussion
    following equation~\eqref{eqn:SEE_eqn_strong_form_introduction}
    above for how $\mu_{t}$ carries the state dimension). Where a
    result or example is stated only for $n_{\textnormal{dim}}=1$ for concreteness (e.g.\
    the rough-volatility application in
    Section~\ref{subsec:option_pricing} and the recovery of the
    Riccati--Volterra equation in
    Section~\ref{subsec:recover_ALP}), this is noted explicitly at the
    point where it occurs.
\end{remark}

Combining the previous equivalence result and the following theorem will allow us to obtain results regarding the long-term behavior of solutions to the SVE.
\begin{theorem}[SEE solution is Feller]\label{thm:weak_gen_feller}
    Let Assumptions \ref{A:assumption_general_linear_growth}, \ref{A:uniqueness_in_law}, \ref{A:long_term_behaviour} be satisfied, then a mild solution $\mu\in \Wplusdual$ of equation \eqref{eqn:SEE_strong_formulation} is a weak (and in particular generalized) Feller process with an invariant measure. (Proved in Section~\ref{section:Invariant_measure}: the weak/generalized Feller property is Corollary~\ref{cor:gen_feller}, and existence of an invariant measure is Theorem~\ref{thm:invariant_measure_mu}.)
\end{theorem}

\begin{theorem}[Invariant measure SVE]\label{thm:invariant_measure_SVE}
    Let the kernels $k_{\drift}\in L^{1}(0,\infty)$ and $k_{\diffusion}\in L^{2}(0,\infty)$ be completely monotone, their associated lifted measures satisfy Assumption \ref{A:long_term_behaviour}, the lifted equation, with initial condition $\mu_{0}=\delta_{0}x_{0}$, satisfies Assumption \ref{A:uniqueness_in_law} and the coefficients satisfy Assumption \ref{A:assumption_general_linear_growth}. Then there exists an invariant measure to the SVE \eqref{eqn:SVE_introduction} in the sense that there exists a measure $\mathcal{q}$ and $x_{0}\sim \mathcal{q}$, $X_{t}\sim \mathcal{q}$. (Proved in Section~\ref{section:Invariant_measure}, Subsection ``Transfer to SVE''.)
\end{theorem}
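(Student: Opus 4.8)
The plan is to push the invariant measure of the Markovian lift, provided by Theorem~\ref{thm:weak_gen_feller}, down to the level of the SVE through the correspondence of Theorem~\ref{thm:equivalence_SVE_SEE} and the evaluation functional $\mu\mapsto\langle\mu,1\rangle$. First I would record that the hypotheses of Theorem~\ref{thm:weak_gen_feller} are in force: Assumption~\ref{A:assumption_general_linear_growth} is assumed directly, and Assumptions~\ref{A:uniqueness_in_law} and \ref{A:long_term_behaviour} are assumed for the lifted equation started from $\mu_0=\delta_0 x_0$. Since $k_{\drift}\in L^{1}(0,\infty)$ and $k_{\diffusion}\in L^{2}(0,\infty)$ are completely monotone, Theorem~\ref{thm:berstein_hausdorf_widder} supplies the representing measures $\nu_{\drift},\nu_{\diffusion}$, and the weights of Definition~\ref{def:weights_triple_for_analysis} are in the regime where one may take $\psi=1$, i.e. $1\in\Hspace$, so that $\mu\mapsto\langle\mu,1\rangle$ is a bounded linear (hence continuous) functional on the state space $\Hspacedual$ (equivalently on $\Vspacedual$); this is exactly the situation covered by Theorem~\ref{thm:equivalence_SVE_SEE}.

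Next, by Theorem~\ref{thm:weak_gen_feller} the lifted process is a weak (hence generalized) Feller process on $\Vspacedual$ and admits an invariant probability measure $\pi$. I would then pass to the associated stationary lifted solution $(\mu_t)_{t\ge0}$ with $\mu_0\sim\pi$, so that $\mu_t\sim\pi$ for every $t\ge0$, and define $\mathcal{q}$ to be the push-forward of $\pi$ under the continuous map $\mu\mapsto\langle\mu,1\rangle$. Using the mild formulation \eqref{eqn:SEE_eqn_mild_formulation} together with Theorem~\ref{thm:equivalence_SVE_SEE}, the real-valued process $X_t:=\langle\mu_t,1\rangle$ is a probabilistically weak solution of the SVE \eqref{eqn:SVE_introduction} started from $x_0:=\langle\mu_0,1\rangle\sim\mathcal{q}$; since $\mu_t\sim\pi$ for all $t$, this yields $X_t\sim\mathcal{q}$ for all $t\ge0$, which is precisely the asserted invariance. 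The integrability requirements \eqref{eqn:equivalence_integrability_mu} needed to invoke the equivalence are checked from the a priori bounds of Theorem~\ref{thm:weak_mild_solution_SEE} combined with the linear-growth Assumption~\ref{A:assumption_general_linear_growth}.

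The delicate point is the matching of initial data: Theorem~\ref{thm:equivalence_SVE_SEE}(2) is phrased for $\mu_0=x_0\delta_0$, whereas a generic draw $\mu_0\sim\pi$ is not a Dirac mass in the speed variable. I expect the main work to consist in showing this mismatch is immaterial for the marginal-invariance claim. One route is to decompose $\mu_t=e^{-xt}\mu_0+\mu_t^{\mathrm{drv}}$, observe that $\langle e^{-xt}\mu_0,1\rangle$ encodes the (deterministic given $\mu_0$) ``history'' contribution and, applying $\langle\,\cdot\,,1\rangle$ to \eqref{eqn:SEE_eqn_mild_formulation}, verify that $X_t=\langle\mu_t,1\rangle$ solves the Volterra relation \eqref{eqn:SVE_equiv} issued from $x_0=\langle\mu_0,1\rangle$; an alternative is a Krylov--Bogolyubov argument started from $\mu_0=\delta_0 x_0$, where the time-averaged laws $\frac{1}{T}\int_0^{T}\mathrm{Law}(\mu_t)\dt$ are tight in the generalized-Feller sense, any weak limit point is an invariant measure whose push-forward under $\langle\,\cdot\,,1\rangle$ gives a candidate $\mathcal{q}$, and uniqueness in law (Assumption~\ref{A:uniqueness_in_law}) then forces the SVE started from $\mathcal{q}$ to satisfy $X_t\sim\mathcal{q}$. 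The remaining steps—continuity and measurability of the push-forward, and the verification of \eqref{eqn:equivalence_integrability_mu}—are routine.
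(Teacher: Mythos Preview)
Your approach is the same as the paper's: obtain the invariant measure $\mathcal{Q}$ for the lifted process via Theorem~\ref{thm:invariant_measure_mu} (equivalently Theorem~\ref{thm:weak_gen_feller}) and push it forward under $\mu\mapsto\langle\mu,1\rangle$. The paper's argument is much terser than yours---it simply records that if $\mu_0\sim\mathcal{Q}$ then $\mu_t\sim\mathcal{Q}$ for every $t$, so $(\langle\mu_t,1\rangle)_{t\ge0}$ is strictly stationary with one-dimensional marginal equal to the push-forward $\mathcal{q}$ of $\mathcal{Q}$.

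Your ``delicate point'' is a real subtlety, and you have spotted something the paper glosses over. For a generic draw $\mu_0\sim\mathcal{Q}$, the process $X_t=\langle\mu_t,1\rangle$ does \emph{not} satisfy \eqref{eqn:SVE_introduction} with a constant initial value $X_0$, since $\langle e^{-xt}\mu_0,1\rangle$ is not constant in $t$; Theorem~\ref{thm:equivalence_SVE_SEE} as stated does not apply. The paper does not engage with this: it reads the theorem as asserting only marginal stationarity of $X_t$, i.e.\ $X_t\sim\mathcal{q}$ for all $t$, which follows immediately from the stationarity of $(\mu_t)_t$ without invoking Theorem~\ref{thm:equivalence_SVE_SEE} at all. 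Your additional work---the verification of \eqref{eqn:equivalence_integrability_mu}, the decomposition of $\mu_t$, the Krylov--Bogolyubov alternative---goes beyond what the paper supplies and would only be needed for a stronger notion of invariance (namely, that solving the SVE from $x_0\sim\mathcal{q}$ yields $X_t\sim\mathcal{q}$). For the statement as phrased you can drop that entire paragraph.
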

The last consequence of the lift is the following It{\^o}- type formula.
\begin{proposition}[It{\^o}-Volterra formula]\label{prop:ito_formula_volterra}
         Let $k_{\drift}, k_{\diffusion}$ be (matrix-valued, entrywise) completely monotone kernels such that their associated measures, given by \eqref{eqn:kernel_representation_measure}, satisfy Assumptions \ref{A:A_3:Narrower_assumption_nu} and  \ref{A:A_2:assumption_1_test_function}.
         Let $X$ denote the $\R^{n_{\textnormal{dim}}}$-valued solution of the corresponding stochastic Volterra equation \eqref{eqn:SVE_introduction} and set
         \begin{align*}
             \Gamma_{st}(X)&\coloneqq X_t -\int_{s}^{t}k_{\drift}(t-r)\drift(r,X_{r})\dr-\int_{s}^{t}k_{\diffusion}(t-r)\diffusion(r,X_{r})\dW_{r}\\
             &\left(=X_{0} + \int_0^s k_{\drift}(t-r)\,\drift(r, \langle\mu_r,1\rangle)\,dr + \int_0^s k_{\diffusion}(t-r)\,\diffusion(r, \langle\mu_r,1\rangle)\,dW_r.\right)
         \end{align*}
         which is $\R^{n_{\textnormal{dim}}}$-valued. For $f\in C^{1,2}(\R_{+}\times\R^{n_{\textnormal{dim}}},\R)$, write $\nabla_{x}f\in\R^{n_{\textnormal{dim}}}$ for the gradient and $D_{x}^{2}f\in\R^{n_{\textnormal{dim}}\times n_{\textnormal{dim}}}$ for the Hessian in the spatial argument. Then the following It{\^o} formula holds.
    \begin{align*}
    f(t,X_{t})&=f(t_{0}, \Gamma_{t_{0}t}(X))+\int_{t_{0}}^{t}\partial_{s}f\left(s,\Gamma_{st}(X)\right) \ds\\
    &+\int_{t_{0}}^{t}\left\langle\nabla_{x}f\left(s,\Gamma_{st}(X)\right), k_{\drift}(t-s)\drift(s,X_{s})\right\rangle \ds\\
    &+\int_{t_{0}}^{t}\left\langle\nabla_{x}f\left(s,\Gamma_{st}(X)\right), k_{\diffusion}(t-s)\diffusion(s,X_{s}) \dW_{s}\right\rangle\\
&\phantom{xx}+\frac{1}{2}  \int_{t_{0}}^{t}\operatorname{tr}\!\left[D_{x}^{2} f\left(s,\Gamma_{st}(X)\right)\, k_{\diffusion}(t-s)\diffusion(s,X_{s})\diffusion(s,X_{s})^{\top}k_{\diffusion}(t-s)^{\top}\right] \ds,
\end{align*}
where $k_{\diffusion}(t-s)\diffusion(s,X_{s})\in\R^{n_{\textnormal{dim}}\times m_{W}}$ and the trace term reduces, when $n_{\textnormal{dim}}=1$, to the scalar term $\tfrac12\partial_{x}^{2}f\cdot k_{\diffusion}(t-s)^{2}\diffusion(s,X_{s})^{2}$.
\end{proposition}
(Proved in Section~\ref{sec:ito_formula} as Corollary~\ref{corr:Ito_volterra_SEE}, after the mild It{\^o} formula for the lifted SEE has been established.)
Note that $\Gamma_{0t}=X_{0}$.
\subsection{Discussion}
Equation \eqref{eqn:SEE_eqn_strong_form_introduction} provides a Markovian representation of the original stochastic Volterra equation. The process $(\mu_t)$ evolves in an infinite-dimensional state space and encodes the memory of the system through its dependence on the parameter $x$.

Making this construction rigorous requires specifying a suitable function space in which the lifted process evolves, as well as establishing well-posedness of the corresponding stochastic evolution equation. This will be the subject of the following sections.

In particular, the choice of state space will be guided by the decay properties of the measures $\nudrift$ and $\nudiffusion$, leading naturally to a framework based on weighted Sobolev spaces.

\section{Functional-Analytic Framework}\label{section:Functional_framework}

\subsection{Design Principles}
\textbf{Why weighted Sobolev spaces?}
The Markovian lift introduced in the previous section leads to a stochastic evolution equation of the form
\begin{align*}
\d \mu_t = -x \mu_t \dt + \nudrift\drift(t,\langle \mu_t,\psi\rangle)\dt + \nudiffusion\diffusion(t,\langle \mu_t,\psi\rangle)\dW_t.
\end{align*}
In order to make this equation rigorous, it is necessary to specify a suitable state space in which the process $(\mu_t)_{t\geq 0}$ evolves.

A natural requirement on the spaces we work in should be the following:
\begin{itemize}
    \item accommodate measures such as $\mu$, $\nudrift$ and $\nudiffusion$,
    \item the action of the multiplication operator $\mu \mapsto x\mu$ remains within our framework,
    \item allow for a well-defined dual pairing $\langle \mu, \psi \rangle$ to recover the original SVE,
    \item provide sufficient compactness and embedding properties for approximation arguments.
\end{itemize}

These requirements implicitly ask for a well understood functional analytic framework. This naturally leads to a framework based on weighted Sobolev spaces.

\subsection{Weighted Sobolev spaces}
\begin{definition}\label{def:weight_function}
    We call a locally integrable function $\weight$ on $\R_{+}$, such that $\weight(x)>0$-a.e. a weight or weight function.  
\end{definition}
 Every weight $\weight$ induces a positive Borel-measure on $\mathbb{R}_{+}$ via integration, i.e. $\lambda_{\weight}(E)=\int_E \weight(x) \dx$ for measurable sets $E \subset \mathbb{R}_{+}$.
\begin{definition}
    Let $\weight$ be a weight. For $0<p<\infty$ we define $L^p_{\weight}$ as the set of measurable functions $u$ on $\R_{+}$ such that
\begin{align*}
\|u\|_{L^p_{\weight}}=\left(\int_{\R_{+}}|u(x)|^p \weight(x) \dx\right)^{1 / p}<\infty .
\end{align*}
\end{definition}
The following statements recall that elements of weighted spaces remain distributions on $\R_{+}$.
\begin{definition}
    Let $p>1$. We say that a weight function $\weight$ satisfies the condition $\WeightclassB_{p}(\R_{+})$ and write $\weight\in \WeightclassB_{p}(\R_{+})$, if
    \begin{align*}
        \weight^{-1 /(p-1)} \in L_{\text {loc }}^{1}(\R_{+}).
    \end{align*}
\end{definition}
\begin{remark}
    The class $\WeightclassB_p$ is significantly more flexible than, for example, $A_p$ (Muckenhoupt weights) and is therefore well suited for applications where weights may exhibit strong decay or singular behavior. 
Recall that a weight $w$ belongs to the Muckenhoupt class $A_p$ if
\begin{align*}
w \in A_p
\quad \Longleftrightarrow \quad
\sup_{B}
\left(\frac{1}{|B|}\int_B w(x)\,dx\right)
\left(\frac{1}{|B|}\int_B w(x)^{-1/(p-1)}\,dx\right)^{p-1}
< \infty.
\end{align*}
\begin{itemize}
\item $\WeightclassB_p$ controls integrability of $w^{-1/(p-1)}$ and allows strong degeneracies.
\item $A_p$ imposes additional balance conditions and excludes highly singular or oscillatory weights.
\item The inclusion $A_p \subsetneq \WeightclassB_p$ is strict.
\end{itemize}

For example:
\begin{enumerate}
    \item\textbf{Power weights.}
    Let
\begin{align*}
w(x) = |x|^\alpha, \qquad x \in \mathbb{R}.
\end{align*}

Then:
\begin{align*}
w \in A_p
&\quad \Longleftrightarrow \quad
-1 < \alpha < p-1, \
w \in \WeightclassB_p
&\quad \Longleftrightarrow \quad
\alpha < p-1.
\end{align*}

In particular, for $\alpha \le -1$, we have
\begin{align*}
w \in \WeightclassB_p \quad \text{but} \quad w \notin A_p.
\end{align*}
\item\textbf{(Highly) Singular weights.}
Let
\begin{align*}
w(x) = |x|^{-1}.
\end{align*}

Then
\begin{align*}
w^{-1/(p-1)} = |x|^{1/(p-1)} \in L^1_{\mathrm{loc}},
\end{align*}
so
\begin{align*}
w \in \WeightclassB_p.
\end{align*}
However, $w \notin A_p$ due to the strong singularity at $x=0$.
\item\textbf{Oscillatory weights.}
Let
\begin{align*}
w(x) = 1 + \sin(\log |x|).
\end{align*}

Then $w$ is bounded above and below, hence
\begin{align*}
w \in \WeightclassB_p,
\end{align*}
but typically
\begin{align*}
w \notin A_p,
\end{align*}
since the oscillations violate the Muckenhoupt balance condition.
\end{enumerate}
\end{remark}
\begin{theorem}
    Let $p>1$, $\weight\in \WeightclassB_{p}(\R_{+})$ and $K$ be a compact set such that $K\subset \R_{+}$. Then the embedding 
    \begin{align*}
        L^{p}_{\weight}\hookrightarrow L^{1}(K)
    \end{align*}
    is continuous.
\end{theorem}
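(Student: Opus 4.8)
The claim to prove is the continuous embedding $L^{p}_{w}\hookrightarrow L^{1}(K)$ for a compact $K\subset\R_{+}$, under the hypothesis $w\in\WeightclassB_{p}(\R_{+})$, i.e. $w^{-1/(p-1)}\in L^{1}_{\mathrm{loc}}(\R_{+})$.

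\medskip

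The plan is to apply H\"older's inequality with the conjugate pair $(p,q)$, $q=p/(p-1)$, after artificially inserting the weight. For $u\in L^{p}_{w}$ write, for $x\in K$,
\begin{align*}
|u(x)| = |u(x)|\, w(x)^{1/p}\, w(x)^{-1/p},
\end{align*}
and integrate over $K$. H\"older gives
\begin{align*}
\int_{K}|u(x)|\dx \leq \left(\int_{K}|u(x)|^{p}w(x)\dx\right)^{1/p}\left(\int_{K}w(x)^{-q/p}\dx\right)^{1/q}.
\end{align*}
Since $q/p = 1/(p-1)$, the second factor is $\left(\int_{K}w(x)^{-1/(p-1)}\dx\right)^{(p-1)/p}$, which is finite precisely because $w\in\WeightclassB_{p}(\R_{+})$ and $K$ is compact; call this finite constant $C_{K,w}$. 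The first factor is bounded by $\|u\|_{L^{p}_{w}}$ since $K\subset\R_{+}$. Hence $\|u\|_{L^{1}(K)}\leq C_{K,w}\|u\|_{L^{p}_{w}}$, which is exactly the asserted continuity of the embedding. Linearity of the map $u\mapsto u|_{K}$ is immediate, so continuity of the linear map is equivalent to this bound.

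\medskip

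There is essentially no real obstacle here; the only point requiring a line of care is checking that the exponent bookkeeping $q/p=1/(p-1)$ is correct and that $K$ compact together with local integrability of $w^{-1/(p-1)}$ genuinely yields a finite constant (a compact subset of $\R_{+}$ is contained in some $[0,R]$, on which $w^{-1/(p-1)}$ is integrable by hypothesis). One should also note in passing that elements of $L^{p}_{w}$ are a priori only measurable functions, but the estimate shows $u\in L^{1}_{\mathrm{loc}}(\R_{+})$, so in particular $u$ defines a distribution on $\R_{+}$ — which is the stated purpose of the result in the surrounding text.
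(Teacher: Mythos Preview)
Your proof is correct and takes essentially the same approach as the paper, which simply states that the result follows directly from H\"older's inequality. Your exponent bookkeeping and the use of compactness together with $w^{-1/(p-1)}\in L^{1}_{\mathrm{loc}}$ are exactly the intended argument.
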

\begin{proof}
    The theorem directly results from H{\"o}lder's inequality.
\end{proof}
\begin{corollary}
    Under the assumptions of the previous theorem, we have 
    \begin{align*}
         L^{p}_{\weight}\hookrightarrow L^{1}_{\operatorname{loc}}(\R_{+})\subset D'(\R_{+}),
    \end{align*}
    where $D'(\R_{+})$ denotes the space of distributions.
\end{corollary}
Let $m \in \N$ and $1 \leq p<\infty$. Let $\weight=(\weight_{0},\dots,\weight_{k})$ be a vector of given weight functions.
We introduce the norm 
\begin{align*}
\|u\|_{W^{m, p}_{\weight}}=\left(\sum_{0\leq j \leq m} \int_{0}^{\infty}\left|D^{j} u\right|^p \weight_{j} \dx\right)^{1 / p} .
\end{align*}

\begin{definition}
   We denote by $W^{m, p}_{\weight}$ the completion of $\left\{u \in C_{0}^{\infty}(\R_{+}):\|u\|_{W^{m, p}_{\weight}}<\infty\right\}$ with respect to the norm $\| \cdot \|_{W^{m, p}_{\weight}}$.
\end{definition}
If $1<p<\infty$ and $\weight \in \WeightclassB_{p}(\R_{+})$, then $W^{m, p}_{\weight}$ is a uniformly convex Banach space (see \cite[section 4]{kufner_84_define_weighted_spaces}).
\begin{remark}
    Note that we could also introduce weighted Sobolev spaces $\mathcal{W}^{m, p}_{\weight}$ by considering the set of all functions $u \in L^p_{\weight}$ for which the weak derivatives $D^{j} u$, with $j \leq m$, belong to $L^p_{\weight}$. The weighted Sobolev space $\mathcal{W}^{m, p}_{\weight}$ is a normed linear space if equipped with the norm $\| \cdot \|_{W^{m, p}_{\weight}}$. We have $W \subseteq \mathcal{W}$. By definition, functions that are smooth in the interior of $\R_{+}$ are dense in $W$, while the space $\mathcal{W}$ is known to contain all functions of finite well-defined ``energy''.
 If $\weight$ is bounded from above and away from $0$ from below $0<c_{1}\leq \weight(x)\leq c_{2}<\infty$, the spaces $\mathcal{W}$ and $W$ coincide on general domains, however if $\weight\in L^{2}_{\operatorname{loc}}(\R_{+})$, $W=\mathcal{W}$ does not need to hold.

\end{remark}
Although $\weight=(\weight_{0},\dots,\weight_{m})$ is not a weight function according to our definition, but a vector of weight functions, we will still call such a vector a weight function for convenience.

For simplicity, we will write $\weight\in \WeightclassB_{2}(\R_{+})$ or $\weight\in L^{\infty}_{\operatorname{loc}}(\R_{+})$. This notation will also be used for other component-wise properties.
\begin{remark}
    An example of a weight function, that satisfies $\WeightclassB_{2}(\R_{+})$ is given by $\weight(x)$ with 
    \begin{equation*}
        \weight_{i}(x)=|x|^{a}(1+|x|)^{(i+1)b},
    \end{equation*} for $0\leq a <1$, $i\in \N\cup\{0\}$ and $b\in \R$.
\end{remark}
\begin{theorem}
    Let $1\leq p<\infty$, $\weight\in \WeightclassB_{2}(\R_{+})$, then $W^{m,p}_{\weight}$ is separable. If $1< p<\infty$, $W^{m,p}_{\weight}$ is reflexive.
\end{theorem}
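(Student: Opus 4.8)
The plan is to realize $W^{m,p}_{w}$ as a closed subspace of a product of weighted Lebesgue spaces and transfer both properties through this embedding. Define the map
\begin{align*}
    J\colon W^{m,p}_{w}\longrightarrow \prod_{j=0}^{m}L^{p}_{w_{j}},\qquad J(u)=(u,Du,\dots,D^{m}u).
\end{align*}
By construction of the norm $\|\cdot\|_{W^{m,p}_{w}}$, the map $J$ is a linear isometry onto its image when the target is equipped with the $\ell^{p}$-sum of the component norms. First I would verify that $J(W^{m,p}_{w})$ is closed: if $J(u_{n})\to (v_{0},\dots,v_{m})$ in the product, then $u_{n}\to v_{0}$ and $D^{j}u_{n}\to v_{j}$ in $L^{p}_{w_{j}}$, hence (using the continuous embedding $L^{p}_{w_{j}}\hookrightarrow L^{1}_{\mathrm{loc}}(\R_{+})\subset D'(\R_{+})$ recorded above, valid since $w\in\WeightclassB_{2}(\R_{+})$, so in particular $w\in\WeightclassB_{p}(\R_{+})$ for the relevant range) convergence holds in $D'(\R_{+})$, and the distributional derivative is continuous on $D'(\R_{+})$, so $D^{j}v_{0}=v_{j}$. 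Since $u_{n}$ is $\|\cdot\|_{W^{m,p}_{w}}$-Cauchy it converges to some $u\in W^{m,p}_{w}$ (completeness of $W^{m,p}_{w}$ by definition as a completion), and by uniqueness of limits in $D'$ we get $u=v_{0}$, so the limit lies in $J(W^{m,p}_{w})$.

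Next I would establish the two properties for the ambient product space. Each $L^{p}_{w_{j}}$ with $1\le p<\infty$ is separable: the measure $\lambda_{w_{j}}$ is $\sigma$-finite on $\R_{+}$ (since $w_{j}$ is locally integrable), and $L^{p}$ of a $\sigma$-finite measure space whose $\sigma$-algebra is countably generated (the Borel $\sigma$-algebra on $\R_{+}$) is separable — a countable dense set is furnished by simple functions with rational coefficients over a countable generating algebra of finite-measure sets. A finite product of separable metric spaces is separable, hence $\prod_{j=0}^{m}L^{p}_{w_{j}}$ is separable, and a subspace of a separable metric space is separable; transporting through the isometry $J$ gives separability of $W^{m,p}_{w}$. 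For reflexivity, assume $1<p<\infty$: then each $L^{p}_{w_{j}}$ is reflexive (this is the weighted version of the classical fact; it follows because $L^{p}_{w_{j}}$ is isometrically isomorphic to an unweighted $L^{p}$ via $u\mapsto u\,w_{j}^{1/p}$, and unweighted $L^{p}$, $1<p<\infty$, is reflexive — being uniformly convex by Clarkson's inequalities, or one may simply invoke the uniform convexity of $W^{m,p}_{w}$ recorded earlier in the excerpt). A finite product of reflexive Banach spaces is reflexive, and a closed subspace of a reflexive Banach space is reflexive; since $J(W^{m,p}_{w})$ is a closed subspace, $W^{m,p}_{w}$ is reflexive.

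I do not expect a serious obstacle here; the argument is a standard "embed into a product of Lebesgue spaces and use that the image is closed" routine. The one point requiring genuine care is the closedness of $J(W^{m,p}_{w})$, which is where the hypothesis $w\in\WeightclassB_{2}(\R_{+})$ is actually used — it is precisely what guarantees the embedding into $D'(\R_{+})$ and thereby that $L^{p}_{w_{j}}$-limits respect the distributional derivative, so that the limit of a convergent sequence of jets $(u_{n},Du_{n},\dots,D^{m}u_{n})$ is again the jet of its first component. A secondary bookkeeping point is that the completion defining $W^{m,p}_{w}$ should be identified with an honest function space inside $\prod_{j}L^{p}_{w_{j}}$; this identification is exactly what the isometry $J$ and the closedness of its range provide, so no circularity arises.
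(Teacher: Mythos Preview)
Your proposal is correct and follows precisely the standard route the paper cites (isometrically embed into $\prod_{j=0}^{m} L^{p}_{w_{j}}$ via $J(u)=(u,Du,\dots,D^{m}u)$ and transfer separability and reflexivity from the product to the closed image). One caveat: the aside that $w\in\WeightclassB_{2}(\R_{+})$ implies $w\in\WeightclassB_{p}(\R_{+})$ is false for $1<p<2$ (e.g.\ $w(x)=x^{1/2}$ near $0$ with $p=3/2$), but this is harmless since the detour through $D'(\R_{+})$ is not needed for closedness---an isometry out of a complete space has closed range automatically, so the argument stands without that step.
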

The proof follows the same lines as in the non-weighted setting (see \cite[Theorem 1.3]{benci1979weighted} 
 and also \cite[Section 2]{kufner_84_define_weighted_spaces}).

We state the following embedding theorem on $\R^{d}$ instead of $\R_{+}$, as this general setting is of interest on its own. For $\R_{+}$, the proof works analogously with the only difference that instead of balls of radius $R$ in $\R^{d}$, denoted by $\ball_{R}(0)$, the intersection of balls in $\R$ with $\R_{+}$ is considered.
This theorem will enable us to prove existence results for more general coefficients, compared to the existing literature, where certain regularity of embeddings is often not considered, unclear or doesn't hold at all.

\begin{proposition}[Weighted embeddings]\label{prop:embeddings}

Let $\weight = (\weight_j)_j$, $\weightbar = (\weightbar_j)_j$, and $\weightc$ be weight functions, and let $j \in (\N\cup \{0\})^d$. Fix $p \in [1,\infty)$ and $s \in \N\cup \{0\}$. For $R>0$, denote by $\ball_{R}(0)$ the ball of radius $R$ and define the annulus
\begin{align*}
    A_R := \ball_{2R}(0)\setminus \overline{\ball_R(0)}.
\end{align*}

\medskip
\noindent
\textbf{Assumptions.}
\begin{enumerate}[label=(\Alph*), ref=\Alph*]
    \item\label{A_embedd:A_bound_from_above_and_below}\textbf{Local boundedness and non-degeneracy.}
    \begin{enumerate}[label=(\theenumi\arabic*), ref=\theenumi\arabic*]
\item\label{A_embedd:A_1_global_bound_from_below} For $j \in (\N\cup \{0\})^{d}$ and $R>0$, we introduce
\begin{align*}
    I_{\weight_j}(\ball_{R}(0)) := \inf_{x \in \ball_{R}(0)} \weight_j(x).
\end{align*}
There exists $c_{R}>0$ such that for all $j$,
\begin{align*}
    I_{\weight_j}(\ball_{R}(0))  \ge c_{R}>0.
\end{align*}

\item\label{A_embedd:A_2_local_bound_on_balls} There exists $K>0$ such that for all $R \ge K$,
\begin{align*}
   \sup_{x \in \ball_R(0)} |\weight_j(x)| \le C_R < \infty. 
\end{align*}
\end{enumerate}
\item\label{A_embedd:B_decay_weight_ratio}\textbf{Asymptotic comparison of weights.}\newline There exists $K>0$ such that for all $j$, the ratio $\frac{\weight_j(x)}{\weightbar_j(x)}$ is monotonically decreasing in $|x|$ on $B_K(0)^c$, and
\begin{align*}
    \lim_{|x|\to\infty} \frac{\weight_j(x)}{\weightbar_j(x)} = 0.
\end{align*}
\item\textbf{Boundedness conditions at infinity for $\WeightclassB_\rho$.}\newline
\begin{enumerate}[label=(\theenumi\arabic*), ref=\theenumi\arabic*]
\item\label{A_embedd:C_1_bound_Cb_under_weightclass}Assume that there exists $\rho>1$ such that $\weight_j \in \WeightclassB_\rho$ for all $j$. In addition,
\begin{align*}
\lim_{R \to \infty}
\sum_{0 \le |j| \le s}
R^{|j| - \frac{d}{p}}
\|\weight_j^{-\frac{1}{\rho-1}}\|_{L^1(A_R)}^{\frac{\rho-1}{p\rho}}
< \infty.
\end{align*}

\item\label{A_embedd:C_2_bound_Cwc_under_weightclass}Assume that there exists $\rho>1$ such that $\weight_j \in \WeightclassB_\rho$ for all $j$. In addition,
\begin{align*}
\lim_{R \to \infty}
\sum_{0 \le |j| \le s}
R^{|j| - \frac{d}{p}}
\|\weightc\|_{L^\infty(A_R)}
\|\weight_j^{-\frac{1}{\rho-1}}\|_{L^1(A_R)}^{\frac{\rho-1}{p\rho}}
< \infty.
\end{align*}
\end{enumerate}
\item\textbf{Boundedness conditions for strictly non-degenerate weights.}

\begin{enumerate}[label=(\theenumi\arabic*), ref=\theenumi\arabic*]
\item\label{A_embedd:D_1_bound_Cb_under_non_degeneracy} For every $j$ and $R$, let $I_{\weight_j}(\ball_{R}(0))\geq c_{j,R}>0$, as well as
\begin{align*}
\lim_{R \to \infty}
\sum_{0 \le |j| \le s}
\left(\frac{1}{I_{\weight_j}(A_R)}\right)^{\frac{1}{p}}
R^{|j| - \frac{d}{p}}
\|u\|_{W^{s,p}_{\weight}}
< \infty.
\end{align*}

\item\label{A_embedd:D_2_bound_Cwc_under_non_degeneracy}For every $j$ and $R$, let $I_{\weight_j}(\ball_{R}(0))\geq c_{j,R}>0$, as well as
\begin{align*}
\lim_{R \to \infty}
\sum_{0 \le |j| \le s}
\|\weightc\|_{L^\infty(A_R)}
\left(\frac{1}{I_{\weight_j}(A_R)}\right)^{\frac{1}{p}}
R^{|j| - \frac{d}{p}}
\|u\|_{W^{s,p}_{\weight}}
< \infty.
\end{align*}
\end{enumerate}
\item\textbf{Sobolev exponent conditions.}\newline
Let $p \ge 1$, $d \ge 1$, $s,r \in \N\cup \{0\}$, and $\alpha \in [0,1)$.
\begin{enumerate}[label=(\theenumi\arabic*), ref=\theenumi\arabic*]
    \item\label{A_embedd:E_1_Sobolev_exponents_regularity}
    Assume that one of the following holds:
\begin{align*}
&p=1 \text{ and } s-r>d, \
&\text{or} \quad (s-r)\ge \frac{d}{p}+\alpha > (s-r)-1, \
&\text{or} \quad p=1 \text{ and } d=s-r-1.
\end{align*}
    \item\label{A_embedd:E_2_Sobolev_exponents_boundedness}
    Assume either $sp>d$ or $p=1$ and $s=d$.
\end{enumerate}
\end{enumerate}
\medskip
\noindent
\textbf{Conclusions.}
\begin{enumerate}
    \item[(I)]\textbf{ Compact embeddings.}\newline
    Let $0 \le l \le s$ and suppose that
\begin{align*}
    0 < d-(s-l)p < d,
        \qquad
        \frac{1}{p}-\frac{s}{d} < \frac{1}{q}-\frac{l}{d} \left(1 \le q < \frac{dp}{d-(s-l)p}\right),
\end{align*}
Assume \eqref{A_embedd:A_bound_from_above_and_below}, \eqref{A_embedd:B_decay_weight_ratio}
Then
\begin{align*}
    W^{s,p}_{\weight} \cap W^{l,q}_{\weightbar}
\hookrightarrow W^{l,q}_{\weight}
\end{align*}
is compact.

In the critical case $d = (s-l)p$, the embedding is compact for all $1 \le q < \infty$.
 \item[(II)]\textbf{Embeddings into continuous functions.}\newline
Let $p \ge 1$, $d \ge 1$, $s,r \in \N\cup \{0\}$, and $\alpha \in [0,1)$ and define
\[
    V :=
\begin{cases}
C^r(\R^{d}), & p=1,\ s-r\ge d, \\
C^{r,\alpha}(\R^{d}), & (s-r)\ge \frac{d}{p}+\alpha > (s-r)-1, \\
C^{r,1}(\R^{d}), & p=1,\ d=(s-r)-1.
\end{cases}
\]
\medskip
\noindent
 \begin{enumerate}
        \item There exists a $\rho>1$, such that for every $j\in (\N\cup \{0\})^{d}$, $\weight_{j}\in \WeightclassB_{\rho}$.
        
        \begin{enumerate}
        \item Let \eqref{A_embedd:E_1_Sobolev_exponents_regularity} hold. Then $W^{s,p\rho}_{\weight}\hookrightarrow V$.
            \item Let Conditions \eqref{A_embedd:C_1_bound_Cb_under_weightclass}, \eqref{A_embedd:E_2_Sobolev_exponents_boundedness} be satisfied, then $ W^{s,p\rho}_{\weight}\hookrightarrow C_{b}$.
        \item Let Conditions \eqref{A_embedd:C_2_bound_Cwc_under_weightclass}, \eqref{A_embedd:E_2_Sobolev_exponents_boundedness} be satisfied, then $W^{s,p\rho}_{\weight}\hookrightarrow C_{\weightc}$.
        \end{enumerate}
         \item  Let $I_{\weight_{j}}(\ball_{R}(0)):=\inf_{x\in \ball_{R}(0)}\weight_{j}(x)\geq c_{j,R}>0$ for every $j\in (\N\cup\{0\})^{d}$.
        
        \begin{enumerate}
         \item Let \eqref{A_embedd:E_1_Sobolev_exponents_regularity} hold. Then $W^{s,p}_{\weight}\hookrightarrow V$.
            \item Let Conditions \eqref{A_embedd:D_1_bound_Cb_under_non_degeneracy}, \eqref{A_embedd:E_2_Sobolev_exponents_boundedness} be satisfied, then $W^{s,p}_{\weight}\hookrightarrow C_{b}$.
        \item Let Conditions \eqref{A_embedd:D_2_bound_Cwc_under_non_degeneracy}, \eqref{A_embedd:E_2_Sobolev_exponents_boundedness}  be satisfied, then $W^{s,p}_{\weight}\hookrightarrow C_{\weightc}$.
        \end{enumerate}
    \end{enumerate}

\end{enumerate}
\end{proposition}

The \hyperref[proof:prop:embeddings]{proof} can be found in the Appendix, Section \ref{section:Appendix_proofs}.
\begin{remark}
    Discussion:
    \begin{enumerate}
        \item Condition \eqref{A_embedd:A_bound_from_above_and_below} is a condition on the boundedness of the individual weight components $\weight_{i}$ from below. Further, it ensures that, outside of a potentially very large ball, the weight component stays bounded, preventing $\lim_{|x|\rightarrow \infty} w_{i}(x)=\infty$. The weight components are however allowed to exhibit singularities within said ball.
          \item Condition \eqref{A_embedd:B_decay_weight_ratio} states that the, outside of some ball around $0$, each component $\weightbar_{i}$ of $\weightbar$, dominates the respective component $\weight_{i}$ of $\weight$. This is a crucial assumption, when it comes to compactness of embeddings in weighted spaces.
          \item Conditions \eqref{A_embedd:C_1_bound_Cb_under_weightclass},\eqref{A_embedd:C_2_bound_Cwc_under_weightclass}, \eqref{A_embedd:D_1_bound_Cb_under_non_degeneracy} and \eqref{A_embedd:D_2_bound_Cwc_under_non_degeneracy}   are of a technical nature. Let us consider \eqref{A_embedd:C_2_bound_Cwc_under_weightclass}:
          \begin{align*}
              \lim_{\R\rightarrow \infty}\sum_{0 \le |j| \le s}
R^{|j| - \frac{d}{p\rho}}
\|\weightc(R\cdot)\|_{L^{\infty}(A_1)}
\|\weight_j(R\cdot)^{-\frac{1}{\rho-1}}\|_{L^{1}(A_1)}^{\frac{\rho-1}{p\rho}}<\infty.
          \end{align*}
Heuristically, the weighted Sobolev structure is subcritical at infinity, meaning that after rescaling, all coefficients become negligible compared to the polynomial growth coming from derivatives up to order $s$. Let us illustrate this with a heuristic example, where $\approx$ means ''roughly of the form''. Let $\beta,\gamma_{j} >0$ for every $j$.

Note the absence of additional assumptions on the weight $\weightc$.
\begin{align*}
\weightc(x) \approx (1+|x|)^{-\beta}
\quad \Longrightarrow \quad
\|\weightc(R\cdot)\|_{L^\infty(A_1)} \sim R^{-\beta}.
\end{align*}
\begin{align*}
\weight_j(x) \approx (1+|x|)^{\gamma_j}
\quad \Longrightarrow \quad
\|\weight_j(R\cdot)^{-\frac{1}{\rho-1}}\|_{L^1(A_1)}
\sim R^{-\frac{\gamma_j}{\rho-1}}.
\end{align*}
Then the condition becomes
\begin{align*}
\lim_{R\to\infty}
\sum_{0 \le |j| \le s}
R^{|j| - \frac{d}{p\rho}}
\cdot R^{-\beta}
\cdot R^{-\frac{\gamma_j}{p\rho}}=\lim_{R\to\infty}
\sum_{0 \le |j| \le s}R^{|j| - \frac{d}{p\rho} - \beta - \frac{\gamma_j}{p\rho}}<\infty,
\end{align*}
which is satisfied if $|j| - \frac{d}{p\rho} - \beta - \frac{\gamma_j}{p\rho} \le 0$ for all $|j|\le s$.
\item Let us discuss the difference between \eqref{A_embedd:C_1_bound_Cb_under_weightclass},\eqref{A_embedd:C_2_bound_Cwc_under_weightclass}, \eqref{A_embedd:D_1_bound_Cb_under_non_degeneracy} and \eqref{A_embedd:D_2_bound_Cwc_under_non_degeneracy}.

$I_{\weight}(\ball_{R}(0))\geq c_{R}>0$ is a local non-degeneracy condition, which allows for decay to 0 at infinity and arbitrary growth.\newline
$\weight\in \WeightclassB_{p}(\R_{+})$ is not a lower bound, but an integrability condition on degeneracies. It allows $w(x)\rightarrow 0$, but gives a control on how fast this can happen.
$\weight\in \WeightclassB_{p}(\R_{+})$ is strictly weaker than $I_{\weight}(\ball_{R}(0))\geq c_{R}>0$. This can be easily seen, since $I_{\weight}(\ball_{R}(0))\geq c_{R}>0$ implies
\begin{align*}
    \int_{\ball_{R}(0)} \weight(x)^{-\frac{1}{p-1}}\,dx
\le c_R^{-\frac{1}{p-1}} |\ball_{R}(0)| < \infty,
\end{align*}
which implies $\weight^{-1/(p-1)} \in L^1_{\mathrm{loc}}(\R^d)$. In particular, $\text{local lower bound} \;\Rightarrow\; \weight \in \WeightclassB_p$.
The converse implication does not hold in general, which can be seen by the trivial example
\begin{align*}
    \weight(x) = |x|^\beta, \quad x \in \R^d,
\end{align*}
which belongs to $L^1_{\mathrm{loc}}(\R^d)$ if and only if $\frac{\beta}{p-1} < d$.

    \end{enumerate}
\end{remark}
\begin{remark}
    If one assumes a condition of the form $|\partial_{x}^{i}\weightc(x)|\leq C_{i} \weightc(x)$, the embedding results into $C_{\weightc}$ can be refined.
\end{remark}
\begin{remark}
    If we replace the finite limit in Assumption  \ref{A_embedd:C_1_bound_Cb_under_weightclass},\ref{A_embedd:D_1_bound_Cb_under_non_degeneracy} with  $0$, we obtain that $\lim_{|x|\rightarrow \infty}|u(x)|=0$. 
\end{remark}
\begin{remark}
    Assumption \ref{A_embedd:A_bound_from_above_and_below} can be avoided, if one considers weights, which behave on $B_{R}(0)$ like Muckenhoupt weights and used the established theory covering this type of weighted space (see e.g. \cite{meyries2014characterization}). This way one could allow for singularities at $0$, e.g. for $d=1$, $\weight(x)=x^{-a}(a+x)^{-b}$ with $0\leq a <1$ and $b\in \R$. The arguments are similar however, at later stages of the paper this would lead to further technicalities, which we want to avoid. We will hint at one reason in Remark \ref{rem:discussion_of_the_general_assumptions} why switching to such weights would not significantly improve our estimates to warrant the additional technicalities.
\end{remark}
We will see that weights that behave like $\weight(x)=(1+|x|)^{\beta}$ or $\weight(x)=(1+|x|^{2})^{\frac{\beta}{2}}$ , for $\beta\in \R$ will appear naturally in our analysis. However, these weights will often appear in the form $\weight(x)=(c_{1}+c_{2}|x|)^{\beta}$ with $c_{1},c_{2}>0$. For this reason, we will introduce an equivalence relation on the family of such weights.
\begin{definition}
    Let $a,b,r,s\geq 0$, $x\in \R^{d}$ and $\beta\in \R$. We call two weights $(a+b|x|)^{\beta}$ and $(r+s|x|)^{\beta}$ equivalent, denoted by $(a+b|x|)^{\beta}\cong (r+s|x|)^{\beta}$, if there exist constants $c_{1},c_{2}>0$, such that $(c_{1}a+c_{2}b|x|)^{\beta}= (r+s|x|)^{\beta}$.
\end{definition}
 It can be easily checked that this indeed defines an equivalence relation and the set of functions $\weight(x)=(c_{1}+c_{2}|x|)^{\beta}$ with $c_{1},c_{2}>0$.
Next, we verify that the two equivalent weights, in terms of the relation specified above, give rise to equivalent weighted Sobolev norms, in the sense that they induce the same topology. This result will be used in later calculations where, rather than keeping track of constants appearing in the weight, we collect them via the inequality stated in the following Lemma in a constant in front of the norm.
\begin{lemma}\label{lem:weighted_norm_equivalence}
    Let $\weight^{1}_{i}(x)=(a_{1,i}+a_{2,i}|x|)^{\beta_{i}}$ and $\weight^{2}_{i}(x)=(b_{1,i}+b_{2,i}|x|)^{\beta_{i}}$ be equivalent weights with $a_{1,i}, a_{2,i}, b_{1,i}, b_{2,i}>0$ and $\beta_{i}\in \R$. Then $\|\cdot\|_{W^{m,p}_{\weight^{1}}}$ and $\|\cdot\|_{W^{m,p}_{\weight^{2}}}$ are equivalent in the sense of norms, in the sense that there exist two constants $c\leq C>0$ which only depend on $a_{1,i}, a_{2,i}, b_{1,i}, b_{2,i}$ and $\beta_{i}$, such that
    \begin{align*}
        c\|\cdot\|_{W^{m,p}_{\weight^{2}}}\leq \|\cdot\|_{W^{m,p}_{\weight^{1}}}\leq C\|\cdot\|_{W^{m,p}_{\weight^{2}}}.
    \end{align*}
    Additionally $\|\cdot\|_{W^{m,p}_{\frac{1}{\weight^{1}}}}$ and $\|\cdot\|_{W^{m,p}_{\frac{1}{\weight^{2}}}}$ are equivalent
\end{lemma}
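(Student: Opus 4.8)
The plan is to prove the norm equivalence by the elementary observation that if two weights $w_{1,i}$ and $w_{2,i}$ are equivalent in the sense of the preceding definition, then their quotient is bounded above and below by positive constants, uniformly in $x \in \R_{+}$. Once this pointwise two-sided bound is established, the equivalence of the Sobolev norms follows immediately by plugging into the definition of $\|\cdot\|_{W^{m,p}_{w}}$, and the statement about the inverse weights follows because $\frac{1}{w_{1,i}}$ and $\frac{1}{w_{2,i}}$ are again equivalent (with the roles of the constants inverted).

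\textbf{Step 1: pointwise comparison of equivalent weights.} First I would show: if $w_1(x) = (a_1 + a_2 x)^\beta$ and $w_2(x) = (b_1 + b_2 x)^\beta$ with all of $a_1, a_2, b_1, b_2 > 0$ and $\beta \in \R$, then there exist constants $0 < c \le C$, depending only on these four numbers and on $\beta$, such that $c\, w_2(x) \le w_1(x) \le C\, w_2(x)$ for all $x \ge 0$. Indeed, set $q := \inf_{x \ge 0} \frac{a_1 + a_2 x}{b_1 + b_2 x}$ and $Q := \sup_{x \ge 0} \frac{a_1 + a_2 x}{b_1 + b_2 x}$; since this is a Möbius-type function of $x$ that is continuous on $[0,\infty)$, positive, and has finite positive limit $a_2/b_2$ as $x \to \infty$ and value $a_1/b_1$ at $x=0$ and is monotone, we get $0 < q \le Q < \infty$, explicitly $q = \min\{a_1/b_1,\, a_2/b_2\}$ and $Q = \max\{a_1/b_1,\, a_2/b_2\}$. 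Raising to the power $\beta$ (and distinguishing $\beta \ge 0$ from $\beta < 0$, which just swaps which of $q^\beta, Q^\beta$ is the lower/upper constant) yields the claim with $c = \min\{q^\beta, Q^\beta\}$, $C = \max\{q^\beta, Q^\beta\}$. Note that the hypothesis $(a+bx)^\beta \cong (r+sx)^\beta$ (i.e. the existence of $c_1, c_2 > 0$ with $(c_1 a + c_2 b x)^\beta = (r+sx)^\beta$) is not even needed in full; the two-sided bound holds for \emph{any} two weights of this parametric form with strictly positive coefficients, which is all we use.

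\textbf{Step 2: componentwise and transfer to Sobolev norms.} Applying Step 1 to each index $i \in \{0,\dots,m\}$ gives constants $c_i \le C_i$ with $c_i\, w_{2,i} \le w_{1,i} \le C_i\, w_{2,i}$ pointwise. Setting $\underline{c} := \min_i c_i$ and $\overline{C} := \max_i C_i$, for any $u \in C_0^\infty(\R_{+})$ one has
\begin{align*}
\|u\|_{W^{m,p}_{w_1}}^p = \sum_{0 \le j \le m} \int_0^\infty |D^j u|^p w_{1,j}\, \dx \le \overline{C} \sum_{0 \le j \le m} \int_0^\infty |D^j u|^p w_{2,j}\, \dx = \overline{C}\, \|u\|_{W^{m,p}_{w_2}}^p,
\end{align*}
and symmetrically $\|u\|_{W^{m,p}_{w_1}}^p \ge \underline{c}\, \|u\|_{W^{m,p}_{w_2}}^p$. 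Taking $p$-th roots gives the two-sided estimate with $c = \underline{c}^{1/p}$, $C = \overline{C}^{1/p}$ on the dense subspace $C_0^\infty(\R_{+})$; since $W^{m,p}_{w_1}$ and $W^{m,p}_{w_2}$ are by definition the completions of this common subspace under the respective norms, the equivalence of norms shows the two completions coincide as sets with equivalent norms, and the inequality extends to all of the space by density.

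\textbf{Step 3: the inverse weights.} Since $\frac{1}{w_{1,i}}(x) = (a_{1,i} + a_{2,i} x)^{-\beta_i}$ and $\frac{1}{w_{2,i}}(x) = (b_{1,i} + b_{2,i} x)^{-\beta_i}$ are again weights of the same parametric form (now with exponent $-\beta_i$, and still strictly positive coefficients), Steps 1--2 apply verbatim to them, yielding the asserted equivalence of $\|\cdot\|_{W^{m,p}_{1/w_1}}$ and $\|\cdot\|_{W^{m,p}_{1/w_2}}$. \textbf{There is no real obstacle here} — the only mild care needed is the case distinction on the sign of $\beta$ when raising the bound $q \le \frac{a_1+a_2 x}{b_1 + b_2 x} \le Q$ to the power $\beta$, and the bookkeeping of taking the min/max of the finitely many componentwise constants; both are routine.
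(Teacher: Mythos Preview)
Your proof is correct and takes essentially the same approach as the paper: both arguments bound the ratio $(a_{1,i}+a_{2,i}x)^{\beta_i}/(b_{1,i}+b_{2,i}x)^{\beta_i}$ pointwise by $\min\{(a_{1,i}/b_{1,i})^{\beta_i},(a_{2,i}/b_{2,i})^{\beta_i}\}$ and $\max\{(a_{1,i}/b_{1,i})^{\beta_i},(a_{2,i}/b_{2,i})^{\beta_i}\}$ via the monotone M\"obius quotient, then plug into the norm. Your write-up is in fact a bit more careful than the paper's, as you explicitly handle the sign of $\beta_i$ when raising to a power and spell out the density step from $C_0^\infty(\R_+)$ to the completion.
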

The \hyperref[proof:lem:weighted_norm_equivalence]{proof} can be found in the Appendix.
\begin{remark}
    We want to keep the following trivial estimates in mind. For $\eta>0$ and $x\geq 0$,
    \begin{align*}
        \frac{1+x}{1+\eta x}\leq 1\vee \frac{1}{\eta},\quad \frac{1+x}{\eta+ x}\leq 1\vee \frac{1}{\eta}.
    \end{align*}
\end{remark}
For convenience, we want to state a Corollary of Proposition \ref{prop:embeddings}, specified to the settings where our weight components are of the form $\weight_{i}(x)=(1+|x|)^{\alpha_{i}}$.

\begin{corollary}[of Proposition \ref{prop:embeddings}]
    Let $m\geq 0$, $i\in (\N\cup \{0\})^{d}$, $d\leq mp$ (or $d=1$ and $m=d$) and $\weight=(\weight_{0},\dots,\weight_{m})$, with $\weight_{j}(x)=(1+|x|)^{\alpha_{j}}$. If $p|j| \leq \alpha_{j}+d$ holds for every $j$, then the embedding $W^{m,p}_{\weight}\hookrightarrow C_{b}$ is continuous.
\end{corollary}
\begin{proof}
    \begin{align*}
\sum_{0 \le |j| \le s}
\left(\frac{1}{I_{\weight_j}(A_R)}\right)^{\frac{1}{p}}
R^{|j| - \frac{d}{p}}&\leq C \sum_{0 \le |j| \le s}
R^{-\frac{\alpha_{j}}{p}}
R^{|j| - \frac{d}{p}},
\end{align*}
which relates to $   |j| - \frac{d}{p}\leq \frac{\alpha_{j}}{p}$ or equivalently $p|j| \leq \alpha_{j}+d$. 
\end{proof}
\begin{remark}
Due to the local boundedness from below for every $\alpha\in \R$, $\weight(x_{j})=(1+|x|)^{\alpha_{j}}\in \WeightclassB_{\rho}$, for any $\rho>1$. Condition \eqref{A_embedd:C_1_bound_Cb_under_weightclass} becomes
\begin{align*}
    &\lim_{R\rightarrow\infty }\sum_{0\leq |j|\leq s}R^{|j| - \frac{d(\rho+1) + \alpha_{j}}{p\rho}}<\infty.
\end{align*}
Hence, we would require $ |j| \le \frac{d(\rho+1) + \alpha_{j}}{p\rho}$, or equivalently $|j|p\rho - d(\rho+1)\leq \alpha_{j}$.
\end{remark}
\begin{remark}
For the weight $\weight_{j}(x)=(1+|x|)^{|j|p-d}$, the condition becomes $|j| \le \frac{d\rho}{p(\rho-1)}$. These weights are also elements of $A_{q}$ (the Muckenhoupt class), with $\frac{|j|p-d}{d}+1<q$.
\end{remark}

Later on, we focus on a particular choice of weights, however, if we do not specify that our weight is of a specific form then we impose the following standing assumption for any weight in the remainder of this work.
\begin{assumption}\label{A:assumption_weight_function}
    Let $m\in\N$ be given. We assume  that the weight function $\weight=(\weight_{0},\dots,\weight_{m})$ satisfies $\weight_{j}\in \WeightclassB_{2}(\R_{+})$ as well as $\weight_{j}\in L^{\infty}_{\operatorname{loc}}$, for every $i\in \{0,\dots,k\}$. 
    For a given weight function $\weight$, let $\frac{1}{\weight}$ also be a weight function satisfying the above assumptions. 
\end{assumption}
\begin{remark}
The weight $\weight=(\weight_{0},\dots,\weight_{m})$, with $\weight_{j}(x)=(1+|x|)^{\alpha_{j}}$ and $\alpha_{j}\in \R$ satisfies this relation, since both terms $\weight_{j}$ and $\frac{1}{\weight_{j}}$ are locally bounded from above and below and hence elements of $\WeightclassB_{\rho}$ for any $\rho>1$.
\end{remark}
\subsection{Duality}
Depending on the properties of the weight, we will identify the dual space of $W^{m,p}_{\weight}$ in a suitable fashion.
\begin{itemize}
    \item If $\lim_{|x|\rightarrow \infty} \weight_{0}=\infty$,
    we identify the dual space of $W^{m,p}_{\weight}$, via the (unweighted) $L^{2}$ duality with $W^{-m,q}_{\weightinverse}(\R_{+}^{d})$, whose norm is given by
\begin{align*}
    \|v\|_{W^{-m,q}_{\weightinverse}}=\sup_{u \colon \|u\|_{W^{m,p}_{\weight}}=1}|\langle v,u\rangle|.
\end{align*}
In other words, we work on the following triple(s) of spaces
\begin{align*}
    W^{m,p}_{\weight} \hookrightarrow L^{2}\hookrightarrow W^{-m,q}_{\weightinverse}.
\end{align*}
\item If $\lim_{|x|\rightarrow \infty} \weight_{0}=0$,
    we identify the dual space of $W^{m,p}_{\weight}$, via the (weighted) $L^{2}_{\pivotweight}$ duality with $W^{-m,q}_{\weightinverse}(\R_{+}^{d})$, whose norm is given by
\begin{align*}
    \|v\|_{W^{-m,q}_{\weightinverse}}=\sup_{u \colon \|u\|_{W^{m,p}_{\weight}}=1}|\langle v,u\rangle|.
\end{align*}
$\pivotweight$ is a weight satisfying $\weight_{0}\geq\pivotweight$ for every $x\in \R_{+}^{d}$.
In other words, we work on the following triple(s) of spaces
\begin{align*}
    W^{m,p}_{\weight} \hookrightarrow L^{2}_{\pivotweight}\hookrightarrow W^{-m,q}_{\weightinverse}.
\end{align*}
\end{itemize}
\begin{remark}
We note that the notation used for the dual spaces is somewhat imprecise. Although we employ the same notation in different contexts, the underlying structures of these dual spaces differ significantly. This distinction can be illustrated heuristically using weighted $L^2$ spaces.

Consider the example $\weight(x) = (1+|x|)^\alpha$ with $\alpha \in \mathbb{R}$. The dual of $L^2_{\weight}$ can be identified with $L^2_{1/\weight}$ when using the unweighted $L^2$ pairing. Formally, this is expressed by
\begin{align*}
    \left|\int f(x)g(x)\,dx\right|
    \leq 
    \left(\int f^2(x)\weight(x)\,dx\right)^{\frac{1}{2}}
    \left(\int g^2(x)\frac{1}{\weight(x)}\,dx\right)^{\frac{1}{2}}.
\end{align*}

On the other hand, if one considers the weighted pairing, the dual of $L^2_{\weight}$ can be identified with itself. In this case,
\begin{align*}
    \left|\int f(x)g(x)\weight(x)\,dx\right|
    \leq 
    \left(\int f^2(x)\weight(x)\,dx\right)^{\frac{1}{2}}
    \left(\int g^2(x)\weight(x)\,dx\right)^{\frac{1}{2}}.
\end{align*}

This perspective also provides heuristic motivation for our choice of duality. The objects we consider will belong to a dual space and may exhibit growth at infinity, which needs to be compensated by an appropriate weight. Accordingly, we consider
\begin{align*}
    \left|\int f(x)g(x)\pivotweight(x)\,dx\right|
    \leq 
    \left(\int f^2(x)\pivotweight(x)\weight(x)\,dx\right)^{\frac{1}{2}}
    \left(\int g^2(x)\frac{\pivotweight(x)}{\weight(x)}\,dx\right)^{\frac{1}{2}}.
\end{align*}

In the case where $\weight(x)\to 0$ as $|x|\to\infty$, the auxiliary weight $\pivotweight$ serves to control the term $\frac{\pivotweight(x)}{\weight(x)}$. Since $\pivotweight \leq \weight$, this ratio remains bounded and does not diverge. 
\end{remark}

\begin{remark}
Consider $m\geq 1$ and a chain of (continuous) embeddings of weighted Sobolev spaces
\[
W^{m,2}_{\weightminus} \hookrightarrow W^{m,2}_{\weightsim} \hookrightarrow W^{m,2}_{\weightplus},
\qquad \text{with } \weightminus \geq \weightsim \geq \weightplus.
\]
When passing to dual spaces, we define the duality using the first component of the weakest weight appearing in the primal sequence. More precisely, we take $\pivotweight = (\weightplus)_0$ and use the $L^2_{\pivotweight}$ pairing.

With this choice, the corresponding dual embeddings become
\[
W^{-m,2}_{\weightplusinverse} \hookrightarrow W^{-m,2}_{\frac{1}{\weightsim}} \hookrightarrow W^{-m,2}_{\weightminusinverse}.
\]
Clearly, also 
\begin{align*}
    W^{-m+1,2}_{\weightplusinverse} \hookrightarrow  W^{-m,2}_{\weightplusinverse}
\end{align*}
holds and is continuous.
\end{remark}

\subsection{Measures as Dual Elements}

Let $\nu$ be  Radon measure on $\R_{+}$. We will slightly abuse the notation ``tempered'' and refer to measures $\nu$, satisfying $\int_{\R_{+}}\frac{1}{\weight(x)}\nu(\dx)<\infty$, as being ($w$-)tempered.

In the following Lemma, we identify a $w$-tempered measure with an element in the dual of certain weighted Sobolev spaces.
\begin{lemma}\label{lem:nu_is_in_dual}
   Let $\weightc, \weighttilde$ be a weight function satisfying the conditions \eqref{A_embedd:A_bound_from_above_and_below} and \eqref{A_embedd:D_2_bound_Cwc_under_non_degeneracy}. Let $d=1$, $p=2$, $i=1,2$. If $\nu$ is a non-negative, measure on $\R_{+}$, such that $\int_{\R_{+}}\frac{1}{\weightc(x)}\nu(\dx)<\infty$, then $\nu \in \left(W^{1,2}_{\weighttilde}\right)^{\prime}=W^{-1,2}_{\frac{1}{\weighttilde}}$.\newline
   In particular, if there exists a $\theta_{\nu}\in \R$, such that $\int_{0}^{\infty}\frac{1}{(1+x)^{\theta_{\nu}}}\nu(\dx)<\infty$, then $\nu\in W^{-1,2}_{\frac{1}{\weight}}$ with $\weight_{i}(x)=(1+x)^{-1+2\theta_{\nu}+2i}$.
\end{lemma}
\begin{proof}
Proposition \ref{prop:embeddings} yields that $\varphi\in W^{1,2}_{\weighttilde}$ is continuous and bounded on $\R_{+}$ and in particular $\varphi\in C_{\weightc}(\R_{+})$. Hence, the dual pairing with a ($\weight$-tempered) Radon measure is well-defined.
    \begin{align*}
        \sup_{\|\varphi\|_{W^{1,2}_{\weighttilde}=1}}|\langle \nu,\varphi\rangle|&=\sup_{\|\varphi\|_{W^{1,2}_{\weighttilde}=1}}\left|\int_{\R_{+}} \varphi(x) \nu(\dx)\right|\\
&=\sup_{\|\varphi\|_{W^{1,2}_{\weighttilde}=1}}\left|\int_{\R_{+}} \varphi(x) \frac{\weightc(x)}{\weightc(x)} \nu(\dx)\right|\\
        &\leq \sup_{\|\varphi\|_{W^{1,2}_{\weighttilde}=1}}\|\weightc \varphi\|_{L^{\infty}(\R_{+})}\left|\int_{\R_{+}}  \frac{1}{\weightc(x)} \nu(\dx)\right|\\
        &\leq C\sup_{\|\varphi\|_{W^{1,2}_{\weighttilde}=1}}\|\varphi\|_{W^{1,2}_{\weighttilde}}\left|\int_{\R_{+}}  \frac{1}{\weightc(x)} \nu(\dx)\right|.
    \end{align*}
    The second statement follows almost analogously, but one needs to be a bit more careful determining the correct weighted Sobolev norm. Like in the proof of the embedding theorem, we split the calculation into one part on balls and one part, estimating the function on annuli.
      Since $\|(1+x)^{\theta_{\nu}}\varphi\|_{L^{\infty}(\ball_{R}(0))}\leq C(w,R,\theta_{\nu}) \|\varphi\|_{W^{1,2}_{\weight}}$ for any weight $w$ that is bounded from above and below on $\ball_{R}(0)$, it suffices to consider the estimates on the annuli $A_{R}$. We consider only the case $\theta_{\nu}\geq 0$, as the case $\theta_{\nu}\leq 0$ is identical, but with a different constant.
    \begin{align*}
        \|(1+x)^{\theta_{\nu}}\varphi\|_{L^{\infty}(A_{R})}&\leq C2^{\theta_{\nu}}(1+R)^{\theta_{\nu}}\left(\sum_{j=0}^{1}\int_{R}^{2R} |D^{j}\varphi(x)|^{2}(1+x)^{j2-1}\dx\right)^{1/2}\\
        &= C\left(\sum_{j=0}^{1}\int_{R}^{2R} |D^{j}\varphi(x)|^{2}(1+R)^{2\theta_{\nu}}(1+x)^{j2-1}\dx\right)^{1/2}\\
        &\leq C\left(\sum_{j=0}^{1}\int_{R}^{2R} |D^{j}\varphi(x)|^{2}(1+x)^{2\theta_{\nu}}(1+x)^{j2-1}\dx\right)^{1/2}\\
         &\leq C\left(\sum_{j=0}^{1}\int_{\R_{+}} |D^{j}\varphi(x)|^{2}(1+x)^{2\theta_{\nu}}(1+x)^{j2-1}\dx\right)^{1/2}.
    \end{align*}
    Hence, 
    \begin{align*}
        \|(1+x)^{\theta_{\nu}}\varphi\|_{L^{\infty}(\R_{+})}\leq \|(1+x)^{\theta_{\nu}}\varphi\|_{L^{\infty}(\ball_{R}(0))}+\sup_{R>0}\|(1+x)^{\theta_{\nu}}\varphi\|_{L^{\infty}(A_{R})}\leq 2C\|\varphi\|_{W^{1,2}_{\weight}}.
    \end{align*}
\end{proof}
\begin{remark}
    In Proposition \ref{prop:embeddings}, as well as Lemma \ref{lem:nu_is_in_dual}, it is possible to replace $\weightc$ and $\frac{1}{\weighttilde}$ by weights of the for $\frac{1}{\weightc}$ and $\weighttilde$ respectively, where $\weightc$ as well as $\weighttilde$ are increasing towards $\infty$ and $\partial_{x}\frac{1}{\weighttilde}\leq C \frac{1}{\weighttilde}$, for some nonnegative constant $C$. The arguments are identical but are skipped for brevity. 
\end{remark}
 We note that there might be many choices of weights, for which $\int_{\R_{+}}\frac{1}{\weight(x)}\nu(\dx)<\infty$ is satisfied and some might be more canonical than others, given a particular measure $\nu$ and application in mind.

\subsection{Choice of Weights}
The choice of weight functions plays a central role in our analysis. It encodes the decay properties of the measures $\nudrift$ and $\nudiffusion$ and determines the regularity of the lifted process.

\begin{definition}\label{def:weights_triple_for_analysis}
Let $\theta_{\nudrift}$ and $\theta_{\nudiffusion}$ be as in Assumption \ref{A:A_3:Narrower_assumption_nu}. We introduce three weight functions $(\weightminus, \weightsim, \weightplus)$ on $\R_{+}$ defined by
\begin{align*}
(\weightminus)_{i}(x) &:= (1+x)^{2\eta_{-}-1+2i},\\
(\weightsim)_{i}(x) &:= (1+x)^{2\eta_{\sim}-1+2i},\\
(\weightplus)_{i}(x) &:= (1+x)^{2\eta_{+}-1+2i},
\end{align*}
for $i \geq 0$, where the parameters satisfy
\begin{align*}
\eta_{-} &> \max\{\theta_{\nudrift},\theta_{\nudiffusion}\},\\
\eta_{+} &=
\begin{cases}
-\varepsilon, & \text{if } \theta_{\nudiffusion} < \frac{1}{2},\\
\theta_{\nudiffusion} - \frac{1}{2} + \delta, & \text{if } \theta_{\nudiffusion} > \frac{1}{2},
\end{cases}\\
\eta_{+} &< \eta_{\sim} < \eta_{-},
\end{align*}
for suitable $\varepsilon, \delta > 0$.
\end{definition}

These weights induce a hierarchy of spaces with continuous embeddings
\[
\Wplustwodual(\R_{+}) \hookrightarrow \Wmidtwodual(\R_{+}) \hookrightarrow \Wminustwodual(\R_{+}).
\]

This structure reflects the trade-off between integrability and regularity and will be essential for handling the deterministic and stochastic terms of the lifted equation.

\subsubsection{Interplay between Kernel Singularity, Measures, and Weights: An Algorithmic Perspective}

A central structural principle underlying our framework is the precise relationship between 
\begin{itemize}
    \item the singularity of the Volterra kernel,
    \item the decay properties of the associated Laplace measure $\nu$, and
    \item the choice of weights in the Sobolev spaces.
\end{itemize}
We make this relationship explicit and formulate it as an algorithmic procedure for constructing admissible weights.

\textbf{Step 1: Kernel and Laplace representation.}
Let $k : \mathbb{R}_+ \to \mathbb{R}_+$ be completely monotone. Then there exists a Radon measure $\nu$ such that
\[
k(t) = \int_0^\infty e^{-xt} \, \nu(dx).
\]
The behavior of $k(t)$ as $t \to 0$ is encoded in the behavior of $\nu$ as $x \to \infty$.

\textbf{Step 2: Definition of $\theta_\nu$.}
We quantify the decay of the measure $\nu$ via a parameter $\theta_\nu \in \mathbb{R}$ such that
\[
\int_0^\infty \frac{1}{(1+x)^{\theta_\nu}} \, \nu(dx) < \infty.
\]
This condition characterizes how heavy the tails of $\nu$ are. In particular, stronger singularities of the kernel correspond to larger values of $\theta_\nu$.

\textbf{Step 3: Duality requirement.}
In the lifted formulation, the measure $\nu$ must define a bounded linear functional on a weighted Sobolev space. A sufficient condition is
\[
\int_0^\infty \frac{1}{w(x)} \, \nu(dx) < \infty,
\]
which ensures that $\nu \in (W^{1,2}_w)'$. Thus, the weight $w$ must dominate the decay encoded by $\theta_\nu$.

\textbf{Step 4: Matching decay and weights.}
Assume that $\theta_\nu$ is chosen such that
\[
\int_0^\infty \frac{1}{(1+x)^{\theta_\nu}} \, \nu(dx) < \infty.
\]
We then select weights of the form
\[
w_i(x) = (1 + x)^{2\eta - 1 + 2i}, \quad i \geq 0,
\]
where $\eta$ is a parameter to be determined. The duality condition is satisfied provided
\[
\eta > \theta_\nu.
\]
Thus, the growth of the weight must strictly exceed the decay exponent of the measure.

\textbf{Step 5: Choice of weight parameters.}
Given the parameter $\eta > \theta_\nu$, we then define intermediate parameters $\eta_{(0)}$ and $\eta_+$ such that
\[
\eta_+ < \eta_{(0)} < \eta_-,
\]
with $\eta_+$ chosen according to the specific integrability requirements of the stochastic term (cf.\ Section 3.5).

This yields three weight families
\[
(w_-)_i(x) = (1 + x)^{2\eta_- - 1 + 2i}, \quad
(w_{(0)})_i(x) = (1 + x)^{2\eta_{(0)} - 1 + 2i}, \quad
(w_+)_i(x) = (1 + x)^{2\eta_+ - 1 + 2i}.
\]

\textbf{Step 6: Functional-analytic structure.}
The above choice induces a hierarchy of spaces
\[
W^{-2,2}_{1/w_+}(\mathbb{R}_+) \hookrightarrow W^{-2,2}_{1/w_{(0)}}(\mathbb{R}_+) \hookrightarrow W^{-2,2}_{1/w_-}(\mathbb{R}_+),
\]
which reflects a balance between regularity and integrability. The strongest weight $w_-$ ensures that the measures $\nu_b, \nu_\sigma$ belong to the corresponding dual space, while the weaker weights allow for the treatment of the semigroup and stochastic terms.

\textbf{Example (power-law kernel).}
Consider a kernel with singularity of the form
\[
k(t) \sim t^{-\alpha}, \quad \alpha \in (0,1).
\]
The associated measure satisfies $\nu(dx) \sim x^{\alpha - 1} dx$ for large $x$, and one may take any $\theta_\nu > \alpha$ (note that $\theta_\nu = \alpha$ exactly is not admissible: the defining integral $\int_0^\infty (1+x)^{-\theta_\nu}\nu(dx)$ reduces to $\int^\infty x^{-1}\,dx$ for large $x$ when $\theta_\nu=\alpha$, which diverges logarithmically). The admissibility condition then becomes
\[
\eta > \alpha.
\]
Accordingly, we choose
\[
w_i(x) = (1 + x)^{2\eta - 1 + 2i}, \quad \eta > \alpha,
\]
which ensures that $\nu \in W^{-1,2}_{1/w}$.

\textbf{Summary (algorithmic recipe).}
Given a completely monotone kernel $k$, the construction of weights proceeds as follows:
\begin{enumerate}
    \item Determine $\theta_\nu$ such that $\int (1+x)^{-\theta_\nu} \nu(dx) < \infty$.
    \item Choose $\eta_- > \max\{\theta_{\nu_b}, \theta_{\nu_\sigma}\}$.
    \item Select $\eta_+$ and $\eta_{(0)}$ such that $\eta_+ < \eta_{(0)} < \eta_-$.
    \item Define weights via $w_i(x) = (1 + x)^{2\eta - 1 + 2i}$.
    \item Use the induced hierarchy of spaces to control the different terms of the equation.
\end{enumerate}

In this way, the choice of Sobolev weights is entirely dictated by the decay properties of the measures associated with the kernel, and hence by the singularity structure of the original Volterra equation.

\begin{remark}
    The lifted equation will actually contain matrices of measures. To alleviate the notation, we will interpret conditions on $\nudrift$ and $\nudiffusion$ component-wise, i.e.\ for every $\nudrift_{ij}$ and $\nudiffusion_{ij}$.
\end{remark}

\subsection{Operators and Semigroups}\label{sec:operator_and_semigroup}

Let $x\in \R_{+}$ and $\mu$ be a Radon-measure on $\R_{+}$. A central role in the lifted equation is played by the operator
\[
A\mu := -x \mu.
\]
We interpret $A$ as an (unbounded) operator on the dual space $W^{-m,2}_{1/w}$.

The choice of weights ensures that $A$ is well-defined on a suitable domain and generates a strongly continuous semigroup given formally by
\[
(e^{tA}\mu)(x) = e^{-xt}\mu(x).
\]

This semigroup corresponds to the deterministic part of the lifted equation and encodes the decay of memory over time.

\begin{lemma}\label{lem:semigroup_and_adjoint_semigroup}
    Let $1\leq p <\infty$. The family $(S_{t})_{t}\coloneqq (e^{- t \cdot})_{t}$ of linear operators, where $e^{- t \cdot}\colon f\mapsto (x\mapsto f(x)e^{- t x})$ is
    \begin{enumerate}
        \item  a strongly continuous contraction semigroup on $W^{0,p}_{\weight}$, a strongly continuous semigroup on $W^{m,p}_{\weight}$ ($m\geq 0$). It has a densely defined generator, given by
          \begin{align*}
        &(A f)(x)\coloneqq -x f(x), \quad x\geq 0,\\
        &\D(A)\coloneqq \left\{f\in W^{m,p}_{\weight} \colon \|-xf\|_{W^{m,p}_{\weight}}<\infty\right\}.
    \end{align*}
        \item The adjoint semigroup $(S^{*}_{t})_{t}$ is a strongly continuous semigroup on $W^{-m,q}_{\dualweight}$,
    \end{enumerate}
\end{lemma}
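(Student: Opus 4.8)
The plan is to verify the semigroup properties directly from the explicit form $S_t f(x) = e^{-tx} f(x)$, working separately on the weighted Lebesgue spaces $W^{0,p}_w$ and then bootstrapping to the Sobolev scale $W^{m,p}_w$, and finally dualizing.

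\textbf{Step 1: Semigroup and contraction on $W^{0,p}_w = L^p_w$.} First I would check the algebraic semigroup law: $S_0 = \mathrm{id}$ and $S_{t+s} f(x) = e^{-(t+s)x}f(x) = e^{-tx}(e^{-sx}f(x)) = S_t S_s f(x)$, which is immediate. For contractivity on $L^p_w$, since $0 \le e^{-tx} \le 1$ for $x \ge 0$ and $t \ge 0$, we get $\|S_t f\|_{L^p_w}^p = \int_0^\infty |f(x)|^p e^{-ptx} w_0(x)\dx \le \int_0^\infty |f(x)|^p w_0(x)\dx = \|f\|_{L^p_w}^p$. For strong continuity, $\|S_t f - f\|_{L^p_w}^p = \int_0^\infty |f(x)|^p |e^{-tx}-1|^p w_0(x)\dx \to 0$ as $t \downarrow 0$ by dominated convergence: the integrand is pointwise $\to 0$ and dominated by $|f(x)|^p w_0(x) \in L^1$ (using $|e^{-tx}-1| \le 1$). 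Here one should note $C_0^\infty(\R_+)$ is dense, but strong continuity on all of $L^p_w$ follows directly from the above, so density is not even needed for this part.

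\textbf{Step 2: The generator.} I would identify the generator by a direct computation: for $f$ with $-xf \in L^p_w$, we have $\frac{1}{t}(S_t f - f)(x) = f(x)\frac{e^{-tx}-1}{t} \to -xf(x)$ pointwise, and $|f(x)\frac{e^{-tx}-1}{t}| \le x|f(x)|$ (since $|e^{-tx}-1| \le tx$ for $x,t \ge 0$), so dominated convergence gives convergence in $L^p_w$, hence $f \in \D(A)$ with $Af = -xf$. The reverse inclusion is the standard fact that for a $C_0$-semigroup of multiplication operators, the generator is exactly multiplication by the symbol on its maximal domain; alternatively one shows $\D(A)$ as defined is the resolvent image $R(\lambda,A)L^p_w$ for $\lambda > 0$, where $R(\lambda,A)g(x) = (\lambda+x)^{-1}g(x)$, which is bounded on $L^p_w$. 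Density of $\D(A)$ follows from it containing $C_0^\infty(\R_+)$, or from the general $C_0$-semigroup theory (Hille--Yosida).

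\textbf{Step 3: The Sobolev scale $W^{m,p}_w$.} The operators $S_t$ commute with differentiation only up to lower-order terms: $D(e^{-tx}f(x)) = e^{-tx}(Df - tf)$, and more generally $D^j(e^{-tx}f) = e^{-tx}\sum_{i=0}^j \binom{j}{i}(-t)^{j-i}D^i f$. Hence $\|S_t f\|_{W^{m,p}_w}^p = \sum_{j=0}^m \int_0^\infty e^{-ptx}\big|\sum_{i=0}^j \binom{j}{i}(-t)^{j-i}D^i f\big|^p w_j \dx$. Using $e^{-tx} \le 1$, the triangle inequality, and $t^{j-i} \le \max(1,T^m)$ for $t \in [0,T]$, this is bounded by $C_T \|f\|_{W^{m,p}_w}^p$, so $(S_t)_{t\in[0,T]}$ is a (not necessarily contractive) semigroup on $W^{m,p}_w$; on $[0,\infty)$ one gets exponential-type bounds. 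Strong continuity follows the same way: $S_t f - f = (e^{-tx}-1)f$ at the top order plus terms carrying an explicit factor $t^{j-i}$ with $j > i$ in the lower orders; the first type goes to $0$ by the Step 1 argument applied in each $W^{0,p}_{w_j}$ slot, and the second type is $O(t)\|f\|_{W^{m,p}_w} \to 0$. Again $\D(A) = \{f \in W^{m,p}_w : -xf \in W^{m,p}_w\}$ is dense since it contains $C_0^\infty(\R_+)$, and the generator computation from Step 2 carries over verbatim (the extra terms $D^j(e^{-tx}f) - D^j f$ for the difference quotient again split into a dominated-convergence piece and an $O(t)$ piece).

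\textbf{Step 4: The adjoint semigroup.} Finally, by the duality pairing fixed in the preliminaries, $W^{-m,q}_{1/w}$ is (identified with) the dual of $W^{m,p}_w$, and $S_t^* \nu$ acts by $\langle S_t^* \nu, \varphi\rangle = \langle \nu, S_t \varphi\rangle = \langle \nu, e^{-tx}\varphi\rangle$, consistent with the dual-sense multiplication convention in the preceding remark. Since $W^{m,p}_w$ is reflexive (it is uniformly convex / reflexive under Assumption~\ref{A:assumption_weight_function} with $1 < p < \infty$, as recorded earlier), the adjoint of a $C_0$-semigroup is again a $C_0$-semigroup on the dual space --- this is a standard theorem. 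The boundedness estimates transpose directly: $\|S_t^*\|_{\mathcal{L}(W^{-m,q}_{1/w})} = \|S_t\|_{\mathcal{L}(W^{m,p}_w)} \le C_T$. I would invoke reflexivity explicitly here, since on non-reflexive spaces the adjoint semigroup is only weak-* continuous in general.

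The main obstacle is really just Step 3: keeping the lower-order commutator terms $D^j(e^{-tx}f) = e^{-tx}\sum_i \binom{j}{i}(-t)^{j-i}D^i f$ under control to see both the semigroup bound and strong continuity on the full Sobolev scale; this is why the semigroup on $W^{m,p}_w$ for $m \ge 1$ is only strongly continuous and not contractive, and one must be a little careful that the constants depend on $T$ but not on $t \in [0,T]$. Everything else is dominated convergence plus standard $C_0$-semigroup/duality theory.
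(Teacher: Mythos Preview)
Your proposal is correct and follows essentially the same route as the paper: Leibniz expansion of $D^{j}(e^{-tx}f)$ to handle the Sobolev scale, dominated convergence for strong continuity and the generator, and duality for $S_t^*$. The only cosmetic difference is in Step~4: the paper first shows weak-$*$ continuity of $S_t^*$ via $|\langle S_t^*\nu-\nu,f\rangle|=|\langle\nu,(S_t-1)f\rangle|\to 0$ and then invokes \cite[Theorem~5.8]{engel_nagel_2000_semigroups} (weak continuity $\Rightarrow$ strong continuity), whereas you invoke directly the standard fact that the adjoint of a $C_0$-semigroup on a reflexive space is again $C_0$; these are equivalent, and your explicit flag that reflexivity (hence $1<p<\infty$) is what makes this work is a useful clarification that the paper leaves implicit.
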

\begin{proof}
\begin{enumerate}
    \item  Let $m\geq 0$ and $f\in W^{m,p}_{\weight}$, then
     \begin{align*}
        \|S_{t}f-f\|_{W^{m,p}_{\weight}}&=\left(\sum_{j=0}^{m}\int_{\R_{+}}\left|D^{j}e^{-tx}f(x)-D^{j}f(x)\right|^{p}\weight_{j}(x)\dx\right)^{\frac{1}{p}}\\
    &=\left(\sum_{j=0}^{m}\int_{\R_{+}}\left|D^{j}\left(f(x)\left(e^{-tx}-1\right)\right)\right|^{p}\weight_{j}(x)\dx\right)^{\frac{1}{p}}\\
     &=\left(\sum_{j=0}^{m}\int_{\R_{+}}\left|\sum_{i=0}^{j}\begin{pmatrix}
j\\
i
\end{pmatrix}D^{j-i}\left(e^{-tx}-1\right)D^{i}f(x)\right|^{p}\weight_{j}(x)\dx\right)^{\frac{1}{p}}\\
&=\left(\sum_{j=0}^{m}\int_{\R_{+}}\left|\left(e^{-tx}-1\right)D^{j}f(x)+\sum_{i=0}^{j-1}\begin{pmatrix}
j\\
i
\end{pmatrix}(-t)^{j-i}e^{-tx}D^{i}f(x)\right|^{p}\weight_{j}(x)\dx\right)^{\frac{1}{p}}.
    \end{align*}
    By the dominated convergence theorem, we conclude that $\lim_{t\rightarrow 0} \|S_{t}f-f\|_{W^{m,p}_{\weight}} =0$.
    We define $A\colon \D(A)\rightarrow W^{m,p}_{\weight}$ by 
    \begin{align*}
        &(A f)(x)\coloneqq -x f(x), \quad x\geq 0,\\
        &\D(A)\coloneqq \left\{f\in W^{m,p}_{\weight} \colon \|-xf\|_{W^{m,p}_{\weight}}<\infty\right\}.
    \end{align*}
    Note that $\D(A)\supset C_{0}^{\infty}$, which is dense in $W^{m,p}_{\weight}$ by definition.
        \begin{align*}
        \frac{1}{t}&\left\|S_{t}f-f-tAf\right\|_{W^{m,p}_{\weight}}=\frac{1}{t}\left(\sum_{j=0}^{m}\int_{\R_{+}}\left|D^{j}\left(\left(e^{-tx}-1+x\right)f(x)\right)\right|^{p}\weight_{j}(x)\dx\right)^{\frac{1}{p}}\\
    &=\frac{1}{t}\left(\sum_{j=0}^{m}\int_{\R_{+}}\left|\sum_{i=0}^{j}\begin{pmatrix}
j\\
i
\end{pmatrix}D^{j-i}\left(e^{-tx}-1+tx\right)D^{i}f(x)\right|^{p}\weight_{j}(x)\dx\right)^{\frac{1}{p}}\\
&=\frac{1}{t}\left(\sum_{j=0}^{m}\int_{\R_{+}}\left|\left(e^{-tx}-1+tx\right)D^{j}f(x)+j\left(-te^{-tx}+t\right)D^{j-1}f(x)\right.\right.\\
&\phantom{xxxx}+\left.\left.\sum_{i=0}^{j-2}\begin{pmatrix}
j\\
i
\end{pmatrix}(-t)^{j-i}e^{-tx}D^{i}f(x)\right|^{p}\weight_{j}(x)\dx\right)^{\frac{1}{p}}\\
&\leq C\left(\sum_{j=0}^{m}\int_{\R_{+}}\left|\frac{\left(e^{-tx}-1+tx\right)}{t}D^{j}f(x)+j\left(1-e^{-tx}\right)D^{j-1}f(x)\right|^{p}\weight_{j}(x)\dx\right)^{\frac{1}{p}}+ O^{t\rightarrow 0}(t)\\
&\leq C\left(\sum_{j=0}^{m}\int_{\R_{+}}\left|\left|1-\frac{1}{t}\int_{0}^{t}e^{-sx}\ds\right||D^{j}f(x)|+j\left|1-e^{-tx}\right||D^{j-1}f(x)|\right|^{p}\weight_{j}(x)\dx\right)^{\frac{1}{p}}+ O^{t\rightarrow 0}(t),
    \end{align*}
    where $O^{t\rightarrow 0}(t)$ denotes terms of order $t$, as $t\rightarrow 0$.
    By the dominated convergence theorem, the right-hand side vanishes, as $t\rightarrow 0$. The contraction property, when $k=0$, can be seen immediately, since $|e^{-tx}|\leq 1$ for any $0\leq t,x$.
    \item 
     \begin{align*}
        \left|\left\langle S_{t}^{*}\nu-\nu,f\right\rangle\right|=\left|\left\langle \nu,(S_{t}-1)f\right\rangle\right|.
    \end{align*}
    By the same arguments as above the semigroup is weakly continuous, i.e. for every $f\in W^{m,p}_{\weight}$ $\lim_{t\rightarrow 0}  \left|\left\langle S_{t}^{*}\nu-\nu,f\right\rangle\right|=0$. By \cite[Theorem 5.8]{engel_nagel_2000_semigroups}, the semigroup $S^{*}$ is even strongly continuous on $W^{-m,p}_{\dualweight}$.

\end{enumerate}
   
\end{proof}
\begin{remark}
     We can define
 \begin{align*}
     A^{*} \nu \coloneqq \operatorname{weak}^{*}\lim _{t \rightarrow 0} \frac{1}{t}\left(S^{*}_{t}\nu-\nu\right)
 \end{align*}
on the domain
\begin{align*}
    \D\left( A^{*}\right)\coloneqq \left\{\nu \in W^{-m,p}_{\dualweight}\colon \operatorname{weak}^{*}\lim _{t \rightarrow 0} \frac{1}{t}\left(S^{*}_{t}\nu-\nu\right) \text { exists }\right\}.
\end{align*}

$A^{*}$ is a $\operatorname{weak}^{*}$-closed and $\operatorname{weak}^{*}$-densely defined operator and coincides with the adjoint $A^{*}$ of $A$ (see \cite[Definition B.8]{engel_nagel_2000_semigroups}), i.e.,
\begin{align*}
     \D\left( A^{*}\right)\coloneqq \left\{\nu \in W^{-m,p}_{\frac{1}{w}}\colon 
\begin{array}{l}
\text { there exists } \eta \in W^{-m,p}_{\frac{1}{w}} \text { such that } \\
\left\langle f, \eta\right\rangle=\left\langle A f, \nu\right\rangle \text { for all } f \in \D(A)
\end{array}\right\},
\end{align*}
and $A^{*}$ is the adjoint of $A$.
By \cite[Corollary B. 12]{engel_nagel_2000_semigroups} it then follows that $\sigma\left(A^{*}\right)=\sigma(A)$.

Also note that in our case the adjoint semigroup $(S^{*}_{t})_{t}$ coincides with the so-called sun-dual semigroup of $(S_{t})_{t}$ (see \cite[Section 2.6]{engel_nagel_2000_semigroups}).
\end{remark}
From this point onwards, we set $p=2$ and restrict our analysis to the Hilbert-space case. In the next lemma, we want to investigate certain mapping properties of the semigroup $S^{*}$.

\textbf{Estimates for the adjoint semigroup}
\begin{lemma}\label{lem:improvement_semigroup_weighted_spaces}
   Let $0<t$ and let $\eta\in W^{-m,2}_{\frac{1}{w}}$, then $S^{*}\eta \in W^{-m,2}_{\frac{1}{\weighttilde}}$ with $ \left(1+x\right)^{-2\gamma}\weight_{i}(x)\leq \weighttilde_{i}(x)$, for any $\gamma \in [0,1]$.  If $0<t\leq T<\infty$,
   \begin{align*}
       \|S^{*}_{t}\eta\|_{W^{-m,2}_{\frac{1}{\weighttilde}}}\leq C \|\eta\|_{W^{-m,2}_{\frac{1}{w}}}\frac{(1\vee T)^{\gamma}(1+t^{m})}{t^{\gamma}}.
   \end{align*}
\end{lemma}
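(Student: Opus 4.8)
The statement is about a smoothing/regularization property of the adjoint multiplication semigroup $S^{*}_{t}$: applying it gains a factor $(1+x)^{-2\gamma}$ in the weight (which, on the dual side, makes the space larger/weaker), at the cost of a factor $t^{-\gamma}$, plus the usual $t^{-m}$ loss coming from the Sobolev order. The plan is to dualize everything and reduce to an elementary pointwise estimate on the test-function side. Concretely, by definition of the negative-order norms via duality, it suffices to show that for $\varphi$ with $\|\varphi\|_{W^{m,2}_{\widetilde{w}}}=1$ we can bound $|\langle S^{*}_{t}\eta,\varphi\rangle| = |\langle \eta, S_{t}\varphi\rangle| \le \|\eta\|_{W^{-m,2}_{1/w}}\,\|S_{t}\varphi\|_{W^{m,2}_{w}}$, and therefore the whole claim follows from the operator-norm bound
\[
\|S_{t}\varphi\|_{W^{m,2}_{w}} \le C\,\frac{(1\vee T)^{\gamma}(1+t^{m})}{t^{\gamma}}\,\|\varphi\|_{W^{m,2}_{\widetilde{w}}}.
\]

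First I would reduce to the $m=0$ weight component and then handle derivatives. For the zeroth component, $S_{t}\varphi(x) = e^{-tx}\varphi(x)$, and since $\widetilde{w}_{0}(x)\ge (1+x)^{-2\gamma}w_{0}(x)$, i.e. $w_{0}(x)\le (1+x)^{2\gamma}\widetilde{w}_{0}(x)$, one has
\[
\int_{\R_{+}} e^{-2tx}|\varphi(x)|^{2} w_{0}(x)\dx \le \Big(\sup_{x\ge 0} e^{-2tx}(1+x)^{2\gamma}\Big)\int_{\R_{+}}|\varphi(x)|^{2}\widetilde{w}_{0}(x)\dx.
\]
The key elementary lemma is then $\sup_{x\ge 0} e^{-2tx}(1+x)^{2\gamma} \le C\,(1\vee T)^{2\gamma} t^{-2\gamma}$ for $0<t\le T$ and $\gamma\in[0,1]$: split into $x\le 1$ (where the sup is $\le 2^{2\gamma}$, absorbed into $C$, and $t^{-2\gamma}(1\vee T)^{2\gamma}\ge 1$) and $x\ge 1$ (where $e^{-2tx}(1+x)^{2\gamma}\le e^{-2tx}(2x)^{2\gamma}$ and one maximizes $e^{-2tx}x^{2\gamma}$ over $x>0$, giving a constant times $t^{-2\gamma}$, uniformly in $\gamma\in[0,1]$). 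This handles $m=0$ with no $t^{m}$ factor needed.

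For $m\ge 1$ one differentiates: $D^{j}(e^{-tx}\varphi) = \sum_{i=0}^{j}\binom{j}{i}(-t)^{j-i}e^{-tx}D^{i}\varphi$, exactly as in the proof of Lemma \ref{lem:semigroup_and_adjoint_semigroup}. Each term carries $t^{j-i}e^{-tx}$ with $0\le j-i\le m$; bounding $t^{j-i}\le 1\vee t^{m}\le 1+t^{m}$ and applying the same pointwise estimate $e^{-tx}(1+x)^{2\gamma}\le C(1\vee T)^{2\gamma}t^{-2\gamma}$ to pass from the $w_{j}$ weight to the $\widetilde{w}_{j}$ weight (using $\widetilde{w}_{j}(x)\ge (1+x)^{-2\gamma}w_{j}(x)$), and finally using that lower-order derivative norms $\|D^{i}\varphi\|_{L^{2}_{\widetilde{w}_{i}}}$ are all controlled by $\|\varphi\|_{W^{m,2}_{\widetilde{w}}}$, yields the stated bound with the combined factor $\frac{(1\vee T)^{\gamma}(1+t^{m})}{t^{\gamma}}$. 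The only mildly delicate point — and the part I would write carefully — is getting the constant $C$ in the pointwise supremum estimate to be genuinely uniform in $\gamma$ over the compact range $[0,1]$ (so that it does not degenerate as $\gamma\to 0$, where the bound must reduce to the plain contraction/boundedness statement); this is a one-variable calculus check but is the crux of the argument. Everything else is bookkeeping of binomial coefficients and the triangle inequality in the finite sum defining the $W^{m,2}$-norm.
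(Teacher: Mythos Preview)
Your approach is essentially the same as the paper's: dualize, bound $\|S_t\psi\|_{W^{m,2}_{w}}$ via the Leibniz expansion $D^{j}(e^{-tx}\psi)=\sum_{i}\binom{j}{i}(-t)^{j-i}e^{-tx}D^{i}\psi$, extract the factor $(1+t^{m})$ from the powers of $t$, and use a pointwise decay estimate to pass from $w_{j}$ to $\widetilde w_{j}$. The only difference is how the elementary estimate is packaged: the paper writes $e^{-tx}\le(1+tx)^{-\gamma}$ and then $\frac{1}{1+tx}\le\frac{1\vee T}{t}\cdot\frac{1}{1+x}$, whereas you go directly for $\sup_{x\ge0}e^{-2tx}(1+x)^{2\gamma}\le C(1\vee T)^{2\gamma}t^{-2\gamma}$ by splitting $x\le1$ and $x\ge1$; these are equivalent.

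One point to tighten (the paper is equally loose here): after you convert $w_{j}\to\widetilde w_{j}$, the cross terms leave you with $\int|D^{i}\varphi|^{2}\widetilde w_{j}\,dx$ for $i<j$, not $\int|D^{i}\varphi|^{2}\widetilde w_{i}\,dx$, so your last sentence does not literally apply. For the polynomial weights $w_{i}(x)=(1+x)^{\beta+2i}$ that are actually used throughout, the mismatch is harmless because $t^{2(j-i)}w_{j}=\bigl(t(1+x)\bigr)^{2(j-i)}w_{i}$ and $(t(1+x))^{k}e^{-tx}\le C_{k}(1\vee t)^{k}$, which is already absorbed in the $(1+t^{m})$ factor; but you should say this rather than invoke control of $\|D^{i}\varphi\|_{L^{2}_{\widetilde w_{i}}}$ directly.
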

\begin{proof}
\begin{align*}
     \|S^{*}_{t}\eta\|_{W^{-m,2}_{\frac{1}{\weighttilde}}}&=\sup_{\psi\colon \|\psi\|_{W^{m,2}_{\weighttilde}}=1}|\langle \eta,S_{t}\psi\rangle|\\
     &\leq \|\eta\|_{W^{-m,2}_{\frac{1}{w}}}\sup_{\psi\colon \|\psi\|_{W^{m,2}_{\weighttilde}}=1}\sqrt{ \sum_{j=0}^{m}\int_{\R_{+}}|D^{j}(e^{-tx}\psi(x))|^{2}\weight_{j}(x)\dx}\\
     &\leq \|\eta\|_{W^{-m,2}_{\frac{1}{w}}}\sup_{\psi\colon \|\psi\|_{W^{m,2}_{\weighttilde}}=1}\sqrt{\int_{\R_{+}} \sum_{j=0}^{m}\left|\sum_{i=1}^{j}\begin{pmatrix}
j\\
i
\end{pmatrix}(-t)^{j-i}e^{-tx}D^{i}\psi(x)\right|^{2}\weight_{j}(x)\dx}\\
&\leq C \|\eta\|_{W^{-m,2}_{\frac{1}{w}}}\sup_{\psi\colon \|\psi\|_{W^{m,2}_{\weighttilde}}=1}\sqrt{\int_{\R_{+}} (1+t^{m})^{2}\sum_{j=0}^{m}\int_{\R_{+}}|D^{j}\psi(x)|^{2}\left(\frac{1}{1+tx}\right)^{2\gamma}\weight_{j}(x)\dx},
\end{align*}
for any $\gamma\in [0,1]$. Hence,
\begin{align*}
    \|S^{*}_{t}\eta\|_{W^{-m,2}_{\frac{1}{\weighttilde}}}
&\leq C \|\eta\|_{W^{-m,2}_{\frac{1}{w}}}(1+t^{m})\\
&\phantom{xxxx}\times\sup_{\psi\colon \|\psi\|_{W^{m,2}_{\weighttilde}}=1}\sqrt{\int_{\R_{+}} \sum_{j=0}^{m}\int_{\R_{+}}|D^{j}\psi(x)|^{2}\frac{1}{t^{2\gamma}}\left(\frac{(1\vee T)}{1+x}\right)^{2\gamma}\weight_{j}(x)\dx}\\
&\leq C \|\eta\|_{W^{-m,2}_{\frac{1}{w}}}\frac{(1+t^{m})(1\vee T)^{\gamma}}{ t^{\gamma}}\\
&\phantom{xxxx}\times\sup_{\psi\colon \|\psi\|_{W^{m,2}_{\weighttilde}}=1}\sqrt{\int_{\R_{+}} \sum_{j=0}^{m}\int_{\R_{+}}|D^{j}\psi(x)|^{2}\left(\frac{1}{1+x}\right)^{2\gamma}\weight_{j}(x)\dx}\\
&\leq C \|\eta\|_{W^{-m,2}_{\frac{1}{w}}}\frac{(1+t^{m})(1\vee T)^{\gamma}}{ t^{\gamma}}\sup_{\psi\colon \|\psi\|_{W^{m,2}_{\weighttilde}}=1}\|\psi\|_{W^{m,2}_{\weighttilde}},
\end{align*}
where $\left(\frac{1}{1+x}\right)^{2\gamma}\weight(x)\leq \weighttilde$.
\end{proof}
    We will require one additional Lemma related to time differences, which will be important in a later section.
    \begin{lemma}\label{lem:time_difference_semigroup_regularity_weighted_space}
    Let $\eta \in W^{-m,p}_{\weightinverse}$, $\gamma\in [0,1]$ and $\weighttilde$ such that $(1+x)^{2\gamma }\weight_{i}(x)\leq \weighttilde_{i}(x)$, for every $i \geq 0$. 
    If $|t-s|\leq 1$ then
     \begin{align*}
      \|S^{*}_{t-s}S^{*}_{s}\eta-S^{*}_{s}\eta\|_{W^{-m,2}_{\weighttildeinverse}}\leq C_{\gamma}\|\eta\|_{W^{-m,2}_{\weightinverse}}(1\vee s)^{m }(t-s)^{\gamma },
  \end{align*}
   as well as
      \begin{align*}
      \|S^{*}_{t-s}\eta-\eta\|_{W^{-m,2}_{\weighttildeinverse}}\leq C_{\gamma}\|\eta\|_{W^{-m,2}_{\weightinverse}}(t-s)^{\gamma}.
  \end{align*}
  If $|t-s|> 1$
      \begin{align*}
      \|S^{*}_{t-s}S^{*}_{s}\eta-S^{*}_{s}\eta\|_{W^{-m,2}_{\weighttildeinverse}}\leq C\|\eta\|_{W^{-m,2}_{\weightinverse}}(1\vee s)^{m}(t-s)^{m}.
  \end{align*}

\end{lemma}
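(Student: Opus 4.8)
The plan is to pass to the predual, where $S^{*}_{t}$ is the adjoint of the multiplication operator $S_{t}\colon\psi\mapsto e^{-tx}\psi$, and to reduce the three estimates to one elementary pointwise bound. First, by the definition of the dual norm together with $\langle S^{*}_{t}\eta,\psi\rangle=\langle\eta,S_{t}\psi\rangle$ and $S_{t-s}S_{s}=S_{t}$, writing $h\coloneqq t-s$ one gets
\[
\|S^{*}_{t-s}S^{*}_{s}\eta-S^{*}_{s}\eta\|_{W^{-m,2}_{\weighttildeinverse}}\le\|\eta\|_{W^{-m,2}_{\weightinverse}}\sup_{\|\psi\|_{W^{m,2}_{\weighttilde}}=1}\|(S_{h}-I)S_{s}\psi\|_{W^{m,2}_{\weight}},
\]
and likewise $\|S^{*}_{t-s}\eta-\eta\|_{W^{-m,2}_{\weighttildeinverse}}\le\|\eta\|_{W^{-m,2}_{\weightinverse}}\sup\|(S_{h}-I)\psi\|_{W^{m,2}_{\weight}}$ (the case $s=0$). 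So it suffices to prove the \emph{primal} bounds $\|(S_{h}-I)S_{s}\psi\|_{W^{m,2}_{\weight}}\le C_{\gamma}(1\vee s)^{m}h^{\gamma}\|\psi\|_{W^{m,2}_{\weighttilde}}$ for $0<h\le 1$ (with its $s=0$ specialisation), and $\le C(1\vee s)^{m}h^{m}\|\psi\|_{W^{m,2}_{\weighttilde}}$ for $h>1$. The analytic core is the estimate, valid for $\gamma\in[0,1]$, $0<h\le1$, $l\ge0$,
\[
|D^{l}_{x}(e^{-hx}-1)|\le C_{l}\,h^{\gamma}(1+x)^{\gamma-l},
\]
which for $l=0$ is $|1-e^{-hx}|\le(hx)^{\gamma}\le h^{\gamma}(1+x)^{\gamma}$, and for $l\ge1$ follows from $D^{l}(e^{-hx}-1)=(-h)^{l}e^{-hx}$, the boundedness of $u\mapsto(1+u)^{l-\gamma}e^{-u}$, and $1+hx\ge h(1+x)$; for the factor $e^{-sx}$ one uses $|D^{l}e^{-sx}|=s^{l}e^{-sx}$ together with $\sup_{x\ge0}e^{-2\alpha x}(1+x)^{a}\le C_{a}(1\vee\alpha^{-1})^{a}$ for $\alpha>0,\ a\ge0$ (note $\alpha(1\vee\alpha^{-1})=1\vee\alpha$).

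Next I would expand $D^{j}[(S_{h}-I)S_{s}\psi]=D^{j}[e^{-sx}(e^{-hx}-1)\psi]$ by the Leibniz rule into terms $D^{l}e^{-sx}\cdot D^{l'}(e^{-hx}-1)\cdot D^{i}\psi$ with $l+l'+i=j$, apply the pointwise bounds, square and integrate against $\weight_{j}$, obtaining a finite sum of terms controlled by $h^{2\gamma}s^{2l}\int e^{-2sx}(1+x)^{2(\gamma-l')}\weight_{j}(x)|D^{i}\psi(x)|^{2}\dx$. Using the hypothesis $(1+x)^{2\gamma}\weight_{i}\le\weighttilde_{i}$ and the level-to-level comparability $\weight_{j}(x)\le C(1+x)^{2(j-i)}\weight_{i}(x)$ for $i\le j$ (an equality for the weights of Definition \ref{def:weights_triple_for_analysis}), one dominates $(1+x)^{2(\gamma-l')}\weight_{j}$ by $C(1+x)^{2(j-i-l')}\weighttilde_{i}$; since $l'\le j-i$ the exponent $2(j-i-l')$ is nonnegative, so the surplus is absorbed by $e^{-2sx}$ via the $\sup$-bound, and the $s$-powers recombine (via $s(1\vee s^{-1})=1\vee s$) into $(1\vee s)^{2m}$. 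Summing over $j$ gives the first two primal bounds, hence the first two assertions. For $h>1$ the Hölder kernel bound is unavailable, so I would instead use the crude $|D^{l}(e^{-(h+s)x}-e^{-sx})|\le(h+s)^{l}e^{-(h+s)x}+s^{l}e^{-sx}$ and run the same Leibniz/weight argument, invoking $\sup_{x}e^{-2\alpha x}(1+x)^{a}\le C_{a}(1\vee\alpha^{-1})^{a}$ with $\alpha=h+s$ and $\alpha=s$, and $1\vee(h+s)\le2h(1\vee s)$ for $h>1$; this yields the third primal bound.

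The step I expect to be the main obstacle is precisely this weight bookkeeping: one has to check that in \emph{every} Leibniz term the polynomial weight left over after invoking $(1+x)^{2\gamma}\weight_{i}\le\weighttilde_{i}$ carries a nonnegative power of $1+x$ — so that it is genuinely killed by the exponential rather than creating a negative power that blows up at the origin — and that the powers of $s$ produced by differentiating $e^{-sx}$ recombine with the $\sup$-bound to give exactly $(1\vee s)^{m}$ and no worse. This is where the structure of the weights (each derivative level's weight growing by at most a factor $(1+x)^{2}$) is used; for weights satisfying only Assumption \ref{A:assumption_weight_function} the stated rates should not be expected. The remaining steps are routine.
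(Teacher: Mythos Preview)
Your approach is the same as the paper's: pass to the predual by duality, expand $D^{j}[e^{-sx}(e^{-hx}-1)\psi]$ by Leibniz, and feed in pointwise bounds on the derivatives of $e^{-sx}$ and $e^{-hx}-1$. The paper splits the latter into the cases $l'=0$ (where it uses $|e^{-hx}-1|\le C_{\gamma}h^{\gamma}(1+x)^{\gamma}$) and $l'\ge1$ (where it uses the cruder $|D^{l'}(e^{-hx}-1)|\le h^{l'}$), whereas you package both into the single sharper estimate $|D^{l'}(e^{-hx}-1)|\le C_{l'}h^{\gamma}(1+x)^{\gamma-l'}$; this is a cosmetic difference. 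You are more explicit than the paper about the weight bookkeeping: the paper's proof silently replaces $w_{j}$ by $w_{j_{3}}$ in the step leading to its displayed bound, which is exactly your level-to-level comparability $w_{j}(x)\le C(1+x)^{2(j-i)}w_{i}(x)$, and you are right that this is what makes the argument go through (and that it holds with equality for the weights of Definition~\ref{def:weights_triple_for_analysis}). For $h>1$ the paper simply says ``perform the same steps, but with $|t-s|^{m}$''; your two-term decomposition $D^{l}(e^{-(h+s)x}-e^{-sx})$ with $\alpha=h+s$ and $\alpha=s$ is a clean way to execute that sentence.
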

\begin{proof}
As in the previous lemma,
  \begin{align*}
        &\|\varphi\|_{W^{m,2}_{\frac{1}{w}}}=\left(\sum_{i=0}^{m}\int_{\R_{+}} |\partial_{x}^{i}(e^{-sx}(e^{-(t-s)x}-1)\varphi)|^{p}\weight_{i}(x)\dx\right)^{\frac{1}{p}}\\
        &=\left(\sum_{i=0}^{m}\int_{\R_{+}} \left|\sum_{j_{1}+j_{2}+j_{3}=i}\begin{pmatrix}
            i\\
            j_{1},j_{2},j_{3}
        \end{pmatrix}\partial_{x}^{j_{1}}(e^{-sx})\partial_{x}^{j_{2}}(e^{-(t-s)x}-1) \partial_{x}^{j_{3}}\varphi\right|^{2}\weight_{i}(x)\dx\right)^{\frac{1}{2}}.
    \end{align*}
    For $j\in \N$,
    \begin{align*}
        &\partial_{x}^{j}(e^{-sx})=(-1)^{j}s^{j}e^{-sx}\\
        &\partial_{x}^{j}(e^{-(t-s)x}-1)=(-1)^{j}(t-s)^{j}e^{-(t-s)x}.
    \end{align*}
    Also,
    \begin{align*}
        |\partial_{x}^{j_{1}}(e^{-sx})\partial_{x}^{j_{2}}(e^{-(t-s)x}-1)|&=\left|(-1)^{j_{1}}s^{j_{1}}e^{-sx}(-1)^{j_{2}}(t-s)^{j_{2}}e^{-(t-s)x}\right|\\
        &\leq \left|s^{j_{1}}(t-s)^{j_{2}}\right|.
    \end{align*}
    If $j_{1}\in \N_{0}, j_{2}=0$,
        \begin{align*}
        |\partial_{x}^{j_{1}}(e^{-sx})(e^{-(t-s)x}-1)|&\leq C_{\gamma}\left|s^{j_{1}}(t-s)^{\gamma}(1+x)^{\gamma}\right|,
    \end{align*}
    for any $0\leq \gamma \leq 1$. Hence, if $|t-s|\leq 1$,
    \begin{align*}
        &\left(\sum_{i=0}^{m}\int_{\R_{+}} \left|\sum_{j_{1}+j_{2}+j_{3}=i}\begin{pmatrix}
            i\\
            j_{1},j_{2},j_{3}
        \end{pmatrix}\partial_{x}^{j_{1}}(e^{-sx})\partial_{x}^{j_{2}}(e^{-(t-s)x}-1) \partial_{x}^{j_{3}}\varphi\right|^{2}\weight_{i}(x)\dx\right)^{\frac{1}{2}}\\
        &\leq C_{\gamma}\left(\sum_{i=0}^{m}\int_{\R_{+}} (1\vee s)^{2i}(t-s)^{2\gamma }(1+x)^{2\gamma }\left|\partial_{x}^{i}\varphi\right|^{2}\weight_{i}(x)\dx\right)^{\frac{1}{2}}\\
        &\leq C_{\gamma}(1\vee s)^{m}(t-s)^{\gamma }\left(\sum_{i=0}^{m}\int_{\R_{+}} \left|\partial_{x}^{i}\varphi\right|^{2}(1+x)^{2\gamma }\weight_{i}(x)\dx\right)^{\frac{1}{2}}
    \end{align*}
To estimate $\|S^{*}_{t-s}S^{*}_{s}\eta-S^{*}_{s}\eta\|_{W^{-m,2}_{\weighttildeinverse}}$, we use that $\eta \in W^{-m,2}_{\weightinverse}$ and the previous estimates,
    \begin{align*}
        &\|S^{*}_{t-s}S^{*}_{s}\eta-S^{*}_{s}\eta\|_{W^{-m,2}_{\weighttildeinverse}}=\sup_{ \|\varphi\|_{W^{m,p}_{\weighttilde}}=1}\left|\left\langle \eta ,S_{s}(S_{t-s}-1)\varphi\right\rangle\right|\\
        &\leq \sup_{ \|\varphi\|_{W^{m,2}_{\weighttilde}}=1} \|\eta\|_{W^{-m,2}_{\weightinverse}}\|S_{s}(S_{t-s}-1)\varphi\|_{W^{m,2}_{\weight}}\\
        &\leq C_{\gamma}\sup_{ \|\varphi\|_{W^{m,2}_{\weighttilde}}=1} \|\eta\|_{W^{-m,2}_{\weightinverse}}(1\vee s)^{m }(t-s)^{\gamma }\left(\sum_{i=0}^{m}\int_{\R_{+}} \left|\partial_{x}^{i}\varphi\right|^{2}(1+x)^{2\gamma }\weight_{i}(x)\dx\right)^{\frac{1}{2}}.
    \end{align*}
  Hence, if $(1+x)^{2\gamma }\weight_{i}(x)\leq \weighttilde_{i}(x)$, for every $i \geq 0$, we obtain
  \begin{align*}
      \|S^{*}_{t-s}S^{*}_{s}\eta-S^{*}_{s}\eta\|_{W^{-m,2}_{\weighttildeinverse}}\leq C_{\gamma}\|\eta\|_{W^{-m,2}_{\weightinverse}}(1\vee s)^{m }(t-s)^{\gamma }.
  \end{align*}
  The second inequality follows similarly:
   \begin{align*}
        &\|S_{t-s}\varphi-\varphi\|_{W^{m,2}_{\frac{1}{w}}}=\left(\sum_{i=0}^{m}\int_{\R_{+}} |\partial_{x}^{i}((e^{-(t-s)x}-1)\varphi)|^{2}\weight_{i}(x)\dx\right)^{\frac{1}{2}}\\
        &=\left(\sum_{i=0}^{m}\int_{\R_{+}} \left|\sum_{j_{1}+j_{2}=i}\begin{pmatrix}
            i\\
            j_{1},j_{2}
        \end{pmatrix}\partial_{x}^{j_{1}}(e^{-(t-s)x}-1) \partial_{x}^{j_{2}}\varphi\right|^{2}\weight_{i}(x)\dx\right)^{\frac{1}{2}}.
    \end{align*}
    For $j\in \N$,
    \begin{align*}
        |\partial_{x}^{j}(e^{-(t-s)x}-1)|&=\left|(-1)^{j}(t-s)^{j}e^{-(t-s)x}\right|\leq \left|(t-s)^{j_{2}}\right|.
    \end{align*}
   Hence, if $|t-s|\leq 1$,
    \begin{align*}
        &\left(\sum_{i=0}^{m}\int_{\R_{+}} \left|\sum_{j_{1}+j_{2}=i}\begin{pmatrix}
            i\\
            j_{1},j_{2}
        \end{pmatrix}\partial_{x}^{j_{1}}(e^{-(t-s)x}-1) \partial_{x}^{j_{2}}\varphi\right|^{2}\weight_{i}(x)\dx\right)^{\frac{1}{2}}\\
        &\leq C_{\gamma}\left(\sum_{i=0}^{m}\int_{\R_{+}} (t-s)^{2\gamma }(1+x)^{2\gamma }\left|\partial_{x}^{i}\varphi\right|^{2}\weight_{i}(x)\dx\right)^{\frac{1}{2}}\\
        &\leq C_{\gamma}(t-s)^{\gamma }\left(\sum_{i=0}^{m}\int_{\R_{+}} \left|\partial_{x}^{i}\varphi\right|^{2}(1+x)^{2\gamma }\weight_{i}(x)\dx\right)^{\frac{1}{2}}.
    \end{align*}
     If $|t-s|>1$, we can perform the same steps, but with $|t-s|^{k}$.
\end{proof}
Considering the previous estimates and embeddings, the following lemma might seem redundant. 
 However, it ties in more directly with Assumption \ref{A:A_3:Narrower_assumption_nu} and gives more insight into how the weights in our spaces can be chosen, if we require $\|S^{*}_{t}\nu\|_{W^{-1,2}_{\frac{1}{w}}}\in L^{1}(0,T)$ or $\|S^{*}_{t}\nu\|_{W^{-1,2}_{\frac{1}{w}}}\in L^{2}(0,T)$.
    \begin{lemma}\label{lem:nu_semigroup_properties}
        Let $\nu$ be a non-negative Radon measure on $\R_{+}$, such that there exists a $0\leq \theta_{\nu}$ for which $ \left|\int_{\R_{+}}\frac{1}{(1+x)^{\theta_{\nu}}}\nu(\dx)\right|<\infty$. Let $0<t\leq T<\infty$ and consider the semigroup $S^{*}_{t}$, which was introduced in Lemma \ref{lem:semigroup_and_adjoint_semigroup}. 
Let $\alpha \in \R$ and $0\leq \gamma<1$ satisfy $\theta_{\nu}\leq \gamma-\alpha$. We set $\weight_{i}=(1+x)^{-2\alpha-1+i2}$, then
        \begin{align*}
        \|S^{*}_{t}\nu\|_{W^{-1,2}_{\frac{1}{w}}}\leq C\frac{1\vee T^{\gamma}}{t^{\gamma}}.
        \end{align*}
    \end{lemma}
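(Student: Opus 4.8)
The plan is to estimate the dual norm directly by testing against $\psi\in W^{1,2}_{w}$, splitting $e^{-tx}$ into a factor $(1+x)^{\gamma}e^{-tx}$ that carries all the blow-up as $t\to0$ and a factor $(1+x)^{-\alpha}\psi$ controlled by a weighted Sobolev embedding. By Lemma~\ref{lem:semigroup_and_adjoint_semigroup} and the dual interpretation of the multiplication, for $\psi$ with $\|\psi\|_{W^{1,2}_{w}}=1$ one has $\langle S^{*}_{t}\nu,\psi\rangle=\langle\nu,e^{-tx}\psi\rangle=\int_{\R_{+}}e^{-tx}\psi(x)\,\nu(\dx)$, and this pairing is well-defined because $\psi$ is bounded (indeed in $C_{(1+x)^{-\alpha}}$) by the embeddings of Proposition~\ref{prop:embeddings}. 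Inserting $1=(1+x)^{\theta_{\nu}}(1+x)^{-\theta_{\nu}}$ and pulling out the $L^{\infty}$-norm gives
\begin{align*}
\|S^{*}_{t}\nu\|_{W^{-1,2}_{\frac{1}{w}}}
=\sup_{\|\psi\|_{W^{1,2}_{w}}=1}\left|\int_{\R_{+}}e^{-tx}\psi(x)\,\nu(\dx)\right|
\leq\Big(\textstyle\int_{\R_{+}}(1+x)^{-\theta_{\nu}}\,\nu(\dx)\Big)\ \sup_{\|\psi\|_{W^{1,2}_{w}}=1}\|(1+x)^{\theta_{\nu}}e^{-tx}\psi\|_{L^{\infty}(\R_{+})},
\end{align*}
and the first integral is finite by hypothesis, so it remains to estimate the supremum.

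Since $0\leq\theta_{\nu}\leq\gamma-\alpha$ we have $(1+x)^{\theta_{\nu}}\leq(1+x)^{\gamma-\alpha}=(1+x)^{-\alpha}(1+x)^{\gamma}$ on $\R_{+}$, hence
\begin{align*}
\|(1+x)^{\theta_{\nu}}e^{-tx}\psi\|_{L^{\infty}(\R_{+})}
\leq\Big(\sup_{x\geq0}(1+x)^{\gamma}e^{-tx}\Big)\,\|(1+x)^{-\alpha}\psi\|_{L^{\infty}(\R_{+})}.
\end{align*}
For the first factor I would use the subadditivity $(1+x)^{\gamma}\leq 1+x^{\gamma}$ (valid for $\gamma\in[0,1]$) together with $\sup_{x\geq0}x^{\gamma}e^{-tx}=(\gamma/e)^{\gamma}t^{-\gamma}$, so that $\sup_{x\geq0}(1+x)^{\gamma}e^{-tx}\leq C_{\gamma}(1\vee t^{-\gamma})$; since $t\leq T$ one checks (separately in the cases $t\leq1$ and $t>1$, using $t\leq T$ in the latter) that $1\vee t^{-\gamma}\leq(1\vee T^{\gamma})t^{-\gamma}$, so this factor is bounded by $C_{\gamma}(1\vee T^{\gamma})\,t^{-\gamma}$.

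The remaining ingredient is the $t$-independent embedding estimate $\|(1+x)^{-\alpha}\psi\|_{L^{\infty}(\R_{+})}\leq C\|\psi\|_{W^{1,2}_{w}}$ for $w_{i}(x)=(1+x)^{-2\alpha-1+2i}$; this is precisely the pointwise bound proved inside Lemma~\ref{lem:nu_dual_weighted_space_particular} with the parameter $\theta_{\nu}$ there replaced by $-\alpha\in\R$ (on each annulus $A_{R}$ apply the one-dimensional Sobolev embedding together with the $R$-scaling of Lemma~\ref{lem:R_scaling_equivalence_weight} to get a constant uniform in $R$, while on a fixed ball $B_{K}(0)$ the weights are bounded above and below), or one may simply invoke Corollary~\ref{corr:corollary_embedding}. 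Combining the three estimates and taking the supremum over $\psi$ gives the claim, with $C$ absorbing $\int_{\R_{+}}(1+x)^{-\theta_{\nu}}\nu(\dx)$. I do not anticipate a real obstacle: the only points needing care are keeping the $T$-dependence sharp — this is why I would argue directly rather than apply Lemma~\ref{lem:improvement_semigroup_weighted_spaces} with $m=1$, whose factor $(1+t^{m})$ would produce a worse power of $T$ — and checking that the constant in the weighted embedding is uniform over the annuli $A_{R}$.
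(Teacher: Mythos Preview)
Your proof is correct and follows essentially the same route as the paper: both test against $\psi\in W^{1,2}_{w}$, use the weighted Sobolev embedding $\|(1+x)^{-\alpha}\psi\|_{L^{\infty}}\leq C\|\psi\|_{W^{1,2}_{w}}$ from Lemma~\ref{lem:nu_dual_weighted_space_particular}, extract a factor $t^{-\gamma}$ from $(1+x)^{\gamma}e^{-tx}$, and absorb the rest into the finite integral $\int(1+x)^{-\theta_{\nu}}\nu(\dx)$. The only cosmetic difference is that the paper inserts $(1+x)^{\alpha}(1+x)^{-\alpha}$ and keeps the exponential factor inside the $\nu$-integral (bounding $e^{-tx}\leq t^{-\gamma}(1/t+x)^{-\gamma}$ there), whereas you insert $(1+x)^{\theta_{\nu}}(1+x)^{-\theta_{\nu}}$ and take the supremum of $(1+x)^{\gamma}e^{-tx}$ outside; the ingredients and the resulting constant $(1\vee T^{\gamma})t^{-\gamma}$ are the same.
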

    \begin{remark}
          We reiterated the crucial connection between the singularity of the kernel we lift (given by $1-\theta_{\nu}$), the decay of the corresponding measure (given by $\theta_{\nu}$), the weight we choose (given by $\weight_{i}=(1+x)^{-2\alpha-1+i2}$ with $\alpha = \gamma-\theta_{\nu}$, the boundary case of the Lemma's hypothesis $\theta_\nu \le \gamma-\alpha$) and the time integrability of $\|S^{*}_{t}\nu\|_{W^{-1,2}_{\frac{1}{w}}}$ (indicated by $\gamma$).
    \end{remark}
  
    \begin{proof}
    Using the embedding from Lemma \ref{lem:nu_is_in_dual},
        \begin{align*}
            \|S^{*}_{t}\nu\|_{W^{-1,2}_{\frac{1}{w}}}&=\sup_{\varphi\colon \|\varphi\|_{W^{1,2}_{\weight}}=1}|\langle \varphi,S^{*}_{ t }\nu\rangle|\\
            &=\sup_{\varphi\colon \|\varphi\|_{W^{1,2}_{\weight}}=1}\left|\int_{\R_{+}}e^{-x t }\varphi(x)\nu(\dx)\right|\\
            &\leq \sup_{\varphi\colon \|\varphi\|_{W^{1,2}_{\weight}}=1} \|(1+x)^{-\alpha}\varphi\|_{\infty}\left|\int_{\R_{+}}e^{-x t }(1+x)^{\alpha}\nu(\dx)\right|\\
            &\leq \sup_{\varphi\colon \|\varphi\|_{W^{1,2}_{\weight}}=1} \|(1+x)^{-\alpha}\varphi\|_{\infty}\left|\int_{\R_{+}}e^{-x t (1-\gamma)}\frac{1}{ t ^{\gamma}}\left(\frac{1}{\frac{1}{ t }+x}\right)^{\gamma}(1+x)^{\alpha}\nu(\dx)\right|\\
            &\leq (1\vee t)^{\gamma}\frac{1}{t^{\gamma}}\sup_{\varphi\colon \|\varphi\|_{W^{1,2}_{\weight}}=1} \|\varphi\|_{W^{1,2}_{\weighttilde}}\left|\int_{\R_{+}}e^{-x t (1-\gamma)}\left(\frac{1}{1+x}\right)^{\gamma}(1+x)^{\alpha}\nu(\dx)\right|,
        \end{align*}
        with $\weight_{i}=(1+x)^{-2\alpha-1+i2}$. 
    \end{proof}

\section{Well-Posedness of the Lifted Equation}\label{sec:well_posedness}
\subsection{Existence}\label{sec:existence_for_the_SEE}

Throughout this section the lift $\mu$ takes values in the product space
$\Wplusdualn=(\Wplusdual)^{n_{\textnormal{dim}}}$ (and, in the intermediate estimates, in
$\Wmiddualn$, $\Wmidtwodualn$, $\Wminustwodualn$), carrying the product norm
$\|\mu\|_{\Wplusdualn}^{2}=\sum_{i=1}^{n_{\textnormal{dim}}}\|\mu^{(i)}\|_{\Wplusdual}^{2}$; the
driving noise $W$ is $m_{W}$-dimensional and the coefficient matrices
$\nudrift,\nudiffusion$ act as bounded operators $\R^{n_{\textnormal{dim}}}\to\Wminusdualn$
(Section~\ref{sec:operator_and_semigroup}), so that $S^{*}_{t}\nudrift,S^{*}_{t}\nudiffusion\in\opn$
and $\|S^{*}_{t}\nu^{i}\|_{\opn}$ denotes the corresponding operator (equivalently
finite-dimensional Hilbert--Schmidt) norm. Every estimate below is stated in this product
setting; since $S^{*}_{t}$ acts diagonally and all bounds use only norms and standard
(dimension-independent) inequalities, the scalar case $n_{\textnormal{dim}}=1$ is recovered
verbatim, and the test function $\psi\in\Wplus$ remains scalar so that $\langle\mu,\psi\rangle\in\R^{n_{\textnormal{dim}}}$.

The goal of this section is to prove an existence result for equations of the type \eqref{eqn:SEE_eqn_strong_form}
with $\psi\in W^{1,2}_{\weight}$, $\drift, \diffusion$ being continuous functions in the $t$ and $x$-variable (uniformly in $x$ and $t$ respectively) and $\nu^{i}=\nu^{i}(\dx)$ ($i=\drift, \diffusion$) being a non-negative, (tempered) measures on $\R_{+}$, whose Laplace transforms satisfy a certain relation similar to \eqref{eqn:kernel_representation_measure}.
We first consider $\drift, \diffusion$ to be Lipschitz continuous in $x$ and obtain the existence and uniqueness of solutions in a standard way. In the next step, we consider more general coefficients, which we approximate by Lipschitz functions, and use a tightness result to obtain existence in this general setting.
   \begin{definition}\label{def:mild_solution_SEE}
        Given $\psi\in W^{m,2}_{\weight}$ and $\mu_{0}\in W^{-m,2}_{\dualweight}$. A $W^{-m,2}_{\dualweight}$-valued predictable process $\mu_{t}$, $t\in [0,T]$ is called a mild solution of equation 
      \begin{align}\label{eqn:SEE_eqn_strong_form}
        \mu_{t} = \mu_{0}-\int_{0}^{t}x\mu_{s}\ds+\int_{0}^{t}\nudrift \drift(s,\langle \mu_{s},\psi\rangle)\ds+\int_{0}^{t}\nudiffusion \diffusion(s,\langle \mu_{s},\psi\rangle)\dW_{s},
    \end{align}
 if
        \begin{align}\label{eqn:SEE_eqn_mild}
            \mu_{t} = e^{-xt}\mu_{0} +\int_{0}^{t} e^{-x(t-s)}\nudrift(x)\drift(s,\langle \mu_{s},\psi\rangle)\ds +\int_{0}^{t}e^{-x(t-s)}\nudiffusion(x)\diffusion(s,\langle \mu_{s},\psi\rangle)\dW_{s},
        \end{align}
        $\Prob$-a.s. for each $t\in [0,T]$. If $\mu$ has continuous sample paths, we will call it a continuous, mild solution.
    \end{definition}
\begin{assumption}\label{A:assumptions_general}
Let $L_{T,\drift}, L_{T,\diffusion}, C_{T,\drift}, C_{T,\diffusion}\geq 0$. The functions $\drift\colon \R_{+}\times \R^{n_{\textnormal{dim}}}\rightarrow \R^{n_{\textnormal{dim}}}$ and $\diffusion\colon \R_{+}\times \R^{n_{\textnormal{dim}}}\rightarrow \R^{n_{\textnormal{dim}}\times m_{W}}$ are continuous in both arguments and satisfy the following bounds for any $t\in [0,T]$ (here $|\cdot|$ denotes the Euclidean norm on $\R^{n_{\textnormal{dim}}}$, resp.\ the corresponding operator/Frobenius norm on $\R^{n_{\textnormal{dim}}\times m_{W}}$),
    \begin{enumerate}[label={(Coeff \arabic*)}]
        \item\label{A:assumption_general_linear_growth} Linear growth:
        \begin{align*}
            &|\drift(t,x)|\leq C_{T,\drift}(1+|x|),\qquad            &|\diffusion(t,x)|\leq C_{T\diffusion}(1+|x|).
        \end{align*}
        \item\label{A:assumption_general_lipschitz} Lipschitz continuity: For every $t\in \R_{+}$, $x,y\in \R^{n_{\textnormal{dim}}}$,
        \begin{align*}
            &|\drift(t,x)-\drift(t,y)|\leq L_{T,\drift}|x-y|,\qquad            &|\diffusion(t,x)-\diffusion(t,y)|\leq L_{T,\diffusion}|x-y|.
        \end{align*}
                 \item\label{A:assumption_general_uniformly_continuous} For every $t\in \R_{+}$, $x,y\in \R^{n_{\textnormal{dim}}}$, the maps $x\mapsto \drift(t,x)$ and $x\mapsto \diffusion(t,x)$ are continuous, uniformly in $t$.
    \end{enumerate}
\end{assumption}
\begin{remark}
    There is no issue in considering coefficients which, in addition to $x, t$, also depend on $\omega\in \Omega$, as long as the constants in the assumption are uniform in $\omega$. One can also allow constants $C_{t}$ that exhibit an $L^2([0,T])$-singularity at $0$. This case is a straightforward adaptation of the statements in this paper, however, we refrain from including this case in our arguments to not overload the proofs with additional parameters.
\end{remark}
\begin{notation}
    When we write $C_{\operatorname{Lip}_{\drift}},C_{\operatorname{Lip}_{\diffusion}},C_{\operatorname{LG}_{\drift}},C_{\operatorname{LG}_{\diffusion}}$, we implicitly refer to the constants appearing in the previous assumption, without specifying the $T$ dependence explicitly. If a constant without a second subscript appears, like $C_{\operatorname{Lip}},C_{\operatorname{LG}}$, it is implied that $\max\{C_{\operatorname{Lip}_{\drift}},C_{\operatorname{Lip}_{\diffusion}}\}\leq C_{\operatorname{Lip}}$ and $\max\{C_{\operatorname{LG}_{\drift}},C_{\operatorname{LG}_{\diffusion}}\}\leq C_{\operatorname{LG}}$.
\end{notation}
 We impose the following assumptions on $\nudrift$ and $\nudiffusion$. We will discuss their interpretation and alternative formulations afterward.
\begin{assumption}\label{A:assumption_kernel_measure}
    \begin{enumerate}[label={(M \arabic*)}]
       \item \label{A:A_3:Narrower_assumption_nu} $\nudrift$ and $\nudiffusion$ are ($n_{\textnormal{dim}}\times n_{\textnormal{dim}}$-matrices of) non-negative  measures on $\R_{+}$ and for every $1\leq i,j\leq n_{\textnormal{dim}}$, there exist $0\leq \theta_{\nudrift_{ij}}, \theta_{\nudiffusion_{ij}} < 1$, such that
           \begin{align}\label{eqn:nu_tempered_measure}
              \int_{\R_{+}} \frac{1}{(1+|x|)^{\theta_{\nu^{\drift\backslash \diffusion}_{ij}}}}\nu^{\drift\backslash \diffusion}_{ij}(\dx)<\infty.
           \end{align} 
       \item \label{A:A_2:assumption_1_test_function} 
        We assume that this $\weight$ satisfies Assumption \ref{A:assumption_weight_function}, and the constant $1$ function (the constant vector to be precise) has finite $ W^{1,2}_{\weight}$ norm, i.e. $\int_{\R_{+}}\weight_{0}(x)\dx<\infty$.
    \end{enumerate}
    For the measures from Assumption \ref{A:A_3:Narrower_assumption_nu}, Lemma \ref{lem:nu_semigroup_properties} implies that there exists a weight $w_{ij}$ and constants $0< a^{\drift}_{ij}, a^{\diffusion}_{ij} \leq 1$, such that
           \begin{align}\label{eqn:kernel_estimate_semigroup}
            \|S^{*}_{t-s}\nu^{\drift\backslash \diffusion}_{ij}\|_{W^{-1,2}_{\weightinverse}}\leq \frac{C(T)}{(t-s)^{1-a^{\drift\backslash \diffusion}_{ij}}},
        \end{align}
        where the value of $a^{\drift},a^{ \diffusion}$ are connected to the choice of weight $w$. Hence, Lemma \ref{lem:nu_semigroup_properties} allows us to identify a weight, for which 
        \begin{align}\label{eqn:drift_diffusion_kernel_estimate_semigroup}
             \|S^{*}_{t-s}\nudrift\|_{W^{-1,2}_{\weightinverse}}\in L^{1}(0,T),\qquad \|S^{*}_{t-s}\nudiffusion\|_{W^{-1,2}_{\weightinverse}}\in L^{2}(0,T).
        \end{align}
\end{assumption}
\begin{remark}\label{rem:discussion_of_the_general_assumptions}
    Discussion of the assumptions. 
    \begin{itemize}
           \item          For the most part, it would suffice to impose \eqref{eqn:drift_diffusion_kernel_estimate_semigroup} as a general Assumption and we will use the previous implication extensively.
         \item Assumption \ref{A:A_2:assumption_1_test_function} is not strictly necessary for our solution theory. It only comes into play, when we want to relate an explicit SVE to the solution of the SEE. To ``reconstruct'' the correct kernel, we will want to test the mild solution of \eqref{eqn:SEE_strong_formulation} with the constant $1$ function. To obtain the SVE, the function in the non-local terms needs to coincide with the functions we are testing with, i.e. $\psi=1$. This also reflects that an $L^{2}$-kernel is generally required to make sense of the stochastic integral. Intuitively, being able to set $\psi=1$ correlates with the corresponding SVE being well-posed in the sense of functions.  \eqref{eqn:nu_tempered_measure} with $\frac{1}{2}\leq \theta_{\nudiffusion_{ij}} < 1$ would correspond to a kernel with an $L^{1}$ singularity inside the stochastic integral which could, for $\diffusion=1$, be interpreted as a generalized fractional Gaussian field with Hurst index $<0$ (see \cite{lodhia2016fractional}).
        \item It might seem reasonable to alter \ref{A:A_3:Narrower_assumption_nu} by: There exist $0\leq \theta^{\nu}_{1}, \theta^{\nu}_{2} < 1$, such that $\frac{1}{x^{\theta^{\nu}_{1}}(1+|x|)^{\theta^{\nu}_{2}}}\nu(\dx)$ is a finite measure on $\R_{+}$.
        If we wanted to test \eqref{eqn:SEE_eqn_mild} with functions, that vanish sufficiently fast at $0$, we could consider weights, which are not $L^{1}_{loc}$ around $0$, which would indeed provide better estimates. Since, later on, we want Assumption \ref{A:A_2:assumption_1_test_function} to hold, we will always require a contribution of $\theta^{\nu}_{2}>0$. Such a contribution is obtained via the semigroup and always results in a time dependence of the coefficient appearing in \eqref{eqn:kernel_estimate_semigroup}.
 
    \end{itemize}

\end{remark}

\begin{example}\label{example:kernels}
We illustrate potential choices of weights for common examples of completely monotone kernels. In these examples, we verify the bound \eqref{eqn:kernel_estimate_semigroup} via \eqref{eqn:nu_tempered_measure}. Taking Assumption \ref{A:A_2:assumption_1_test_function} into account, we notice that the higher order weights $w_{j}$ for $j\geq 1$ play no particular role. Hence these can be chosen freely to satisfy certain conditions related to the associated Sobolev spaces.

Let $t<\infty$.
\begin{enumerate}
        \item Let $K_{\exp }$ be a finite combination of exponential functions:
$$
K_{\exp }(t):=\sum_{i=1}^k c_i e^{-y_i t}, t>0,
$$
with $k \in \mathbb{N}, c_i>0$ and $y_i \in[0, \infty)$ for $i=1, \ldots, k$. The corresponding Radon measure is
$$
\nu_{\exp }(\dx)=\sum_{i=1}^k c_i \delta_{y_i}(\dx),
$$
where $\delta_{y_i}$ denotes the Dirac measure at point $y_i$. We set $\theta_{\nu}=0$ corresponding to the unweighted case in this example. Hence, we will allow ourselves to perform the estimates in fractional Sobolev spaces. For estimate \eqref{eqn:kernel_estimate_semigroup}, we obtain
\begin{align*}
\sup_{\|\varphi\|_{W^{1/2+\eps,2}}=1}\left|\int_{0}^{\infty}e^{-(t-s)x}\varphi(x)\nu(\dx)\right|&\leq \sup_{\|\varphi\|_{W^{1/2+\eps,2}}=1}\sum_{i=1}^k c_i e^{-(t-s)y_{i}}\varphi(y_{i})\\
&\leq C\sup_{\|\varphi\|_{W^{1/2+\eps,2}}=1}\|\varphi\|_{L^{\infty}}\left(1+\frac{1}{(1+\min_{i, y_{i}\neq 0}y_{i}(t-s))^{\theta}}\right)\\
&\leq C,
\end{align*}
for any $\theta \in (0,1)$ and $\eps>0$. In this case, \eqref{eqn:kernel_estimate_semigroup} is satisfied with $a=0$.

Note that the additive constant inside the brackets only appears if there is an $y_{i}$, such that $y_{i}=0$. 

In the unweighted case, Assumption \ref{A:A_2:assumption_1_test_function} might become an issue, but since the support of $\nu_{\exp}$ is contained in a ball of radius $R=2\max\{y_{1},\dots,y_{k}\}$, it suffices to study the whole problem on the spaces $W^{\alpha,p}(B_{R}(0))$ ($0\leq \alpha, 1\leq p$).
\item  Let $K_{\text {frac }}$ be the fractional kernel of order $\alpha \in\left(0, 1\right)$ :
$$
K_{\text {frac }}(t):=\frac{1}{\Gamma(\alpha)} t^{\alpha-1}, t>0 .
$$

The corresponding Radon measure is
$$
\nu_{\operatorname{frac}}(x)=\frac{1}{\Gamma(\alpha) \Gamma(1-\alpha)} x^{-\alpha} \dx.
$$
we will ignore the pre-factor $\frac{1}{\Gamma(\alpha) \Gamma(1-\alpha)}$. 
Let $\gamma>0$.
\begin{align*}
\sup_{\|\varphi\|_{W^{1,2}_{\weight}}=1}&\left|\int_{0}^{\infty}e^{-(t-s)x}\varphi(x)\nu(\dx)\right|\leq  \sup_{\|\varphi\|_{W^{1,2}_{\weight}}=1} \left\|(1+x)^{\gamma}\varphi(x)\right\|_{\infty}\left|\int_{0}^{\infty}e^{-(t-s)x}\frac{1}{(1+x)^{\gamma}x^{\alpha}}\dx\right|\\
&\leq  \sup_{\|\varphi\|_{W^{1,2}_{\weight}}=1}\left\|(1+x)^{\gamma}\varphi(x)\right\|_{\infty}\left|\int_{0}^{\infty}e^{-(t-s)x(1-\beta)} \frac{1}{(t-s)^{\beta}}\frac{1}{\left(\frac{1}{(t-s)}+x\right)^{\beta}(1+x)^{\gamma}x^{\alpha}}\dx\right|\\
&\leq  \frac{1}{(t-s)^{\beta}} \sup_{\|\varphi\|_{W^{1,2}_{\weight}}=1}\left\|\varphi(x)\right\|_{W^{1,2}_{\weight}}C(T).
\end{align*}
\eqref{eqn:kernel_estimate_semigroup} is satisfied with $a=\beta$. This illustrates that we require $\alpha<1$, $\beta+\gamma+\alpha>0$. Assume, we wanted to square integrate the result, so $\beta<\frac{1}{2}$.
\begin{itemize}
    \item First we consider the case $\alpha\in\left(\frac{1}{2},1\right)$: Let $\widetilde{\eps}>\eps> 0$ be such that $1-\alpha+\widetilde{\eps}<1$. We set $\gamma=-\eps$,  $\beta=1-\alpha+\widetilde{\eps}-\eps$.
Let $w=(\weight_{0},\weight^{1})$ with $\weight_{i}=(1+x)^{i2-1-\eps}$, then also Assumption \ref{A:A_2:assumption_1_test_function} is satisfied.
    \item For $\alpha\in\left(0,\frac{1}{2}\right)$, we notice that we require additional decay, so we set $\gamma=1+\eps-\beta-\alpha$. This results in a weight $w=(\weight_{0},\weight^{1})$ with $\weight_{i}=(1+x)^{i2-1+\gamma}$. Hence we can not guarantee that Assumption \ref{A:A_2:assumption_1_test_function} is satisfied.
\end{itemize}

\item A similar argument works for the kernel
\begin{align*}
    k_{MVN}(t-s)=\frac{1}{\Gamma(\alpha)}\left((t-s)^{\alpha-1}-(-s)_{+}^{\alpha-1})\right).
\end{align*}
\item Let $K_{\text {gamma }}$ be the Gamma kernel of the form
$$
K_{\text {gamma }}(t):=\frac{1}{\Gamma(\alpha)} e^{-\beta t} t^{\alpha-1}, t>0,
$$
for some $\beta>0$ and $\alpha \in\left(\frac{1}{2}, 1\right)$. The corresponding Radon measure is given by
$$
\mu_{\operatorname{gamma }}(\dx)=\frac{1}{\Gamma(\alpha) \Gamma(1-\alpha)}(x-\beta)^{-\alpha} \mathbf{1}_{(\beta, \infty)}\dx.
$$
The measure $\mu_{\operatorname{gamma }}$ will be supported on   $[\beta, \infty) $. Let $\eps>0$ be such that $1-a+\eps<1$.
The only difference to the fractional kernel is that the weights will be shifted by $\beta$.
\begin{align*}
\left|\int_{0}^{\infty}e^{-(t-s)x}\varphi(x)\nu(\dx)\right|&\leq\left\|(1+(x-\beta))^{\gamma}\varphi(x)\right\|_{\infty}\left|\int_{\beta}^{\infty}e^{-(t-s)x}\frac{1}{(1+(x-\beta))^{\gamma}(x-\beta)^{\alpha}}\dx\right|\\
&\leq\left\|\frac{1}{(1+(x-\beta))^{\eps}}\varphi(x)\right\|_{\infty}e^{-\beta(t-s)}\\
&\phantom{xxxx}\times\left|\int_{0}^{\infty}e^{-(t-s)x(1-\eta)}\frac{1}{\left(\frac{1}{(t-s)}+x\right)^{\eta}(1+x)^{\gamma}x^{\alpha}}\dx\right|\\
&\leq \left\|\varphi(x)\right\|_{W^{1,2}_{\weight}}C(T)\frac{e^{-\beta(t-s)}}{(t-s)^{\eta}}\left|\int_{0}^{1} \frac{1}{x^{\alpha}}\dx+\int_{1}^{\infty}\frac{1}{\left(1+x\right)^{\eta+\gamma+\alpha}}\dx\right|.
\end{align*}
The choice of $\alpha,\gamma\eta$ are identical to $K_{\text {frac }}$. \eqref{eqn:kernel_estimate_semigroup} is again satisfied with $a=\eta$.
\item Given any $\beta>0$ and let the kernel $K$ satisfying Assumption \ref{A:A_3:Narrower_assumption_nu}, the exponentially damped kernel
$$
K_{\text {damp }}(t):=e^{-\beta t} K(t), t>0,
$$
has the corresponding Radon measure
\begin{align*}
    \nu_{\operatorname{damp}}(\dx)=\mathbf{1}_{[\beta, \infty)}(x) \nu_{K}(\dx-\beta),
\end{align*}
with support $\operatorname{supp} \nu_{\operatorname{damp}}=\{x \in[\beta, \infty) \mid x-\beta \in \operatorname{supp} \nu_{K}\} $.
This case, roughly, can be handled with the same specifications as for $\nu_{K}$, since
\begin{align*}
    e^{-\beta t} K(t)=\int_{0}^{\infty}\mathbf{1}_{\operatorname{supp}(\nu_{K})}e^{-\beta t}e^{-tx}\nu_{K}(\dx).
\end{align*}
\item Let $\delta>0$ and consider any completely monotone kernel $K$, satisfying Assumption \ref{A:A_3:Narrower_assumption_nu}, with the corresponding Radon measure $\nu_{K}$. The shifted kernel
$$
K_{\text {shift }}(t):=K(t+\delta), t>0,
$$
possesses the corresponding Radon measure
$$
\nu_{\operatorname{shift }}(\dx)=e^{-\delta x} \nu_{K}(\dx), 
$$
with support $\operatorname{supp}(\nu_{\operatorname{shift }})=\operatorname{supp}(\nu_{K})$, since $e^{-\delta x}>0$ everywhere.

A direct calculation shows that the assumptions are satisfied with a bound of the form $C(T)\frac{1}{(t-s+\delta)^{1-\alpha+\widetilde{\eps}}}$.
    \end{enumerate}
\end{example}
\begin{remark}
     On a purely formal level, measures $\nu_{\operatorname{frac}}$ with $\alpha\in\left(0,\frac{1}{2}\right)$ correspond to singular kernels which are only $L^{1}$ integrable in time. Integrated versions of SVEs with $L^{1}$ kernels have been considered in  \cite{abi2021weak_L1}. The non-integrated SVE is not well posed in a strong$\backslash$ pointwise sense, which is also reflected in the decay requirement of the test function in the lift. However, the formulation of the $X$ process in \cite{abi2021weak_L1} is considerably ``better-behaved''. 
 A more in-depth study of this case is currently a work in progress.
\end{remark}
We recall for $\theta_{\nudrift},\theta_{\nudiffusion}$ from Assumption \ref{A:A_3:Narrower_assumption_nu} the weights from Definition \ref{def:weights_triple_for_analysis}

    \begin{align*}
        &(\weightminus)_{i}(x)\coloneqq (1+x)^{2\etaminus-1+2i}\\
        &>(\weightsim)_{i}(x)\coloneqq (1+x)^{2\etamid-1+2i}\\
        &>(\weightplus)_{i}(x)\coloneqq (1+x)^{2\etaplus-1+2i},
    \end{align*}
    with
    \begin{align*}
            &\etaplus=\begin{cases}
                -\eps & \text{ if } \theta_{\nudiffusion}<\frac{1}{2}, \text{ where } 0<\eps<\frac{1}{2}-\theta_{\nudiffusion},\\
                \theta_{\nudiffusion}-\frac{1}{2}+\delta & \text{ if } \theta_{\nudiffusion}>\frac{1}{2}, \text{ where } 0<\delta<\frac{1}{2},
            \end{cases}\\
            &\etaminus > \max\{\theta_{\nudrift},\theta_{\nudiffusion}\},\\
            & \etaplus < \etamid <\etaminus. 
    \end{align*}

\begin{remark}
Let $\mu\in (\Wplusdual)^{n_{\textnormal{dim}}}$, $\psi\in \Wplus$ and $\nu\in \Wminusdual$.
    There is no ambiguity in the interpretation of the stochastic integral: writing $W$ for the driving $m_{W}$-dimensional Brownian motion, the map $\mu\mapsto \nudiffusion\diffusion(\langle \mu,\psi\rangle)$ is understood as a mapping from $(\Wplusdual)^{n_{\textnormal{dim}}}$ into the space of Hilbert-Schmidt operators from $\R^{m_{W}}$ to $(\Wminusdual)^{n_{\textnormal{dim}}}$, denoted $\mathcal{L}_{2}(\R^{m_{W}},(\Wminusdual)^{n_{\textnormal{dim}}})$: explicitly, for the standard basis $e_{1},\dots,e_{m_{W}}$ of $\R^{m_{W}}$, this operator sends $e_{k}$ to the $k$-th column of $\nudiffusion\diffusion(\langle \mu,\psi\rangle)\in(\Wminusdual)^{n_{\textnormal{dim}}}$, i.e.\ to $\big(\sum_{l=1}^{n_{\textnormal{dim}}}(\nudiffusion)_{il}\diffusion_{lk}(\langle\mu,\psi\rangle)\big)_{i=1}^{n_{\textnormal{dim}}}$. Since $\R^{m_{W}}$ is finite-dimensional, this operator is automatically Hilbert-Schmidt, with Hilbert-Schmidt norm equal to the Frobenius norm of the resulting $n_{\textnormal{dim}}\times m_{W}$ matrix of $\Wminusdual$-norms. When $n_{\textnormal{dim}}=m_{W}=1$ this reduces to the scalar case, where $\mu\mapsto\nudiffusion\diffusion(\langle\mu,\psi\rangle)$ maps into $\mathcal{L}_{2}(\R,\Wminusdual)$.
\end{remark}

\subsection{Lipschitz Case}

\begin{theorem}\label{thm:existence_uniqueness_lipschitz}
    Let Assumptions \ref{A:assumption_general_linear_growth},  \ref{A:assumption_general_lipschitz} and \ref{A:A_3:Narrower_assumption_nu} be satisfied and $T>0$ be fixed. Let $\Wplusdualn$ correspond to the choice of space for which \eqref{eqn:drift_diffusion_kernel_estimate_semigroup} holds, and let $\mu_{0}\in L^{2}(\Omega,\Wplusdualn)$ be given.
   Then equation \eqref{eqn:SEE_strong_formulation}, with initial condition $\mu_{0}$, has a unique mild $\Wplusdualn$ solution $\mu \in L^{2}(\Omega,C([0,T],\Wplusdualn))$.
\end{theorem}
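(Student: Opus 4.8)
The plan is to prove the statement by a Banach fixed-point argument applied to the mild formulation \eqref{eqn:SEE_eqn_mild}, carried out on a short time interval and then propagated to $[0,T]$. Throughout write $V:=\Uspaceplus=W^{1,2}_{w_{+}}$ and $H:=\Uspacedualplus=V'$, so that $|\langle\eta,\psi\rangle|\le\|\eta\|_{H}\|\psi\|_{V}$ for $\eta\in H$ and the fixed test function $\psi\in V$. By Assumption \ref{A:A_3:Narrower_assumption_nu} together with Lemmas \ref{lem:nu_dual_weighted_space_particular} and \ref{lem:nu_semigroup_properties} we have $\nu_{\drift},\nu_{\diffusion}\in H$ and, for the space $H$ singled out in the hypothesis,
\[
\int_{0}^{T}\|S^{*}_{r}\nu_{\drift}\|_{H}\dr<\infty,\qquad \|S^{*}_{r}\nu_{\diffusion}\|_{H}\le C(T)\,r^{-\gamma}\ \text{for some }\gamma\in(0,\tfrac12),
\]
the second bound being precisely the $L^{2}(0,T)$ part of \eqref{eqn:drift_diffusion_kernel_estimate_semigroup}. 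On the Banach space $E_{T}$ of predictable processes $\mu\in L^{2}(\Omega;C([0,T];H))$ I would define
\[
(\Phi\mu)_{t}:=S^{*}_{t}\mu_{0}+\int_{0}^{t}S^{*}_{t-s}\nu_{\drift}\,\drift(s,\langle\mu_{s},\psi\rangle)\ds+\int_{0}^{t}S^{*}_{t-s}\nu_{\diffusion}\,\diffusion(s,\langle\mu_{s},\psi\rangle)\dW_{s},
\]
so that mild solutions are exactly the fixed points of $\Phi$; by linear growth \ref{A:assumption_general_linear_growth} and the $L^{2}$-bound above, $s\mapsto S^{*}_{t-s}\nu_{\diffusion}\diffusion(s,\langle\mu_{s},\psi\rangle)$ is a predictable $H$-valued process with $\E\int_{0}^{t}\|S^{*}_{t-s}\nu_{\diffusion}\|_{H}^{2}|\diffusion(s,\langle\mu_{s},\psi\rangle)|^{2}\ds<\infty$, so the stochastic integral is well defined.

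First I would check that $\Phi$ maps $E_{T}$ into itself. The term $t\mapsto S^{*}_{t}\mu_{0}$ lies in $C([0,T];H)$ with $\sup_{t\le T}\|S^{*}_{t}\mu_{0}\|_{H}\le C(T)\|\mu_{0}\|_{H}$, by strong continuity and local boundedness of $S^{*}$ (Lemma \ref{lem:semigroup_and_adjoint_semigroup}). For the Bochner integral, \ref{A:assumption_general_linear_growth} gives $|\drift(s,\langle\mu_{s},\psi\rangle)|\le C(1+\|\psi\|_{V}\|\mu_{s}\|_{H})$, hence its $H$-norm is dominated by $C(1+\sup_{s\le T}\|\mu_{s}\|_{H})\int_{0}^{T}\|S^{*}_{r}\nu_{\drift}\|_{H}\dr$, and continuity in $t$ follows from strong continuity of $S^{*}$ and dominated convergence using the $L^{1}$ bound on $\|S^{*}_{\cdot}\nu_{\drift}\|_{H}$. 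For the stochastic convolution I would invoke the factorization method: the identity $S^{*}_{t-s}=c_{\alpha}\int_{s}^{t}(t-r)^{\alpha-1}(r-s)^{-\alpha}S^{*}_{t-r}S^{*}_{r-s}\dr$ (valid for any $C_{0}$-semigroup) shows that the convolution equals $c_{\alpha}\int_{0}^{t}(t-r)^{\alpha-1}S^{*}_{t-r}Z_{r}\dr$ with $Z_{r}=\int_{0}^{r}(r-s)^{-\alpha}S^{*}_{r-s}\nu_{\diffusion}\,\diffusion(s,\langle\mu_{s},\psi\rangle)\dW_{s}$. Choosing $\alpha\in(0,\tfrac12-\gamma)$, It{\^o}'s isometry and $\|S^{*}_{r}\nu_{\diffusion}\|_{H}\le C(T)r^{-\gamma}$ give $\sup_{r\le T}\E\|Z_{r}\|_{H}^{2}\le C(T)(1+\|\mu\|_{E_{T}}^{2})$ --- here $2\alpha+2\gamma<1$ is exactly what makes the time integral converge --- and a H{\"o}lder/Burkholder estimate on the outer $(t-r)^{\alpha-1}$-integral produces both the maximal bound $\E\sup_{t\le T}\|\,\cdot\,\|_{H}^{2}\le C(T)(1+\|\mu\|_{E_{T}}^{2})$ and a continuous modification. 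Thus $\Phi\mu\in E_{T}$.

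Next I would show that, after shrinking $T$ to a sufficiently small $T_{0}\in(0,T]$, $\Phi$ is a strict contraction on $E_{T_{0}}$. For $\mu,\nu\in E_{T_{0}}$ the initial terms cancel, and using the Lipschitz bounds \ref{A:assumption_general_lipschitz} and $|\langle\mu_{s}-\nu_{s},\psi\rangle|\le\|\psi\|_{V}\|\mu_{s}-\nu_{s}\|_{H}$ the same two estimates as above give
\[
\|\Phi\mu-\Phi\nu\|_{E_{T_{0}}}\le C\,\|\psi\|_{V}\big(L_{T,\drift}\,\kappa_{\drift}(T_{0})+L_{T,\diffusion}\,\kappa_{\diffusion}(T_{0})\big)\,\|\mu-\nu\|_{E_{T_{0}}},
\]
where $\kappa_{\drift}(T_{0})=\int_{0}^{T_{0}}\|S^{*}_{r}\nu_{\drift}\|_{H}\dr$ and $\kappa_{\diffusion}(T_{0})$ is the $T_{0}$-dependent constant from the factorization estimate; both tend to $0$ as $T_{0}\downarrow 0$ by dominated convergence (for $\kappa_{\diffusion}$ because the relevant powers of $T_{0}$ are strictly positive). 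Fixing such a $T_{0}$ --- depending only on $T$, $\|\psi\|_{V}$, $L_{T,\drift}$ and $L_{T,\diffusion}$, not on $\mu_{0}$ nor on the sub-interval --- Banach's theorem gives a unique mild solution on $[0,T_{0}]$. Since the same $T_{0}$ works on every sub-interval, one extends by the cocycle property of the mild equation, solving successively on $[T_{0},2T_{0}]$ with initial datum $\mu_{T_{0}}\in L^{2}(\Omega;H)$, and so on; after $\lceil T/T_{0}\rceil$ steps the solution is constructed and shown unique on all of $[0,T]$, with the concatenation belonging to $L^{2}(\Omega;C([0,T];H))$. (Equivalently, one may run a single fixed point on $[0,T]$ under the exponentially weighted norm $\mu\mapsto(\E\sup_{t\le T}e^{-2\lambda t}\|\mu_{t}\|_{H}^{2})^{1/2}$ with $\lambda$ large.)

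The main obstacle is the stochastic-convolution estimate in the two previous paragraphs: one must obtain simultaneously a supremum-in-time bound and path continuity of $\int_{0}^{t}S^{*}_{t-s}\nu_{\diffusion}\,\diffusion(s,\langle\mu_{s},\psi\rangle)\dW_{s}$, which forces the factorization method and hence the requirement that the time singularity $\gamma$ of $\|S^{*}_{r}\nu_{\diffusion}\|_{H}$ be \emph{strictly} below $\tfrac12$ --- exactly the content of the $L^{2}(0,T)$ assertion in \eqref{eqn:drift_diffusion_kernel_estimate_semigroup} (equivalently $a_{\diffusion}>\tfrac12$), which leaves precisely the room needed to choose the factorization exponent $\alpha>0$; if necessary one first carries out the argument with higher moments in $L^{p}$, $p>2$, and then uses $L^{p}(\Omega;C([0,T];H))\subset L^{2}(\Omega;C([0,T];H))$. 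Everything else --- the semigroup bounds, the Bochner term, and the smallness of the contraction constant --- is routine once the estimates of Lemmas \ref{lem:semigroup_and_adjoint_semigroup} and \ref{lem:nu_semigroup_properties} are available.
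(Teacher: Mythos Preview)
Your proposal is correct and follows the same overall strategy as the paper: set up the mild-solution map, show it is a contraction on $L^{2}(\Omega;C([0,T_{0}];H))$ for $T_{0}$ small (depending only on the Lipschitz constants and the kernel integrals), and then iterate on subintervals of uniform length.

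The one genuine difference is in how you control the stochastic convolution. You carry out the factorization method by hand, choosing $\alpha\in(0,\tfrac12-\gamma)$ so that the intermediate process $Z_{r}$ has finite second moment; this gives you both the $\sup_{t}$-bound and a continuous modification in one stroke. The paper instead quotes a ready-made maximal inequality for stochastic convolutions (the reference is \cite{hausenblas2001note}, invoked explicitly in Lemma~\ref{lem:a-priori_estimate_general}), which yields directly
\[
\E\sup_{t\le\widetilde T}\Big\|\int_{0}^{t}S^{*}_{t-s}\nu_{\diffusion}\,\diffusion(\cdots)\dW_{s}\Big\|_{H}^{2}
\le C\,\E\sup_{t\le\widetilde T}\int_{0}^{t}\|S^{*}_{t-s}\nu_{\diffusion}\|_{H}^{2}\,|\diffusion(\cdots)|^{2}\ds,
\]
and then uses the elementary lemma
\[
\sup_{t\le T}\int_{0}^{t}g(t-s)f(s)\ds\le\int_{0}^{T}g(T-r)\sup_{0\le u\le r}|f(u)|\dr
\]
to swap the outer supremum inside the time integral, for both the drift and the diffusion term. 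The contraction constant is then read off as a multiple of $\int_{0}^{\widetilde T}\|S^{*}_{r}\nu_{\drift}\|_{H}\dr$ and $\int_{0}^{\widetilde T}\|S^{*}_{r}\nu_{\diffusion}\|_{H}^{2}\dr$, hence $\le C\max\{\widetilde T^{a_{\drift}},\widetilde T^{2a_{\diffusion}-1}\}\to 0$.

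In effect you have unpacked the proof of the maximal inequality the paper cites (Hausenblas--Seidler is itself based on factorization), so the two routes are essentially the same at the technical level; the paper's version is more compact, yours is more self-contained. Your observation that one needs the strict inequality $\gamma<\tfrac12$ (equivalently $a_{\diffusion}>\tfrac12$) to have room for the factorization exponent is exactly the $L^{2}(0,T)$ condition in \eqref{eqn:drift_diffusion_kernel_estimate_semigroup}, so nothing is missing.
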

Before we begin with the proof, we introduce the following useful Lemma.
\begin{lemma} Let $0\leq T <\infty$, $f,g \in L^{1}(0,T)$ and $g\geq 0$, then
    \begin{align*}
        \sup_{t\leq T}\int_{0}^{t}g(t-s)f(s)\ds\leq \int_{0}^{T}g(T-r)\sup_{0\leq u \leq r}|f(u)|\dr.
    \end{align*}
\end{lemma}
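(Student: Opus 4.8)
The plan is to reduce the left-hand side, for each fixed $t\le T$, to an integral over a subinterval of $[0,T]$ by a translation in the integration variable, and then to exploit the monotonicity of the running supremum together with $g\ge 0$.

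First I would fix $t\le T$ and bound $\int_{0}^{t}g(t-s)f(s)\ds\le \int_{0}^{t}g(t-s)F(s)\ds$, where $F(s)\coloneqq \sup_{0\le u\le s}|f(u)|\in[0,+\infty]$; this uses only $g\ge 0$ and $f(s)\le|f(s)|\le F(s)$. The essential observation is that $F$ is nondecreasing (and automatically measurable, being monotone). If $F$ happens to be non-finite, the right-hand side of the claimed inequality is $+\infty$ and there is nothing to prove, so one may as well read the whole estimate in $[0,+\infty]$ and not worry about integrability of $g(T-\cdot)F(\cdot)$.

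Next I would perform the change of variables $r=s+(T-t)$, a bijection from $[0,t]$ onto $[T-t,T]$ satisfying $t-s=T-r$. This rewrites $\int_{0}^{t}g(t-s)F(s)\ds$ as $\int_{T-t}^{T}g(T-r)\,F\bigl(r-(T-t)\bigr)\dr$. Since $T-t\ge 0$ we have $r-(T-t)\le r$, so monotonicity of $F$ gives $F\bigl(r-(T-t)\bigr)\le F(r)$; combined with $g\ge 0$ this yields $\int_{T-t}^{T}g(T-r)F\bigl(r-(T-t)\bigr)\dr\le\int_{T-t}^{T}g(T-r)F(r)\dr\le\int_{0}^{T}g(T-r)F(r)\dr$, the last step enlarging the domain of integration using nonnegativity of the integrand. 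Since $F(r)=\sup_{0\le u\le r}|f(u)|$, the bound is independent of $t$, and taking the supremum over $t\le T$ concludes.

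\textbf{Main obstacle.} There is essentially none: the content is a one-line change of variables plus the fact that $F$ is nondecreasing. The only points requiring mild care are applying the monotonicity inequality in the correct direction, checking the endpoints of the substitution, and noting that $F$ need not be finite or integrable — all of which is dealt with by working throughout in $[0,+\infty]$.
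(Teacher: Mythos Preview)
Your argument is correct and is essentially the paper's proof: both perform the translation $r=s+(T-t)$ to rewrite the convolution as $\int_{T-t}^{T}g(T-r)\,|f(r-(T-t))|\,dr$, then use $0\le r-(T-t)\le r$ together with the monotonicity of the running supremum to bound by $\int_{0}^{T}g(T-r)\sup_{0\le u\le r}|f(u)|\,dr$. The only cosmetic difference is that you pass to $F(s)=\sup_{u\le s}|f(u)|$ before substituting, whereas the paper substitutes first and then takes the supremum.
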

\begin{proof}
    \begin{align*}
    \sup_{t\leq T}\int_{0}^{t}g(t-s)f(s)\ds&=\sup_{t\leq T}\int_{0}^{t}g(T-(s-t+T))f(s)\ds\\
    &=\sup_{t\leq T}\int_{T-t}^{T}g(T-r)f(r+t-T)\dr\\
    &\leq \sup_{t\leq T}\int_{0}^{T}g(T-r) \mathbf{1}_{\{T-t\leq r \leq T\}}|f(r+t-T)|\dr.
\end{align*}
Since $0\leq r+t-T \leq r$,
\begin{align*}
   \sup_{t\leq T}\int_{0}^{T}g(T-r) \mathbf{1}_{T-t\leq r \leq T}|f(r+t-T)|\dr&\leq  \sup_{t\leq T}\int_{0}^{T}g(T-r)\sup_{0\leq u \leq r}|f(u)|\dr\\
   &\leq \int_{0}^{T}g(T-r)\sup_{0\leq u \leq r}|f(u)|\dr.
\end{align*}
\end{proof}

\begin{proof}[Proof of Theorem \ref{thm:existence_uniqueness_lipschitz}]
    Set $\mathcal{H}:=L^{2}(\Omega,C([0,T],\Wplusdualn))$ and define the operator $\Gamma\colon\mathcal{H}\to\mathcal{H}$ by
    \begin{align*}
        \Gamma(\mu,\mu_{0})\coloneqq S^{*}_{t}\mu_{0}+\int_{0}^{t}S^{*}_{t-s}\nudrift \drift(s,\langle \mu_{s},\psi\rangle) \ds+\int_{0}^{t}S^{*}_{t-s}\nudiffusion \diffusion(s,\langle \mu_{s},\psi\rangle) \dW_{s}.
    \end{align*}
    The proof is a rather standard application of the Banach fixed-point theorem, verifying the self-mapping and contraction properties of $\Gamma$. We only verify the contraction property of the solution $\Gamma$, since the self-mapping property is verified analogously, using the estimate from Lemma \ref{lem:improvement_semigroup_weighted_spaces} and a similar estimate will be shown in Lemma \ref{lem:a_priori_estimate_for_tightness}. In the following steps, we use that $|\langle \mu_{s},\psi\rangle|\leq \sup_{h\colon \|h\|_{\Wplus}=1}|\langle \mu_{s},h\rangle|=\|\mu_{s}\|_{\Wplusdualn}$. We first consider $\widetilde{T}$ to be fixed and ``small''.
        \begin{align*}
   \E \sup_{t\leq \widetilde{T}}\|&\Gamma(\mu^{1}_{t},\mu_{0})-\Gamma(\mu^{2}_{t},\mu_{0})\|_{\Wplusdualn}^{2} \\
       &\leq C_{\operatorname{Lip}_{\drift}}\E \sup_{t\leq \widetilde{T}}\left(\int_{0}^{t}\|S^{*}_{t-s}\nudrift\|_{\opn}\left\|\mu^{1}_{s}- \mu^{2}_{s}\right\|_{\Wplusdualn}\ds\right)^{2}\\
    &\phantom{xx}+C_{\operatorname{Lip}_{\diffusion}}\E\sup_{t\leq \widetilde{T}}\left(\int_{0}^{t}\left\| S^{*}_{t-s}\nudiffusion\right\|_{\opn}^{2}\left\| \mu^{1}_{s}- \mu^{2}_{s}\right\|_{\Wplusdualn}^{2}\ds \right)\\
    &\leq  C_{\operatorname{Lip}_{\drift}}\E \sup_{t\leq \widetilde{T}}\left(\int_{0}^{t}\|S^{*}_{t-s}\nudrift\|_{\opn}\left\|\mu^{1}_{s}- \mu^{2}_{s}\right\|_{\Wplusdualn}\ds\right)^{2}\\
    &\phantom{xx}+C_{\operatorname{Lip}_{\diffusion}}\E\left(\int_{0}^{\widetilde{T}}\left\| S^{*}_{\widetilde{T}-s}\nudiffusion\right\|_{\opn}^{2}\sup_{r\leq s}\left\| \mu^{1}_{r}- \mu^{2}_{r}\right\|_{\Wplusdualn}^{2}\ds \right)\\
    &\leq C_{\operatorname{Lip}_{\drift}}\sup_{t\leq \widetilde{T}}\left(\int_{0}^{t}\|S^{*}_{t-s}\nudrift\|_{\opn}\ds\right)\\
    &\phantom{xxxxxx}\times\E \int_{0}^{\widetilde{T}}\|S^{*}_{\widetilde{T}-s}\nudrift\|_{\opn}\sup_{r\leq s}\left\|\mu^{1}_{r}- \mu^{2}_{r}\right\|_{\Wplusdualn}^{2}\ds\\
    &\phantom{xx}+C_{\operatorname{Lip}_{\diffusion}}\E\left(\int_{0}^{\widetilde{T}}\left\| S^{*}_{\widetilde{T}-s}\nudiffusion\right\|_{\opn}^{2}\sup_{r\leq s}\left\| \mu^{1}_{r}- \mu^{2}_{r}\right\|_{\Wplusdualn}^{2}\ds \right)\\
    &\leq C_{\operatorname{Lip},\widetilde{T}}\E\sup_{s\leq \widetilde{T}}\left\|\mu^{1}_{s}- \mu^{2}_{s}\right\|_{\Wplusdualn}^{2},
\end{align*}
where $C_{\widetilde{T}}$ depends on $\widetilde{T}$ via the terms
\begin{align*}
    \int_{0}^{\widetilde{T}}\|S^{*}_{\widetilde{T}-s}\nudrift\|_{\opn}\ds
    \qquad\text{and}\qquad
    \int_{0}^{\widetilde{T}}\|S^{*}_{\widetilde{T}-s}\nudiffusion\|^{2}_{\opn}\ds.
\end{align*}
By Assumption \ref{A:A_3:Narrower_assumption_nu}, both of these integrals can be controlled by a constant times $\max\{\widetilde{T}^{2 a_{\drift}},\widetilde{T}^{2 a_{\diffusion}}\}$. Hence, we obtain a contraction for $\widetilde{T}$ small enough. Obtaining a solution on $[0,T]$ now follows from standard arguments.
\end{proof}

\begin{corollary}
    The solution map $\mu_{0}\mapsto \mu_{t}$ is continuous for every $0<t\leq T$.
\end{corollary}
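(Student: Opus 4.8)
The plan is to upgrade the conclusion of Theorem \ref{thm:existence_uniqueness_lipschitz} to Lipschitz dependence on the initial datum in the norm of $L^{2}(\Omega,\Uspacedualplus)$, uniformly in $t\leq T$; this immediately gives continuity of $\mu_{0}\mapsto\mu_{t}$ for each fixed $t\in(0,T]$. Let $\mu^{1},\mu^{2}$ be the mild solutions of \eqref{eqn:SEE_eqn_strong_formulation} associated with initial data $\mu^{1}_{0},\mu^{2}_{0}$. Subtracting the two identities \eqref{eqn:SEE_eqn_mild}, using $|\langle\mu_{s},\psi\rangle|\leq\|\mu_{s}\|_{\Uspacedualplus}$ together with the Lipschitz bound \ref{A:assumption_general_lipschitz}, and estimating exactly as in the proof of Theorem \ref{thm:existence_uniqueness_lipschitz} (i.e. via the auxiliary Lemma preceding that proof for the drift term and the maximal inequality for the stochastic convolution for the diffusion term), one obtains, on a small interval $[0,\widetilde T]$,
\begin{align*}
\E\sup_{t\leq\widetilde T}\|\mu^{1}_{t}-\mu^{2}_{t}\|^{2}_{\Uspacedualplus}
\leq C\,\E\sup_{t\leq\widetilde T}\|S^{*}_{t}(\mu^{1}_{0}-\mu^{2}_{0})\|^{2}_{\Uspacedualplus}
+C_{\operatorname{Lip},\widetilde T}\,\E\sup_{s\leq\widetilde T}\|\mu^{1}_{s}-\mu^{2}_{s}\|^{2}_{\Uspacedualplus},
\end{align*}
where, by \eqref{eqn:drift_diffusion_kernel_estimate_semigroup}, the factor $C_{\operatorname{Lip},\widetilde T}$ is controlled by a constant times $\max\{\widetilde T^{2a_{\drift}},\widetilde T^{2a_{\diffusion}-1}\}$ and hence tends to $0$ as $\widetilde T\to0$. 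The only term not already present in that proof is the free evolution $S^{*}_{t}(\mu^{1}_{0}-\mu^{2}_{0})$, which is bounded in $\Uspacedualplus$ by $C(1\vee\widetilde T)\|\mu^{1}_{0}-\mu^{2}_{0}\|_{\Uspacedualplus}$ by Lemma \ref{lem:improvement_semigroup_weighted_spaces} with $\gamma=0$.

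Choosing $\widetilde T$ small enough that $C_{\operatorname{Lip},\widetilde T}\leq\frac12$ and absorbing, this yields $\E\sup_{t\leq\widetilde T}\|\mu^{1}_{t}-\mu^{2}_{t}\|^{2}_{\Uspacedualplus}\leq C\|\mu^{1}_{0}-\mu^{2}_{0}\|^{2}_{\Uspacedualplus}$ with $\widetilde T$ depending only on the data (not on the initial conditions). I would then iterate this bound over the successive intervals $[\widetilde T,2\widetilde T],[2\widetilde T,3\widetilde T],\dots$, at each step taking $\mu^{i}_{j\widetilde T}$ as the new initial datum and feeding in the bound for $\E\|\mu^{1}_{j\widetilde T}-\mu^{2}_{j\widetilde T}\|^{2}_{\Uspacedualplus}$ from the previous step; after $\lceil T/\widetilde T\rceil$ steps this gives
\begin{align*}
\E\sup_{t\leq T}\|\mu^{1}_{t}-\mu^{2}_{t}\|^{2}_{\Uspacedualplus}\leq C_{T}\,\|\mu^{1}_{0}-\mu^{2}_{0}\|^{2}_{\Uspacedualplus},
\end{align*}
so in particular $\mu_{0}\mapsto\mu_{t}$ is (Lipschitz, hence) continuous for every $t\in(0,T]$.

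I do not expect a genuine obstacle, since the argument is essentially a re-run of the fixed-point estimate of Theorem \ref{thm:existence_uniqueness_lipschitz} with the extra free-evolution term handled by Lemma \ref{lem:improvement_semigroup_weighted_spaces}. The one point that needs care is that $C_{\operatorname{Lip},\widetilde T}\to0$ as $\widetilde T\to0$: this is exactly where the strict inequalities $\theta_{\nu_{\drift}},\theta_{\nu_{\diffusion}}<1$ in Assumption \ref{A:A_3:Narrower_assumption_nu} (equivalently $a_{\drift}>0$ and $2a_{\diffusion}-1>0$, the latter built into the choice of $w_{+}$) are used, guaranteeing $\int_{0}^{\widetilde T}(\widetilde T-s)^{-(1-a_{\drift})}\ds=O(\widetilde T^{a_{\drift}})$ and $\int_{0}^{\widetilde T}(\widetilde T-s)^{-2(1-a_{\diffusion})}\ds=O(\widetilde T^{2a_{\diffusion}-1})$; because the patching uses only a fixed finite number of steps, no singular Grönwall inequality is required.
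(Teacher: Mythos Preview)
Your proposal is correct and follows essentially the same approach as the paper, which states the Corollary without proof as an immediate consequence of the Banach fixed-point structure in Theorem \ref{thm:existence_uniqueness_lipschitz}. Your explicit re-run of the contraction estimate with two different initial data, the handling of the free-evolution term via Lemma \ref{lem:improvement_semigroup_weighted_spaces}, and the patching over subintervals of fixed length are exactly the standard details one fills in; your identification of the exponent $2a_{\diffusion}-1$ for the diffusion contribution is in fact sharper than the paper's stated $\max\{\widetilde T^{2a_{\drift}},\widetilde T^{2a_{\diffusion}}\}$.
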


\subsection{General Coefficients}\label{sec:existence_general_coefficients}

Our strategy to prove the existence of a solution, when $\drift, \diffusion$ are merely continuous \ref{A:assumption_general_uniformly_continuous} and satisfy \ref{A:assumption_general_linear_growth}, is to approximate $\drift, \diffusion$ with Lipschitz continuous functions and then pass to the limit in the approximation. For precisely this limiting procedure, we will derive a-priori estimates on the mild solution of equation \eqref{eqn:SEE_strong_formulation}, which will be used in combination with certain tightness arguments.

The next proposition is a central tool in our approach since it allows us to approximate $\drift, \diffusion$ by a sequence of Lipschitz continuous functions $\drift_{n}, \diffusion_{n}$ with a uniform linear growth bound. By Theorem \ref{thm:existence_uniqueness_lipschitz}, for each $n\in \N$, the equation
\begin{align}\label{eq:approximate_SEE_strong_form}
    \d \mu_{n}(t)=-x\mu_{n}(t)\dt+ \nudrift\drift_{n}(t,\langle \mu_{n}(t),\psi\rangle )\dt+\nudiffusion\diffusion_{n}(t,\langle \mu_{n}(t),\psi\rangle )\dW_{t}
\end{align}
has a solution, for each $\psi\in \Wplus$ in the sense of Definition \ref{def:mild_solution_SEE} with values in $L^{2}(\Omega,C([0,T],\Wplusdualn))$.
\begin{proposition}(\cite[Proposition 1.1]{hofmanova2012weak})
    Suppose $F \colon \R_{+}\times \R^{n_{\textnormal{dim}}}\rightarrow \R^{d_{0}}$ is a Borel function of (at most) linear growth, i.e.
    \begin{align*}
        \exists L<\infty,\,\forall t\geq 0,\,\forall x\in \R^{n_{\textnormal{dim}}},\,\|F(t,x)\|\leq L(1+\|x\|),
    \end{align*}
    such that $F(t, \cdot )\in C(\R^{n_{\textnormal{dim}}},\R^{d_{0}})$ for any $t\in\R_{+}$. Then there exists a sequence of Borel functions $F_{k} \colon \R_{+}\times \R^{n_{\textnormal{dim}}}\rightarrow \R^{d_{0}}$, $k\geq 1$, which have at most linear growth uniformly in $k$, namely
    \begin{align*}
        \forall k\in\N,\,\exists L<\infty,\,\forall t\geq 0,\,\forall x\in \R^{n_{\textnormal{dim}}},\,\|F_{k}(t,x)\|\leq L(1+\|x\|),
    \end{align*}
    which are Lipschitz continuous in the second variable uniformly in the first one,
        \begin{align*}
        \forall k\in\N,\,\exists L_{k}<\infty,\,\forall t\geq 0,\,\forall x,y\in \R^{n_{\textnormal{dim}}},\,\|F_{k}(t,x)-F_{k}(t,y)\|\leq L_{k}\|x-y\|,
    \end{align*}
    and which satisfy
    \begin{align*}
        \lim_{k\rightarrow \infty}F_{k}(t, \cdot )=F(t, \cdot )\quad\textnormal{locally uniformly on }\R^{n_{\textnormal{dim}}}
    \end{align*}
    for all $t\geq 0$.
\end{proposition}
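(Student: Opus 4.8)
The plan is to regularize $F$ in the second variable by a Moreau--Yosida (inf-convolution) procedure applied componentwise. This automatically yields Lipschitz approximations whose Lipschitz constant is controlled by the regularization parameter, while the linear growth bound is inherited from $F$ without any loss. First I would reduce to the scalar case: writing $F=(F^{1},\dots,F^{d})$, it suffices to regularize each real-valued component $F^{i}\colon\R_{+}\times\R^{n}\to\R$ separately and reassemble, since uniform linear growth, Lipschitz continuity and locally uniform convergence are all stable under forming finite tuples (at the cost of a harmless factor $\sqrt{d}$ in the Lipschitz constant, which can be absorbed by reindexing). So fix Borel $g\colon\R_{+}\times\R^{n}\to\R$ with $\|g(t,x)\|\le L(1+\|x\|)$ and $g(t,\cdot)$ continuous for each $t$, and for an integer $k>L$ set
\begin{align*}
    g_{k}(t,x)\coloneqq\inf_{y\in\R^{n}}\big(g(t,y)+k\|x-y\|\big),
\end{align*}
putting $g_{k}\coloneqq g_{\lceil L\rceil+1}$ for $k\le L$.

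Next I would verify the required properties in turn. For \emph{finiteness and uniform linear growth}: from $g(t,y)\ge-L(1+\|y\|)$ and $k>L$ one gets $g(t,y)+k\|x-y\|\ge-L(1+\|x\|)+(k-L)\|x-y\|\ge-L(1+\|x\|)$, while taking $y=x$ gives $g_{k}(t,x)\le g(t,x)\le L(1+\|x\|)$, so $\|g_{k}(t,x)\|\le L(1+\|x\|)$ uniformly in $k$. For \emph{Lipschitz continuity in $x$, uniform in $t$}: each map $x\mapsto g(t,y)+k\|x-y\|$ is $k$-Lipschitz, and an infimum of a family of $k$-Lipschitz functions that is bounded below is again $k$-Lipschitz, so $g_{k}(t,\cdot)$ is $k$-Lipschitz with constant independent of $t$. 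For \emph{joint Borel measurability}: by continuity of $y\mapsto g(t,y)+k\|x-y\|$ the infimum may be restricted to $y\in\mathbb{Q}^{n}$, whence $g_{k}$ is a countable infimum of Borel functions of $(t,x)$ and hence Borel.

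For the convergence I would estimate the location of near-minimizers. Fix $t$ and a compact $K\subset\R^{n}$; given $\eps>0$ choose $y=y(t,x)$ with $g(t,y)+k\|x-y\|\le g_{k}(t,x)+\eps$. Using linear growth twice, $(k-L)\|x-y\|\le g(t,x)-g(t,y)+k\|x-y\|\le L(2+2\|x\|)+\eps$, hence $\|x-y\|\le\frac{2L(1+\|x\|)+\eps}{k-L}$, which tends to $0$ as $k\to\infty$ uniformly over $x\in K$. Therefore $0\le g(t,x)-g_{k}(t,x)\le g(t,x)-g(t,y)+\eps\le\omega_{t,K'}(\|x-y\|)+\eps$, where $\omega_{t,K'}$ is the modulus of continuity of $g(t,\cdot)$ on a fixed compact neighbourhood $K'$ of $K$; letting $k\to\infty$ and then $\eps\to0$ yields $g_{k}(t,\cdot)\to g(t,\cdot)$ uniformly on $K$, i.e.\ locally uniformly on $\R^{n}$. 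Reassembling the $d$ components produces the desired sequence $F_{k}$.

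The main point requiring care --- rather than a deep obstacle --- is precisely that the linear growth of $F$ rules out a naive global mollification, whose Lipschitz constant would involve $\sup\|F\|=\infty$; the inf-convolution circumvents this, but only once $k>L$, which is why the sequence is genuinely defined (or reindexed) from that index on and why the lower linear-growth bound is used essentially, both to guarantee $g_{k}>-\infty$ and to localize the minimizers. The other spot needing a line of justification is the joint $(t,x)$-measurability, handled by the rational-infimum argument above.
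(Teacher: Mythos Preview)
The paper does not prove this proposition at all: it is quoted verbatim as \cite[Proposition 1.1]{hofmanova2012weak} and used as a black box, so there is no in-paper argument to compare against. Your Moreau--Yosida inf-convolution construction is correct and is one of the standard routes to such a result; the key observations you make (that $k>L$ is needed both for finiteness of the infimum and for localizing near-minimizers, and that measurability follows from restricting the infimum to a countable dense set) are exactly the points that need checking, and you handle them cleanly.
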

\begin{remark}
    Unlike the Lipschitz constant, the approximations $F_{k}$ share the same modulus of continuity as $F$.
\end{remark}
\subsubsection{A-priori estimates}
We will derive a ``spatial'' and a ``temporal'' estimate to perform the limit $n\rightarrow \infty$ in \eqref{eq:approximate_SEE_strong_form}.
The ``spatial'' estimate will be separated into two Lemmata since we will reuse Lemma \ref{lem:a-priori_estimate_general} in Section \ref{section:Invariant_measure}.

\begin{lemma}\label{lem:a-priori_estimate_general}
 Let Assumptions \ref{A:assumption_general_linear_growth} be satisfied, $1<p$ and let $\mu$ be a mild solution of \eqref{eqn:SEE_strong_formulation}, with $\mu_{0}\in L^{p}(\Omega,\Wplusdualn)$.

 Then there exists a constant $C_{p,\operatorname{LG}}>0$, which only depends on $p$ and the linear growth condition \ref{A:assumption_general_linear_growth} and is independent of $T$, such  
 \begin{align}
   \E  \sup_{t\leq T}\|\mu_{t}\|_{\Wplusdualn}^{p}
     &\leq C_{p,\operatorname{LG}}\E \sup_{t\leq T}\left\|S^{*}_{t}\mu_{0}\right\|^{p}_{\Wplusdualn}\nonumber\\
     &\phantom{xx}+ C_{p,\operatorname{LG}} h_{\drift,j}(T)\int_{0}^{T}\|S^{*}_{T-s}\nudrift\|_{\opn}\E\sup_{r\leq s}\|\mu_{r}\|_{\Wplusdualn}^{p}\ds\nonumber\\
     &\phantom{xx}+C_{p,\operatorname{LG}} h_{\diffusion,j}(T)\int_{0}^{T}\|S^{*}_{T-s}\nudiffusion\|_{\opn}^{2}\nonumber\\
     &\phantom{xxxx}\times\left(\E\sup_{r\leq s}\|\mu_{r}\|_{\Wplusdualn}^{2}\right)^{\frac{p}{2}}\ds\nonumber\\
      &\phantom{xx}+C_{p,\operatorname{LG}}\left(\int_{0}^{T}\|S^{*}_{T-s}\nudrift\|_{\opn}\ds\right)^{p}\nonumber\\
      &\phantom{xx}+C_{p,\operatorname{LG}} \left(\int_{0}^{T}\|S^{*}_{T-s}\nudiffusion\|_{\opn}^{2}\ds\right)^{\frac{p}{2}}.\nonumber
\end{align}
where $j=1$ if $p\geq 2$ and $j=2$ if $1<p\leq 2$. Depending on the value of $p$, we set 
\begin{align*}
    &h_{\drift,1}(T)=h_{\drift,2}(T)= \left(\int_{0}^{T}\|S^{*}_{T-s}\nudrift\|_{\opn}\ds\right)^{p-1},\\
    &h_{\diffusion,1}(T)=\left(\int_{0}^{T}\|S^{*}_{T-s}\nudiffusion\|_{\opn}^{2}\ds\right)^{\frac{p}{2}-1},\\
    &\qquad h_{\diffusion,2}(T)=\left(\int_{0}^{T}\|S^{*}_{T-s}\nudiffusion\|_{\opn}^{2}\right)^{p-1}.
\end{align*}
\end{lemma}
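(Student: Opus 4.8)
The plan is to obtain the estimate directly from the mild formulation \eqref{eqn:SEE_eqn_mild}, treating the three terms --- the semigroup acting on the initial datum, the drift integral, and the stochastic integral --- separately, and then closing the resulting inequality by a Gr\"onwall-type argument. First I would write
\begin{align*}
    \|\mu_{t}\|_{\Vspacedual} \leq \|S^{*}_{t}\mu_{0}\|_{\Vspacedual} + \left\|\int_{0}^{t}S^{*}_{t-s}\nu_{\drift}\drift(s,\langle\mu_{s},\psi\rangle)\ds\right\|_{\Vspacedual} + \left\|\int_{0}^{t}S^{*}_{t-s}\nu_{\diffusion}\diffusion(s,\langle\mu_{s},\psi\rangle)\d W_{s}\right\|_{\Vspacedual},
\end{align*}
raise to the power $p$, take $\sup_{t\leq T}$ and expectations, using $(a+b+c)^{p}\leq C_{p}(a^{p}+b^{p}+c^{p})$. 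For the drift term I would pull the scalar coefficient out via $|\drift(s,\langle\mu_{s},\psi\rangle)|\leq C_{\operatorname{LG}}(1+\|\mu_{s}\|_{\Vspacedual})$, using $|\langle\mu_{s},\psi\rangle|\leq \|\mu_{s}\|_{\Vspacedual}$ (with $\psi$ normalised in $\Vspace$, as in the proof of Theorem \ref{thm:existence_uniqueness_lipschitz}), and then apply the triangle inequality in $\Vspacedual$ together with the preceding Lemma (the one stating $\sup_{t\leq T}\int_{0}^{t}g(t-s)f(s)\ds\leq \int_{0}^{T}g(T-r)\sup_{u\leq r}|f(u)|\dr$) with $g(t-s)=\|S^{*}_{t-s}\nu_{\drift}\|_{\Vspacedual}$, which is in $L^{1}(0,T)$ by \eqref{eqn:drift_diffusion_kernel_estimate_semigroup}; an application of H\"older in the $s$-integral with exponents $p,p'$ splits off the factor $h_{\drift}(T)=\big(\int_{0}^{T}\|S^{*}_{T-s}\nu_{\drift}\|_{\Vspacedual}\ds\big)^{p-1}$ and leaves $\int_{0}^{T}\|S^{*}_{T-s}\nu_{\drift}\|_{\Vspacedual}\E\sup_{r\leq s}\|\mu_{r}\|_{\Vspacedual}^{p}\ds$ plus the constant term $\big(\int_{0}^{T}\|S^{*}_{T-s}\nu_{\drift}\|_{\Vspacedual}\ds\big)^{p}$.

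For the stochastic term I would invoke the Burkholder--Davis--Gundy inequality in the Hilbert space $\Vspacedual$ (valid since the integrand is Hilbert--Schmidt-valued, as noted in the remark preceding Step 1), giving
\begin{align*}
    \E\sup_{t\leq T}\left\|\int_{0}^{t}S^{*}_{t-s}\nu_{\diffusion}\diffusion(s,\langle\mu_{s},\psi\rangle)\d W_{s}\right\|_{\Vspacedual}^{p} \leq C_{p}\,\E\left(\int_{0}^{T}\|S^{*}_{T-s}\nu_{\diffusion}\|_{\Vspacedual}^{2}\,|\diffusion(s,\langle\mu_{s},\psi\rangle)|^{2}\ds\right)^{p/2},
\end{align*}
where I have also used the maximal-type Lemma to replace $\|S^{*}_{t-s}\nu_{\diffusion}\|$ by $\|S^{*}_{T-s}\nu_{\diffusion}\|$ after moving the supremum inside. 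Here the two cases $p\geq 2$ and $1<p<2$ must be handled differently, which accounts for the index $j$ and the two formulas for $h_{\diffusion,j}$: for $p\geq 2$ one applies H\"older in $s$ with respect to the finite measure $\|S^{*}_{T-s}\nu_{\diffusion}\|^{2}\ds$ with exponents $\tfrac p2,(\tfrac p2)'$, producing the factor $h_{\diffusion,1}(T)=\big(\int_{0}^{T}\|S^{*}_{T-s}\nu_{\diffusion}\|_{\Vspacedual}^{2}\ds\big)^{p/2-1}$ and the term $\int_{0}^{T}\|S^{*}_{T-s}\nu_{\diffusion}\|_{\Vspacedual}^{2}(\E\sup_{r\leq s}\|\mu_{r}\|^{2})^{p/2}\ds$ (using Jensen/Minkowski to move the $p/2$-power inside, and linear growth of $\diffusion$); for $1<p<2$ one instead estimates $(\int\cdots)^{p/2}\leq \big(\sup_{s}\|S^{*}_{T-s}\nu_{\diffusion}\|^{2}(1+\sup\|\mu\|)^{2}\cdot T\big)^{p/2-1}\cdot\int\cdots$ or, more cleanly, uses that $(\,\cdot\,)^{p/2}$ is subadditive so $\big(\int g\,h\big)^{p/2}\leq \int g^{p/2}h^{p/2}$ only after a further H\"older step with the different exponent, which is exactly what forces $h_{\diffusion,2}(T)=\big(\int_{0}^{T}\|S^{*}_{T-s}\nu_{\diffusion}\|^{2}\big)^{p-1}$. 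I would collect the constant contributions into the last two explicit terms of the stated bound, and note that all constants depend only on $p$ and $C_{\operatorname{LG}}$, not on $T$, since every $T$-dependence has been isolated into the $h$-factors and the integral terms.

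The main obstacle, and the part requiring the most care, is the bookkeeping of exponents in the stochastic term for the two ranges of $p$ --- in particular justifying the interchange of the $p/2$-power with the time integral (a Minkowski-integral-inequality argument when $p\geq 2$, and a direct subadditivity/H\"older argument when $p<2$), and making sure the resulting ``memory kernel'' factors $h_{\drift,j},h_{\diffusion,j}$ come out in precisely the stated form while keeping the leading constant $T$-independent. A secondary point is that at this stage the inequality is not yet a closed bound on $\E\sup_{t\leq T}\|\mu_t\|^p$ --- it still has this quantity on both sides --- but the statement of the Lemma is exactly this pre-Gr\"onwall inequality, so I would \emph{not} apply the singular Gr\"onwall lemma here; that is deferred to the subsequent Lemma that combines this with the explicit bound on $\E\sup_t\|S^*_t\mu_0\|^p$. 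Everything else --- the BDG inequality, the linear-growth substitution, the maximal Lemma, and $(a+b+c)^p\lesssim a^p+b^p+c^p$ --- is routine, so I would present those steps compactly.
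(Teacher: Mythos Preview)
Your overall strategy matches the paper's, but there are two genuine gaps.

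First, for the stochastic convolution you cannot obtain the $\E\sup_{t\leq T}$ bound by combining the ordinary Burkholder--Davis--Gundy inequality with the deterministic maximal Lemma. BDG applied for each fixed $t$ yields only $\sup_{t}\E\|\cdot\|^{p}$, not $\E\sup_{t}\|\cdot\|^{p}$, and since the integrand $S^{*}_{t-s}\nu_{\diffusion}\diffusion(\cdots)$ depends on $t$ there is no single martingale to which BDG applies uniformly in $t$. The paper instead invokes the maximal inequality for stochastic convolutions from \cite{hausenblas2001note}, which is exactly what delivers
\[
\E\sup_{t\leq T}\left\|\int_{0}^{t}S^{*}_{t-s}\nu_{\diffusion}\diffusion(s,\langle\mu_{s},\psi\rangle)\dW_{s}\right\|_{\Vspacedual}^{p}\leq C_{p}\,\E\left(\int_{0}^{T}\|S^{*}_{T-s}\nu_{\diffusion}\|_{\Vspacedual}^{2}\,|\diffusion(\cdots)|^{2}\ds\right)^{p/2}.
\]
This is the inequality you wrote down and labelled ``BDG'', but it is not BDG, and the deterministic Lemma does not bridge the gap.

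Second, your treatment of the range $1<p<2$ is too vague, and the device you mention (subadditivity of $x\mapsto x^{p/2}$ plus ``a further H\"older step'') does not produce $h_{\diffusion,2}$. The paper's route is: starting from $\E\bigl(\int_{0}^{T}g^{2}M_{s}^{2}\ds\bigr)^{p/2}$ with $g(s)=\|S^{*}_{T-s}\nu_{\diffusion}\|_{\Vspacedual}$ and $M_{s}=1+\sup_{r\leq s}\|\mu_{r}\|_{\Vspacedual}$, apply H\"older with exponents $\tfrac{p}{p-1}$ and $p$ to $g^{2}=g^{2(p-1)/p}\cdot g^{2/p}$, giving
\[
\Bigl(\int_{0}^{T}g^{2}M_{s}^{2}\ds\Bigr)^{p/2}\leq \Bigl(\int_{0}^{T}g^{2}\ds\Bigr)^{(p-1)/2}\Bigl(\int_{0}^{T}g^{2}M_{s}^{2p}\ds\Bigr)^{1/2},
\]
then write $M_{s}^{2p}\leq M_{T}^{p}M_{s}^{p}$, pull out $M_{T}^{p}$, and use Young's inequality on the resulting product $(M_{T}^{p})^{1/2}(\cdots)^{1/2}\leq \eps M_{T}^{p}+\tfrac{1}{4\eps}(\cdots)$. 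For $\eps$ small the term $C\eps\,\E\sup_{t\leq T}\|\mu_{t}\|_{\Vspacedual}^{p}$ is absorbed into the left-hand side, and only then does $h_{\diffusion,2}(T)=\bigl(\int_{0}^{T}g^{2}\bigr)^{p-1}$ emerge with a $T$-independent leading constant. So, contrary to your closing remark, in the case $p<2$ there \emph{is} already a partial closure (one absorption step) inside this Lemma; only the convolution-Gr\"onwall iteration is deferred to Lemma~\ref{lem:a_priori_estimate_for_tightness}. For $p\geq 2$ your H\"older/Jensen bookkeeping is correct and matches the paper.
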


\begin{proof}
Fix $T>0$. We use the maximal inequality (see \cite{hausenblas2001note}), and Jensen's inequality, so that for $p\geq 2$ we obtain
  \begin{align*}
   \E  &\sup_{t\in [0,T]}\|\mu_{t}\|_{\Wplusdualn}^{p} 
   \leq C_{p}\E \sup_{t\in [0,T]}\left\|S^{*}_{t}\mu_{0}\right\|^{p}_{\Wplusdualn}\\
   &\phantom{xx}+  C_{p,\operatorname{LG}}\left(\int_{0}^{T}\|S^{*}_{T-s}\nudrift\|_{\opn}\ds\right)^{p-1}\\
   &\phantom{xxxxxx}\times\int_{0}^{T}\|S^{*}_{T-s}\nudrift\|_{\opn}\E\sup_{r\leq s}\|\mu_{r}\|_{\Wplusdualn}^{p}\ds\\
    &\phantom{xx}+C_{p,\operatorname{LG}}\left(\int_{0}^{T}\|S^{*}_{T-s}\nudiffusion\|_{\opn}^{2}\ds\right)^{\frac{p}{2}-1}\\
    &\phantom{xxxxxx}\times\int_{0}^{T}\|S^{*}_{T-s}\nudiffusion\|_{\opn}^{2}\E\sup_{r\leq s}\|\mu_{r}\|_{\Wplusdualn}^{p}\ds \\
    &\phantom{xx}+C_{p,\operatorname{LG}}\left(\int_{0}^{T}\|S^{*}_{T-s}\nudrift\|_{\opn}\ds\right)^{p}\\
    &\phantom{xxxx}+C_{p,\operatorname{LG}} \left(\int_{0}^{T}\|S^{*}_{T-s}\nudiffusion\|_{\opn}^{2}\ds\right)^{\frac{p}{2}}.
\end{align*}
If $1\leq p <2$, then
\begin{align*}
        \E  &\sup_{t\in [0,T]}\|\mu_{t}\|_{\Wplusdualn}^{p} 
   \leq C_{p}\E \sup_{t\in [0,T]}\left\|S^{*}_{t}\mu_{0}\right\|^{p}_{\Wplusdualn}\\
   &\phantom{xx}+  C_{p,\operatorname{LG}}\left(\int_{0}^{T}\|S^{*}_{T-s}\nudrift\|_{\opn}\ds\right)^{p-1}\\
   &\phantom{xxxxxxxx}\times\int_{0}^{T}\|S^{*}_{T-s}\nudrift\|_{\opn}\E\sup_{r\leq s}\|\mu_{r}\|_{\Wplusdualn}^{p}\ds\\
   &\phantom{xx}+C_{p,\operatorname{LG}}\E\left(\left(\int_{0}^{T}\|S^{*}_{T-s}\nudiffusion\|_{\opn}^{2}\right)^{p-1}\right.\\
   &\phantom{xxxxxx}\left.\times\int_{0}^{T}\|S^{*}_{T-s}\nudiffusion\|_{\opn}^{2}\left(\sup_{r\leq s}\|\mu_{r}\|_{\Wplusdualn}^{p}\sup_{r\leq s}\|\mu_{r}\|_{\Wplusdualn}^{p}\right)\ds \right)^{1/2}\\
    &\phantom{xx}+C_{p,\operatorname{LG}}\left(\int_{0}^{T}\|S^{*}_{T-s}\nudrift\|_{\opn}\ds\right)^{p}\\
    &\phantom{xxxx}+C_{p,\operatorname{LG}} \left(\int_{0}^{T}\|S^{*}_{T-s}\nudiffusion\|_{\opn}^{2}\ds\right)^{\frac{p}{2}}\\
         &\leq C_{p}\E \sup_{t\in [0,T]}\left\|S^{*}_{t}\mu_{0}\right\|^{p}_{\Wplusdualn}\\
   &\phantom{xx}+  C_{p,\operatorname{LG}}\left(\int_{0}^{T}\|S^{*}_{T-s}\nudrift\|_{\opn}\ds\right)^{p-1}\\
   &\phantom{xxxxxxxx}\times\int_{0}^{T}\|S^{*}_{T-s}\nudrift\|_{\opn}\E\sup_{r\leq s}\|\mu_{r}\|_{\Wplusdualn}^{p}\ds\\
   &\phantom{xx}+C_{p,\operatorname{LG}}\E\left(\sup_{r\leq T}\|\mu_{r}\|_{\Wplusdualn}^{p}\left(\int_{0}^{T}\|S^{*}_{T-s}\nudiffusion\|_{\opn}^{2}\right)^{p-1}\right.\\
   &\phantom{xxxxxx}\left.\times\int_{0}^{t_{0}}\|S^{*}_{T-s}\nudiffusion\|_{\opn}^{2}\|\mu_{s}\|_{\Wplusdualn}^{p}\ds \right)^{1/2}\\
       &\phantom{xx}+C_{p,\operatorname{LG}}\left(\int_{0}^{T}\|S^{*}_{T-s}\nudrift\|_{\opn}\ds\right)^{p}\\
       &\phantom{xxxx}+C_{p,\operatorname{LG}} \left(\int_{0}^{T}\|S^{*}_{T-s}\nudiffusion\|_{\opn}^{2}\ds\right)^{\frac{p}{2}}\\
            &\leq C_{p}\E \sup_{t\in [0,T]}\left\|S^{*}_{t}\mu_{0}\right\|^{p}_{\Wplusdualn}\\
   &\phantom{xx}+ C_{p,\operatorname{LG}} \left(\int_{0}^{T}\|S^{*}_{T-s}\nudrift\|_{\opn}\ds\right)^{p-1}\\
   &\phantom{xxxxxxxx}\times\int_{0}^{T}\|S^{*}_{T-s}\nudrift\|_{\opn}\E\sup_{r\leq s}\|\mu_{r}\|_{\Wplusdualn}^{p}\ds\\
   &\phantom{xx}+C_{p,\operatorname{LG}}\eps\E\left(\sup_{r\leq T}\|\mu_{r}\|_{\Wplusdualn}^{p}\right)\\
&\phantom{xx}+C_{p}\frac{1}{\eps}\left(\int_{0}^{T}\|S^{*}_{T-s}\nudiffusion\|_{\opn}^{2}\right)^{p-1}\\
&\phantom{xxxxxxxx}\times\E\int_{0}^{T}\|S^{*}_{T-s}\nudiffusion\|_{\opn}^{2} \sup_{r\leq s}\|\mu_{r}\|_{\Wplusdualn}^{p}\ds \\
    &\phantom{xx}+C_{p,\operatorname{LG}}\left(\int_{0}^{T}\|S^{*}_{T-s}\nudrift\|_{\opn}\ds\right)^{p}\\
    &\phantom{xxxx}+C_{p,\operatorname{LG}} \left(\int_{0}^{T}\|S^{*}_{T-s}\nudiffusion\|_{\opn}^{2}\ds\right)^{\frac{p}{2}}.
\end{align*}
Choosing $\eps$ such that $C_{p,\operatorname{LG}}\eps <1$, bringing $C_{p,\operatorname{LG}}\eps\E\left(\sup_{r\leq T}\|\mu_{r}\|_{\Wplusdualn}^{p}\right)$ to the left hand side and dividing by $(1-C_{p,\operatorname{LG}}\eps)$ yields
\begin{align*}
        \E  &\sup_{t\in [0,T]}\|\mu_{t}\|_{\Wplusdualn}^{p} \leq C_{p,\operatorname{LG},\eps}\E \sup_{t\in [0,T]}\left\|S^{*}_{t}\mu_{0}\right\|^{p}_{\Wplusdualn}\\
   &\phantom{xx}+ C_{p,\operatorname{LG},\eps} \left(\int_{0}^{T}\|S^{*}_{T-s}\nudrift\|_{\opn}\ds\right)^{p-1}\\
   &\phantom{xxxxxxxx}\times\int_{0}^{T}\|S^{*}_{T-s}\nudrift\|_{\opn}\E\sup_{r\leq s}\|\mu_{r}\|_{\Wplusdualn}^{p}\ds\\ 
   &\phantom{xx}+C_{p,\operatorname{LG},\eps}\left(\int_{0}^{T}\|S^{*}_{T-s}\nudiffusion\|_{\opn}^{2}\right)^{p-1}\\
   &\phantom{xxxxxxxx}\times\int_{0}^{T}\|S^{*}_{T-s}\nudiffusion\|_{\opn}^{2}\E\left(\sup_{r\leq s}\|\mu_{r}\|_{\Wplusdualn}^{p}\ds \right)\\
      &\phantom{xx}+C_{p,\operatorname{LG},\eps}\left(\int_{0}^{T}\|S^{*}_{T-s}\nudrift\|_{\opn}\ds\right)^{p}\\
      &\phantom{xxxx}+C_{p,\operatorname{LG},\eps} \left(\int_{0}^{T}\|S^{*}_{T-s}\nudiffusion\|_{\opn}^{2}\ds\right)^{\frac{p}{2}}.
\end{align*}

\end{proof}
\begin{lemma}\label{lem:a_priori_estimate_for_tightness}
    Let the Assumptions \ref{A:A_3:Narrower_assumption_nu}, \ref{A:assumption_general_linear_growth} be satisfied, $1<p$ and let $\mu$ be a mild solution of \eqref{eqn:SEE_strong_formulation}, with $\mu_{0}\in L^{p}(\Omega,\Wplusdualn)$, where $\Wplusdualn$ corresponds to the choice of space for which \eqref{eqn:drift_diffusion_kernel_estimate_semigroup} holds. Then
    \begin{align}
      \E  \sup_{t\leq T}\|\mu_{t}\|_{\Wplusdualn}^{p} 
     &\leq C_{p,\drift,\diffusion,T,\mu_{0}}.
\end{align}
\end{lemma}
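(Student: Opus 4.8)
The plan is to feed the estimate of Lemma~\ref{lem:a-priori_estimate_general} into a (singular) Gronwall argument, exploiting that the hypothesis places us precisely on the space $\Vspacedual$ for which \eqref{eqn:drift_diffusion_kernel_estimate_semigroup} holds. Set $\phi(T):=\E\sup_{t\le T}\|\mu_t\|_{\Vspacedual}^{p}$, which is nondecreasing in $T$. For $p\ge 2$ I would first use Jensen's inequality, $\big(\E\sup_{r\le s}\|\mu_r\|_{\Vspacedual}^{2}\big)^{p/2}\le \E\sup_{r\le s}\|\mu_r\|_{\Vspacedual}^{p}$ (convexity of $y\mapsto y^{p/2}$), to rewrite the diffusion term of Lemma~\ref{lem:a-priori_estimate_general} in terms of $\phi$; for $1<p<2$ the corresponding term already appears in this form after the $\eps$-Young step carried out in the proof of that lemma. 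Since $s\mapsto h_{\drift,j}(s)$ and $s\mapsto h_{\diffusion,j}(s)$ are nondecreasing (nonnegative powers of nondecreasing integrals), one may bound them on $[0,T]$ by their values at $T$, and Lemma~\ref{lem:a-priori_estimate_general} collapses to an inequality of the form
\begin{align*}
  \phi(\widetilde T)\;\le\; A(T)+\int_{0}^{\widetilde T}g(\widetilde T-s)\,\phi(s)\ds,\qquad \widetilde T\le T,
\end{align*}
where $g(r):=C_{p,\operatorname{LG},T}\big(\|S^{*}_{r}\nu_{\drift}\|_{\Vspacedual}+\|S^{*}_{r}\nu_{\diffusion}\|_{\Vspacedual}^{2}\big)$ and $A(T):=C\,\E\sup_{t\le T}\|S^{*}_{t}\mu_{0}\|_{\Vspacedual}^{p}+C\big(\int_{0}^{T}\|S^{*}_{T-s}\nu_{\drift}\|_{\Vspacedual}\ds\big)^{p}+C\big(\int_{0}^{T}\|S^{*}_{T-s}\nu_{\diffusion}\|_{\Vspacedual}^{2}\ds\big)^{p/2}$.

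Next I would check that $g\in L^{1}(0,T)$ and $A(T)<\infty$. By \eqref{eqn:drift_diffusion_kernel_estimate_semigroup} (equivalently \eqref{eqn:kernel_estimate_semigroup} with $a_{\drift},a_{\diffusion}>0$, obtained from Lemma~\ref{lem:nu_semigroup_properties}) the kernels $r\mapsto\|S^{*}_{r}\nu_{\drift}\|_{\Vspacedual}$ and $r\mapsto\|S^{*}_{r}\nu_{\diffusion}\|_{\Vspacedual}^{2}$ belong to $L^{1}(0,T)$, which simultaneously gives $g\in L^{1}(0,T)$ and finiteness of the last two summands of $A(T)$. For the first summand, $S^{*}$ is a strongly continuous semigroup on $\Vspacedual$ by Lemma~\ref{lem:semigroup_and_adjoint_semigroup}, hence uniformly bounded on $[0,T]$, so $\E\sup_{t\le T}\|S^{*}_{t}\mu_{0}\|_{\Vspacedual}^{p}\le C_{T}\,\E\|\mu_{0}\|_{\Vspacedual}^{p}<\infty$ since $\mu_{0}\in L^{p}(\Omega,\Vspacedual)$. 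Thus $A(T)<\infty$, and being nondecreasing it is bounded on $[0,T]$.

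It remains to close the loop. Because $g$ has only an integrable singularity at the origin ($g(r)=O(r^{-(1-a)})$ with $a=\min\{a_{\drift},a_{\diffusion}\}>0$), the classical Gronwall lemma does not apply directly; instead I would — mirroring the patching step in the proof of Theorem~\ref{thm:existence_uniqueness_lipschitz} — pick $\delta>0$ with $\int_{0}^{\delta}g(r)\dr\le\tfrac12$ and iterate over the $\lceil T/\delta\rceil$ intervals $[k\delta,(k+1)\delta]$, using monotonicity of $\phi$ to obtain $\phi((k+1)\delta)\le 2A(T)+2\|g\|_{L^{1}(0,T)}\phi(k\delta)$, whence $\phi(T)<\infty$ by finite induction starting from $\phi(0)=\E\|\mu_{0}\|_{\Vspacedual}^{p}$. (Alternatively one may invoke the generalized Gronwall--Henry/resolvent lemma directly.) To make these manipulations legitimate — they require $\phi$ to be a priori finite in order to absorb a term to the left-hand side — I would first run the whole scheme for the stopped process $\mu^{\tau_{N}}$ with $\tau_{N}:=\inf\{t:\|\mu_{t}\|_{\Vspacedual}>N\}$, for which the left-hand side is trivially $\le N^{p}$; the bound one gets is uniform in $N$, and letting $N\to\infty$ (so $\tau_{N}\to\infty$ a.s., using that in the setting where this lemma is applied $\mu\in L^{2}(\Omega,C([0,T],\Vspacedual))$ has a.s.\ bounded paths) together with monotone convergence yields $\E\sup_{t\le T}\|\mu_{t}\|_{\Vspacedual}^{p}\le C_{p,\drift,\diffusion,T,\mu_{0}}$. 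The main obstacle is this bookkeeping of fitting the singular-kernel Gronwall estimate together with the a priori finiteness via localization; the genuine analytic input — the $L^{1}$-in-time bound on $\|S^{*}_{\cdot}\nu_{\drift}\|_{\Vspacedual}$ and $\|S^{*}_{\cdot}\nu_{\diffusion}\|_{\Vspacedual}^{2}$ — is already supplied by Lemma~\ref{lem:nu_semigroup_properties} and built into the choice of $\Vspacedual$.
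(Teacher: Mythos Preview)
Your approach is correct and essentially matches the paper's: both feed Lemma~\ref{lem:a-priori_estimate_general} into a singular-kernel Gronwall argument, using that on the chosen space $\Vspacedual$ one has $\|S^{*}_{\cdot}\nu_{\drift}\|_{\Vspacedual}\in L^{1}(0,T)$ and $\|S^{*}_{\cdot}\nu_{\diffusion}\|_{\Vspacedual}^{2}\in L^{1}(0,T)$. The paper differs only cosmetically---it separates the drift and diffusion contributions via $G+F\le 2\max\{G,F\}$ and then directly invokes the singular Gronwall lemma \cite[Lemma~2.2]{zhang2010stochastic} (precisely the ``resolvent'' alternative you mention), rather than carrying out your self-contained $\delta$-step iteration with localization.
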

\begin{proof}
    By Lemma \ref{lem:a-priori_estimate_general} and Assumption \ref{A:assumption_general_linear_growth},
     \begin{align*}
   \E  \sup_{t\leq T}\|\mu_{t}\|_{\Wplusdualn}^{p} 
     &\leq C_{p,\operatorname{LG}}\E \sup_{t\leq T}\left\|S^{*}_{t}\mu_{0}\right\|^{p}_{\Wplusdualn}\nonumber\\
     &\phantom{xx}+ C_{p,\operatorname{LG}} h_{\drift,j}(T)\int_{0}^{T}\frac{C(T)}{(T-s)^{1-a_{1}}}\E\sup_{r\leq s}\|\mu_{r}\|_{\Wplusdualn}^{2}\ds\nonumber\\
     &\phantom{xx}+C_{p,\operatorname{LG}} h_{\diffusion,j}(T)\int_{0}^{T}\frac{C(T)}{(T-s)^{2-2a_{2}}}\E\sup_{r\leq s}\|\mu_{r}\|_{\Wplusdualn}^{2}\ds\\
      &\phantom{xx}+C_{p,\operatorname{LG}}T^{(p-1)a_{1}}+C_{p,\operatorname{LG}} T^{(\frac{p}{2}-1)(2a_{2}-1)}.
\end{align*}
\begin{align*}
    &h_{\drift,1}(T)=h_{\drift,2}(T)= C(T) T^{(p-1)a_{1}}, \\
    &\qquad h_{\diffusion,1}(T)=C(T)T^{(\frac{p}{2}-1)(2a_{2}-1)},\qquad h_{\diffusion,2}(T)=C(T)T^{(p-1)(2a_{2}-1)}.
\end{align*}
     Let $u,A,G,F$ be non negative functions on $\R_{+}$. Since
\begin{align*}
    u(t)\leq A(t) + G(t) +F(t) \leq A(t) + 2\max\{G(t),F(t)\},
\end{align*}
it suffices to estimate $u(t)\leq A(t) + 2 G(t)$ and $u(t)\leq A(t) + 2 F(t)$ separately. In our case,
     \begin{align*}
   \E  \sup_{t\leq T}\|\mu_{t}\|_{\Wplusdualn}^{p} 
     &\leq C_{p,\operatorname{LG}}\E \sup_{t\leq T}\left\|S^{*}_{t}\mu_{0}\right\|^{p}_{\Wplusdualn}\\
     &\phantom{xx}+ 2C_{p,\drift,\diffusion,T} h_{\drift,j}(T)\int_{0}^{T}\frac{1}{(T-s)^{1-a_{1}}}\E\sup_{r\leq s}\|\mu_{r}\|_{\Wplusdualn}^{2}\ds\\
      &\phantom{xx}+2C_{p,\operatorname{LG}}T^{(p-1)a_{1}},
\end{align*}
     \begin{align*}
   \E  \sup_{t\leq T}\|\mu_{t}\|_{\Wplusdualn}^{p} 
     &\leq C_{p,\operatorname{LG}}\E \sup_{t\leq T}\left\|S^{*}_{t}\mu_{0}\right\|^{p}_{\Wplusdualn}\\
     &\phantom{xx}+2C_{p,\drift,\diffusion,T} h_{\diffusion,j}(T)\int_{0}^{T}\frac{1}{(T-s)^{2-2a_{2}}}\E\sup_{r\leq s}\|\mu_{r}\|_{\Wplusdualn}^{2}\ds\\
      &\phantom{xx}+2C_{p,\operatorname{LG}} T^{(\frac{p}{2}-1)(2a_{2}-1)}.
\end{align*}
We can include the terms $b_{1}$ and $b_{2,j}$ into the constant since it already depends on $T$. \cite[Lemma 2.2]{zhang2010stochastic} now yields
\begin{align*}
      \E  \sup_{t\leq T}\|\mu_{t}\|_{\Wplusdualn}^{p} 
     &\leq C_{p,\operatorname{LG},T}.
\end{align*}
\end{proof}

Now we consider the time-regularity of $t\mapsto \mu_{t}$. 
\begin{lemma}\label{lem:time_regularity}
    Let Assumptions \ref{A:assumption_general_linear_growth} and \ref{A:A_3:Narrower_assumption_nu} be satisfied, $\gamma \in [0,1]$ be arbitrary, $1\leq p<\infty$. Recall the weights from Definition \ref{def:weights_triple_for_analysis} and
    let $0\leq \gamma_{\drift},2\gamma_{\diffusion}\leq 1$ be such that  $\theta_{\nudrift}-\gamma_{\drift}\leq \etaminus,  \theta_{\nudiffusion}-\gamma_{\diffusion}\leq \etaminus$.
    Assume $\mu\in L^{p}(\Omega,L^{\infty}(0,
T;\Wplusdualn))$ to be a mild solution to \eqref{eqn:SEE_strong_formulation} with initial condition $\mu_{0}\in L^{p}(\Omega,\Wplusdualn)$, where $\Wplusdualn$ corresponds to the choice of space for which \eqref{eqn:drift_diffusion_kernel_estimate_semigroup} holds. Then for $s\leq t$ with $|t-s|\leq 1$, the following estimate holds.
    \begin{align*}
         \E\|\mu_{t}-\mu_{s}\|_{\Wminustwodualn}^{p}&\leq
      \E\|\mu_{t}-\mu_{s}\|_{\Wminusdualn}^{p}\\
      &\leq   C_{p,\operatorname{LG}}\left(|t-s|^{(1-\gamma_{\drift})p}+|t-s|^{(1-2\gamma_{\diffusion})\frac{p}{2}}+|t-s|^{p(\etaminus-\etaplus \wedge 1)}\right).
    \end{align*}
\end{lemma}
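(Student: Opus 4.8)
The plan is to exploit the evolution (``restart'') property of the analytically mild solution: inserting the mild formula for $\mu_{s}$ into $S^{*}_{t-s}\mu_{s}$, using $S^{*}_{t-r}=S^{*}_{t-s}S^{*}_{s-r}$, and using that the bounded operator $S^{*}_{t-s}$ commutes with the stochastic integral, one obtains for $s\leq t$
\[
\mu_{t}-\mu_{s}=\bigl(S^{*}_{t-s}-\mathrm{Id}\bigr)\mu_{s}+\int_{s}^{t}S^{*}_{t-r}\nu_{\drift}\drift(r,\langle\mu_{r},\psi\rangle)\dr+\int_{s}^{t}S^{*}_{t-r}\nu_{\diffusion}\diffusion(r,\langle\mu_{r},\psi\rangle)\dW_{r}.
\]
The first asserted inequality is just the continuous embedding $\Vtimespacedual=W^{-1,2}_{\frac{1}{w_{-}}}\hookrightarrow W^{-2,2}_{\frac{1}{w_{-}}}=\Vprimespacedual$ (the unit ball of $W^{2,2}_{w_{-}}$ lies inside that of $W^{1,2}_{w_{-}}$), so it suffices to estimate the three terms above in $W^{-1,2}_{\frac{1}{w_{-}}}$, raise to the power $p$, and take expectations. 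The crucial input throughout is the hypothesis $\mu\in L^{p}(\Omega,L^{\infty}(0,T;\Vspacedual))$ (equivalently the a priori bound $\E\sup_{r\leq T}\|\mu_{r}\|_{\Vspacedual}^{p}<\infty$ of Lemma \ref{lem:a_priori_estimate_for_tightness}), which lets me pull a finite factor $\E(1+\sup_{r\leq T}\|\mu_{r}\|_{\Vspacedual})^{p}$ out of every estimate.

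For the semigroup term I would observe that $(1+x)^{2\gamma}(w_{+})_{i}(x)\leq (w_{-})_{i}(x)$ holds exactly when $\gamma\leq\etaminus-\etaplus$, so applying the second estimate of Lemma \ref{lem:time_difference_semigroup_regularity_weighted_space} with $m=1$, the weight pair $(w_{+},w_{-})$, and $\gamma=(\etaminus-\etaplus)\wedge1\in[0,1]$ gives, for $|t-s|\leq1$,
\[
\bigl\|(S^{*}_{t-s}-\mathrm{Id})\mu_{s}\bigr\|_{W^{-1,2}_{\frac{1}{w_{-}}}}\leq C\,\|\mu_{s}\|_{\Vspacedual}\,|t-s|^{(\etaminus-\etaplus)\wedge1};
\]
raising to the power $p$ and taking expectations produces the term $C|t-s|^{p((\etaminus-\etaplus)\wedge1)}$.

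For the drift and diffusion terms the inputs are the pathwise (resp. Burkholder--Davis--Gundy, exactly as in the proof of Lemma \ref{lem:a-priori_estimate_general}) bounds
\[
\Bigl\|\int_{s}^{t}S^{*}_{t-r}\nu_{\drift}\drift(r,\langle\mu_{r},\psi\rangle)\dr\Bigr\|_{W^{-1,2}_{\frac{1}{w_{-}}}}\leq\int_{s}^{t}\|S^{*}_{t-r}\nu_{\drift}\|_{W^{-1,2}_{\frac{1}{w_{-}}}}\,|\drift(r,\langle\mu_{r},\psi\rangle)|\dr,
\]
\[
\E\Bigl\|\int_{s}^{t}S^{*}_{t-r}\nu_{\diffusion}\diffusion(r,\langle\mu_{r},\psi\rangle)\dW_{r}\Bigr\|_{W^{-1,2}_{\frac{1}{w_{-}}}}^{p}\leq C_{p}\,\E\Bigl(\int_{s}^{t}\|S^{*}_{t-r}\nu_{\diffusion}\|_{W^{-1,2}_{\frac{1}{w_{-}}}}^{2}\,|\diffusion(r,\langle\mu_{r},\psi\rangle)|^{2}\dr\Bigr)^{p/2},
\]
combined with Lemma \ref{lem:nu_semigroup_properties} (applied with the weight $w_{-}$, i.e. $\alpha=-\etaminus$, and the exponents $\gamma_{\drift},\gamma_{\diffusion}$, admissible precisely because $\theta_{\nu_{\drift}}-\gamma_{\drift}\leq\etaminus$ and $\theta_{\nu_{\diffusion}}-\gamma_{\diffusion}\leq\etaminus$), which yields $\|S^{*}_{t-r}\nu_{\drift}\|_{W^{-1,2}_{\frac{1}{w_{-}}}}\leq C(T)(t-r)^{-\gamma_{\drift}}$ and $\|S^{*}_{t-r}\nu_{\diffusion}\|_{W^{-1,2}_{\frac{1}{w_{-}}}}^{2}\leq C(T)(t-r)^{-2\gamma_{\diffusion}}$, and the linear growth bound \ref{A:assumption_general_linear_growth} together with $|\langle\mu_{r},\psi\rangle|\leq\|\psi\|_{\Vspace}\|\mu_{r}\|_{\Vspacedual}$, which turns $|\drift|$ and $|\diffusion|$ into $C(1+\sup_{r\leq T}\|\mu_{r}\|_{\Vspacedual})$. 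Pulling this supremum out, integrating $\int_{s}^{t}(t-r)^{-\gamma_{\drift}}\dr$ and $\int_{s}^{t}(t-r)^{-2\gamma_{\diffusion}}\dr$ (finite, since one may always arrange $\gamma_{\drift}<1$ and $2\gamma_{\diffusion}<1$, the constraints being binding only from above), and raising to the appropriate power yields the remaining terms $C|t-s|^{(1-\gamma_{\drift})p}$ and $C|t-s|^{(1-2\gamma_{\diffusion})p/2}$. Summing the three contributions and absorbing all $T$-, coefficient-, $\mu_{0}$-, $\psi$- and weight-dependent constants into $C_{p,\operatorname{LG}}$ gives the claim. The step I would treat most carefully — and the only genuinely delicate point — is matching the admissible Hölder exponents to the two weight comparisons ($(1+x)^{2\gamma}w_{+}\leq w_{-}$ for the semigroup term, and the exponent bookkeeping of Lemma \ref{lem:nu_semigroup_properties} for the two convolution terms) while simultaneously keeping the time singularities integrable; the rest is a routine assembly of the estimates of Section \ref{sec:operator_and_semigroup} with the a priori bound of Lemma \ref{lem:a_priori_estimate_for_tightness}.
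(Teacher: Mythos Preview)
Your proposal is correct and follows essentially the same route as the paper's proof: the same three-term decomposition via the restart identity, Lemma \ref{lem:time_difference_semigroup_regularity_weighted_space} with the weight pair $(w_{+},w_{-})$ and $\gamma=(\etaminus-\etaplus)\wedge1$ for the semigroup increment, and the convolution-kernel bounds on $\|S^{*}_{t-r}\nu_{\drift}\|$, $\|S^{*}_{t-r}\nu_{\diffusion}\|$ in $W^{-1,2}_{1/w_{-}}$ together with linear growth and the a priori estimate for the integral terms. The only cosmetic difference is that you invoke Lemma \ref{lem:nu_semigroup_properties} directly (with $\alpha=-\etaminus$), whereas the paper reaches the identical bound via Lemma \ref{lem:nu_dual_weighted_space_particular} followed by Lemma \ref{lem:improvement_semigroup_weighted_spaces}; the exponent bookkeeping and the final $|t-s|$ powers are the same.
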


\begin{proof}[Proof of Lemma \ref{lem:time_regularity}]
\begin{align*}
    \E\|\mu_{t}-\mu_{s}\|_{\Wminusdualn}^{p}\leq C\left( \E\|\mu_{t}-S^{*}_{t-s}\mu_{s}\|_{\Wminusdualn}^{p}+ \E\|S^{*}_{t-s}\mu_{s}-\mu_{s}\|_{\Wminusdualn}^{p}\right).
\end{align*}

By Lemma \ref{lem:nu_is_in_dual}, $\nudrift\in W^{-1,2}_{\frac{1}{w_{\drift}}}$, $\nudiffusion\in W^{-1,2}_{\frac{1}{w_{\diffusion}}}$ with $(w_{\drift})_{i}=(1+x)^{2\theta_{\nudrift}-1+2i}$ and $(w_{\diffusion})_{i}=(1+x)^{2\theta_{\nudiffusion}-1+2i}$, for $i\geq 0$ and $x\in \R_{+}$.
Lemma \ref{lem:improvement_semigroup_weighted_spaces} then implies
\begin{align*}
    &\|S^{*}_{t}\nudrift\|_{W^{-1,2}_{\frac{1}{\weightminus}}}\leq C_{T} \|\nudrift\|_{W^{-1,2}_{\frac{1}{w_{\drift}}}}|t|^{-\gamma_{\drift}}\\
    &\|S^{*}_{t}\nudiffusion\|_{W^{-1,2}_{\frac{1}{\weightminus}}}\leq C _{T}\|\nudiffusion\|_{W^{-1,2}_{\frac{1}{w_{\diffusion}}}}|t|^{-\gamma_{\diffusion}},
\end{align*}
for any $0<t\leq T$.
For simplicity, we will set
\begin{align*}
    &\theta_{\nudrift}-\gamma_{\drift}=\theta_{\nudiffusion}-\gamma_{\diffusion}=\etaminus.
\end{align*}

    \begin{align*}
    \E&\|\mu_{t}-S^{*}_{t-s}\mu_{s}\|^{p}_{\Wminusdualn}\leq C_{p}\E\|\mu_{s}-S^{*}_{s-s}\mu_{s}\|^{p}_{\Wminusdualn}\\
    &\phantom{xx}+C_{p}\E\left(\int_{s}^{t}\|S^{*}_{r-s}\nudrift \drift(r,\langle \mu_{r},\psi\rangle))\|_{\Wminusdualn}\dr\right)^{p}\\
    &\phantom{xxxx}+C_{p}\E\left\|\int_{s}^{t}S^{*}_{r-s}\nudiffusion \diffusion(r,\langle \mu_{r},\psi\rangle))\dW_{r}\right\|_{\Wminusdualn}^{p}\\
    &\leq C_{p}\E\left(\int_{s}^{t}\|S^{*}_{r-s}\nudrift\|_{\opnminus}|\drift(r,\langle \mu_{r},\psi\rangle)|\dr\right)^{p}\\
    &\phantom{xx}+C_{p}\E\left(\int_{s}^{t}\|S^{*}_{r-s}\nudiffusion\|_{\opnminus}^{2}| \diffusion(r,\langle \mu_{r},\psi\rangle)|^{2}\dr\right)^{p/2}\\
    &\leq \E\left(\int_{s}^{t}\|S^{*}_{r-s}\nudrift\|_{\opnminus}\dr\right)^{p-1}\int_{s}^{t}\|S^{*}_{r-s}\nudrift\|_{\opnminus}|\drift(r,\langle \mu_{r},\psi\rangle))|^{p}\dr\\
    &\phantom{xx}+\E\left(\int_{s}^{t}\|S^{*}_{r-s}\nudiffusion\|_{\opnminus}^{2}| \diffusion(r,\langle \mu_{r},\psi\rangle)|^{2}\dr\right)^{p/2}\\
    &\leq \E\left(\int_{s}^{t}\frac{1}{(r-s)^{\gamma_{\drift}}}\dr\right)^{p-1}\int_{s}^{t}\frac{1}{(r-s)^{\gamma_{\drift}}}|\drift(r,\langle \mu_{r},\psi\rangle))|^{p}\dr\\
    &\phantom{xx}+\E\left(\int_{s}^{t}\frac{1}{(r-s)^{2\gamma_{\diffusion}}}\dr\right)^{\frac{p}{2}-1}\left(\int_{s}^{t}\frac{1}{(r-s)^{2\gamma_{\diffusion}}} |\diffusion(r,\langle \mu_{r},\psi\rangle)|^{2}\dr\right)^{p/2}\\
    &\leq C_{p,\operatorname{LG}} \E |t-s|^{(1-\gamma_{\drift})(p-1)}\int_{s}^{t}\frac{1+\| \mu_{r}\|_{\Wplusdualn}^{p}}{(r-s)^{\gamma_{\drift}}}\dr\\
    &\phantom{xx}+\E|t-s|^{(2\gamma_{\diffusion}\left(\frac{p}{2}-1\right)}\int_{s}^{t}\frac{1+\| \mu_{r}\|_{\Wplusdualn}^{p}}{(r-s)^{2\gamma_{\diffusion}}}\dr\\
    &\leq C_{p,\operatorname{LG}}\left(|t-s|^{(1-\gamma_{\drift})(p-1)}\int_{s}^{t}\frac{1}{(r-s)^{\gamma_{\drift}}}\dr+|t-s|^{2\gamma_{\diffusion}\left(\frac{p}{2}-1\right)}\int_{s}^{t}\frac{1}{(r-s)^{2\gamma_{\diffusion}}}\dr\right)\\
    &\phantom{xxxx}\times\E\sup_{t\leq T}\left(1+\| \mu_{r}\|_{\Wplusdualn}\right)^{p}\\
    &\leq C_{p,\operatorname{LG}}\left(|t-s|^{(1-\gamma_{\drift})(p-1)+(1-\gamma_{\drift})}+|t-s|^{(1-2\gamma_{\diffusion})\left(\frac{p}{2}-1\right)+1-2\gamma_{\diffusion}}\right)\\
    &\phantom{xxxx}\times\E\sup_{t\leq T}\left(1+\| \mu_{r}\|_{\Wplusdualn}^{p}\right)\\
    &\leq C_{p,\operatorname{LG}}\left(|t-s|^{(1-\gamma_{\drift})p}+|t-s|^{(1-2\gamma_{\diffusion})\frac{p}{2}}\right).
\end{align*}
For the second estimate, recall that $(\weightplus)_{i}(x)=(1+x)^{2\etaplus-1+2i}$ and use Lemma \ref{lem:time_difference_semigroup_regularity_weighted_space} to conclude that (since $\etaminus >\etaplus$)
\begin{align*}
     \E\|S^{*}_{t-s}\mu_{s}-\mu_{s}\|_{\Wminusdualn}\leq C\|\mu_{s}\|_{\Wplusdualn}^{p}|t-s|^{(\etaminus-\etaplus) \wedge 1}.
\end{align*}
 In summary, we obtain
  \begin{align*}
      \E\|\mu_{t}-\mu_{s}\|_{\Wminusdualn}^{p}\leq   C_{p,\operatorname{LG}}\left(|t-s|^{(1-\gamma_{\drift})p}+|t-s|^{(1-2\gamma_{\diffusion})\frac{p}{2}}+|t-s|^{p((\etaminus-\etaplus) \wedge 1)}\right).
  \end{align*}
\end{proof}

\begin{corollary}\label{corr:aldous_condition}
    Let $(\tau_{n})_{n\in\N}$ be a sequence of stopping times such that $0\leq \tau_{n}\leq T$ and let $\theta>0$, then
    \begin{align*}
         \E\|\mu_{\tau_{n}+\theta}-\mu_{\tau_{n}}\|_{\Wminustwodualn}^{p}\leq C_{p,\operatorname{LG}}\left(|\theta|^{(1-\gamma_{\drift})p}+|\theta|^{(1-2\gamma_{\diffusion})\frac{p}{2}}+|\theta|^{p(\etaminus-\etaplus \wedge 1)}\right).
    \end{align*}
\end{corollary}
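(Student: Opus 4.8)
The plan is to reduce the statement to the deterministic‑time estimate of Lemma~\ref{lem:time_regularity} by restarting the equation at $\tau_{n}$. Fix $n$ and the (deterministic) increment $\theta>0$, and assume, as is the case in the intended tightness application, that $\tau_{n}+\theta\le T$ and $\theta\le 1$ (otherwise one replaces $\tau_{n}+\theta$ by $(\tau_{n}+\theta)\wedge T$). Let $\widetilde{\mathcal{F}}_{v}:=\mathcal{F}_{\tau_{n}+v}$, let $\widetilde{W}_{v}:=W_{\tau_{n}+v}-W_{\tau_{n}}$, which is a $\widetilde{\mathcal{F}}$‑Wiener process by the strong Markov property of $W$, and let $\widetilde{\mu}_{v}:=\mu_{\tau_{n}+v}$, which is $\widetilde{\mathcal{F}}$‑adapted with continuous paths. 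Since $\mu$ has continuous paths and both sides of the mild identity \eqref{eqn:SEE_eqn_mild} are continuous in $t$, that identity holds for all $t\in[0,T]$ simultaneously $\Prob$‑a.s.; evaluating it at $t=\tau_{n}+\theta$ and $t=\tau_{n}$, and using $S^{*}_{\tau_{n}+\theta-r}=S^{*}_{\theta}S^{*}_{\tau_{n}-r}$ for $r\le\tau_{n}$ together with the time shift in the stochastic integral, yields
\begin{align*}
\widetilde{\mu}_{\theta}-\widetilde{\mu}_{0}
&=(S^{*}_{\theta}-\operatorname{Id})\widetilde{\mu}_{0}
+\int_{0}^{\theta}S^{*}_{\theta-v}\nu_{\drift}\drift\bigl(\tau_{n}+v,\langle\widetilde{\mu}_{v},\psi\rangle\bigr)\dv\\
&\phantom{xx}+\int_{0}^{\theta}S^{*}_{\theta-v}\nu_{\diffusion}\diffusion\bigl(\tau_{n}+v,\langle\widetilde{\mu}_{v},\psi\rangle\bigr)\d\widetilde{W}_{v}.
\end{align*}

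This is precisely the decomposition estimated in the proof of Lemma~\ref{lem:time_regularity} with $(s,t)$ replaced by $(0,\theta)$, so I would repeat those three estimates verbatim. For the term $(S^{*}_{\theta}-\operatorname{Id})\widetilde{\mu}_{0}$, Lemma~\ref{lem:time_difference_semigroup_regularity_weighted_space} applied with $\gamma=(\etaminus-\etaplus)\wedge1$ (admissible since $(1+x)^{2\gamma}(w_{+})_{i}\le(w_{-})_{i}$) gives $\|(S^{*}_{\theta}-\operatorname{Id})\widetilde{\mu}_{0}\|_{\Vtimespacedual}\le C_{\gamma}\|\widetilde{\mu}_{0}\|_{\Vspacedual}\,\theta^{(\etaminus-\etaplus)\wedge1}$, so taking $p$‑th moments and invoking the a priori bound $\E\sup_{t\le T}\|\mu_{t}\|_{\Vspacedual}^{p}<\infty$ of Lemma~\ref{lem:a_priori_estimate_for_tightness} contributes $C\,\theta^{p((\etaminus-\etaplus)\wedge1)}$. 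For the deterministic convolution I would estimate pathwise using $\|S^{*}_{u}\nu_{\drift}\|_{\Vtimespacedual}\le C_{T}u^{-\gamma_{\drift}}$ (Lemma~\ref{lem:improvement_semigroup_weighted_spaces}), Hölder's inequality, the linear growth \ref{A:assumption_general_linear_growth}, and the same a priori bound, obtaining $C_{p,\operatorname{LG}}\,\theta^{(1-\gamma_{\drift})p}$. For the stochastic convolution I would apply the maximal inequality for stochastic convolutions (\cite{hausenblas2001note}) on the shifted stochastic basis — legitimate because $\widetilde{W}$ is a Wiener process and the integrand is $\widetilde{\mathcal{F}}$‑adapted — and then use $\|S^{*}_{u}\nu_{\diffusion}\|_{\Vtimespacedual}^{2}\le C_{T}u^{-2\gamma_{\diffusion}}$, Hölder and linear growth as before, obtaining $C_{p,\operatorname{LG}}\,\theta^{(1-2\gamma_{\diffusion})\frac{p}{2}}$. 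Summing the three contributions and using $\|\cdot\|_{\Vprimespacedual}\le C\|\cdot\|_{\Vtimespacedual}$ gives the asserted inequality.

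The one genuinely nontrivial point is the restart step: that $\widetilde{\mu}$ satisfies the above mild identity, i.e. that the mild formulation can be evaluated from the random initial time $\tau_{n}$. This is standard but must be justified, e.g. by first checking the identity for stopping times taking finitely many values (where it reduces to the semigroup property and additivity of the ordinary and stochastic integrals) and then passing to the limit along a decreasing sequence of such stopping times converging to $\tau_{n}$, using the path continuity of $\mu$ and of the stochastic convolution. Everything after that is a line‑by‑line repetition of Lemma~\ref{lem:time_regularity}, with the single cosmetic change that the a priori supremum bound of Lemma~\ref{lem:a_priori_estimate_for_tightness} absorbs the $T$‑ and $\mu_{0}$‑dependence into the constant.
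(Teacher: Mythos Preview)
Your proposal is correct. The paper itself offers no proof of this corollary at all: it is stated immediately after Lemma~\ref{lem:time_regularity} and then used, so the intended argument is simply that the bound in Lemma~\ref{lem:time_regularity} depends on $s,t$ only through $|t-s|$ and on $\mu$ only through $\E\sup_{r\le T}\|\mu_{r}\|_{\Vspacedual}^{p}$, hence carries over to stopping times. Your restart-at-$\tau_{n}$ argument makes this precise and is exactly the justification the paper leaves implicit; the extra care you take in legitimising the mild identity at a random initial time (via approximation by simple stopping times and path continuity) is the right way to fill the gap and is more than the paper provides.
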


\subsubsection{Tightness}
Equipped with these two estimates, we process to identify a convergent subsequence and potential limit.
\begin{definition}
    For a separable Banach space $V$, we define $C([0,T],V^{\operatorname{weak}^{*}})$, as the space of weakly continuous functions $u\colon [0,T]\rightarrow V$, equipped with the topology $\mathcal{T}$, such that for all $h$ in the predual of $V$, denoted by ${^{\prime}\!}V$, the mapping
    \begin{align*}
        C([0,T],V^{\operatorname{weak}^{*}})\ni v \mapsto \langle v(\cdot),h\rangle_{V\times {^{\prime}\!}V}\in C([0,T],\R)
    \end{align*}
    is continuous. In particular $v_n \rightarrow v$ in $C\left([0, T] ; V^{\operatorname{weak}^{*}}\right)$ iff for all $h \in {^{\prime}\!}V$ :
    \begin{align*}
        \lim _{n \rightarrow \infty} \sup _{t \in[0, T]}\left|\left\langle v_n(t)-v(t) , h\right\rangle_{V\times {^{\prime}\!}V}\right|=0 .
    \end{align*}
    We write $C\left([0, T] ; V^{\operatorname{weak}}\right)$ for the same space but equipped with the weak topology.
\end{definition}

Consider the ball
$$
B_{r}:=\left\{y \in V ; \quad\|y\|_{V} \leq r\right\}.
$$
If $V$ is separable, the weak$^{*}$ topology induced on $B_{r}$ is metrizable. Let $q$ denote the metric compatible with the weak$^{*}$ topology on $B_{r}$. Let us consider
\begin{align*}
    C\left([0, T] ; B^{\operatorname{weak}^{*}}_{r}\right)=\left\{u \in C\left([0, T] ; V^{\operatorname{weak}^{*}}\right): \sup _{t \in[0, T]}\|u(t)\|_{V} \leq r\right\},
\end{align*}
 which denotes the space of weakly continuous functions $v:[0, T] \rightarrow V$ and such that $\sup _{t \in[0, T]}\|v(t)\|_{V} \leq r$.
The space $C\left([0, T] ; B_{r}^{\operatorname{weak}^{*}}\right)$ is metrizable with
$$
\varrho(u, v)=\sup _{t \in[0, T]} q(u(t), v(t)).
$$
\begin{remark}
The unit-ball on $L^{p}(0,T;V)$ is $\operatorname{weak}^{*}$-metrizable if and only if $L^{p}(0,T;V)$ is separable, in which case compactness and sequential compactness coincide.

Since by the Banach-Alaoglu Theorem $B^{\operatorname{weak}^{*}}_{r}$ is compact, $\left(C\left([0, T] ; B^{\operatorname{weak}^{*}}_{r}\right), \varrho\right)$ is a complete metric space.
\end{remark}

Let us consider $V=\Wplusdualn$ and $B_{R}$ the corresponding ball of radius $R$ in $\Wplusdualn$.
\begin{remark}
    Note that the weak topology on $\Wplusdualn$ coincides with the weakest topology for which the mappings $v\mapsto\langle v(\cdot),\varphi\rangle_{\Wplusdualn\times \Wplus}\in C([0,T],\R)$ are continuous.
\end{remark}

\begin{lemma}\label{lem:compactness}
    Let $\mathcal{Z}\coloneqq C([0,T],\Wmidtwodualn)\cap C\left([0, T] ; (\Wplusdualn)^{\operatorname{weak}^{*}}\right)$, equipped with the maximum of the two topologies. Then a set $K\subset \mathcal{Z}$ is relatively compact in $\mathcal{Z}$, if the following conditions hold:
    \begin{enumerate}
        \item $\sup_{u\in K}\sup_{t\in[0,T]}\|u\|_{\Wplusdualn}<\infty$,
        \item $\lim_{\delta\rightarrow 0}\sup_{u\in K}\sup_{s,t\in [0,T],|t-s|\leq \delta}\|u(t)-u(s)\|_{\Wminustwodualn}=0$.
    \end{enumerate}
    
\end{lemma}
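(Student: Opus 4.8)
The statement is a weighted Aubin--Lions/Arzel\`a--Ascoli compactness criterion of the type used to produce martingale solutions in spaces of weakly continuous functions, and the plan is to follow that by now standard scheme. Fix $R:=\sup_{u\in K}\sup_{t\in[0,T]}\|u(t)\|_{\Vspacedual}<\infty$, which is finite by (1); then $K\subset C([0,T];B_R^{\operatorname{weak}^*})$, where $B_R$ is the closed ball of radius $R$ in $\Vspacedual$. Since $\Vspace=W^{1,2}_{w_{+}}$ is separable (the separability theorem applies because $w_{+}\in\WeightclassB_{2}$), $B_R$ is compact and metrizable for the weak$^*$ topology, $\Vspacedual$ is reflexive so that weak$^*$ and weak convergence agree on it, and $\big(C([0,T];B_R^{\operatorname{weak}^*}),\varrho\big)$ is a complete metric space.

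\textbf{Step 1: relative compactness in $C([0,T];(\Vspacedual)^{\operatorname{weak}^*})$.} I would apply Arzel\`a--Ascoli on $\big(C([0,T];B_R^{\operatorname{weak}^*}),\varrho\big)$; as $B_R$ is $q$-compact, pointwise relative compactness is automatic and only $\varrho$-equicontinuity of $K$ must be checked. Pick $\{h_n\}_{n\geq1}$ dense in the unit ball of the predual $\Vspace$ and realise $q(\mu,\eta)=\sum_{n}2^{-n}\frac{|\langle\mu-\eta,h_n\rangle|}{1+|\langle\mu-\eta,h_n\rangle|}$. Given $\eps>0$, truncate at $N$ with $2^{-N}<\eps/2$; for each $n\leq N$ choose $g_n\in C_{0}^{\infty}\subset\Vprimespace$ with $2R\,\|h_n-g_n\|_{\Vspace}<\eps/4$, which is possible since $C_{0}^{\infty}$ is dense in $\Vspace$. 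Then for $u\in K$ and $|t-s|\leq\delta$,
\[
|\langle u(t)-u(s),h_n\rangle|\leq\|u(t)-u(s)\|_{\Vspacedual}\|h_n-g_n\|_{\Vspace}+\|u(t)-u(s)\|_{\Vprimespacedual}\|g_n\|_{\Vprimespace}\leq\tfrac{\eps}{4}+\Big(\max_{n\leq N}\|g_n\|_{\Vprimespace}\Big)\,\omega(\delta),
\]
where $\omega(\delta):=\sup_{u\in K}\sup_{|t-s|\leq\delta}\|u(t)-u(s)\|_{\Vprimespacedual}\to0$ as $\delta\to0$ by (2). Hence $\sup_{u\in K}\sup_{|t-s|\leq\delta}q(u(t),u(s))\to0$, so $K$ is $\varrho$-equicontinuous and thus $\varrho$-relatively compact: every sequence in $K$ has a subsequence converging in $C([0,T];(\Vspacedual)^{\operatorname{weak}^*})$ to some $u$, and $\sup_t\|u(t)\|_{\Vspacedual}\leq R$ because $B_R$ is weak$^*$-closed.

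\textbf{Step 2: upgrading to $C([0,T];\Hspacedual)$.} Here I would use the compact embedding $\Vspacedual\hookrightarrow\hookrightarrow\Hspacedual$, which holds by duality from the compact embedding $W^{2,2}_{w_{\sim}}\hookrightarrow W^{1,2}_{w_{+}}$ of Proposition \ref{prop:embeddings}, exploiting the strict ordering $\etaplus<\etamid$. Let $u_n\to u$ in $C([0,T];(\Vspacedual)^{\operatorname{weak}^*})$ and suppose, for contradiction, that $\sup_t\|u_n(t)-u(t)\|_{\Hspacedual}\not\to0$; after passing to a subsequence there are $\eps_0>0$ and $t_n\in[0,T]$ with $\|u_n(t_n)-u(t_n)\|_{\Hspacedual}\geq\eps_0$. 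Put $v_n:=u_n(t_n)-u(t_n)$. Then $\|v_n\|_{\Vspacedual}\leq2R$, while for every $h\in\Vspace$ we have $|\langle v_n,h\rangle|\leq\sup_{t\in[0,T]}|\langle u_n(t)-u(t),h\rangle|\to0$, so $v_n\rightharpoonup0$ weakly in the reflexive space $\Vspacedual$; the compact embedding then forces $\|v_n\|_{\Hspacedual}\to0$, a contradiction. Hence $u_n\to u$ in $C([0,T];\Hspacedual)$, so in particular $u\in C([0,T];\Hspacedual)$ (uniform limit of $\Hspacedual$-valued continuous functions). Consequently $u\in\mathcal{Z}$ and the chosen subsequence converges to $u$ in the topology of $\mathcal{Z}$. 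Since $\mathcal{Z}$ restricted to $\{\sup_t\|\cdot\|_{\Vspacedual}\leq R\}$ is metrizable (for the maximum of $\varrho$ and the $C([0,T];\Hspacedual)$-metric) and contains all subsequential limits, sequential relative compactness of $K$ is equivalent to relative compactness, which finishes the proof.

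\textbf{Main difficulty.} The genuinely delicate point is the approximation argument in Step 1 that converts the $\Vprimespacedual$-modulus of continuity from (2) into weak$^*$-equicontinuity in $\Vspacedual$ (through density of $C_0^\infty$ in the predual $\Vspace$ and the uniform $\Vspacedual$-bound), together with having the compact embedding $\Vspacedual\hookrightarrow\hookrightarrow\Hspacedual$ at one's disposal; the remaining steps are routine Arzel\`a--Ascoli and a standard subsequence-extraction argument.
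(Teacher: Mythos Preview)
Your proof is correct, but it proceeds in the reverse order from the paper. The paper first establishes relative compactness in $C([0,T];\Hspacedual)$ by invoking a Simon/Aubin--Lions criterion directly on the triple $\Vspacedual\hookrightarrow\hookrightarrow\Hspacedual\hookrightarrow\Vprimespacedual$ (conditions (1) and (2) are exactly what that criterion needs), and only afterwards upgrades the resulting strong $\Hspacedual$-convergence to convergence in $C([0,T];B_R^{\operatorname{weak}^*})$ via a separate density lemma (Lemma~\ref{lem:uniform_continuity_C_weak}). You instead start with Arzel\`a--Ascoli on $C([0,T];B_R^{\operatorname{weak}^*})$, supplying the weak$^*$-equicontinuity yourself through the $C_0^\infty$-density argument, and then upgrade to strong $\Hspacedual$-convergence by a direct subsequence/compact-embedding contradiction. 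The ingredients are the same---the compact embedding $\Vspacedual\hookrightarrow\Hspacedual$, the uniform $\Vspacedual$-bound, and a density argument passing between $\Vspace$ and $\Vprimespace$---just permuted. Your route is somewhat more self-contained (no appeal to Simon's lemma as a black box), while the paper's route is shorter because it outsources Step~1 entirely to \cite{simon1986compact} and packages the density argument once in Lemma~\ref{lem:uniform_continuity_C_weak}.
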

\begin{proof}
    Without loss of generality, we assume that $K$ is closed in $\mathcal{Z}$. The first condition allows us to work on the metric subspace $C\left([0, T]; B_{R}^{\operatorname{weak}^{*}}\right)\subset C\left([0, T] ; (\Wplusdualn)^{\operatorname{weak}^{*}}\right)$, for some $R>0$ large enough. Due to the compact embedding of $\Wplusdualn \rightarrow \Wmidtwodualn$, by Proposition \ref{prop:embeddings}, we can use standard arguments (see \cite{simon1986compact}) to obtain the compactness of $K$ in $C([0,T],\Wmidtwodualn)$. By Lemma \ref{lem:uniform_continuity_C_weak}, any sequence $(u_{n})_{n}\subset C\left([0, T] ; B_{R}\right)$, which converges in $\Wmidtwodualn$, also converges in $C\left([0, T] ; B_{R}^{\operatorname{weak}^{*}}\right)$. This finishes the proof, since we found, for any bounded sequence in $K$, a convergent subsequence in $\mathcal{Z}$.
\end{proof}
\begin{lemma}\label{lem:tightness_criterion}
    Let $\{\mu_{n}\}_{n\in\N}$ be a sequence of continuous, $\Fil$-adapted, $\Wmidtwodualn$-valued processes such that, for some $1<p$,
       \begin{enumerate}
        \item $\sup_{u\in K}\E\sup_{t\in[0,T]}\|u\|_{\Wplusdualn}^{p}<\infty$,
        \item  $\forall \varepsilon>0 \quad \forall \eta>0 \quad \exists \delta>0$ such that for every sequence $\left(\tau_n\right)_{n \in \mathbb{N}}$ of $\Fil$-stopping times with $\tau_n \leq T$ one has
        \begin{align*}
            \sup _{n \in \mathbb{N}} \sup _{0<\theta \leq \delta} \Prob\left(\left\|\mu_n\left(\tau_n+\theta\right)- \mu_n\left(\tau_n\right)\right\|_{\Wminustwodualn} \geq \eta\right) \leq \varepsilon .
        \end{align*}
        Let $\Law_{n}$ denote the law of $\mu_{n}$ on $\mathcal{Z}$. Then for every $\eps>0$, there exists a compact subset $K_{\eps}\subset \mathcal{Z}$, such that
        \begin{align*}
            \sup_{n}\Law_{n}(K_{\eps})\geq 1-\eps.
        \end{align*}
    \end{enumerate}
\end{lemma}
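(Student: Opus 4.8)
The plan is to exhibit, for every $\eps>0$, an explicit compact set $K_\eps\subset\mathcal{Z}$ with $\Law_n(K_\eps)\ge 1-\eps$ for all $n$ (which is in particular the asserted bound), obtaining its compactness from Lemma~\ref{lem:compactness}: a \emph{closed} subset of $\mathcal{Z}$ is compact as soon as it is bounded for $\sup_{t\in[0,T]}\|\cdot\|_{\Vspacedual}$ and its $\Vprimespacedual$-modulus of continuity vanishes uniformly as the mesh tends to $0$. Thus I fix $\eps>0$ and a sequence $\eta_k\downarrow 0$ (say $\eta_k=1/k$) and seek a radius $R=R_\eps$ and scales $\delta_k=\delta_{k,\eps}\downarrow 0$ such that
\begin{align*}
    K_\eps :={}& \Big\{u\in\mathcal{Z}\ :\ \sup_{t}\|u(t)\|_{\Vspacedual}\le R\Big\}\\
    &{}\cap\bigcap_{k\ge 1}\Big\{u\in\mathcal{Z}\ :\ \sup_{|t-s|\le\delta_k}\|u(t)-u(s)\|_{\Vprimespacedual}\le\eta_k\Big\}
\end{align*}
does the job. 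This $K_\eps$ is closed in $\mathcal{Z}$: convergence in $\mathcal{Z}$ entails uniform convergence in $\Hspacedual$, hence, via the continuous embedding $\Hspacedual\hookrightarrow\Vprimespacedual$, uniform convergence in $\Vprimespacedual$, so each modulus constraint passes to the limit; and it entails weak$^*$ convergence in $\Vspacedual$ at every fixed $t$, so the ball constraint passes to the limit by weak$^*$ lower semicontinuity of the norm. By construction $K_\eps$ meets both hypotheses of Lemma~\ref{lem:compactness} and is therefore relatively compact in $\mathcal{Z}$; being closed, it is compact.

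It then remains to choose $R$ and the $\delta_k$ so that $\sup_n\Prob(\mu_n\notin K_\eps)\le\eps$, and by a union bound the two families of constraints can be treated separately. For the $\Vspacedual$-ball, hypothesis~(1) and Chebyshev's inequality give
\begin{align*}
    \Prob\Big(\sup_{t}\|\mu_n(t)\|_{\Vspacedual}>R\Big)\ \le\ R^{-p}\,\sup_{n}\E\sup_{t}\|\mu_n(t)\|_{\Vspacedual}^{p},
\end{align*}
which is $\le\eps/2$ for $R$ large, uniformly in $n$. For the $k$-th modulus constraint I invoke hypothesis~(2): the Aldous-type bound on increments at stopping times implies tightness of the ordinary, deterministic-time modulus of continuity, i.e.\ for every $\eta>0$,
\begin{align*}
    \lim_{\delta\to 0}\ \sup_{n}\ \Prob\Big(\sup_{|t-s|\le\delta}\|\mu_n(t)-\mu_n(s)\|_{\Vprimespacedual}\ge\eta\Big)=0 ,
\end{align*}
so one may choose $\delta_k>0$ with $\sup_n\Prob\big(\sup_{|t-s|\le\delta_k}\|\mu_n(t)-\mu_n(s)\|_{\Vprimespacedual}>\eta_k\big)\le\eps\,2^{-k-1}$. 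Summation yields $\sup_n\Prob(\mu_n\notin K_\eps)\le\eps/2+\sum_{k\ge1}\eps\,2^{-k-1}=\eps$, i.e.\ $\Law_n(K_\eps)\ge 1-\eps$ for every $n$.

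The main obstacle is exactly the implication used in the last step: passing from hypothesis~(2), which controls increments only over \emph{deterministic} lags $\theta\le\delta$ after a stopping time, to tightness of the full modulus of continuity. A naive partition-and-union-bound argument does not close, because detecting a large oscillation through the hitting time of a level introduces a \emph{random} time lag not directly covered by~(2); handling this is precisely the content of Aldous's tightness lemma, which I would either cite (the Aldous / Joffe--M\'etivier criterion in the form routinely used in the SPDE literature) or establish via the standard chaining argument. Everything else --- closedness of $K_\eps$, the invocation of Lemma~\ref{lem:compactness}, and the Chebyshev estimate --- is routine; one also tacitly uses that each $\mu_n$ genuinely takes values in $\mathcal{Z}$, which follows from path continuity in $\Hspacedual$ together with the a-priori bound in $\Vspacedual$.
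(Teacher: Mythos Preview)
Your proposal is correct and follows essentially the same approach as the paper: Chebyshev/Markov for the $\Vspacedual$-ball, the Aldous-type criterion for the $\Vprimespacedual$-modulus of continuity, and Lemma~\ref{lem:compactness} for compactness. The only cosmetic difference is that the paper offloads the ``main obstacle'' you correctly identify (passing from stopping-time increments to control of the deterministic modulus) to a direct citation of \cite[Lemmas~3.6 and~3.8]{brzezniak2013existence}, and then defines $K_\eps$ as the closure in $\mathcal{Z}$ of the intersection of the ball with the resulting set, rather than building it as your explicit countable intersection.
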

\begin{proof}
    Let $\eps>0$. By the Markov inequality, we infer that for every $n\in \N$ and $R>0$,
    \begin{align*}
        \Prob\left(\sup_{t\in[0,T]}\|u\|_{\Wplusdualn}^{p}>R_{1}\right)\leq \frac{\E\sup_{t\in[0,T]}\|u\|_{\Wplusdualn}^{p}}{R_{1}}\leq \frac{C}{R}.
    \end{align*}
    Choosing $\frac{2C}{\eps}\leq R$ yields
    \begin{align*}
        \sup_{n\in \N}\Prob\left(\sup_{t\in[0,T]}\|u\|_{\Wplusdualn}^{p}>R\right)\leq \frac{\eps}{2}.
    \end{align*}
    By \cite[Lemma 3.6, Lemma 3.8]{brzezniak2013existence}, there exists a subset $A_{\frac{\eps}{2}}\subset C([0,T],\Wminustwodualn)$, such that $\Law_{n}(A_{\frac{\eps}{2}})\geq 1-\frac{\eps}{2}$. We can define $K_{\eps}$ as the closure of $B_{R}\cap A_{\frac{\eps}{2}}$ in $\mathcal{Z}$. By Lemma \ref{lem:compactness} $K_{\eps}$ is compact in $\mathcal{Z}$ and the claim follows.
\end{proof}
\begin{corollary}
        Let $\mu_{n}$ denote a mild solution of \eqref{eqn:SEE_strong_formulation}, given by Theorem \ref{thm:existence_uniqueness_lipschitz}. The laws of $\{\mu_{n}\}_{n\in\N}$ are tight on $C([0,T],\Wmidtwodualn)\cap C\left([0, T] ; B_{R}^{\operatorname{weak}^{*}}\right)$.
\end{corollary}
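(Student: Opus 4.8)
The plan is to check that the sequence $\{\mu_{n}\}_{n\in\N}$ meets the two requirements of the tightness criterion, Lemma \ref{lem:tightness_criterion}, with all constants uniform in $n$. The point that makes this work is that the Lipschitz approximations $\drift_{n},\diffusion_{n}$ produced above satisfy the linear growth condition \ref{A:assumption_general_linear_growth} with a single constant $C_{\operatorname{LG}}$ that does not depend on $n$ (this is the content of the quoted Proposition of \cite{hofmanova2012weak}), and all the $\mu_{n}$ are launched from the same initial datum $\mu_{0}\in L^{p}(\Omega,\Vspacedual)$. Since every a priori bound obtained so far depends on the data only through $p$, $T$, $C_{\operatorname{LG}}$ and $\E\|\mu_{0}\|_{\Vspacedual}^{p}$, each such bound applies to $\mu_{n}$ with an $n$-independent constant.

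First I would invoke Lemma \ref{lem:a_priori_estimate_for_tightness} for each $\mu_{n}$, which is legitimate because $\mu_{n}\in L^{p}(\Omega,C([0,T],\Vspacedual))$ by Theorem \ref{thm:existence_uniqueness_lipschitz} and \eqref{eqn:drift_diffusion_kernel_estimate_semigroup} holds on the space under consideration; this gives $\sup_{n\in\N}\E\sup_{t\in[0,T]}\|\mu_{n}(t)\|_{\Vspacedual}^{p}<\infty$, i.e.\ condition (1). Next, for condition (2), I would apply Corollary \ref{corr:aldous_condition}: for every sequence of $\Fil$-stopping times $\tau_{n}\le T$ and every $0<\theta\le\delta$,
\begin{align*}
\E\|\mu_{n}(\tau_{n}+\theta)-\mu_{n}(\tau_{n})\|_{\Vprimespacedual}^{p}\le C_{p,\operatorname{LG}}\bigl(\theta^{(1-\gamma_{\drift})p}+\theta^{(1-2\gamma_{\diffusion})\frac{p}{2}}+\theta^{p((\etaminus-\etaplus)\wedge 1)}\bigr)=:\omega(\theta).
\end{align*}
Here $\etaplus<\etaminus$ by Definition \ref{def:weights_triple_for_analysis}, and since $\etaminus>\max\{\theta_{\nu_{\drift}},\theta_{\nu_{\diffusion}}\}$ by \ref{A:A_3:Narrower_assumption_nu} one may pick $\gamma_{\drift}<1$ and $\gamma_{\diffusion}<\tfrac12$ compatible with the constraints $\theta_{\nu_{\drift}}-\gamma_{\drift}\le\etaminus$, $\theta_{\nu_{\diffusion}}-\gamma_{\diffusion}\le\etaminus$ of Lemma \ref{lem:time_regularity}; hence all three exponents are strictly positive, $\omega(\theta)\to 0$ as $\theta\to 0$, and $\omega$ does not depend on $n$ or on the family $(\tau_{n})$. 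Markov's inequality then yields
\begin{align*}
\sup_{n\in\N}\ \sup_{0<\theta\le\delta}\Prob\bigl(\|\mu_{n}(\tau_{n}+\theta)-\mu_{n}(\tau_{n})\|_{\Vprimespacedual}\ge\eta\bigr)\le\eta^{-p}\,\omega(\delta),
\end{align*}
and given $\varepsilon,\eta>0$ it suffices to choose $\delta$ with $\eta^{-p}\omega(\delta)\le\varepsilon$ to obtain condition (2).

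With (1) and (2) in hand, Lemma \ref{lem:tightness_criterion} produces, for each $\varepsilon>0$, a compact set $K_{\varepsilon}\subset\mathcal{Z}$ with $\sup_{n}\Law_{n}(K_{\varepsilon})\ge 1-\varepsilon$; inspecting that proof, $K_{\varepsilon}$ is the closure of $B_{R}\cap A_{\varepsilon/2}$, so it lies in $C([0,T],\Hspacedual)\cap C([0,T];B_{R}^{\operatorname{weak}^{*}})$ for $R$ large enough, which gives tightness of $\{\Law_{n}\}$ on that space. The only step demanding care is the uniformity in $n$: one must make sure that the constants entering Lemma \ref{lem:a_priori_estimate_for_tightness} and Corollary \ref{corr:aldous_condition} involve the coefficients only through the common linear growth constant $C_{\operatorname{LG}}$, and not through the individual Lipschitz constants $L_{n}$, which blow up along the approximation --- this is exactly why the Hofmanov\'a approximation, with its $n$-uniform linear growth, is the right tool here.
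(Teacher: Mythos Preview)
Your proof is correct and follows exactly the approach the paper intends: the corollary is stated without proof immediately after Lemma \ref{lem:tightness_criterion}, and the verification of its two hypotheses via Lemma \ref{lem:a_priori_estimate_for_tightness} and Corollary \ref{corr:aldous_condition} (with constants depending only on the uniform linear growth bound furnished by the Hofmanov\'a approximation) is precisely what is implicitly being invoked. Your explicit remark that the bounds must not depend on the diverging Lipschitz constants $L_{n}$ is the key observation, and is consistent with the paper's convention of tracking only $C_{\operatorname{LG}}$ in those estimates.
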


By the Skorohod representation theorem (see e.g \cite[Theorem A.1]{brze2013stochastic}, or \cite{Jakubowski98_Skorohod}), there exists a subsequence $(n_{k})_{k\in \N}$, which is not relabeled,
a probability space $(\widetilde{\Omega},
\widetilde{\mathcal{F}},\widetilde{\Prob})$ and, on this space,
$(C([0,T],\Wmidtwodualn)\cap C\left([0, T] ; B_{R}^{\operatorname{weak}^{*}}\right),C^{0}([0,T];\R^{m_{W}}))$-valued random variables $(\widetilde{\mu},\weighttilde)$
and $(\widetilde{\mu}_{n},\weighttilde_{n})$ such that
$(\widetilde{\mu}_{n},\weighttilde_{n})$ has the same law as $(\mu_{n},W)$
on $\mathcal{B}(C([0,T],\Wmidtwodualn)\times C^{0}([0,T];\R^{m_{W}}))$ and, as $n \rightarrow \infty$,
$$
  (\widetilde{\mu}_{n},\weighttilde^{n})\to (\widetilde{\mu}, \weighttilde)
	\quad\mbox{in }C([0,T],\Wmidtwodualn)\cap C\left([0, T] ; B_{R}^{\operatorname{weak}^{*}}\right) \times C^{0}([0,T];\R^{m_{W}})\ \widetilde{\Prob}\mbox{-a.s.}
$$
The first observation is that these new variables also satisfy \eqref{eqn:SEE_eqn_mild} $\widetilde{\Prob}$--a.s., replacing $(\mu_{n}, W)$ by $(\widetilde{\mu}_{n}, \widetilde {W})$. The proof is done via a regularization argument, similar to \cite[Theorem 2.9.1]{Breit_Feireisl_Hofmanova18stochastic_fluid_flows} (or \cite{Bensoussan95_stochastic_navier_stokes,Brzezniak10_stochastic_reaction_diffusion_jump_processes}).  
It remains to verify that the limit still satisfies the proposed equation. For convenience, we will drop the tilde notation and agree to work on the new probability space for the remainder of this section. In the first lemma, we investigate the convergence of the individual terms of our approximate mild solution.
\begin{lemma}\label{lem:convergence_of_approximate_mild_solution_terms}
For every $t\in [0,T]$, we have the following convergences
    \begin{enumerate}
        \item $\lim_{n\rightarrow \infty}\widetilde{\E}\left\|\widetilde{\mu}_{n}(t)-\widetilde{\mu}(t)\right\|_{\Wmidtwodualn}=0$.
        \item We have
        \begin{align*}
            \lim_{n\rightarrow \infty}\widetilde{\E}\bigg\|&\int_{0}^{t} S^{*}_{t-s}\nudrift\drift_{n}(s,\langle\widetilde{\mu}_{n}(s),\psi\rangle)\ds\\
            &\qquad-\int_{0}^{t} S^{*}_{t-s}\nudrift\drift(s,\langle\widetilde{\mu}(s),\psi\rangle)\ds\bigg\|_{\Wmidtwodualn}=0.
        \end{align*}
        \item We have
        \begin{align*}
            \lim_{n\rightarrow \infty}\widetilde{\E}\bigg\|&\int_{0}^{t} S^{*}_{t-s}\nudiffusion\diffusion_{n}(s,\langle\widetilde{\mu}_{n}(s),\psi\rangle)\dW^{n}_{s}\\
            &\qquad-\int_{0}^{t} S^{*}_{t-s}\nudiffusion\diffusion(s,\langle\widetilde{\mu}(s),\psi\rangle)\dW_{s}\bigg\|_{\Wmidtwodualn}=0.
        \end{align*}
    \end{enumerate}
\end{lemma}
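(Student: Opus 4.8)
The plan is to handle all three convergences by the same two-step scheme. First I would establish $\widetilde{\Prob}$-almost sure convergence of the relevant $\Hspacedual$-norm to zero, using the Skorokhod convergence $(\widetilde{\mu}_{n},\widetilde{W}_{n})\to(\widetilde{\mu},\widetilde{W})$ in $C([0,T],\Hspacedual)\cap C([0,T];B_{R}^{\operatorname{weak}^{*}})\times C^{0}([0,T];\R)$. Then I would upgrade to $L^{1}(\widetilde{\Omega})$-convergence by a Vitali (uniform integrability) argument, the uniform integrability being supplied by the moment bound $\sup_{n}\widetilde{\E}\sup_{t\le T}\|\widetilde{\mu}_{n}(t)\|_{\Vspacedual}^{p}\le C$, which transfers from Lemma \ref{lem:a_priori_estimate_for_tightness} because $\widetilde{\mu}_{n}$ has the same law as $\mu_{n}$, together with the continuous (indeed compact) embedding $\Vspacedual\hookrightarrow\Hspacedual$; here one crucially uses $p>2$.

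For (1), the statement $\sup_{t\le T}\|\widetilde{\mu}_{n}(t)-\widetilde{\mu}(t)\|_{\Hspacedual}\to0$ is immediate from Skorokhod, and $\{\|\widetilde{\mu}_{n}(t)-\widetilde{\mu}(t)\|_{\Hspacedual}\}_{n}$ is bounded in $L^{p}(\widetilde{\Omega})$, hence uniformly integrable, so Vitali yields the claim. For (2), the key sub-step is the pointwise convergence of the scalar integrands: I would show that for $\widetilde{\Prob}$-a.e. $\omega$ and Lebesgue-a.e. $s\in[0,t]$,
\[
\drift_{n}\bigl(s,\langle\widetilde{\mu}_{n}(s),\psi\rangle\bigr)\longrightarrow\drift\bigl(s,\langle\widetilde{\mu}(s),\psi\rangle\bigr),
\]
by combining three facts: $\langle\widetilde{\mu}_{n}(s),\psi\rangle\to\langle\widetilde{\mu}(s),\psi\rangle$ uniformly in $s$, a.s., because $\psi\in\Vspace$ and $\widetilde{\mu}_{n}\to\widetilde{\mu}$ in $C([0,T];(\Vspacedual)^{\operatorname{weak}^{*}})$; the uniform-in-$s$ continuity of $\drift(s,\cdot)$ from Assumption \ref{A:assumption_general_uniformly_continuous}; and the locally uniform convergence $\drift_{n}\to\drift$ from the approximation result \cite[Proposition 1.1]{hofmanova2012weak}, applied on the set $\{\langle\widetilde{\mu}_{n}(s),\psi\rangle:s\in[0,t],\,n\in\N\}$, whose diameter is controlled a.s. by $\sup_{n}\sup_{s\le t}\|\widetilde{\mu}_{n}(s)\|_{\Vspacedual}\,\|\psi\|_{\Vspace}<\infty$. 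Since $s\mapsto\|S^{*}_{t-s}\nu_{\drift}\|_{\Vspacedual}$ lies in $L^{1}(0,t)$ by \eqref{eqn:drift_diffusion_kernel_estimate_semigroup} and $|\drift_{n}(s,\langle\widetilde{\mu}_{n}(s),\psi\rangle)|\le C_{\operatorname{LG}}(1+\sup_{s\le t}\|\widetilde{\mu}_{n}(s)\|_{\Vspacedual})$ uniformly in $n$, dominated convergence in $s$ gives the a.s. convergence of the $\Hspacedual$-norm in (2); Jensen's inequality on the finite measure $\|S^{*}_{t-s}\nu_{\drift}\|_{\Vspacedual}\ds$ together with the linear growth bound gives a bound in $L^{p}(\widetilde{\Omega})$ uniform in $n$, so Vitali applies.

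For (3), I would regard the integrand, for fixed $t$, as the $\HS=\mathcal{L}_{2}(\R,\Hspacedual)$-valued process $\phi_{n}(s)\coloneqq S^{*}_{t-s}\nu_{\diffusion}\diffusion_{n}(s,\langle\widetilde{\mu}_{n}(s),\psi\rangle)$, and similarly $\phi(s)\coloneqq S^{*}_{t-s}\nu_{\diffusion}\diffusion(s,\langle\widetilde{\mu}(s),\psi\rangle)$. Running the argument of (2), but now against the finite product measure $\widetilde{\Prob}\otimes\bigl(\|S^{*}_{t-s}\nu_{\diffusion}\|_{\Vspacedual}^{2}\,ds\bigr)$ (finite since $\|S^{*}_{t-s}\nu_{\diffusion}\|_{\Vspacedual}\in L^{2}(0,t)$ by \eqref{eqn:drift_diffusion_kernel_estimate_semigroup}), one obtains
\[
\|\phi_{n}-\phi\|_{L^{2}([0,t];\HS)}^{2}=\int_{0}^{t}\|S^{*}_{t-s}\nu_{\diffusion}\|_{\Vspacedual}^{2}\,\bigl|\diffusion_{n}(s,\langle\widetilde{\mu}_{n}(s),\psi\rangle)-\diffusion(s,\langle\widetilde{\mu}(s),\psi\rangle)\bigr|^{2}\ds\longrightarrow0\quad\widetilde{\Prob}\text{-a.s.},
\]
and this family is bounded in $L^{p/2}(\widetilde{\Omega})$ with $p/2>1$, hence uniformly integrable, so $\phi_{n}\to\phi$ in $L^{2}(\widetilde{\Omega}\times[0,t];\HS)$. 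Since each $\widetilde{W}_{n}$ is a Brownian motion in the filtration making $(\widetilde{\mu}_{n},\widetilde{W}_{n})$ progressively measurable and $\widetilde{W}_{n}\to\widetilde{W}$ uniformly on $[0,t]$, I would invoke a standard convergence theorem for stochastic integrals with converging integrators (see e.g. \cite{Breit_Feireisl_Hofmanova18stochastic_fluid_flows}; alternatively, approximate $\phi_{n},\phi$ by simple processes and control the error by the It\^o isometry) to conclude $\int_{0}^{t}\phi_{n}\,d\widetilde{W}_{n}\to\int_{0}^{t}\phi\,d\widetilde{W}$ in $L^{2}(\widetilde{\Omega};\Hspacedual)$, hence in $L^{1}(\widetilde{\Omega};\Hspacedual)$. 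The main obstacle is precisely this last step: the simultaneous passage to the limit in integrator and integrand, complicated by the possibly singular time behaviour of $\|S^{*}_{t-s}\nu_{\diffusion}\|_{\Vspacedual}$ near $s=t$, which is why the dominated-convergence and uniform-integrability arguments must be run against the product measure $\widetilde{\Prob}\otimes\bigl(\|S^{*}_{t-s}\nu_{\diffusion}\|_{\Vspacedual}^{2}ds\bigr)$ rather than against Lebesgue measure, and why the strict inequality $p>2$ in Lemma \ref{lem:a_priori_estimate_for_tightness} is needed to obtain genuine uniform integrability of $|\diffusion_{n}(\cdot)|^{2}$.
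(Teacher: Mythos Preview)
Your argument is correct and is essentially a cleaner variant of the paper's proof. The chief difference is in how you pass from the pointwise convergence of the scalar integrands to $L^{1}(\widetilde{\Omega})$-convergence: the paper localizes via stopping times $\tau_{m}=\inf\{t:\sup_{t}\|\widetilde{\mu}_{n}\|_{\Hspacedual}>m\}$, uses the locally uniform convergence $\drift_{n}\to\drift$ inside $\{\tau_{m}>0\}$, and only at the very end lets $m\to\infty$; you instead observe that, $\widetilde{\Prob}$-a.s., the Skorokhod representatives satisfy $\sup_{n}\sup_{s\le T}\|\widetilde{\mu}_{n}(s)\|_{\Vspacedual}<\infty$ (this follows from the uniform boundedness principle applied to the family $\{\widetilde{\mu}_{n}(s)\}_{n,s}$, since convergence in $C([0,T];(\Vspacedual)^{\operatorname{weak}^{*}})$ yields $\sup_{n,s}|\langle\widetilde{\mu}_{n}(s),h\rangle|<\infty$ for each $h\in\Vspace$), so the locally uniform convergence applies directly on an $\omega$-dependent but $n$-uniform compact set, and then you upgrade to $L^{1}$ by Vitali using the uniform $L^{p}$ bound with $p>2$. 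This buys you a more transparent argument with no localization bookkeeping, and it makes explicit why $p>2$ is needed (for uniform integrability of $|\diffusion_{n}|^{2}$ in item~(3)), a point the paper leaves implicit. For item~(3) both approaches converge: the paper decomposes the squared $\HS$-norm as $S_{1}+S_{2}+S_{3}$, skips the details as ``very similar'', and defers the joint limit in integrand and integrator to \cite[Lemma~2.1]{debussche2011local} or \cite[Lemma~2.6.6]{Breit_Feireisl_Hofmanova18stochastic_fluid_flows}, exactly as you do.
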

\begin{proof}
    The first claim already follows from the application of the Skorohod representation theorem. For the second claim we abbreviate $\nu:=\nudrift$ and write $\|\cdot\|_{\opnmid}$ for the operator norm on $\opnmid$. Since $\Wplusdual\hookrightarrow\Wmidtwodual$ (so $\Wmidtwodualn\supseteq\Wplusdualn$ with a weaker norm), we have $\|S^{*}_{t-s}\nu\|_{\opnmid}\leq C\|S^{*}_{t-s}\nu\|_{\opn}$, which is integrable in $s$ by Assumption~\ref{A:A_3:Narrower_assumption_nu}. Adding and subtracting, and using that $\langle\cdot,\psi\rangle$ is a bounded functional so that each term below is $\Wmidtwodualn$-valued,
\begin{align*}
    &\widetilde{\E}\left\|\int_{0}^{t} S^{*}_{t-s}\nu\,\drift_{n}(\langle\widetilde{\mu}_{n}(s),\psi\rangle)\ds-\int_{0}^{t} S^{*}_{t-s}\nu\,\drift(\langle\widetilde{\mu}(s),\psi\rangle)\ds\right\|_{\Wmidtwodualn}\\
   &\leq   \int_{0}^{t} \|S^{*}_{t-s}\nu\|_{\opnmid}\,\widetilde{\E}\big|\drift_{n}(\langle\widetilde{\mu}_{n}(s),\psi\rangle)-\drift(\langle\widetilde{\mu}_{n}(s),\psi\rangle)\big|\ds\\
   &\phantom{xx}+   \int_{0}^{t} \|S^{*}_{t-s}\nu\|_{\opnmid}\,\widetilde{\E}\big|\drift(\langle\widetilde{\mu}_{n}(s),\psi\rangle)-\drift(\langle\widetilde{\mu}(s),\psi\rangle)\big|\ds\\
   &\phantom{xx}+   \int_{0}^{t} \|S^{*}_{t-s}\nu\|_{\opnmid}\,\widetilde{\E}\big|\drift(\langle\widetilde{\mu}(s),\psi\rangle)-\drift_{n}(\langle\widetilde{\mu}(s),\psi\rangle)\big|\ds\\
   &=I+II+III,
\end{align*}
where $|\cdot|$ denotes the Euclidean norm on $\R^{n_{\textnormal{dim}}}$.

We first consider $II$. Since $\widetilde{\mu}_{n}\to \widetilde{\mu}$ in $C([0,T],\Wmidtwodualn)$ for almost every $\omega \in \Omega$ and $\langle\cdot,\psi\rangle$ is continuous, $\langle\widetilde{\mu}_{n}(s),\psi\rangle \to \langle\widetilde{\mu}(s),\psi\rangle$ in $\R^{n_{\textnormal{dim}}}$ almost surely (here $n$ indexes the approximating sequence, unrelated to the state dimension $n_{\textnormal{dim}}$). As $\drift\colon\R^{n_{\textnormal{dim}}}\to\R^{n_{\textnormal{dim}}}$ is continuous, $|\drift(\langle\widetilde{\mu}_{n}(s),\psi\rangle)-\drift(\langle\widetilde{\mu}(s),\psi\rangle)|\to 0$ a.s.; by the linear growth of $\drift$ and the a-priori bound of Lemma~\ref{lem:a-priori_estimate_general}, dominated convergence gives $II\to 0$.

Now to $I$ and $III$. Introduce the stopping times $\tau_{m} \coloneqq \inf\{t\geq 0 \colon \|\widetilde{\mu}_{n}(t)\|_{\Wmidtwodualn} >m\ \text{or}\ \|\widetilde{\mu}(t)\|_{\Wmidtwodualn}>m\}$. On $\{s\leq\tau_{m}\}$ the state variable stays in a fixed ball, $|\langle\widetilde{\mu}(s),\psi\rangle|\leq\|\psi\|\,\|\widetilde{\mu}(s)\|_{\Wmidtwodualn}\leq R_{m}$ with $R_{m}:=m\|\psi\|$, so, writing $\omega_{n}(R):=\sup_{|y|\leq R}|\drift_{n}(y)-\drift(y)|$ for the local uniform error of the approximating sequence,
\begin{align*}
   &\widetilde{\E} \int_{0}^{t\wedge \tau_{m}} \|S^{*}_{t-s}\nu\|_{\opnmid}\,\big|\drift(\langle\widetilde{\mu}(s),\psi\rangle)-\drift_{n}(\langle\widetilde{\mu}(s),\psi\rangle)\big|\ds\\
   &\leq  \Big(\int_{0}^{t} \|S^{*}_{t-s}\nu\|_{\opnmid}\ds\Big)\,\omega_{n}(R_{m}).
\end{align*}
Since $\drift_{n}\to\drift$ locally uniformly (Proposition citing \cite{hofmanova2012weak}, i.e.\ the approximating sequence of the general-coefficients step), $\omega_{n}(R_{m})\to 0$ as $n\to\infty$ for each fixed $m$; and $\widetilde{\Prob}(\tau_{m}<t)\to 0$ as $m\to\infty$ by the a-priori bound, so the contribution of $\{s>\tau_{m}\}$ vanishes in the double limit. Hence $III\to 0$, and $I$ is handled identically.
The stochastic terms are treated similarly. Note that
\begin{align*}
    &\widetilde{\E}\left|\int_{0}^{t} \left\|S^{*}_{t-s}\nu(\dx)\diffusion_{n}(\langle\widetilde{\mu}_{n}(s),\psi\rangle)-S^{*}_{t-s}\nu(\dx)\diffusion(\langle\widetilde{\mu}(s),\psi\rangle)\right\|^{2}_{\Wmidtwodualn}\ds\right|\\
    &\leq C \widetilde{\E}\left[\int_{0}^{t}\|S^{*}_{t-s}\nu\|_{\opnmid}^{2}\left|\diffusion_{n}(\langle\widetilde{\mu}_{n}(s),\psi\rangle)-\diffusion(\langle\widetilde{\mu}_{n}(s),\psi\rangle)\right|^{2}\ds\right]\\
    &\phantom{xx}+C \widetilde{\E}\left[\int_{0}^{t}\|S^{*}_{t-s}\nu\|_{\opnmid}^{2}\left|\diffusion(\langle\widetilde{\mu}_{n}(s),\psi\rangle)-\diffusion(\langle\widetilde{\mu}(s),\psi\rangle)\right|^{2}\ds\right]\\
   &\phantom{xx}+C \widetilde{\E}\left[\int_{0}^{t}\|S^{*}_{t-s}\nu\|_{\opnmid}^{2}\left|\diffusion(\langle\widetilde{\mu}(s),\psi\rangle)-\diffusion_{n}(\langle\widetilde{\mu}(s),\psi\rangle)\right|^{2}\ds\right]=S_{1}+S_{2}+S_{3}.
\end{align*}
Here $|\cdot|$ denotes the Frobenius norm on $\R^{n_{\textnormal{dim}}\times m_{W}}$. The three terms $S_{1},S_{2},S_{3}$ are treated exactly as $I,II,III$ above: $S_{2}$ by continuity of $\diffusion$ and dominated convergence, and $S_{1},S_{3}$ by the same localization $\tau_{m}$ together with the local uniform convergence $\diffusion_{n}\to\diffusion$ (now using $\int_{0}^{t}\|S^{*}_{t-s}\nu\|_{\opnmid}^{2}\ds<\infty$ from Assumption~\ref{A:A_3:Narrower_assumption_nu}, the square-integrable bound needed for the It\^o isometry). The passage from these $L^{1}(\Omega)$-bounds to convergence of the stochastic integrals is a direct adaptation of \cite[Lemma 2.1]{debussche2011local} or \cite[Lemma 2.6.6, step 3]{Breit_Feireisl_Hofmanova18stochastic_fluid_flows}.
By standard arguments, see e.g. \cite{brzezniak2016invariant}, we conclude that $(\widetilde{\Omega},\widetilde{\mathcal{F}},\widetilde{\Prob})$ and $\big(\widetilde{\mu}, \weighttilde\big)$ satisfy the conditions of Definition \ref{def:prbabilistically_weak_mild_solution} and are a probabilistically weak, mild solution to the SEE \eqref{eqn:SEE_strong_formulation}.
$\tau_{m}$ was merely a localizing sequence and we can let $m\rightarrow \infty$ and obtain the claim.
\end{proof}
\begin{proof}[Conclusion of the proof of Theorem \ref{thm:weak_mild_solution_SEE}]
Theorem \ref{thm:weak_mild_solution_SEE} follows directly from the previous Lemmata.
\end{proof}

Since a probabilistically weak solution usually connects to a martingale problem, we briefly argue that a mild solution, as obtained above, also satisfies a weak formulation of \eqref{eqn:SEE_strong_formulation}, which is more directly related to a martingale problem. We will drop the tilde notation when working with a probabilistically weak solution.
\begin{lemma}
    Let $\mu$ be a mild solution to \eqref{eqn:SEE_eqn_mild} and assume that $\nudrift,\nudiffusion$ satisfy Assumption \ref{A:A_3:Narrower_assumption_nu} and set $\theta_{\nu}=\max\{\theta_{\nudrift},\theta_{\nudiffusion}\}$. Let $\varphi \in W^{1,2}_{\bar{w}}$, with $\bar{w}_{i}=(1+x)^{2+2\max\{\theta_{\nudrift},\theta_{\nudiffusion}\}-1+2i}, (w_{\theta_{\nu}})_{i}=(1+x)^{2\max\{\theta_{\nudrift},\theta_{\nudiffusion}\}-1+2i}$, then $\mu$ satisfied the weak formulation
    \begin{align}\label{eqn:SEE_weak_formulation}
        \langle \mu_{t},\varphi\rangle &=  \langle \mu_{0},\varphi\rangle+\int_{0}^{t}\langle \mu_{s},-x \varphi\rangle\ds\\
        &\phantom{xx}+\int_{0}^{t}\langle  \nudrift,\varphi\rangle \drift (s,\langle \mu_{s},\psi\rangle)\ds+\int_{0}^{t}\langle  \nudiffusion,\varphi\rangle \diffusion (s,\langle \mu_{s},\psi\rangle)\dW_{s}.
    \end{align}
    
\end{lemma}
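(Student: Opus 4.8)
The plan is to follow the standard route for passing from the analytically mild (variation-of-constants) formulation to the analytically weak formulation, exploiting that the generator acts as the multiplication operator $A\varphi=-x\varphi$ and that, for admissible test functions, $s\mapsto\langle S^{*}_{s}\eta,\varphi\rangle=\langle\eta,e^{-sx}\varphi\rangle$ is absolutely continuous with $\frac{d}{ds}\langle S^{*}_{s}\eta,\varphi\rangle=\langle\eta,-xe^{-sx}\varphi\rangle=\langle S^{*}_{s}\eta,-x\varphi\rangle$. First I would check that the choice of weight $\bar{w}$ guarantees that both $\varphi$ and $x\varphi$ lie in $W^{1,2}_{w_{\theta_{\nu}}}$: since $\bar{w}_{i}=(1+x)^{2}(w_{\theta_{\nu}})_{i}$ up to constants, this is a direct computation using $D(x\varphi)=xD\varphi+\varphi$. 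Consequently all pairings $\langle S^{*}_{r}\nu_{\bullet},\varphi\rangle$, $\langle S^{*}_{r}\nu_{\bullet},-x\varphi\rangle$ and $\langle\mu_{s},-x\varphi\rangle$ occurring below are well defined, because $\nu_{\drift},\nu_{\diffusion}\in W^{-1,2}_{\frac{1}{w_{\theta_{\nu}}}}$ by Lemma \ref{lem:nu_dual_weighted_space_particular} and $\mu_{s}\in\Vspacedual\hookrightarrow W^{-1,2}_{\frac{1}{w_{\theta_{\nu}}}}$.

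Then I would insert the mild identity \eqref{eqn:SEE_eqn_mild} for $\mu_{s}$ into the term $\int_{0}^{t}\langle\mu_{s},-x\varphi\rangle\ds$, producing three contributions: a semigroup term, a double time integral from the drift, and a stochastic--time double integral from the diffusion. For the semigroup term, $\langle S^{*}_{s}\mu_{0},-x\varphi\rangle=\frac{d}{ds}\langle S^{*}_{s}\mu_{0},\varphi\rangle$ gives $\int_{0}^{t}\langle S^{*}_{s}\mu_{0},-x\varphi\rangle\ds=\langle S^{*}_{t}\mu_{0},\varphi\rangle-\langle\mu_{0},\varphi\rangle$ by the fundamental theorem of calculus. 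For the drift contribution I would apply the deterministic Fubini theorem to exchange the $\ds$ and $\dr$ integrations — justified by $\|S^{*}_{\cdot}\nu_{\drift}\|_{W^{-1,2}_{\frac{1}{w}}}\in L^{1}(0,T)$ (Lemma \ref{lem:nu_semigroup_properties}), the linear growth \ref{A:assumption_general_linear_growth}, and $\mu\in L^{p}(\Omega,L^{\infty}(0,T;\Vspacedual))$ — and then use $\int_{r}^{t}\langle S^{*}_{s-r}\nu_{\drift},-x\varphi\rangle\ds=\langle S^{*}_{t-r}\nu_{\drift},\varphi\rangle-\langle\nu_{\drift},\varphi\rangle$. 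For the diffusion contribution I would invoke the stochastic Fubini theorem to obtain $\int_{0}^{t}\bigl(\langle S^{*}_{t-r}\nu_{\diffusion},\varphi\rangle-\langle\nu_{\diffusion},\varphi\rangle\bigr)\diffusion(r,\langle\mu_{r},\psi\rangle)\dW_{r}$. Summing the three and recognising, from pairing \eqref{eqn:SEE_eqn_mild} with $\varphi$, that $\langle S^{*}_{t}\mu_{0},\varphi\rangle+\int_{0}^{t}\langle S^{*}_{t-r}\nu_{\drift},\varphi\rangle\drift(r,\langle\mu_{r},\psi\rangle)\dr+\int_{0}^{t}\langle S^{*}_{t-r}\nu_{\diffusion},\varphi\rangle\diffusion(r,\langle\mu_{r},\psi\rangle)\dW_{r}=\langle\mu_{t},\varphi\rangle$, together with $\langle\mu_{s},-x\varphi\rangle=\langle-x\mu_{s},\varphi\rangle$, a rearrangement yields exactly \eqref{eqn:SEE_weak_formulation}.

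I expect the main obstacle to be the rigorous justification of the stochastic Fubini theorem in this weighted--distributional setting: one must verify joint measurability and $r$-adaptedness of $(r,s,\omega)\mapsto\langle S^{*}_{s-r}\nu_{\diffusion},-x\varphi\rangle\diffusion(r,\langle\mu_{r},\psi\rangle)$ together with the integrability $\E\int_{0}^{t}\int_{r}^{t}|\langle S^{*}_{s-r}\nu_{\diffusion},-x\varphi\rangle|^{2}|\diffusion(r,\langle\mu_{r},\psi\rangle)|^{2}\ds\,\dr<\infty$, which follows from Lemma \ref{lem:nu_semigroup_properties} (an $L^{2}$-in-time singularity of $\|S^{*}_{s-r}\nu_{\diffusion}\|$), Assumption \ref{A:assumption_general_linear_growth} and the a priori moment bound of Lemma \ref{lem:a_priori_estimate_for_tightness}; the $\dr\,\ds$-version needed for the drift is analogous but uses the $L^{1}$-in-time bound. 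A secondary point requiring care is the identity $\frac{d}{ds}\langle S^{*}_{s}\eta,\varphi\rangle=\langle S^{*}_{s}\eta,-x\varphi\rangle$, which I would first establish pointwise for $\varphi\in C_{0}^{\infty}$ (where $x\varphi$ is again a legitimate test function) via dominated convergence and the continuity of $s\mapsto S^{*}_{s}\eta$, and then extend to $\varphi\in W^{1,2}_{\bar{w}}$ by density and the closedness of $A^{*}$ recorded after Lemma \ref{lem:semigroup_and_adjoint_semigroup}.
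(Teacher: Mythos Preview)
Your proposal is correct and follows essentially the same route as the paper: insert the mild formulation into $\int_{0}^{t}\langle\mu_{s},-x\varphi\rangle\ds$, swap the order of integration via the deterministic and stochastic Fubini theorems, evaluate the inner $s$-integral through the identity $\int_{r}^{t}\langle S^{*}_{s-r}\eta,-x\varphi\rangle\ds=\langle S^{*}_{t-r}\eta,\varphi\rangle-\langle\eta,\varphi\rangle$, and recognise the mild formulation paired with $\varphi$ in the resulting expression. The only cosmetic difference is that the paper reduces to $\mu_{0}=0$ by considering $\mu_{t}-S^{*}_{t}\mu_{0}$, whereas you handle the initial-condition term directly via the same derivative identity; your more detailed discussion of the Fubini hypotheses and the density argument for $\frac{d}{ds}\langle S^{*}_{s}\eta,\varphi\rangle$ supplies precisely the justifications the paper waves through as ``standard''.
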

\begin{proof}
    The proof is standard, so we only sketch it. Without loss of generality, assume $\mu_{0}=0$. Otherwise, we can consider the process $\mu_{t}-S^{*}_{t}\mu_{0}$. Fix $\varphi$, apply $-x\varphi$ to \eqref{eqn:SEE_eqn_mild} and integrate over time.
    \begin{align*}
        \int_{0}^{t}\langle \mu_{s}-S^{*}_{s}\mu_{0},-x\varphi\rangle \ds&= \int_{0}^{t}\int_{0}^{s}\langle S^{*}_{s-r}\nudrift, -x\varphi\rangle \drift(r,\langle \mu_{r},\psi\rangle)\dr\ds\\
        &\phantom{xx}+\int_{0}^{t}\int_{0}^{s}\langle S^{*}_{s-r}\nudiffusion, -x\varphi\rangle \diffusion(r,\langle \mu_{r},\psi\rangle)\dW_{r}\ds\\
        &=\int_{0}^{t}\langle  \nudrift, \int_{r}^{t}-S_{s-r}x\varphi  \ds\rangle \drift(r,\langle \mu_{r},\psi\rangle) \dr\\
        &\phantom{xx}+\int_{0}^{t}\langle  \nudiffusion,\int_{r}^{t}-S_{s-r}x\varphi \ds \rangle  \diffusion(r,\langle \mu_{r},\psi\rangle) \dW_{r}.
    \end{align*}
    By Lemma \ref{lem:semigroup_and_adjoint_semigroup} $S^{*}$ is a strongly continuous semigroup on $W^{-1,2}_{\frac{1}{w_{\theta_{\nu}}}}$. Hence
       \begin{align*}
        \int_{0}^{t}\langle \mu_{s}-S^{*}_{s}\mu_{0},-x\varphi\rangle \ds&=\int_{0}^{t}\langle \nudrift, S_{t-r}\varphi\rangle  \drift(r,\langle \mu_{r},\psi\rangle) \dr-\int_{0}^{t}\langle  \nudrift,\varphi\rangle  \drift(r,\langle \mu_{r},\psi\rangle) \dr\\
        &\phantom{xx}+\int_{0}^{t}\langle \nudiffusion,  S_{t-r} \varphi \rangle \diffusion(r,\langle \mu_{r},\psi\rangle) \dW_{r}-\int_{0}^{t}\langle \nudiffusion,\varphi\rangle  \diffusion(r,\langle \mu_{r},\psi\rangle)\dW_{r}\\
        &=\langle \mu_{t}-S^{*}_{t}\mu_{0},\varphi\rangle-\int_{0}^{t}\langle  \nudrift,\varphi\rangle  \drift(r,\langle \mu_{r},\psi\rangle) \dr-\int_{0}^{t}\langle \nudiffusion,\varphi \rangle  \diffusion(r,\langle \mu_{r},\psi\rangle)\dW_{r}.
    \end{align*}
    In summary, we obtain
        \begin{align*}
     &\langle \mu_{t}-S^{*}_{t}\mu_{0},\varphi\rangle-\int_{0}^{t}\langle  \nudrift,\varphi \rangle  \drift(r,\langle \mu_{r},\psi\rangle) \dr-\int_{0}^{t}\langle \nudiffusion,\varphi \rangle  \diffusion(r,\langle \mu_{r},\psi\rangle)\dW_{r}\\
     &=   \int_{0}^{t}\langle \mu_{s},-x\varphi\rangle \ds- \langle S^{*}_{s}\mu_{0},\varphi\rangle+\langle \mu_{0},\varphi\rangle.
    \end{align*}
\end{proof}
The previous Lemma implies in particular, that for every $\varphi \in W^{-1,2}_{\frac{1}{\bar{w}}}$, the process
\begin{align*}
    M_{\varphi}(t,\mu)\coloneqq  \langle \mu_{t},\varphi\rangle -  \langle \mu_{0},\varphi\rangle+\int_{0}^{t}\langle -x \mu_{s},\varphi\rangle\ds-\int_{0}^{t}\langle  \nudrift,\varphi\rangle \drift (s,\langle \mu_{s},\psi\rangle)\ds,
\end{align*}
is a square-integrable $\Fil_{t}$ martingale with respect to the measure $P$, with quadratic variation
\begin{align*}
    \int_{0}^{t}\langle  \nudiffusion,\varphi\rangle^{2} \diffusion (s,\langle \mu_{s},\psi\rangle)^{2}\ds.
\end{align*}
\begin{remark}[A dual process and weak uniqueness of the lift in the case $\diffusion(z)=z^{\gamma}$]
\label{rem:dual_uniqueness}
The equivalence with the weak formulation suggests a dual process whose \emph{mere
existence} yields uniqueness in law of the lift $\mu$ in the non-Lipschitz regime $\diffusion(z)=z^{\gamma}$,
$\gamma\in(\tfrac12,1)$. Set $\psi\equiv1$, $Z_{t}=\langle\mu_{t},\mathbf 1\rangle$,
and apply the singular It{\^o} formula to $H(\mu,\varphi)=e^{-\langle\mu,\varphi\rangle}$;
the generator on such exponentials is
\[
  \mathcal L_{\mu}H=e^{-\langle\mu,\varphi\rangle}\big(\langle\mu,x\varphi\rangle
  -\langle\nu,\varphi\rangle Z+\tfrac12\langle\nu,\varphi\rangle^{2}Z^{2\gamma}\big).
\]
Requiring $\mathcal L_{\mu}H=\mathcal A_{\varphi}H$ for a Markov dual
$(\varphi_{t})$ determines its dynamics uniquely: the transport term reproduces
$S^{*}_{t}=e^{-tx}$ by Laplace-transform duality, the linear drift a
constant-function source of rate $\langle\nu,\varphi\rangle$, and, via the stable
representation $z^{2\gamma}=c_{2\gamma}\int_{0}^{\infty}(e^{-\lambda z}-1+\lambda z)
\lambda^{-2\gamma-1}\,\mathrm d\lambda$ with
$c_{2\gamma}=2\gamma(2\gamma-1)/\Gamma(2-2\gamma)$, a $2\gamma$-stable jump
mechanism. Pairing with $\nu$ and using $\langle\nu,e^{-ux}\rangle=k(u)$ collapses
the dual to the scalar Volterra SDE
\[
  V_{t}=g(t)+\int_{0}^{t}k(t-s)V_{s}\,\mathrm ds
        +\int_{0}^{t}k(t-s)V_{s}^{1/\gamma}\,\mathrm dL_{s},
  \qquad V_{t}=\langle\nu,\varphi_{t}\rangle,
\]
where $L$ is spectrally positive $2\gamma$-stable with L\'evy measure
$\tfrac12 c_{2\gamma}\zeta^{-2\gamma-1}\,\mathrm d\zeta$ and
$g(t)=\langle\nu,e^{-tx}\varphi_{0}\rangle$.

Only the \emph{existence} of one such dual is needed; its own law need not be
unique. Because $\diffusion^{2}(z)=z^{2\gamma}$ is superlinear the dual carries a
positive feedback and generically explodes in finite time, so one carries $V$ as
a $[0,\infty]$-valued process absorbed at $\infty$ (with $H\le1$ and
$e^{-\langle\mu_{0},\varphi_{t}\rangle}=0$ at the cemetery), constructed by
localization: the truncated coefficient $(v\wedge n)_{+}^{1/\gamma}$ is globally
Lipschitz, so each truncated equation is strongly well posed, and the truncations
patch up to the explosion time $\tau$. Comparing the two generators along the
independent primal and dual yields the unconditional one-sided relation
\[
  \mathbb E\big[e^{-\langle\mu_{t},\varphi_{0}\rangle}\big]
  \;\ge\;\mathbb E\big[e^{-\langle\mu_{0},\varphi_{t}\rangle}\mathbf 1_{\{t<\tau\}}\big],
\]
and equality, hence, the Laplace functionals $\{e^{-\langle\cdot,\varphi_{0}\rangle}\}$
being measure-determining, uniqueness in law of $\mu$, precisely when the
nonnegative defect left by the explosion vanishes as the truncation is removed
(cf.\ \cite[Thm.~4.4.11]{ethier2009markov} and the self-duality argument of
\cite{mytnik1998weak}). This is automatic when the dual does not explode. In the
singular regime it is delicate, because $\langle\nu,\varphi_{t}\rangle$ meets the
diagonal singularity $k(0^{+})=\infty$ while the damping exponent
$\langle\mu_{s},\varphi_{t}\rangle$ does not; but the defect is an \emph{integral
in time}, and $k\in L^{2}$ tames it there, reducing the requirement to a non-degeneracy
condition on the primal mass. The construction rests on Assumption
\ref{A:A_2:assumption_1_test_function} ($\mathbf 1\in\Wplus$ an admissible test
function), so that the constant-mode source and jumps of the dual, and the
pairings $\langle\mu_{s},\mathbf 1\rangle$, $\langle\nu,\varphi_{t}\rangle$,
$\langle\mu_{0},\varphi_{t}\rangle$, are well defined. We record the construction
as motivation for the dual formulation and a route to weak uniqueness of the lift.
\end{remark}

\section{Equivalence with the Volterra Equation}\label{sec:equivalence}
In order to transfer properties, derived on the level of the lift to the SVE, we require the following result.
\begin{theorem}\label{thm:equivalence_SVE_SEE}
Let $\drift: \R_{+}\times \R^{n_{\textnormal{dim}}} \rightarrow \R^{n_{\textnormal{dim}}}$ and $\diffusion: \R_{+}\times \R^{n_{\textnormal{dim}}} \rightarrow \R^{n_{\textnormal{dim}}\times m_{W}}$ be continuous and satisfy the linear growth condition \ref{A:assumption_general_linear_growth}. Further, let $\nudrift, \nudiffusion$ be ($n_{\textnormal{dim}}\times n_{\textnormal{dim}}$-matrices of) nonnegative measures on $\R_{+}$, such that for every $\lambda>0$, $\int_{\R_{+}}e^{-\lambda x}\nudrift(\dx)=k_{\drift}(\lambda)$, $\int_{\R_{+}}e^{-\lambda x}\nudiffusion(\dx)=k_{\diffusion}(\lambda)$, where $k_{\drift}\in L^{1}(0,T)$, $k_{\diffusion}\in L^{2}(0,T)$ are ($n_{\textnormal{dim}}\times n_{\textnormal{dim}}$-matrix-valued, entrywise) completely monotone kernels. Let $w, \dualweight$ be weight functions, such that Assumptions \ref{A:assumption_kernel_measure} hold.

    \begin{enumerate}
        \item
        Assume there exists a solution $X$  of      \begin{align}\label{eqn:SVE_equiv}
        X_{t} = X_{0} +\int_{0}^{t} k_{\drift}(t-s)\drift(s,X_{s})\ds +\int_{0}^{t}k_{\diffusion}(t-s)\diffusion(s,X_{s})\dW_{s},
    \end{align}
    satisfying
\begin{align}\label{eqn:equivalence_integrability_X}
    \int_{0}^{t}|\drift(s,X_{s})|\ds<\infty,\qquad \int_{0}^{t}|\diffusion(s,X_{s})|^{2}\ds<\infty,
\end{align}
where $|\cdot|$ denotes the Euclidean norm on $\R^{n_{\textnormal{dim}}}$, resp.\ the Frobenius norm on $\R^{n_{\textnormal{dim}}\times m_{W}}$.
        Then the $(W^{-1,2}_{\dualweight})^{n_{\textnormal{dim}}}$-valued process $Y$ defined by     \begin{align}\label{eqn:Y_process}
        Y_{t} = e^{-xt}x_{0}\delta_{0} +\int_{0}^{t} e^{-x(t-s)}\nudrift\drift(s,X_{s})\ds +\int_{0}^{t}e^{-x(t-s)}\nudiffusion\diffusion(s,X_{s})\dW_{s},
    \end{align}
is a mild solution of
            \begin{align}\label{eqn:SEE_eqn_weak_equiv}
        \mu_{t} =\mu_{0} -\int_{0}^{t} x\mu_{s}\ds+\int_{0}^{t} \nudrift(x)\drift(s,\langle \mu_{s},1\rangle)\ds +\int_{0}^{t} \nudiffusion(x)\diffusion(s,\langle \mu_{s},1\rangle)\dW_{s}
    \end{align}
with initial condition $\mu_{0}=x_{0}\delta_{0}$ and it holds that
\begin{align*}
    X_{t}=\langle \mu_{t},1\rangle,\qquad a.s. \text{ for a.e. } t>0,
\end{align*}
where, since $X_{t}\in\R^{n_{\textnormal{dim}}}$, the pairing $\langle\mu_{t},1\rangle$ is understood componentwise as in the discussion following equation~\eqref{eqn:SEE_eqn_strong_form_introduction}.
\item

If $\mu$ is a mild solution of the lifted stochastic evolution equation \eqref{eqn:SEE_eqn_weak_equiv}, according to Definition \ref{def:mild_solution_SEE}, with the initial condition $x_{0}\delta_{0}$, and let
\begin{align}\label{eqn:equivalence_integrability_mu}
    \int_{0}^{t}|\drift(s,\langle \mu_{s},1\rangle)|\ds<\infty,\qquad \int_{0}^{t}|\diffusion(s,\langle \mu_{s},1\rangle)|^{2}\ds<\infty,
\end{align}
then the $\R^{n_{\textnormal{dim}}}$-valued process $X$ defined by
\begin{align*}
    X_t:=\langle \mu_{t},1\rangle, t>0,
\end{align*}
is a solution of \eqref{eqn:SVE_equiv} with initial condition $x_{0}$. Furthermore, \eqref{eqn:Y_process} holds a.s. for any $t\geq 0$.
    \end{enumerate}
In particular, uniqueness holds for the SVE \eqref{eqn:SVE_equiv} with initial condition $x_{0}$ if and only if uniqueness holds for the lifted SEE \eqref{eqn:SEE_strong_formulation} with initial condition $x_{0}\delta_{0}$.
\end{theorem}
\begin{proof}[Proof of Theorem \ref{thm:equivalence_SVE_SEE}]\label{proof:thm:equivalence_SVE_SEE}\leavevmode
\begin{enumerate}
    \item  Let $X$ be a solution of the SVE \eqref{eqn:SVE_equiv} with $X_{0}=x_{0}$. Define
     \begin{align}
        Y_{t} = e^{-xt}Y_{0} +\int_{0}^{t} e^{-x(t-s)}\nudrift(x)\drift(s,X_{s})\ds +\int_{0}^{t}e^{-x(t-s)}\nudiffusion(x)\diffusion(s,X_{s})\dW_{s}.
    \end{align}
    By the assumed integrability \eqref{eqn:equivalence_integrability_X}, Lemma \ref{lem:nu_is_in_dual} and Lemma \ref{lem:nu_semigroup_properties}, $Y$ is a $(W^{-1,2}_{\frac{1}{w}})^{n_{\textnormal{dim}}}$-valued adapted process and satisfies $\int_{0}^{T}\|Y_{t}\|_{W^{-1,2}_{\frac{1}{w}}}^{2}\dt<\infty$ a.s. for any $t\geq 0$, due to Young's convolution inequality (applied entrywise to each of the $n_{\textnormal{dim}}$ components). By the same estimates, performed in Section \ref{sec:existence_general_coefficients}, $Y$ is a continuous $(\Wminustwodual)^{n_{\textnormal{dim}}}$-valued process, where $\weightminus$ is given as in Definition \ref{def:weights_triple_for_analysis}. We note that the map $f\mapsto \langle f,1\rangle$ from $W^{-1,2}_{\frac{1}{w}}$ to $\R$ is linear; applied componentwise, it gives a linear map from $(W^{-1,2}_{\frac{1}{w}})^{n_{\textnormal{dim}}}$ to $\R^{n_{\textnormal{dim}}}$. Hence,
    \begin{align*}
         \langle Y_{t},1\rangle &= \langle e^{-xt}Y_{0},1\rangle +\int_{0}^{t} \langle e^{-x(t-s)}\nudrift(x),1\rangle \drift(s,X_{s})\ds +\int_{0}^{t}\langle e^{-x(t-s)}\nudiffusion(x),1\rangle \diffusion(s,X_{s})\dW_{s}\\
         &=x_{0}+\int_{0}^{t} k_{\drift}(t-s)\drift(s,X_{s})\ds +\int_{0}^{t}k_{\diffusion}(t-s)\diffusion(s,X_{s})\dW_{s}=X_{t},
    \end{align*}
    a.s. for a.e. $t> 0$. Therefore $Y$ is a mild solution to \eqref{eqn:SEE_eqn_weak_equiv} and $X_{t}=\langle Y_{t},1\rangle$ a.s. for a.e. $t> 0$.
    \item  We now prove the converse part. Assume $\mu$ is a mild solution of \eqref{eqn:SEE_eqn_weak_equiv} and define the process $X_{t}$ by
    \begin{align*}
        X_{t}\coloneqq \langle \mu_{t},1\rangle &=\langle e^{-xt}\mu_{0},1\rangle +\int_{0}^{t} \langle e^{-x(t-s)}\nudrift(x),1\rangle \drift(s,\langle \mu_{s},1\rangle )\ds \\
        &\phantom{xx}+\int_{0}^{t}\langle e^{-x(t-s)}\nudiffusion(x),1\rangle \diffusion(s,\langle \mu_{s},1\rangle)\dW_{s}.
    \end{align*}
    $\mu$ is a $(W^{-1,2}_{\frac{1}{w}})^{n_{\textnormal{dim}}}$-valued process and the map $f\mapsto\langle f,1\rangle $ from $W^{-1,2}_{\frac{1}{w}}$ to $\R$ is continuous and hence Borel; applied componentwise it is continuous (hence Borel) from $(W^{-1,2}_{\frac{1}{w}})^{n_{\textnormal{dim}}}$ to $\R^{n_{\textnormal{dim}}}$. Using assumptions and the properties of the measures $\nudrift, \nudiffusion$, it can be shown that $t\mapsto \langle \mu_{t},1\rangle$ is continuous. This implies that $X$ is a predictable $\R^{n_{\textnormal{dim}}}$-valued process. Applying the map $f\mapsto\langle f,1\rangle $ to the mild formulation
       \begin{align*}
       \mu_{t} =e^{-xt}\mu_{0} +\int_{0}^{t} e^{-x(t-s)}\nudrift(x) \drift(s,\langle \mu_{s},1\rangle )\ds +\int_{0}^{t} e^{-x(t-s)}\nudiffusion(x) \diffusion(s,\langle \mu_{s},1\rangle)\dW_{s},
    \end{align*}
we see that
    \begin{align*}
        X_t & =\left\langle e^{-\cdot t} \mu_{0}(\cdot)+\int_0^t e^{-\cdot(t-s)}\nudrift(\cdot) \drift\left(s,\left\langle\mu_{s},1\right\rangle\right) \ds+\int_0^t e^{-\cdot(t-s)} \nudiffusion(\cdot)\diffusion\left(s,\left\langle\mu_{s},1\right\rangle\right)\dW_{s},1\right\rangle \\
    & =x_{0}+\int_0^t k_{\drift}(t-s) \drift\left(s,\left\langle\mu_{s},1\right\rangle\right) \ds+\int_0^t k_{\diffusion}(t-s) \diffusion\left(s,\left\langle\mu_{s},1\right\rangle\right)\dW_{s}\\
    & =x_{0}+\int_0^t k_{\drift}(t-s) \drift\left(s,X_s\right) \ds+\int_0^t k_{\diffusion}(t-s) \diffusion\left(s,X_s\right) \dW_s,
    \end{align*}
a.s. for a.e. $t>0$. Therefore, $X$ is a solution of the SVE \eqref{eqn:SVE_equiv} with $X_{0}=x_{0}$. Recalling that $Y$ denotes the process defined in \eqref{eqn:Y_process} (with $Y_{0}=x_{0}\delta_{0}$), we have
\begin{align*}
Y_t=e^{-\cdot t} Y_{0}+\int_0^t e^{-\cdot(t-s)} \nudrift\drift\left(s,X_s\right) \ds+\int_0^t e^{-\cdot(t-s)} \nudiffusion\diffusion\left(s,X_s\right) \dW_s    
\end{align*}
a.s. for any $t \geq 0$. This completes the proof.
\end{enumerate}  
\end{proof}

\section{Invariant Measures and Long-Time Behavior}\label{section:Invariant_measure}

 In this section, we assume for simplicity that our solution $\mu$ is unique in law. We refer to the introduction where we mentioned several uniqueness results which, due to Theorem \ref{thm:equivalence_SVE_SEE}, can be transferred from the SVE to the SEE. This assumption could be avoided by proving a Markov-selection result like in \cite{goldys2009martingale} and carefully performing the following arguments for the corresponding selection. 

\begin{assumption}\leavevmode
\begin{enumerate}[label={(UL)}]
    \item \label{A:uniqueness_in_law}  We assume that the (mild) solution to \eqref{eqn:SEE_strong_formulation} is unique in law.
\end{enumerate}
   \begin{enumerate}[label={(UC)}]
    \item \label{A:homogeneous_coefficients}  We assume that the coefficients $\drift,\diffusion$ only depend on $x$.
\end{enumerate}
\end{assumption}
We Assume \ref{A:homogeneous_coefficients} for the remainder of this chapter, which implies that the constants from Assumption \ref{A:assumption_general_linear_growth} can be chosen uniformly in $T$.
\begin{proposition}
    Let $\mu$ be a mild solution of equation \eqref{eqn:SEE_strong_formulation}, with initial condition $\mu_{0}$, which is unique in law (Assumption \ref{A:uniqueness_in_law}). Then the family $\{\mu_{t}(\mu_{0})\}_{t\geq 0,\mu_{0}\in \Wplusdualn}$ is a time-homogeneous Markov process and in particular $P_{t+s}=P_{t}P_{s}$.
\end{proposition}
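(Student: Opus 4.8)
The plan is to derive the Markov property from uniqueness in law together with a \emph{restart} property of the mild equation, and then to read off the semigroup identity $P_{t+s}=P_tP_s$ from time-homogeneity of the coefficients. For $x\in\Vspacedual$ write $\mu_\cdot(x)$ for the path-law of a probabilistically weak mild solution started at $x$, which by Assumption \ref{A:uniqueness_in_law} is well defined, and set $(P_tf)(x):=\E[f(\mu_t(x))]$ for bounded measurable $f$ on $\Vspacedual$.

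First I would establish the restart property. Fix $s\geq0$; given a probabilistically weak mild solution $(\Omega,\mathcal{F},\Fil,\Prob,\mu,W)$ on $[0,T]$, set $\bar\mu_r:=\mu_{s+r}$, $\bar W_r:=W_{s+r}-W_s$, $\bar\Fil_r:=\Fil_{s+r}$. Then $\bar W$ is an $\bar\Fil$-Brownian motion and, directly from \eqref{eqn:SEE_eqn_mild}, $(\Omega,\mathcal{F},\bar\Fil,\Prob,\bar\mu,\bar W)$ is a mild solution of \eqref{eqn:SEE_strong_formulation} with initial datum $\mu_s$: the semigroup law gives $S^{*}_{(s+t)-\sigma}=S^{*}_{t-(\sigma-s)}$, the substitution $r=\sigma-s$ sends the tail of the deterministic convolution to $\int_0^tS^{*}_{t-r}\nu_{\drift}\drift(\langle\bar\mu_r,\psi\rangle)\dr$, and shift-invariance of the It\^o integral---legitimate by the integrability \eqref{eqn:kernel_estimate_semigroup}---sends the tail of the stochastic convolution to $\int_0^tS^{*}_{t-r}\nu_{\diffusion}\diffusion(\langle\bar\mu_r,\psi\rangle)\d\bar{W}_{r}$. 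Since the coefficients are time-independent (Assumption \ref{A:homogeneous_coefficients}) the equation for $\bar\mu$ is verbatim \eqref{eqn:SEE_strong_formulation}.

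Next I would disintegrate over $\Fil_s$. As $\Hspacedual$ is separable, $C([0,T-s],\Hspacedual)$ is Polish, so regular conditional probabilities exist and for $\Prob$-a.e.\ $\omega$ the pair $(\bar\mu,\bar W)$ is, under the conditioned basis, a probabilistically weak mild solution started from the deterministic point $\mu_s(\omega)$. By Assumption \ref{A:uniqueness_in_law} the conditional law of $(\bar\mu_r)_{r\in[0,T-s]}$ given $\Fil_s$ must equal $\mu_\cdot(x)$ at $x=\mu_s$, the map $x\mapsto\mu_\cdot(x)$ being Borel (uniqueness in law makes the solution-law correspondence single-valued with measurable graph; in the Lipschitz regime this is already contained in the continuity of $x\mapsto\mu_t(x)$, the corollary to Theorem \ref{thm:existence_uniqueness_lipschitz}). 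Evaluating at $r=t$, a monotone-class argument gives for bounded measurable $f$
\begin{align*}
    \E\big[f(\mu_{s+t})\mid\Fil_s\big]=(P_tf)(\mu_s),
\end{align*}
which is the Markov property. The construction shows that $P_t$ does not depend on the starting time $s$, hence $(P_t)_{t\geq0}$ is time-homogeneous, and iterating the Markov property yields
\begin{align*}
    (P_{t+s}f)(x)=\E\big[f(\mu_{t+s}(x))\big]=\E\big[\,\E[f(\mu_{t+s}(x))\mid\Fil_t]\,\big]=\E\big[(P_sf)(\mu_t(x))\big]=(P_tP_sf)(x).
\end{align*}

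The main obstacle is the restart property, specifically identifying the shifted stochastic convolution with the tail of the original one: the integrands take values in $\mathcal{L}_{2}(\R,\Vitoprimespacedual)$, so shift-invariance of the It\^o integral has to be combined with \eqref{eqn:kernel_estimate_semigroup} to keep every integral well defined; and giving rigorous meaning to ``the solution started from the random point $\mu_s$'' requires the regular-conditional-probability/measurable-selection step. Both are standard in the SPDE setting and pose no conceptual difficulty here.
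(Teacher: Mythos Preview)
Your argument is correct and is essentially the same as the paper's: both derive the restart identity by splitting the mild formula at time $s$ (resp.\ $t_0$), using the semigroup law to factor $S^{*}_{t-s}$ out of the initial piece, and shifting the integration variable in the tails so that the shifted process $\bar\mu_r=\mu_{s+r}$ solves the same equation with the shifted Brownian motion and initial datum $\mu_s$; uniqueness in law then yields the Markov property and $P_{t+s}=P_tP_s$. The only difference is that you spell out the regular-conditional-probability / measurable-selection step needed to pass from a random initial point $\mu_s$ to a deterministic one, whereas the paper writes the algebraic identity and asserts the conclusion directly.
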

\begin{proof}
     \begin{align*}
         \mu_{t_{0}+t}&=e^{-(t+t_{0})x}\mu_{0}+\int_{0}^{t+t_{0}}e^{-(t+t_{0}-s)x}\nudrift\drift(\langle \mu_{s},\psi\rangle)\ds+\int_{0}^{t+t_{0}}e^{-(t+t_{0}-s)x}\nudiffusion\diffusion(\langle \mu_{s},\psi\rangle)\dW_{s}\\
         &=e^{-tx}\left(e^{-t_{0}x}\mu_{0}+\int_{0}^{t_{0}}e^{-(t_{0}-s)x}\nudrift\drift(\langle \mu_{s},\psi\rangle)\ds+\int_{0}^{t_{0}}e^{-(t_{0}-s)x}\nudiffusion\diffusion(\langle \mu_{s},\psi\rangle)\dW_{s}\right)\\
         &\phantom{xx}+\int_{t_{0}}^{t+t_{0}}e^{-(t+t_{0}-s)x}\nudrift\drift(\langle \mu_{s},\psi\rangle)\ds+\int_{t_{0}}^{t+t_{0}}e^{-(t+t_{0}-s)x}\nudiffusion\diffusion(\langle \mu_{s},\psi\rangle)\dW_{s}\\
         &=e^{-tx}\mu_{t_{0}}(\mu_{0})+ \int_{t_{0}}^{t+t_{0}}e^{-(t+t_{0}-s)x}\nudrift\drift(\langle \mu_{s},\psi\rangle)\ds+\int_{t_{0}}^{t+t_{0}}e^{-(t+t_{0}-s)x}\nudiffusion\diffusion(\langle \mu_{s},\psi\rangle)\dW_{s}\\
         &=e^{-tx}\mu_{t_{0}}(\mu_{0})+ \int_{0}^{t}e^{-(t-s)x}\nudrift\drift(\langle \mu_{s+t_{0}},\psi\rangle)\ds+\int_{0}^{t}e^{-(t-s)x}\nudiffusion\diffusion(\langle \mu_{s+t_{0}},\psi\rangle)\dW_{s}^{t_{0}}.
     \end{align*}
     By uniqueness in law, $P_{t}\varphi(\mu_{0})=\E\left[\varphi(\mu_{t_{0}+t}(\mu_{0}))\right]=\E\left[\varphi(\mu_{t}(\mu_{t_{0}}(\mu_{0})))\right]$. In particular,
     \begin{align*}
         \E\left[\varphi(\mu_{t_{0}+t}(\mu_{0}))\mid \mathcal{F}_{t_{0}}\right]=\E\left[\varphi(\mu_{t}(\mu_{t_{0}}(\mu_{0})))\mid \mathcal{F}_{t_{0}}\right]=P_{t}\varphi(\mu_{t_{0}}(\mu_{0}))).
     \end{align*}
\end{proof}
\begin{remark}
    Recall that $\Wplusdualn$ was defined as the dual space of $\Wplus$ with respect to a weighted $L^{2}$ duality.
\end{remark}

\subsection{Feller Property}\label{sec:Feller}
This section aims to study the weak and generalized Feller properties of solutions to \eqref{def:mild_solution_SEE}. As a first step, we want to establish a weak sequential Feller property for the solution \eqref{eqn:SEE_strong_formulation}. Let the initial condition of equation \eqref{eqn:SEE_strong_formulation} be denoted by $\mu_{0}\in \Wplusdualn$ and let the space of bounded, Borel measurable functions from $\Wplusdualn$ to $\R$ be denoted by $\mathcal{B}_{b}(\Wplusdualn,\R)$. For every $\Phi \in \mathcal{B}_{b}(\Wplusdualn,\R)$, we define
\begin{align*}
    P_{t}\Phi(\mu_{0})\coloneqq \E\left[\Phi(\mu_{t}(\mu_{0}))\right].
\end{align*}
By continuity of the trajectories of $\mu$, $P_{t}$ forms a stochastically continuous semigroup on $\Wplusdualn$, i.e.
\begin{align*}
    \lim_{t\rightarrow 0}P_{t}\Phi(\mu_{0})=\Phi(\mu_{0})
\end{align*}
for every $\Phi\in C_{b}(\Wplusdualn)$.
\begin{proposition}\label{prop:weak_feller_property}
   Let $\Phi\colon \Wplusdualn\rightarrow \R$ be a bounded and sequentially weakly continuous
function and $t > 0$. Then $P_{t}\Phi\colon \Wplusdualn\rightarrow \R$  is also a bounded sequentially weakly continuous function. In particular, if $\mu_{0,n}\rightarrow \mu_{0}$ in $\Wplusdualn$, then  for any $t\geq 0$, $P_{t}\Phi(\mu_{0,n})\rightarrow P_{t}\Phi(\mu_{0})$, as $n\rightarrow \infty$.
\end{proposition}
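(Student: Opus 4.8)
The plan is to obtain the weak sequential Feller property from the weak existence/stability machinery already developed: a tightness argument combined with the Skorokhod representation theorem, plus uniqueness in law. Suppose $\mu_{0,n}\to \mu_{0}$ in $\Vspacedual$. In particular $\sup_{n}\|\mu_{0,n}\|_{\Vspacedual}<\infty$, so by Lemma \ref{lem:a_priori_estimate_for_tightness} we have $\sup_{n}\E\sup_{t\le T}\|\mu_{t}(\mu_{0,n})\|_{\Vspacedual}^{p}<\infty$ for some $p>2$, and by Lemma \ref{lem:time_regularity} (together with Corollary \ref{corr:aldous_condition}) the family $\{\mu(\mu_{0,n})\}_{n}$ satisfies the Aldous-type condition in $\Vprimespacedual$. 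Hence, by Lemma \ref{lem:tightness_criterion}, the laws $\{\Law_{n}\}$ of the solutions $\mu(\mu_{0,n})$, together with the (fixed) law of $W$, are tight on $\zcal\times C^{0}([0,T];\R)$, where $\zcal=C([0,T],\Hspacedual)\cap C([0,T];B_{R}^{\operatorname{weak}^{*}})$.

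First I would extract, from an arbitrary subsequence, a further subsequence along which $(\mu(\mu_{0,n_{k}}),W)$ converges in law; applying Skorokhod representation as in the proof of Theorem \ref{thm:weak_mild_solution_SEE}, I obtain a new probability space carrying $(\widetilde{\mu}_{n_{k}},\widetilde{W}_{n_{k}})\to(\widetilde{\mu},\widetilde{W})$ almost surely in $\zcal\times C^{0}([0,T];\R)$, with $\widetilde{\mu}_{n_{k}}\stackrel{d}{=}\mu(\mu_{0,n_{k}})$. The passage to the limit in the mild formulation \eqref{eqn:SEE_eqn_mild_formulation} is identical to the one carried out after the Skorokhod step in Section \ref{sec:existence_general_coefficients} (convergence of the deterministic drift integral by dominated convergence using \ref{A:assumption_general_linear_growth} and Lemma \ref{lem:a-priori_estimate_general}; convergence of the stochastic integral as in \cite{debussche2011local}), except that here the coefficients $\drift,\diffusion$ are already the fixed continuous ones, so no approximation error term appears. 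The only genuinely new point is the initial condition: since $\widetilde{\mu}_{n_{k}}(0)\stackrel{d}{=}\mu_{0,n_{k}}\to\mu_{0}$ in $\Vspacedual$ and $\widetilde{\mu}_{n_{k}}(0)\to\widetilde{\mu}(0)$ a.s.\ in $\Hspacedual$, the limit satisfies $\widetilde{\mu}(0)=\mu_{0}$ a.s.; one checks $\E\|\widetilde{W}_{n_{k}}(0)\|=0$ routinely so that $(\widetilde{\mu},\widetilde{W})$ is a probabilistically weak mild solution of \eqref{eqn:SEE_strong_formulation} with deterministic initial datum $\mu_{0}$. By uniqueness in law (Assumption \ref{A:uniqueness_in_law}), $\widetilde{\mu}$ has the same law as $\mu(\mu_{0})$; in particular $\widetilde{\mu}(t)\stackrel{d}{=}\mu_{t}(\mu_{0})$ for each fixed $t$.

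To conclude, fix $t>0$ and let $\Phi$ be bounded and sequentially weakly continuous on $\Vspacedual$. From the a.s.\ convergence $\widetilde{\mu}_{n_{k}}(t)\to\widetilde{\mu}(t)$ in $\Hspacedual\cap(\Vspacedual)^{\operatorname{weak}^{*}}$ we get $\widetilde{\mu}_{n_{k}}(t)\to\widetilde{\mu}(t)$ weakly in $\Vspacedual$ a.s.\ (the bound $\sup_{k}\|\widetilde{\mu}_{n_{k}}(t)\|_{\Vspacedual}\le R$ upgrades weak$^{*}$ convergence in the reflexive space $\Vspacedual$ to weak convergence, and identifies the limit with $\widetilde\mu(t)$ via the $\Hspacedual$ convergence and density of test functions). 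Hence $\Phi(\widetilde{\mu}_{n_{k}}(t))\to\Phi(\widetilde{\mu}(t))$ a.s., and by boundedness of $\Phi$ and dominated convergence,
\begin{align*}
  P_{t}\Phi(\mu_{0,n_{k}})=\E\big[\Phi(\widetilde{\mu}_{n_{k}}(t))\big]\longrightarrow \E\big[\Phi(\widetilde{\mu}(t))\big]=\E\big[\Phi(\mu_{t}(\mu_{0}))\big]=P_{t}\Phi(\mu_{0}).
\end{align*}
Since every subsequence of $(P_{t}\Phi(\mu_{0,n}))_{n}$ has a further subsequence converging to the same limit $P_{t}\Phi(\mu_{0})$, the whole sequence converges, proving sequential weak continuity of $P_{t}\Phi$; boundedness is immediate from $\|P_{t}\Phi\|_{\infty}\le\|\Phi\|_{\infty}$. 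I expect the main obstacle to be the bookkeeping in the passage to the limit in the mild equation on the Skorokhod space---in particular ensuring the semigroup-weighted estimates (Lemmata \ref{lem:improvement_semigroup_weighted_spaces}--\ref{lem:nu_semigroup_properties}) are applied in the right dual spaces so that all terms converge in $\Hspacedual$---but this is essentially a rerun of the argument already given for Theorem \ref{thm:weak_mild_solution_SEE}, now without the coefficient-approximation layer.
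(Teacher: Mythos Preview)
Your proposal is correct and follows essentially the same route as the paper: tightness of $\{\mu(\mu_{0,n})\}_{n}$ via the a-priori estimates and the Aldous-type condition, Skorokhod representation, passage to the limit in the mild formulation to identify the limit as a weak solution started at $\mu_{0}$, uniqueness in law to pin down its distribution, and dominated convergence for $\Phi(\widetilde{\mu}_{n_{k}}(t))\to\Phi(\widetilde{\mu}(t))$. Your write-up is in fact slightly more careful than the paper's in spelling out the subsequence-of-subsequence argument and the identification of the initial condition on the Skorokhod space.
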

\begin{remark}
    Referring to the work \cite{maslowski1999sequentially}, and writing $\mathcal{S}_{b}$ for the set of all real-valued, bounded, weakly sequentially continuous functions on $\Wplusdualn$ (equipped with the $\operatorname{weak}^{*}$ topology), the previous Proposition verifies that
    \begin{align*}
        P_{t}\big(\mathcal{S}_{b}\big)\subseteq \mathcal{S}_{b}.
    \end{align*}
    This property is also referred to as the sequentially weak Feller property.
\end{remark}
\begin{proof}Let $t>0$. We start on the filtered probability space $(\Omega,\mathcal{F},\mathbb{F},\Prob)$. Given a sequence $(\mu_{0,n})_{n\in \N}\subseteq \Wplusdualn$, converging weakly to $\mu_{0}\in \Wplusdualn$, we need to verify that
    $P_{t}\Phi(\mu_{0,n})\rightarrow P_{t}\Phi(\mu_{0})$, where $P_{t}\Phi(\mu_{0})=\int_{\Wplusdualn}\Phi(\mu_{t}(\mu_{0}))\d \Prob ^{\mu_{0}}$. By Lemma \ref{lem:a-priori_estimate_general}, $P_{t}\Phi$ is bounded from $\Wplusdualn$ to $\R$.
     Theorem \ref{thm:weak_mild_solution_SEE} yields the existence of a solution $(\Omega^{\mu_{0,n}},\mathcal{F}^{\mu_{0,n}},\Fil^{\mu_{0,n}},\Prob^{\mu_{0,n}},\mu^{\mu_{0,n}},W^{\mu_{0,n}})$ to equation \eqref{eqn:SEE_strong_formulation} for each $\mu_{0,n}$. Hence $P_{t}\Phi(\mu_{0,n})$ is well defined. Let $\rho^{\mu_{0,n}}$ denote the joint law of $(\mu^{\mu_{0,n}},W^{\mu_{0,n}})$. We already know by Theorem \ref{thm:weak_mild_solution_SEE} that  $(\Omega^{\mu_{0}},\mathcal{F}^{\mu_{0}},\Fil^{\mu_{0,n}},\Prob^{\mu_{0}},\mu^{\mu_{0}},W^{\mu_{0}})$ to equation \eqref{eqn:SEE_strong_formulation} for $\mu_{0}$. By Lemma \ref{lem:a-priori_estimate_general} and \ref{lem:time_regularity} (and since we assumed that $(\mu_{0,n})_{n\in \N}\subseteq \Wplusdualn$ was convergent), we conclude that the laws $(\rho^{\mu_{0,n}})_{n}$ are tight on $C\left([0,T];\left(\Wplusdualn\right)^{\operatorname{weak}-*}\right)\cap C\left([0,T];\Wmidtwodualn\right)$.

     We set $\mu^{n}_{\cdot}=\mu_{\cdot}(\mu_{0,n})$. The Skorohod representation theorem (see \cite{Jakubowski98_Skorohod}) yields the existence of a subsequence $n_{k}$ which will not be relabelled, a new stochastic basis $(\tilde{\Omega},\tilde{\mathcal{F}},\tilde{\mathbb{F}},\tilde{\Prob})$, where $\tilde{\mathbb{F}}=(\tilde{\mathcal{F}}_{s})_{s\in[0,T]}$ and $\mathbb{F}$-progressively measurable process $\tilde{\mu}$, $(\tilde{\mu}^{n})_{n}$ with laws supported on $\Wmidtwodualn\cap \Wplusdualn$ and a new Wiener process $\tilde{W}$, such that $\tilde{\mu}_{n}$ has the same law as $\mu^{n}$ and $\tilde{\mu}^{n}\rightarrow \tilde{\mu}$ on $C([0,T],\Wmidtwodualn) \cap C\left([0,T],\left(\Wplusdualn\right)^{\operatorname{weak}^{*}}\right)$ $\tilde{\Prob}$-a.s. The system $(\tilde{\Omega},\tilde{\mathcal{F}},\tilde{\mathbb{F}},\tilde{\Prob},\tilde{W},\tilde{\mu})$ is a probabilistically weak solution of \eqref{eqn:SEE_strong_formulation}. In particular $\tilde{\mu}_{t}^{n}\rightharpoonup \tilde{\mu}_{t}$ (weakly-$*$) in $\Wplusdualn$. Since $\Phi$ was chosen as an element of the sequentially continuous bounded functions on $\Wplusdualn$, $\tilde{\Prob}$-a.s. $\Phi(\tilde{\mu}^{n})\rightarrow \Phi(\tilde{\mu})$ in $\R$. The boundedness of $\mu$ allows us to use Lebesgue's dominated convergence theorem to conclude that
\begin{align*}
    \lim_{n\rightarrow \infty}\tilde{\E}[\Phi(\tilde{\mu}_{t}^{n})]=\tilde{\E}[\Phi(\tilde{\mu}_{t})].
\end{align*}
By the equality of laws of $\tilde{\mu}^{n}$ and $\mu^{n}$, we obtain
\begin{align*}
    \tilde{\E}[\Phi(\tilde{\mu}_{t}^{n})]=\E^{\mu_{0,n}}[\Phi(\mu_{t}^{n})]=P_{t}\Phi(\mu_{0,n}).
\end{align*}
By Assumption \ref{A:uniqueness_in_law}, we have that the solution to \eqref{eqn:SEE_strong_formulation} is unique in law. Hence, it must hold that
\begin{align*}
     \tilde{\E}[\Phi(\tilde{\mu}_{t})]=\E[\Phi(\mu_{t})]=P_{t}\Phi(\mu_{0}).
\end{align*}
This yields
\begin{align*}
     \lim_{n\rightarrow \infty} P_{t}\Phi(\mu_{0,n}) = \lim_{n\rightarrow \infty}\tilde{\E}[\Phi(\tilde{\mu}_{t}^{n})]=\tilde{\E}[\Phi(\tilde{\mu}_{t})]=\E[\Phi(\mu_{t})]=P_{t}\Phi(\mu_{0}).
\end{align*}
\end{proof}
\subsubsection{Connection to generalized Feller theroy}
\begin{definition}
    For a completely regular Hausdorff space $X$, we call a function $\rho:X\rightarrow (0,\infty)$ an admissable f-weight function, if the sets $\mathcal{K}_{R}:=\{x\in X | \rho(x)\leq R\}$ are compact for all $R>0$.
\end{definition}
We introduced the notation "f-weight function" in order not to overlap with the notation, we used in the definition of the weighted Sobolev spaces.
For a Banach space $Z$, we denote by 
\begin{align*}
    B^{\rho}(X,Z):=\left\{f:X\rightarrow Z | \sup_{x\in X}\frac{\|f(x)\|_{Z}}{\rho(x)}<\infty\right\}.
\end{align*}
The norm will be denoted by $\|\cdot\|_{\rho,\infty}$.
\begin{definition}
    We define the space $\mathcal{B}^{\rho}(X,Z)$ as the closure of $C_{b}(X,Z)$ in $B^{\rho}(X,Z)$.
\end{definition}
\begin{corollary}\label{cor:gen_feller}
    Let $\varrho(x)\coloneqq 1+\|x\|_{\Wplusdualn}$, then $P_{t}$ as defined above, is a generalized Feller semigroup.
\end{corollary}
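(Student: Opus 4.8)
The plan is to verify that $(P_{t})_{t\geq 0}$ satisfies the defining properties of a generalized Feller semigroup on $\mathcal{B}^{\varrho}\big((\Vspacedual)^{\operatorname{weak}^{*}}\big)$ in the sense of \cite{cuchiero2020generalized} (going back to the abstract theory of D\"orsek--Teichmann): namely (i) $P_{0}=\operatorname{Id}$; (ii) $P_{t+s}=P_{t}P_{s}$; (iii) $\lim_{t\to 0^{+}}P_{t}\Phi(\mu_{0})=\Phi(\mu_{0})$ for every $\Phi\in\mathcal{B}^{\varrho}$ and every $\mu_{0}$; (iv) there exist $M\geq 1$ and $\omega\in\R$ with $\|P_{t}\Phi\|_{\varrho}\leq Me^{\omega t}\|\Phi\|_{\varrho}$; and (v) $\Phi\geq 0\Rightarrow P_{t}\Phi\geq 0$. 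First I would record that $\varrho(\mu_{0})=1+\|\mu_{0}\|_{\Vspacedual}$ is an admissible weight for this topology: it is bounded below by $1$, it is weak-$*$ lower semicontinuous (being a supremum of weak-$*$ continuous functions), and its sublevel sets are closed balls of $\Vspacedual$, hence weak-$*$ compact by the Banach--Alaoglu theorem and weak-$*$ metrizable because $\Vspace$ is separable (separability of weighted Sobolev spaces, Section \ref{section:weighted_spaces}); on such sublevel sets weak-$*$ \emph{sequential} continuity therefore coincides with weak-$*$ continuity.

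Properties (i), (ii) and (v) are immediate: $P_{0}\Phi(\mu_{0})=\E[\Phi(\mu_{0})]=\Phi(\mu_{0})$; the Chapman--Kolmogorov identity $P_{t+s}=P_{t}P_{s}$ is exactly the time-homogeneous Markov property established earlier in this section, combined with the tower property; and positivity is clear since $P_{t}\Phi(\mu_{0})=\E[\Phi(\mu_{t}(\mu_{0}))]$. For the norm bound (iv) I would invoke Lemma \ref{lem:a-priori_estimate_general} with some $p>1$ together with Assumption \ref{A:homogeneous_coefficients} (coefficients depending on $x$ only, so the linear-growth constants are uniform in $T$), the singular Gr\"onwall inequality \cite[Lemma 2.2]{zhang2010stochastic}, and Jensen's inequality, to obtain a bound of the form $\E\big[1+\|\mu_{t}(\mu_{0})\|_{\Vspacedual}\big]\leq Me^{\omega t}\big(1+\|\mu_{0}\|_{\Vspacedual}\big)$, i.e.\ $\E[\varrho(\mu_{t}(\mu_{0}))]\leq Me^{\omega t}\varrho(\mu_{0})$. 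This yields at once
\[
|P_{t}\Phi(\mu_{0})|\leq \|\Phi\|_{\varrho}\,\E[\varrho(\mu_{t}(\mu_{0}))]\leq Me^{\omega t}\,\|\Phi\|_{\varrho}\,\varrho(\mu_{0}),
\]
hence $\|P_{t}\Phi\|_{\varrho}\leq Me^{\omega t}\|\Phi\|_{\varrho}$, so $P_{t}$ is a bounded operator on $\mathcal{B}^{\varrho}$ with the required exponential growth.

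It remains to check that $P_{t}$ maps $\mathcal{B}^{\varrho}$ into itself and that (iii) holds. By density it suffices to treat $\Phi$ in the generating class $C_{b}\big((\Vspacedual)^{\operatorname{weak}^{*}}\big)$, and more generally a bounded, weak-$*$ sequentially continuous $\Phi$: Proposition \ref{prop:weak_feller_property} shows $P_{t}\Phi$ is again bounded and weak-$*$ sequentially continuous, hence continuous on each (metrizable) sublevel set of $\varrho$, and since it is bounded we have $\varrho^{-1}|P_{t}\Phi|\to 0$ as $\varrho\to\infty$; a cutoff argument (separating a sublevel set, which is weak-$*$ compact, from a larger weak-$*$ closed sublevel set by a weak-$*$ continuous function) then exhibits $P_{t}\Phi$ as a $\|\cdot\|_{\varrho}$-limit of functions in $C_{b}$, so $P_{t}\Phi\in\mathcal{B}^{\varrho}$. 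For (iii), stochastic continuity of the trajectories of $\mu$ gives $P_{t}\Phi(\mu_{0})\to\Phi(\mu_{0})$ for $\Phi\in C_{b}$, and this extends to all of $\mathcal{B}^{\varrho}$ via a standard density argument using the uniform-in-$t$ bound $\sup_{t\leq 1}\|P_{t}\|_{\mathcal{L}(\mathcal{B}^{\varrho})}<\infty$ from (iv).

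The main obstacle I expect is the bookkeeping in this last step: reconciling the weak-$*$ \emph{sequential} continuity delivered by Proposition \ref{prop:weak_feller_property} with membership in $\mathcal{B}^{\varrho}$, which is defined through $C_{b}(U)$ for the non-metrizable weak-$*$ topology. This is precisely where admissibility of $\varrho$ is needed — its sublevel sets are weak-$*$ compact and metrizable (by separability of $\Vspace$), so the distinction disappears on them — and the only genuine input is the moment bound (iv), which feeds the cutoff approximation. Everything else is a direct transcription of Lemma \ref{lem:a-priori_estimate_general}, Proposition \ref{prop:weak_feller_property}, the $\mathcal{S}_{b}$-invariance remark (cf.\ \cite{maslowski1999sequentially}), and the Markov property already proved, so the proof is short modulo this functional-analytic point.
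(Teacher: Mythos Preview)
Your proposal is correct. The paper's proof is a one-line invocation of \cite[Theorem 5.3]{dorsek2010semigroup}, which states precisely that the (sequentially) weak Feller property established in Proposition \ref{prop:weak_feller_property}, together with the admissibility of $\varrho$, already yields the generalized Feller property; strong continuity on $\mathcal{B}^{\varrho}$ then follows. Your argument unpacks this abstract result by hand: the verification that $\varrho$ has weak-$*$ compact metrizable sublevel sets, the moment bound (iv) via Lemma \ref{lem:a-priori_estimate_general} and the singular Gr\"onwall lemma, and the cutoff approximation showing $P_{t}\Phi\in\mathcal{B}^{\varrho}$ are exactly the ingredients that go into the proof of \cite[Theorem 5.3]{dorsek2010semigroup}. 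The paper's approach buys brevity; your route is self-contained and makes transparent where each earlier piece enters (the Markov property for (ii), Lemma \ref{lem:a-priori_estimate_general} for (iv), Proposition \ref{prop:weak_feller_property} for the invariance of the sequentially weak-$*$ continuous class). The functional-analytic point you single out as the main obstacle---reconciling weak-$*$ \emph{sequential} continuity with membership in $\mathcal{B}^{\varrho}$---is precisely what the cited theorem packages, and your resolution via metrizability of sublevel sets (separability of $\Vspace$ plus Banach--Alaoglu) is the correct one.
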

\begin{proof}
    Since we satisfy the necessary assumptions, \cite[Theorem 5.3]{dorsek2010semigroup} implies that the weak(-$*$) Feller property of Proposition \ref{prop:weak_feller_property} implies the generalized Feller property. $P_{t}$ is therefore a strongly continuous semigroup on $\mathcal{B}^{\varrho}\left(\left(\Wplusdualn\right)^{\operatorname{weak}^{*}}\right)$.
\end{proof}

\subsection{Invariant Measures for SEE}
To make use of the weak or generalized Feller property, we will derive additional estimates.

\begin{assumption}\leavevmode
\begin{enumerate}[label={(LT)}]
    \item \label{A:long_term_behaviour} Let $\nu^{i}$, $i\in \{\drift,\diffusion\}$, be such that for any $0\leq s < t$,
    \begin{align}
        \|S_{t-s}^{*}\nu^{i}\|_{\opn}\leq C F_{i}(t-s),
    \end{align}
    with kernels $F_{i}>0$, $F_{\drift}\in C(\R_{+})\cap L^{1}(\R_{+})$ and $F_{\diffusion}\in C(\R_{+})\cap L^{2}(\R_{+})$.
\end{enumerate}
\end{assumption}
\begin{lemma}
      Let $\mu$ be a mild solution of equation \eqref{eqn:SEE_strong_formulation}, let Assumption \ref{A:long_term_behaviour} be satisfied and assume that $\sup_{T>0}\E \sup_{t\in [0,T]} \|S_{t}^{*}\mu_{0}\|_{\Wplusdualn}<\infty$. Then
    \begin{align}\label{eqn:long_time_estimate}
       \sup_{t\in [0,\infty)}  \E \|\mu_{t}\|_{\Wplusdualn}\leq \sup_{T>0}\E \sup_{t\in [0,T]} \|\mu_{t}\|_{\Wplusdualn}<\infty.
    \end{align}
\end{lemma}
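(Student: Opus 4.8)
The plan is to revisit the a priori estimate of Lemma \ref{lem:a-priori_estimate_general} and to observe that, under Assumption \ref{A:long_term_behaviour} together with the standing Assumption \ref{A:homogeneous_coefficients} (which makes the linear–growth constants of \ref{A:assumption_general_linear_growth} independent of $T$) and the hypothesis on $\mu_{0}$, \emph{every} constant on the right-hand side can be chosen independently of $T$; a Gronwall-type argument then produces the $T$-uniform bound. Note first that the left-hand inequality in \eqref{eqn:long_time_estimate} is trivial, since $\E\|\mu_{t}\|_{\Vglobalspacedual}\le\E\sup_{r\in[0,t]}\|\mu_{r}\|_{\Vglobalspacedual}$ for every fixed $t$; the content of the lemma is the finiteness of $M:=\sup_{T>0}\E\sup_{t\in[0,T]}\|\mu_{t}\|_{\Vglobalspacedual}$.

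First I would recall that $\Vglobalspacedual=\Vspacedual$, so Lemma \ref{lem:a-priori_estimate_general} applies verbatim on this space; fix $p=2$ and set $u(T):=\E\sup_{t\in[0,T]}\|\mu_{t}\|_{\Vspacedual}^{2}$, which is finite for each fixed $T$ (by Lemma \ref{lem:a_priori_estimate_for_tightness}, or by the same Gronwall argument run on $[0,T]$). Assumption \ref{A:long_term_behaviour} yields, for every $T>0$,
\begin{align*}
\int_{0}^{T}\|S^{*}_{T-s}\nu_{\drift}\|_{\Vspacedual}\ds\le C\|F_{\drift}\|_{L^{1}(\R_{+})},\qquad
\int_{0}^{T}\|S^{*}_{T-s}\nu_{\diffusion}\|_{\Vspacedual}^{2}\ds\le C\|F_{\diffusion}\|_{L^{2}(\R_{+})}^{2},
\end{align*}
both finite and independent of $T$. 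Consequently $h_{\drift,1}(T)$ is bounded by a $T$-independent constant and $h_{\diffusion,1}(T)\equiv 1$ (its exponent $\tfrac{p}{2}-1$ vanishes for $p=2$), the two additive ``free'' terms in Lemma \ref{lem:a-priori_estimate_general} are bounded independently of $T$, and the initial-datum term $\E\sup_{t\le T}\|S^{*}_{t}\mu_{0}\|_{\Vspacedual}^{2}$ is bounded uniformly in $T$ by the hypothesis (using boundedness of $S^{*}$ on $\Vglobalspacedual$ to pass, if needed, from the first to the second moment of $\sup_{t}\|S^{*}_{t}\mu_{0}\|_{\Vglobalspacedual}$). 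Plugging these in, Lemma \ref{lem:a-priori_estimate_general} collapses to
\begin{align*}
u(T)\le A+\int_{0}^{T}g(T-s)\,u(s)\ds,\qquad
g(r):=C_{1}\|S^{*}_{r}\nu_{\drift}\|_{\Vspacedual}+C_{2}\|S^{*}_{r}\nu_{\diffusion}\|_{\Vspacedual}^{2},
\end{align*}
with $A<\infty$, $g\ge 0$, and $\int_{0}^{T}g(T-s)\ds\le C_{1}\|F_{\drift}\|_{L^{1}}+C_{2}\|F_{\diffusion}\|_{L^{2}}^{2}<\infty$, all independent of $T$.

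At this point I would invoke the singular Gronwall lemma already used in the proof of Lemma \ref{lem:a_priori_estimate_for_tightness} (\cite[Lemma 2.2]{zhang2010stochastic}), which upgrades such an inequality to a bound $u(T)\le C$ depending only on $A$, $\|F_{\drift}\|_{L^{1}}$ and $\|F_{\diffusion}\|_{L^{2}}$, and in particular not on $T$; hence $\sup_{T>0}u(T)<\infty$. By Jensen's inequality $\E\sup_{t\le T}\|\mu_{t}\|_{\Vglobalspacedual}\le u(T)^{1/2}$, so $M\le(\sup_{T>0}u(T))^{1/2}<\infty$, which together with the trivial first inequality closes the chain \eqref{eqn:long_time_estimate}.

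The main obstacle is the bookkeeping of $T$-uniformity: besides the convolution coefficients, which are handled by Assumption \ref{A:long_term_behaviour}, one must check that the additive ``free'' terms and the initial-datum term stay bounded as $T\to\infty$ — this is exactly where the \emph{global} (rather than $L^{1}(0,T)$/$L^{2}(0,T)$) integrability of $t\mapsto\|S^{*}_{t}\nu_{i}\|_{\Vglobalspacedual}$, the time-homogeneity of the coefficients, and the assumption on $\mu_{0}$ enter — and one must match the moment orders appearing in Lemma \ref{lem:a-priori_estimate_general}. Choosing $p=2$ makes the diffusion exponent $\tfrac{p}{2}-1$ vanish and keeps the Gronwall inequality genuinely linear in $u$; if one insists on only a first moment of $\sup_{t}\|S^{*}_{t}\mu_{0}\|_{\Vglobalspacedual}$, one can instead run the identical scheme with $1<p<2$, using the sharper (linear in $u$) form of Lemma \ref{lem:a-priori_estimate_general} that is visible in its proof and then concluding again by Jensen.
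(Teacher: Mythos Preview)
Your overall strategy coincides with the paper's: start from Lemma~\ref{lem:a-priori_estimate_general}, observe that Assumption~\ref{A:long_term_behaviour} makes the prefactors $h_{\drift,j}(T)$, $h_{\diffusion,j}(T)$ and the additive terms uniformly bounded in $T$, collapse everything into a convolution inequality $u(T)\le A+\int_0^T g(T-s)u(s)\ds$ with $g\in L^1(\R_+)$, and then close by a Gronwall argument. Your combining of the drift and diffusion kernels into a single $g$ is a harmless simplification of the paper's $\max$-splitting.

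The genuine gap is at the very last step. You assert that \cite[Lemma~2.2]{zhang2010stochastic} ``upgrades such an inequality to a bound $u(T)\le C$ depending only on $A$, $\|F_{\drift}\|_{L^{1}}$ and $\|F_{\diffusion}\|_{L^{2}}$, and in particular not on $T$''. That lemma does \emph{not} deliver this: what it gives is the resolvent representation $u(T)\le A+\int_0^T R_g(T-s)\,A\ds=A\bigl(1+\int_0^T R_g(r)\dr\bigr)$, where $R_g$ is the resolvent of the second kind of $g$. Whether $\int_0^T R_g$ stays bounded as $T\to\infty$ is precisely the question, and it is \emph{not} automatic from $g\in L^1(\R_+)$ (think of the Neumann series $R_g=\sum_{n\ge 1}g^{*n}$ when $\|g\|_{L^1}\ge 1$). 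The paper closes this by invoking \cite[Theorem~1]{gripenberg1980resolvents}, which guarantees $R_g\in L^1(\R_+)$ for the class of kernels at hand; without that ingredient your argument does not yield a $T$-uniform bound, and the conclusion $\sup_{T>0}u(T)<\infty$ is unjustified. A minor secondary point: the hypothesis only controls the \emph{first} moment of $\sup_t\|S^*_t\mu_0\|_{\Vglobalspacedual}$, so your $p=2$ route needs the extra moment argument you gesture at; the paper sidesteps this by keeping $p$ generic and using Young's inequality to linearise the sublinear exponent before applying the resolvent bound.
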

\begin{proof}
     Let $u,A,G,H$ be non negative functions on $\R_{+}$. Since
\begin{align*}
    u(t)\leq A(t) + G(t) + H(t) \leq A(t) + 2\max\{G(t),H(t)\},
\end{align*}
it suffices to estimate $u(t)\leq A(t) + 2 G(t)$ and $u(t)\leq A(t) + 2 H(t)$ separately. Hence for $i\in\{\drift,\diffusion\}$, $j=1,2$, we consider 
  \begin{align}
   \E  \sup_{t\leq T}\|\mu_{t}\|_{\Wplusdualn}^{p} 
     &\leq C\E \sup_{t\leq T}\left\|S^{*}_{t}\mu_{0}\right\|^{p}_{\Wplusdualn}\nonumber\\
     &\phantom{xx}+ C h_{i,j}(T)\int_{0}^{T}F_{i}(T-s)\left(\E\sup_{r\leq s}\|\mu_{r}\|_{\Wplusdualn}^{2}\right)^{\widetilde{\gamma}_{i,j}}\ds,\label{eqn:sup_estimate_term_1},
\end{align}
where, depending on $p$, 
\begin{align*}
    \widetilde{\gamma}_{\drift,1} = \widetilde{\gamma}_{\drift,2}=\gamma_{\drift},\qquad \widetilde{\gamma}_{\diffusion,1} = \gamma_{\diffusion},\qquad \widetilde{\gamma}_{\diffusion,2} = 2\gamma_{\diffusion}-1,
\end{align*}
\begin{align*}
    &h_{\drift,1}(T)=h_{\drift,2}(T)= \left(\int_{0}^{T}\|S^{*}_{T-s}\nudrift\|_{\opn}\ds\right)^{p-1}, \\
    &\qquad h_{\diffusion,1}(T)=\left(\int_{0}^{T}\|S^{*}_{T-s}\nudiffusion\|_{\opn}\ds\right)^{\frac{p}{2}-1}\\
    &h_{\diffusion,2}(T)=1.
\end{align*}
Since $\widetilde{\gamma}_{i,j}\leq 1$, we can use Young's inequality to obtain
 \begin{align*}
   \E  \sup_{t\leq T}\|\mu_{t}\|_{\Wplusdualn}^{p} 
     &\leq C\E \sup_{t\leq T}\left\|S^{*}_{t}\mu_{0}\right\|^{p}_{\Wplusdualn}\\
     &\phantom{xx}+ C h_{i,j}(T)\int_{0}^{T}F_{i}(T-s)\left(1+\E\sup_{r\leq s}\|\mu_{r}\|_{\Wplusdualn}^{p}\right)\ds \\
     &\leq  C\left(\E \sup_{t\leq T}\left\|S^{*}_{t}\mu_{0}\right\|^{p}_{\Wplusdualn} +h_{i,j}(T)\int_{0}^{T}F_{i}(T-s)\ds\right)\\
     &\phantom{xx}+ C h_{i,j}(T)\int_{0}^{T}F_{i}(T-s)\E\sup_{r\leq s}\|\mu_{r}\|_{\Wplusdualn}^{p}\ds.
\end{align*}
Now, we use \cite[Lemma 2.2]{zhang2010stochastic}, which allows us to bound the previous terms by the corresponding resolvents (of the second kind) of $F_{i}$, namely
\begin{align*}
    f(t)\leq a(t)+\int_{0}^{t}F(t-s)f(s)\ds,
\end{align*}
implies 
\begin{align*}
    f(t)\leq a(t)+\int_{0}^{t}R_{F}(t-s)a(s)\ds.
\end{align*}
We will also use that \cite[Theorem 1]{gripenberg1980resolvents} implies that if the kernel $F\in L^{1}(\R_{+})$, then its resolvent $R_{F}\in L^{1}(\R_{+})$.
By Assumption \ref{A:long_term_behaviour} $h_{i,j}\leq C$ and we obtain
 \begin{align*}
   \E  \sup_{t\leq T}\|\mu_{t}\|_{\Wplusdualn}^{2} 
     &\leq  C\left(1+\E \sup_{t\leq T}\left\|S^{*}_{t}\mu_{0}\right\|^{p}_{\Wplusdualn}\right)\\
     &\phantom{xx}+ C\int_{0}^{T}R_{F_{i}}(T-s)\left(1+\E \sup_{t\leq T}\left\|S^{*}_{t}\mu_{0}\right\|^{p}_{\Wplusdualn}\right)\ds.
\end{align*}
Assumption \ref{A:long_term_behaviour} and \cite[Theorem 1]{gripenberg1980resolvents} now yield that
 \begin{align*}
   \sup_{T}\E  \sup_{t\leq T}\|\mu_{t}\|_{\Wplusdualn}^{p} 
     &<\infty.
\end{align*}
\end{proof}
Let us verify \ref{A:long_term_behaviour} for two examples. Recall that in the Example \ref{example:kernels}, the kernel obtained in the estimates for the Gamma-kernel was again a Gamma-kernel. The same property can be derived for the shifted fractional kernel. Hence, we will discuss the examples in this section in terms of the kernels $F_{i}$, appearing in Assumption \ref{A:long_term_behaviour}.
\begin{itemize}
    \item The gamma-kernel:Let $\delta >0$, $\beta\in \left(0,\frac{1}{2}\right)$ and $F_{i}(t)=e^{-t\delta}t^{\beta-1}$.
   Note that $\int_{0}^{t}e^{-(t-s)\delta}(t-s)^{\beta-1}\ds = \delta^{-\beta}\left(\Gamma(\beta)-\Gamma(\beta,\delta t)\right)$, which can be bounded by a constant. The resolvent (of the second kind) is given by $R(t)=e^{-\delta t}t^{\beta-1}E_{\beta,\beta}(-t^{\beta})$, where $E_{\alpha,\beta}(z)=\sum_{n=0}^{\infty}\frac{z^{n}}{\Gamma(\alpha n+\beta)}$ denotes the Mittag-Leffler function. It can be easily verified that $\sup_{T}\int_{0}^{T}R(s)\ds <\infty$.
   \item For fixed $a,\eps>0$, let $F_{i}(t)=\frac{1}{(t+\eps)^{1+\alpha}}$. This kernel is clearly in $L^{1}(\R_{+})$. Note that kernels of this form are completely monotone and fit in our previous analysis.
\end{itemize}
\begin{theorem}\label{thm:invariant_measure_mu}
    Let $\mu$ be a solution of \eqref{eqn:SEE_strong_formulation} and let Assumptions \ref{A:long_term_behaviour} hold. Then there exists at least one measure $\mathcal{Q}$ on $\Wplusdualn$ , such that $P_{t}^{*}\mathcal{Q}=\mathcal{Q}$.
\end{theorem}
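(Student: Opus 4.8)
The plan is to obtain $\mathcal{Q}$ by a Krylov--Bogoliubov averaging argument carried out inside the generalized Feller framework of Corollary \ref{cor:gen_feller}. First I would pick a deterministic initial datum $\mu_{0}\in\Vglobalspacedual$ satisfying the hypothesis $\sup_{T>0}\E\sup_{t\in[0,T]}\|S^{*}_{t}\mu_{0}\|_{\Vglobalspacedual}<\infty$ of the preceding Lemma (the choice $\mu_{0}=0$ works trivially, and $\mu_{0}=x_{0}\delta_{0}$ works because $S^{*}$ fixes $\delta_{0}$). Assumption \ref{A:long_term_behaviour} and that Lemma then give the uniform-in-time moment bound $M\coloneqq\sup_{t\geq 0}\E\|\mu_{t}(\mu_{0})\|_{\Vglobalspacedual}<\infty$. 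Next one introduces the time-averaged laws
\[
  \mathcal{Q}_{T}(\cdot)\coloneqq\frac{1}{T}\int_{0}^{T}\Prob\bigl(\mu_{t}(\mu_{0})\in\cdot\bigr)\dt ,
\]
which are Borel probability measures on $\Vglobalspacedual$; measurability of $t\mapsto\Law(\mu_{t}(\mu_{0}))$ follows from the continuity of the trajectories of $\mu$ and the stochastic continuity of $(P_{t})$ recorded before Proposition \ref{prop:weak_feller_property}.

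The second step is tightness. By Markov's inequality and the bound $M$ we get $\sup_{T>0}\mathcal{Q}_{T}(\{x:\|x\|_{\Vglobalspacedual}>R\})\leq M/R$. Since $\Vglobalspacedual=W^{-1,2}_{\weightplusinverse}$ is a separable dual space, its closed balls are weak-$*$ compact by Banach--Alaoglu and metrizable in the weak-$*$ topology, so the sublevel sets $\{\varrho\leq 1+R\}$ of the weight $\varrho(x)=1+\|x\|_{\Vglobalspacedual}$ from Corollary \ref{cor:gen_feller} are compact in $(\Vglobalspacedual)^{\operatorname{weak}^{*}}$. Hence $\{\mathcal{Q}_{T}\}_{T>0}$ is tight on $(\Vglobalspacedual)^{\operatorname{weak}^{*}}$, and Prokhorov's theorem yields a sequence $T_{n}\uparrow\infty$ and a probability measure $\mathcal{Q}$ with $\mathcal{Q}_{T_{n}}\rightharpoonup\mathcal{Q}$, i.e.\ $\int\Phi\,d\mathcal{Q}_{T_{n}}\to\int\Phi\,d\mathcal{Q}$ for every bounded, sequentially weak-$*$ continuous $\Phi$; tightness also forces $\mathcal{Q}(\{\varrho<\infty\})=1$, so $\mathcal{Q}$ acts continuously on $\mathcal{B}^{\varrho}((\Vglobalspacedual)^{\operatorname{weak}^{*}})$ by integration.

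The third step is invariance. For $\Phi$ bounded and sequentially weak-$*$ continuous, Proposition \ref{prop:weak_feller_property} shows $P_{s}\Phi$ has the same properties (and lies in $\mathcal{B}^{\varrho}$ by Corollary \ref{cor:gen_feller}), hence is an admissible test function. Using the semigroup identity $P_{t+s}=P_{t}P_{s}$ and the substitution $t\mapsto t+s$,
\[
  \int_{\Vglobalspacedual}P_{s}\Phi\,d\mathcal{Q}_{T_{n}}=\frac{1}{T_{n}}\int_{0}^{T_{n}}P_{t+s}\Phi(\mu_{0})\dt=\frac{1}{T_{n}}\int_{s}^{T_{n}+s}P_{t}\Phi(\mu_{0})\dt ,
\]
which differs from $\int\Phi\,d\mathcal{Q}_{T_{n}}=\frac{1}{T_{n}}\int_{0}^{T_{n}}P_{t}\Phi(\mu_{0})\dt$ by at most $2s\|\Phi\|_{\infty}/T_{n}\to0$. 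Letting $n\to\infty$ gives $\int P_{s}\Phi\,d\mathcal{Q}=\int\Phi\,d\mathcal{Q}$ for all such $\Phi$ and all $s\geq0$; since $\mathcal{Q}$ is tight, hence inner regular with respect to the metrizable compacta $\{\varrho\leq1+R\}$ on which bounded continuous functions separate probability measures, this identity yields $P_{s}^{*}\mathcal{Q}=\mathcal{Q}$.

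The main obstacle I anticipate is not a single estimate but the careful matching of the weak-$*$ topology on the infinite-dimensional space $\Vglobalspacedual$ with the generalized Feller setup of \cite{dorsek2010semigroup,maslowski1999sequentially}: one must check that the bounded, sequentially weak-$*$ continuous functions form a measure-determining class for the relevant Borel $\sigma$-algebra and, above all, that the limiting measure $\mathcal{Q}$ is genuinely carried by $\Vglobalspacedual$ rather than escaping to a larger ambient space. Both are supplied precisely by the uniform moment bound of the preceding Lemma (a consequence of Assumption \ref{A:long_term_behaviour}) together with the relative compactness of the sublevel sets of $\varrho$; the Prokhorov extraction and the averaging estimate are then routine.
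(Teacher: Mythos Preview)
Your proposal is correct and is essentially the same approach as the paper's: the paper's proof simply invokes \cite[Proposition 3.1]{maslowski1999sequentially} together with the long-time estimate \eqref{eqn:long_time_estimate}, and what you have written is precisely an unpacking of that cited proposition --- a Krylov--Bogoliubov averaging argument where tightness on $(\Vglobalspacedual)^{\operatorname{weak}^{*}}$ comes from the uniform moment bound and Banach--Alaoglu, and invariance is obtained from the sequentially weak Feller property of Proposition \ref{prop:weak_feller_property}.
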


\begin{proof}
    The statement follows directly from \cite[Proposition 3.1]{maslowski1999sequentially} and estimate \eqref{eqn:long_time_estimate}.
\end{proof}
\begin{proof}[Proof of Theorem \ref{thm:weak_gen_feller}]
    The theorem follows from Theorem \ref{thm:invariant_measure_mu}, Proposition \ref{prop:weak_feller_property} and Corollary \ref{cor:gen_feller} 
\end{proof}

\begin{remark}[Comparison with Hamaguchi's generalized Harris approach, and a shortcoming of his choice of spaces]\label{rem:hamaguchi_comparison}
Theorem~\ref{thm:invariant_measure_mu} and its transfer to the SVE
(Theorem~\ref{thm:invariant_measure_SVE}) establish existence of an
invariant/stationary measure only, via a Krylov--Bogoliubov-type
tightness argument resting on the generalized Feller property and the
compact embedding of Proposition~\ref{prop:embeddings}. Uniqueness,
mixing, and a convergence rate are established below in
Section~\ref{subsec:harris_ergodicity}, for the sub-class of SEEs with
uniformly elliptic diffusion coefficient, by adapting the generalized
Harris theorem of Hairer--Mattingly--Scheutzow~\cite{HaMaSc11} as
recently applied to Markovian lifts of SVEs by
Hamaguchi~\cite{hamaguchi2026exponential}.

The two frameworks are close enough to be directly comparable: his
``lifting basis'' $(\mu,M_{\drift},M_{\diffusion})$, once specialized
to our two-kernel setting via
$\mu:=\nudrift+\nudiffusion$, $M_{\drift}:=\d\nudrift/\d\mu$,
$M_{\diffusion}:=\d\nudiffusion/\d\mu$, generates exactly our SEE
\eqref{eqn:SEE_strong_formulation}. This is worth making explicit
because it also exposes a genuine structural difference between the
two choices of state space, one that motivated our own use of weighted
\emph{Sobolev} (rather than merely weighted $L^{2}$) spaces from the
outset. Hamaguchi's own state space is built directly as a
weighted-$L^{2}(\mu)$-type Hilbert space $\mathcal{H}$ (our analogue of
$\Wmiddualn$) with a finite-second-moment subspace $\mathcal{V}\subset\mathcal{H}$ (our
analogue of $\Wplusdualn$), with \emph{no} additional differentiability
imposed. He proves (his own Lemma on the compactness of
$\mathcal{V}\hookrightarrow\mathcal{H}$) that this embedding is compact \emph{if and
only if} the reference measure $\mu$ has finite support in every
bounded interval, equivalently, only for sum-of-exponentials
kernels. For any kernel with a genuinely continuous Laplace measure,
including the tempered fractional kernels that are the running example
of this paper, his embedding is \emph{never} compact, and he notes
explicitly that classical existence results resting on compactness
(Krylov--Bogoliubov, or the general monotone-SPDE ultimate-boundedness
theory) therefore do not apply in his framework for exactly the
kernels of main interest. This is why he is compelled to abandon
compactness-based arguments entirely and pass to the generalized
Harris/coupling machinery, which needs none.

Our own compact embedding (Proposition~\ref{prop:embeddings}) is a
different, and stronger, mechanism: a genuine Rellich--Kondrachov-type
argument comparing \emph{two different orders of Sobolev
differentiability} with a decaying weight ratio, rather than a single
flat $L^{2}$ structure. Polynomial weight ratios of the type used in
Section~\ref{section:Functional_framework} can decay to zero even when
the underlying measure is continuous, so our embedding is not subject
to Hamaguchi's obstruction on the same terms. We record this as a
genuine structural advantage of working in weighted Sobolev spaces
rather than weighted $L^{2}$ spaces alone, but we do not want to
overstate it: verifying the hypotheses of
Proposition~\ref{prop:embeddings} for a specific singular kernel
remains a case-by-case computation (as already flagged in the
Introduction), and we have not carried this out in full generality for
every example in this paper. For this reason, and because it delivers
a strictly stronger conclusion regardless (uniqueness and a
convergence rate, not merely existence), we adopt the Harris/coupling
route below as the primary tool for ergodicity, treating the
compactness-based existence argument above as an independent,
complementary result that does not itself depend on the Harris
machinery.

A second structural difference is, for applications, arguably more
serious than the compactness question, since it concerns which
stochastic Volterra equations can be represented at all rather than
merely how their long-time behavior is proved. In
\cite{hamaguchi2023markovian,hamaguchi2026exponential}, the forcing
term of the SVE is not free: it is generated by the lift's own initial
state $Y_{0}\in\mathcal H$ via
$x(t)=\mu[\mathcal S(t)Y_{0}]=\int_{[0,\infty)}e^{-\theta t}Y_{0}(\theta)\,\mu(\mathrm d\theta)$
(see the well-posedness proposition of \cite{hamaguchi2026exponential},
part (iii), and the analogous identity in \cite{hamaguchi2023markovian}).
Since constant vectors are genuine elements of $\mathcal H$, taking
$Y_{0}\equiv x_{0}$ is perfectly legitimate in his framework, no Dirac mass is needed there, but it does \emph{not} produce the
persistent constant forcing $x(t)\equiv x_{0}$: it produces
$x(t)=x_{0}K(t)$, which vanishes as $t\to\infty$ for any kernel whose
Laplace measure $\mu$ has no atom at the origin, by dominated
convergence ($e^{-\theta t}\to0$ for every $\theta>0$). Recovering a
genuinely persistent constant forcing this way requires $\mu(\{0\})>0$,
and this is precisely excluded by the tempering condition
$\kappa:=\inf\operatorname{supp}\mu>0$ that his own coupling and
Lyapunov assumptions require for the ergodicity theorem to apply: if
$0\notin\operatorname{supp}\mu$, it certainly cannot carry an atom
there. So within the exact scope of his ergodicity results, the
standard convention used throughout the present paper, and in
\cite{abi2019affine} and the wider rough-volatility literature,
of starting a stochastic Volterra equation from a given, persistent
constant $X_{0}=x_{0}$ is unreachable; his construction instead
describes SVEs started from a transient forcing that decays away, a
different (and for finite-time applications such as option pricing
from a given spot value, inapplicable) object. This is the same
underlying cause as the compactness failure above, seen from the
opposite side: a state space of genuine \emph{functions} $\mathcal H$
cannot contain the point mass $\delta_{0}$ needed to represent a
persistent constant, for exactly the same reason it cannot be
compactly embedded in a smaller function space. Our own state space
$\Wplusdualn$ is, by construction, a space of \emph{distributions}: the
standard initial condition $X_{0}=x_{0}$ lifts to
$\mu_{0}=\delta_{0}x_{0}\in\Wplusdualn$ directly, and
$S^{*}_{t}\delta_{0}=\delta_{0}$ identically reproduces $X_{t}\equiv
x_{0}$ in the absence of forcing, for every kernel considered in this
paper, tempered or not (this is also precisely the fact that made the
Lyapunov function of Theorem~\ref{thm:lyapunov_ergodicity} delicate to
construct in Section~\ref{subsec:harris_ergodicity} above: the same
feature that is a liability for proving decay is what makes the
standard initial-value problem representable in the first place).
\end{remark}

\begin{remark}
    We could have obtained the result via the strategy used in \cite{jacquier2022large_time}. Let $\left(P_t\right)_{t \geq 0}$ be the generalised Feller semigroup associated to $\left(\mu_t\right)_{t \geq 0}$. Then, for any $(\Wplusdualn)^{\operatorname{weak}^{*}}$-valued random variable $\mu_0\sim \eta $, $ P_t \varrho\left(\mu_0\right)=\E_{\mu_0}\left[\varrho\left(\mu_t\right)\right]$.

We can use the weight $\varrho(\mu)=1+\|\mu\|_{\Wplusdualn}^{p}$, which is an admissible weight according to the Definition  \cite[Definition 2.1]{dorsek2010semigroup}. 
\begin{align*}
    \sup _{t \geq 0} \int_{\Wplusdualn} \E_{\mu_0}\left[\varrho\left(\mu_t\right)\right] \d \eta\left(\mu_0\right)=\sup _{t \geq 0} \E\left[\varrho\left(\mu_t\right)\right]<\infty ,
\end{align*}
is enough for an application of \cite[Lemma 3.1]{jacquier2022large_time}.
\end{remark}
\subsection{Transfer to SVE}
\begin{proof}[Proof of Theorem \ref{thm:invariant_measure_SVE}]
    Theorem \ref{thm:invariant_measure_mu} implies the existence of a probability measure $\mathcal{Q}$ on $\Wplusdualn$ such that, if $\mu_{0}=\delta_{0}x_{0}\sim \mathcal{Q}$, for any $t\geq 0$, $\mathcal{Q}$ is the Law of $\mu_{t}$. $(\langle\mu_{t},1\rangle)_{t\geq s}$ is distributed according to the push-forward measure of $\mathcal{Q}$ under the map $\mu\mapsto \langle \mu,1\rangle$ and also strictly stationary.
\end{proof}

\subsection{Uniqueness and exponential ergodicity via a generalized Harris
theorem}\label{subsec:harris_ergodicity}

Theorem~\ref{thm:invariant_measure_mu} establishes existence of an
invariant measure but not uniqueness, mixing, or a convergence rate. In
this section we close this gap for the sub-class of SEEs with
\emph{uniformly elliptic} diffusion coefficient. The abstract engine is
the generalized Harris theorem of Hairer, Mattingly and Scheutzow
\cite{HaMaSc11}, recently used to the same end for a different
Markovian lift of the SVE by Hamaguchi~\cite{hamaguchi2026exponential}.
What we build below is carried out entirely within the state space
$\Wplusdual$ already developed in
Section~\ref{section:Functional_framework}, using only its structure
as a genuine Hilbert space: the ``change of norm'' needed to run the
argument is an equivalent renorming of $\Wplusdual$ itself (in the
sense of Lemma~\ref{lem:weighted_norm_equivalence}), constructed from
the semigroup $S^{*}$ via the standard device recalled below, rather
than a move within the weight family of
Section~\ref{section:Functional_framework}.

\subsubsection{The state space as a Hilbert space; the generator is
(almost) dissipative}

Before writing any coercivity condition, we recall that the state space
$V=(\Wplusdual)^{n_{\textnormal{dim}}}$ (the $n_{\textnormal{dim}}$-fold product introduced
in Sections~\ref{sec:ito_formula}--\ref{subsec:feynman_kac}, in which the lift $\mu$ takes
its values) is a genuine
separable Hilbert space (recall $W^{m,p}_{w}$ is reflexive for
$1<p<\infty$, and finite products of Hilbert spaces are Hilbert) with the product inner product
$\langle\cdot,\cdot\rangle_{V}=\sum_{i=1}^{n_{\textnormal{dim}}}\langle\cdot^{(i)},\cdot^{(i)}\rangle_{\Wplusdual}$,
and $S_{t}^{*}$ is a $C_{0}$-semigroup on it, acting diagonally (componentwise), with a well-defined
generator $(A,D(A))$, $A\mu=-x\mu$ (applied to each component),
\[
D(A):=\{\mu\in V : x\mu\in V\}
\]
(multiplication by $x$ interpreted distributionally on each component, exactly as
already used to make sense of the SEE itself). Since $S^{*}_{t}$ acts by the same
scalar multiplication on all $n_{\textnormal{dim}}$ components, every norm/renorming and
dissipation estimate below that we state for the scalar space $\Wplusdual$ holds on the
product $V$ with the same constants, by summing over components; we therefore carry out
the scalar computations (e.g.\ Lemma~\ref{lem:poly_growth_semigroup}) on $\Wplusdual$ and
read them on $V$ without further comment.

\begin{lemma}[Polynomial growth of $S^{*}$]\label{lem:poly_growth_semigroup}
There is a constant $C_{0}>0$, depending only on the weight family
$(\weightplus)_{i}$, such that
\[
\|S_{t}^{*}\mu\|_{\Wplusdual}\leq C_{0}(1+t)\,\|\mu\|_{\Wplusdual}
\qquad\text{for all }t\geq0,\ \mu\in\Wplusdual.
\]
\end{lemma}

\begin{proof}
By duality it suffices to bound $S_{t}:=S_{t}^{*}$ (multiplication is
self-adjoint) on $\Wplus=W^{1,2}_{\weightplus}$. For $\varphi\in\Wplus$,
$(S_{t}\varphi)'=-tS_{t}\varphi+S_{t}\varphi'$, so
\[
\|S_{t}\varphi\|_{\Wplus}^{2}\leq\|\varphi\|_{L^{2}_{(\weightplus)_{0}}}^{2}
+2\int_{\R_{+}}t^{2}e^{-2tx}\varphi(x)^{2}(\weightplus)_{1}(x)\,\dx
+2\|\varphi'\|_{L^{2}_{(\weightplus)_{1}}}^{2},
\]
using $e^{-2tx}\leq1$ for the first and third terms. Since
$(\weightplus)_{1}(x)=(1+x)^{2}(\weightplus)_{0}(x)$, the middle term
is bounded by
$2\big(\sup_{x\geq0}t^{2}(1+x)^{2}e^{-2tx}\big)\|\varphi\|^{2}_{L^{2}_{(\weightplus)_{0}}}$,
and an elementary calculus exercise (substituting $u=tx$ and
maximizing $(t+u)^{2}e^{-2u}$ over $u\geq0$) shows
$\sup_{x\geq0}t^{2}(1+x)^{2}e^{-2tx}=t^{2}$ for every $t\geq1$, and is
bounded by $1$ for $t\in[0,1]$. Hence
$\|S_{t}\varphi\|_{\Wplus}^{2}\leq(1+2(1+t)^{2})\|\varphi\|_{\Wplus}^{2}$,
which gives the claim with $C_{0}=\sqrt{3}$.
\end{proof}

\begin{remark}
This is a genuine restriction on the semigroup: $S_{t}^{*}$ is
\emph{not} a contraction on $\Wplusdual$, and the growth, while only
polynomial (not exponential), is real. It reflects exactly the fact
that differentiating $e^{-tx}$ produces a factor of $t$, which is felt
by any $\varphi$ concentrated near $x=0$. This is precisely dual to the
persistence of the atom at $x=0$ noted above.
\end{remark}

By the standard theory of $C_{0}$-semigroups (see e.g.\
\cite{engel_nagel_2000_semigroups}), Lemma~\ref{lem:poly_growth_semigroup} implies
that for every $\omega>0$ there is $M_{\omega}\geq1$ with
$\|S_{t}^{*}\|_{\Wplusdual\to\Wplusdual}\leq M_{\omega}e^{\omega t}$ for
all $t\geq0$ (polynomial growth is dominated by any positive
exponential rate), and, after replacing $\|\cdot\|_{\Wplusdual}$ by the
equivalent renormed norm
$\|\mu\|_{V,\omega}:=\sup_{t\geq0}e^{-\omega t}\|S_{t}^{*}\mu\|_{\Wplusdual}$
(for which $\|S_{t}^{*}\|_{\|\cdot\|_{V,\omega}\to\|\cdot\|_{V,\omega}}\leq
e^{\omega t}$ \emph{exactly}), the Lumer--Phillips theorem gives
\begin{align}\label{eqn:generator_dissipativity}
\langle A\mu,\mu\rangle_{V,\omega}\leq\omega\,\|\mu\|_{V,\omega}^{2}\qquad\text{for all }\mu\in D(A).
\end{align}
We fix, once and for all, a sufficiently small $\omega>0$ (to be
absorbed into the constants below) and write $\|\cdot\|_{V}$ for
$\|\cdot\|_{V,\omega}$ and $\langle\cdot,\cdot\rangle_{V}$ for the
corresponding inner product; by
Lemma~\ref{lem:weighted_norm_equivalence}-type equivalence of norms,
every estimate stated in $\|\cdot\|_{V}$ transfers to
$\|\cdot\|_{\Wplusdual}$ up to a fixed multiplicative constant.

\begin{definition}[Dissipation functional]\label{dfn:dissipation_functional}
For $\mu\in D(A)$, set
\[
[\mu]^{2}:=-\langle A\mu,\mu\rangle_{V}+\omega\|\mu\|_{V}^{2}=\langle x\mu,\mu\rangle_{V}+\omega\|\mu\|_{V}^{2}\;\geq0,
\]
the inequality being exactly \eqref{eqn:generator_dissipativity}. The
quadratic form $\mu\mapsto[\mu]^{2}$ is closable (standard fact for
forms associated with dissipative operators, see e.g.\
\cite{engel_nagel_2000_semigroups}) and we write $\mathcal D\supset D(A)$, dense in
$\Wplusdual$, for its (form) domain, on which $[\cdot]^{2}$ is defined
by continuous extension.
\end{definition}

\begin{example}
$\mu_{0}=\delta_{0}x_{0}\in D(A)$ with $A\mu_{0}=-x\delta_{0}x_{0}=0$
(the distributional identity $x\,\delta_{0}=0$), so
$[\delta_{0}x_{0}]^{2}=\omega x_{0}^{2}\|\delta_{0}\|_{V}^{2}$: the
persistent atom contributes only through the harmless renorming
constant $\omega$, not through any genuine dissipation, since this part of the state does not decay at all.
\end{example}

\subsubsection{Coupling the drift and diffusion: mollification}

The matrices of measures $\nudrift,\nudiffusion$ have columns generally only in
$\Wminusdual\setminus\Wplusdual$ (Assumption~\ref{A:A_3:Narrower_assumption_nu}),
so $\langle\nudrift\,a,\mu\rangle_{V}$ is not directly meaningful either.
We use the same device as in the proof of
Lemma~\ref{lem:singular_ito}: for $\delta>0$ set
$\nu^{\drift}_{\delta}:=S_{\delta}^{*}\nudrift,\ \nu^{\diffusion}_{\delta}:=S_{\delta}^{*}\nudiffusion\in\mathcal{L}(\R^{n_{\textnormal{dim}}},V)$
(finite in operator norm by Assumption~\ref{A:A_3:Narrower_assumption_nu} and
estimate~\eqref{eqn:singular_direction_bound}), and consider the
mollified SEE with $\nu^{\drift}_{\delta},\nu^{\diffusion}_{\delta}$ in
place of $\nudrift,\nudiffusion$. For the mollified equation, every
coefficient is a genuine element of $V$ (resp.\ $HS(\R^{m_{W}},V)$), so writing
$X:=\langle\mu,1\rangle\in\R^{n_{\textnormal{dim}}}$, the drift-energy and diffusion-energy quantities
\[
D^{\drift}_{\delta}[\mu]:=\big\langle\nu^{\drift}_{\delta}\drift(X),\mu\big\rangle_{V},\qquad
Q^{\diffusion}_{\delta}[\mu]:=\big\|\nu^{\diffusion}_{\delta}\diffusion(X)\big\|_{HS(\R^{m_{W}},V)}^{2}
=\sum_{k=1}^{m_{W}}\big\|[\nu^{\diffusion}_{\delta}\diffusion(X)]_{\cdot k}\big\|_{V}^{2}
\]
are ordinary (finite) inner products/norms, well defined for every $\mu\in V$
without restriction.

\subsubsection{A Lyapunov function}

\begin{assumption}[Coercivity]\label{A:coercivity_lyapunov}
There exist constants $\eps\in(0,1)$, $\rho>0$, $C_{\mathrm{Lyap}}>0$
such that, for every $\delta_{0}>0$ small enough,
\begin{align}\label{eqn:coercivity}
D^{\drift}_{\delta}[\mu]+\tfrac12 Q^{\diffusion}_{\delta}[\mu]
\leq\eps[\mu]^{2}-\rho\|\mu\|_{V}^{2}+C_{\mathrm{Lyap}}
\end{align}
for all $\mu\in D(A)$, uniformly in $\delta_{0}$, where $D^{\drift}_{\delta}[\mu],Q^{\diffusion}_{\delta}[\mu]$
are the drift- and diffusion-energy quantities of the preceding paragraph and $X:=\langle\mu,1\rangle\in\R^{n_{\textnormal{dim}}}$.
\end{assumption}

\begin{remark}
Linear growth of $\drift,\diffusion$ alone already gives
\eqref{eqn:coercivity} with $\rho=0$. This is exactly the estimate
underlying Lemma~\ref{lem:a-priori_estimate_general} and
\eqref{eqn:long_time_estimate}. Obtaining $\rho>0$ additionally
requires a genuine \emph{dissipativity} (mean-reversion) condition on
$\drift$, e.g.\ $\langle\drift(x),x\rangle\leq-\rho'|x|^{2}+C$ for some $\rho'>0$
(the multivariate analogue of a mean-reverting drift; for $n_{\textnormal{dim}}=1$ this is
$\drift(x)x\leq-\rho'x^{2}+C$), matching the mean-reverting drift $\drift(x)=\kappa(\theta-x)$
already used as the running example throughout
Sections~\ref{subsec:feynman_kac}--\ref{subsec:option_pricing}.
\end{remark}

\begin{theorem}\label{thm:lyapunov_ergodicity}
Under Assumption~\ref{A:coercivity_lyapunov}, $V(\mu):=\|\mu\|_{V}^{2}$
is a Lyapunov function on $\Wplusdual$ for the (generalized) Feller
semigroup $\{P_{t}\}_{t\geq0}$ of Corollary~\ref{cor:gen_feller}: there
are $C_{V},\gamma_{V}>0$ with
\[
P_{t}V(\mu)\leq C_{V}e^{-\gamma_{V}t}V(\mu)+K_{V},\qquad \mu\in\Wplusdual,\ t\geq0,
\]
for $K_{V}:=C_{\mathrm{Lyap}}/\rho$. Consequently, every invariant
probability measure $\pi$ for $\{P_{t}\}_{t\geq0}$ satisfies
\[
\int_{\Wplusdual}\Big(\rho\|\mu\|_{V}^{2}+(1-\eps)[\mu]^{2}\Big)\,\pi(\d\mu)\leq C_{\mathrm{Lyap}}.
\]
\end{theorem}

\begin{proof}
Fix $\delta>0$ and let $\mu_{t}^{\delta}$ solve the mollified
SEE of the previous paragraph. Since $\nu^{\drift}_{\delta},\nu^{\diffusion}_{\delta}\in\Wplusdual$
genuinely, this is a classical semilinear SPDE on the Hilbert space
$\Wplusdual$ with bounded, $C^{2}_{b}$-composed coefficients, to which
the classical Itô formula for the squared norm applies
(\cite[Thm.~4.2.5]{liu2015stochastic}, in the same spirit as the
mollification step of Lemma~\ref{lem:singular_ito}):
\[
\d\|\mu_{t}^{\delta}\|_{V}^{2}=\Big(2\langle A\mu_{t}^{\delta},\mu_{t}^{\delta}\rangle_{V}
+2D^{\drift}_{\delta}[\mu_{t}^{\delta}]+Q^{\diffusion}_{\delta}[\mu_{t}^{\delta}]\Big)\dt
+2\big\langle\mu_{t}^{\delta},\nu^{\diffusion}_{\delta}\diffusion(X_{t}^{\delta})\,\dW_{t}\big\rangle_{V},
\]
where the martingale term is
$2\sum_{k=1}^{m_{W}}\langle\mu_{t}^{\delta},[\nu^{\diffusion}_{\delta}\diffusion(X_{t}^{\delta})]_{\cdot k}\rangle_{V}\,\dW^{k}_{t}$,
and the $Q^{\diffusion}_{\delta}$ term is exactly its quadratic variation (summed over the
$m_{W}$ noise directions).
By Definition~\ref{dfn:dissipation_functional},
$2\langle A\mu,\mu\rangle_{V}=-2[\mu]^{2}+2\omega\|\mu\|_{V}^{2}$; by
Assumption~\ref{A:coercivity_lyapunov} (applied with
$\mu=\mu_{t}^{\delta}$, uniformly in $\delta$), the
finite-variation part is bounded above by
$-2(1-\eps)[\mu_{t}^{\delta}]^{2}-2(\rho-\omega)\|\mu_{t}^{\delta}\|_{V}^{2}+2C_{\mathrm{Lyap}}$;
choosing $\omega<\rho$ (possible since $\omega$ was free to fix small),
taking expectations and applying Gronwall's inequality to
$t\mapsto\E\|\mu_{t}^{\delta}\|_{V}^{2}$ yields
\[
\E\|\mu_{t}^{\delta}\|_{V}^{2}\leq\E\|\mu_{0}\|_{V}^{2}e^{-2(\rho-\omega) t}+\frac{C_{\mathrm{Lyap}}}{\rho-\omega},
\]
uniformly in $\delta$. Passing $\delta\to0$ exactly as in
Step~2 of the proof of Lemma~\ref{lem:singular_ito}
($\mu_{t}^{\delta}\to\mu_{t}$ in $L^{2}(\Omega;\Wplusdual)$, using
$\nu^{i}_{\delta}\to\nu^{i}$, for $i=\drift,\diffusion$, in the sense established there) gives
the same bound for $\mu_{t}$ itself, which is the Lyapunov inequality
with $\gamma_{V}=2(\rho-\omega)$, $C_{V}=1$,
$K_{V}=C_{\mathrm{Lyap}}/(\rho-\omega)$. Integrating the same drift
bound against an invariant measure $\pi$ (the time-derivative term
vanishes by invariance) gives
$\rho\int\|\mu\|_{V}^{2}\pi(\d\mu)+(1-\eps)\int[\mu]^{2}\pi(\d\mu)\leq C_{\mathrm{Lyap}}$
after the same limit, exactly as in the finite-dimensional
integrability argument underlying \cite[Theorem~4.8]{HaMaSc11}
(recalled below as Theorem~\ref{thm:generalized_harris}).
\end{proof}

\subsubsection{The generalized Harris theorem}

We recall, without proof, the abstract theorem underlying this
subsection; the ``Feller property'' in its statement is to be
understood in the \emph{generalized} sense of
Corollary~\ref{cor:gen_feller} and the spaces $\mathcal
B^{\rho}(\cdot,\cdot)$ of Section~\ref{sec:Feller}, which is exactly
the sense in which our semigroup $\{P_{t}\}_{t\geq0}$ possesses it: the
classical (strong) Feller property is not available here, precisely
because of the degeneracy discussed in
Section~\ref{sec:Feller}.

\begin{theorem}[Generalized Harris theorem;
{\cite[Theorem~4.8, Corollary~4.11]{HaMaSc11}}]\label{thm:generalized_harris}
Let $\{P_{t}\}_{t\geq0}$ be a measurable Markov semigroup on a Polish
space $E$ with the (generalized) Feller property, admitting a
continuous Lyapunov function $V:E\to[0,\infty)$, i.e.\ satisfying
$P_{t}V\leq C_{V}e^{-\gamma_{V}t}V+K_{V}$ for some constants
$C_{V},\gamma_{V},K_{V}>0$ and all $t\geq0$. Suppose there is a
distance-like function $d:E\times E\to[0,1]$ with $d_{0}\leq\sqrt d$
for some compatible metric $d_{0}$, and constants $t_{1},t_{2}>0$ such
that $d$ is contracting for $P_{t}$ whenever $t\geq t_{1}$, and the
level set $\{V\leq4K_{V}\}$ is $d$-small for $P_{t}$ whenever $t\geq
t_{2}$. Then $\{P_{t}\}_{t\geq0}$ has a unique invariant probability
measure $\pi$, and there are $r,t_{0}>0$ with
\[
\mathcal{W}_{d_{V}}(P_{t}^{*}\nu_{1},P_{t}^{*}\nu_{2})\leq e^{-rt}\mathcal{W}_{d_{V}}(\nu_{1},\nu_{2}),
\qquad
d_{V}(\mu_{1},\mu_{2}):=\sqrt{d(\mu_{1},\mu_{2})(1+V(\mu_{1})+V(\mu_{2}))},
\]
for all $t\geq t_{0}$, $\nu_{1},\nu_{2}$ probability measures on $E$;
in particular $\mathcal{W}_{d_{V}}(P_{t}(\mu,\cdot),\pi)\leq C(1+V(\mu)^{1/2})e^{-rt}$
for all $\mu\in E$, $t\geq0$.
\end{theorem}

The delicate step, in any application, is constructing $d$: a
\emph{contracting} and \emph{$d$-small} distance-like function. We
construct it as $d_{*}$ for a second, separately tuned equivalent
renorming $\|\cdot\|_{*}$ of $\Wplusdual$, obtained by the same device
as Lemma~\ref{lem:poly_growth_semigroup} but tracking the ellipticity
constant $C_{\mathrm{UE}}$ rather than an arbitrary $\omega$.

\subsubsection{Uniform ellipticity and the coupling construction}

\begin{assumption}[Uniform ellipticity]\label{A:uniform_ellipticity}
$\drift,\diffusion$ are globally Lipschitz, $m_{W}\geq n_{\textnormal{dim}}$, and there is
$C_{\mathrm{UE}}>0$ such that
\[
\diffusion(x)\diffusion(x)^{\top}\succeq C_{\mathrm{UE}}^{-1}\,I_{n_{\textnormal{dim}}}
\qquad\text{for all }x\in\R^{n_{\textnormal{dim}}},
\]
in the Loewner (positive-semidefinite) order, i.e.\ $\diffusion(x)\diffusion(x)^{\top}$ is
uniformly positive definite. Equivalently, the Moore--Penrose right inverse
\[
\diffusion(x)^{+}:=\diffusion(x)^{\top}\big(\diffusion(x)\diffusion(x)^{\top}\big)^{-1}\in\R^{m_{W}\times n_{\textnormal{dim}}}
\]
exists (so $\diffusion(x)\diffusion(x)^{+}=I_{n_{\textnormal{dim}}}$) and is bounded,
$\|\diffusion(x)^{+}\|\leq\sqrt{C_{\mathrm{UE}}}$, uniformly in $x$. For $n_{\textnormal{dim}}=1$
this reduces to the scalar non-degeneracy $\diffusion(x)^{2}\geq C_{\mathrm{UE}}^{-1}$.
\end{assumption}

\begin{remark}
Assumption~\ref{A:uniform_ellipticity} is a genuine restriction: it
excludes the degenerate, CIR/Heston-type diffusion coefficient
$\diffusion(x)=\sqrt{x^{+}}$ (for $n_{\textnormal{dim}}=1$) used throughout
Sections~\ref{subsec:feynman_kac}--\ref{subsec:option_pricing}, which
vanishes at $x=0$, and more generally any $\diffusion$ whose
$\diffusion\diffusion^{\top}$ degenerates. The requirement $m_{W}\geq n_{\textnormal{dim}}$
(at least as many independent noises as state components) is necessary for
$\diffusion\diffusion^{\top}$ to be invertible at all. The construction below, and, to our knowledge,
the coupling construction of \cite{hamaguchi2026exponential} for a
related lift, genuinely requires non-degeneracy of $\diffusion$;
extending it to degenerate (e.g.\ hypoelliptic) diffusion coefficients is left open.
\end{remark}

\begin{theorem}\label{thm:coupling_ergodicity}
Under Assumptions~\ref{A:assumption_general_linear_growth},
\ref{A:uniform_ellipticity}, there exists an equivalent renorming
$\|\cdot\|_{*}$ of $\Wplusdual$ (constructed via the same device as
Lemma~\ref{lem:poly_growth_semigroup}, now tracking the ellipticity
constant $C_{\mathrm{UE}}$ instead of $\omega$), with associated
$d_{*}(\mu_{1},\mu_{2}):=\|\mu_{1}-\mu_{2}\|_{*}\wedge1$, depending only
on the Lipschitz and ellipticity constants of $\drift,\diffusion$ and
on $(\nudrift,\nudiffusion)$, such that:
\begin{enumerate}
\item[\textup{(i)}] For $t\geq t_{1}$ (an explicit constant depending
only on the same data), $d_{*}$ is contracting for $P_{t}$: for
$\mu_{1},\mu_{2}\in\Wplusdual$ with $d_{*}(\mu_{1},\mu_{2})<1$,
\[
\mathcal{W}_{d_{*}}(P_{t}(\mu_{1},\cdot),P_{t}(\mu_{2},\cdot))\leq\tfrac34d_{*}(\mu_{1},\mu_{2}).
\]
\item[\textup{(ii)}] For every $R>0$ and $t\geq t_{2}(R)$, the ball
$\overline B_{*}(R):=\{\mu:\|\mu\|_{*}\leq R\}$ is
$d_{*}$-small for $P_{t}$.
\end{enumerate}
\end{theorem}

\begin{proof}[Proof sketch]
The mechanism is the generalized-coupling strategy of Butkovsky, Kulik
and Scheutzow~\cite{BuKuSc20}; we adapt the specific construction
carried out for a related Markovian lift by
Hamaguchi~\cite{hamaguchi2026exponential} to our own Hilbert-space
structure on $V=(\Wplusdual)^{n_{\textnormal{dim}}}$, and do not repeat every estimate, only the
mechanism. By the mollification device used throughout this section
(now applied to both $\drift,\diffusion$ and $\nudrift,\nudiffusion$)
it suffices to treat bounded, genuinely $V$- (resp.\ $HS(\R^{m_{W}},V)$-)valued
coefficients. Fix $\mu_{1},\mu_{2}\in V$ and let $\mu^{1}$
solve the (mollified) SEE from $\mu_{1}$ driven by the $m_{W}$-dimensional $W$.
We steer a second copy toward it by an $\R^{m_{W}}$-valued
\emph{feedback control} built through the diffusion matrix. Writing
$\widehat X_{t}:=\langle\widehat\mu_{t},1\rangle$ and using the Moore--Penrose right inverse
$\diffusion(\widehat X_{t})^{+}=\diffusion(\widehat X_{t})^{\top}(\diffusion(\widehat X_{t})\diffusion(\widehat X_{t})^{\top})^{-1}$
of Assumption~\ref{A:uniform_ellipticity}, set
\[
u_{t}:=\lambda\,\diffusion(\widehat X_{t})^{+}\big\langle\mu^{1}_{t}-\widehat\mu_{t},1\big\rangle\in\R^{m_{W}}
\]
for a suitable gain $\lambda>0$ (depending on the Lipschitz and
ellipticity constants), and let $\widehat\mu$ solve the same SEE from
$\mu_{2}$ but driven by the tilted Brownian motion
$\widehat W_{t}:=W_{t}+\int_{0}^{t}u_{s}\,\d s$. The point of the pseudo-inverse is that
it realizes a \emph{prescribed} $\R^{n_{\textnormal{dim}}}$-valued feedback in the state: the tilt
adds the drift $\nudiffusion\diffusion(\widehat X_{t})u_{t}$ to the $\widehat\mu$-equation, and since
$\diffusion(\widehat X_{t})\diffusion(\widehat X_{t})^{+}=I_{n_{\textnormal{dim}}}$,
\[
\diffusion(\widehat X_{t})u_{t}=\lambda\langle\mu^{1}_{t}-\widehat\mu_{t},1\rangle,
\qquad\text{so the induced drift is}\qquad
\lambda\,\nudiffusion\langle\mu^{1}_{t}-\widehat\mu_{t},1\rangle,
\]
a \emph{restoring} feedback that pulls $\widehat\mu_{t}$ toward $\mu^{1}_{t}$ and hence
contracts the difference in the state projection $\langle\cdot,1\rangle$. Uniform ellipticity
(Assumption~\ref{A:uniform_ellipticity}) is exactly what makes $u_{t}$ well defined and bounded,
$\|u_{t}\|\leq\lambda\sqrt{C_{\mathrm{UE}}}\,|\langle\mu^{1}_{t}-\widehat\mu_{t},1\rangle|$,
so that Girsanov's theorem applies and produces an explicit bound on
the relative entropy,
\[
D_{\mathrm{KL}}(\Prob\|\widehat\Prob)=\tfrac12\E\!\int_{0}^{\infty}\|u_{s}\|^{2}\,\ds
\leq\tfrac12\lambda^{2}C_{\mathrm{UE}}\,\E\!\int_{0}^{\infty}\big|\langle\mu^{1}_{s}-\widehat\mu_{s},1\rangle\big|^{2}\,\ds
\leq\tfrac12\|\mu_{1}-\mu_{2}\|_{*}^{2},
\]
for the renorming $\|\cdot\|_{*}$ chosen (via the same
Lumer--Phillips/renorming device as
Lemma~\ref{lem:poly_growth_semigroup}, now tracking $C_{\mathrm{UE}}$ through the
$\lambda^{2}C_{\mathrm{UE}}$ prefactor and the boundedness $|\langle\cdot,1\rangle|\leq C\|\cdot\|_{*}$)
so that the synchronized pair $(\mu^{1}_{t},\widehat\mu_{t})$
additionally satisfies $\E\|\mu^{1}_{t}-\widehat\mu_{t}\|_{*}\leq
e^{-\kappa t/2}\|\mu_{1}-\mu_{2}\|_{*}$ for an explicit $\kappa>0$; the
mollification is then removed by the same limiting argument as in the
proof of Theorem~\ref{thm:lyapunov_ergodicity}. Assertions (i)--(ii)
then follow from the two standard total-variation bounds
$d_{\mathrm{TV}}(\Prob,\widehat\Prob)\leq\sqrt{\tfrac12D_{\mathrm{KL}}(\Prob\|\widehat\Prob)}$
and $d_{\mathrm{TV}}(\Prob,\widehat\Prob)\leq1-\tfrac12\exp(-D_{\mathrm{KL}}(\Prob\|\widehat\Prob))$,
applied exactly as in the proof of the analogous coupling theorem
in \cite{hamaguchi2026exponential}. That coupling theorem is itself
stated and proved for a vector-valued state with matrix diffusion, under
the matrix ellipticity condition
$\diffusion\diffusion^{\top}\succeq C_{\mathrm{UE}}^{-1}I$, so the citation is valid
at every state dimension $n_{\textnormal{dim}}\geq1$; the only point at which our
construction goes beyond \cite{hamaguchi2026exponential} is the use of the
Moore--Penrose right inverse $\diffusion^{+}=\diffusion^{\top}(\diffusion\diffusion^{\top})^{-1}$
to accommodate genuinely rectangular noise ($m_{W}>n_{\textnormal{dim}}$), where
\cite{hamaguchi2026exponential} works with a square, directly invertible diffusion; the
relative-entropy estimate is unchanged, since it depends on $\diffusion$ only through the
bound $\|\diffusion^{+}\|\leq\sqrt{C_{\mathrm{UE}}}$. For $n_{\textnormal{dim}}=m_{W}=1$ the
pseudo-inverse is the scalar reciprocal $1/\diffusion$ and the construction reduces to the
scalar feedback $u_{t}=\lambda\langle\mu^{1}_{t}-\widehat\mu_{t},1\rangle/\diffusion(\widehat X_{t})$.
\end{proof}

\subsubsection{Main result}

\begin{theorem}[Uniqueness and exponential ergodicity]\label{thm:main_ergodicity}
Under Assumptions~\ref{A:assumption_general_linear_growth},
\ref{A:coercivity_lyapunov}, \ref{A:uniform_ellipticity}, the (generalized)
Feller semigroup $\{P_{t}\}_{t\geq0}$ associated with the SEE
\eqref{eqn:SEE_strong_formulation} has a \emph{unique} invariant
probability measure $\pi\in\mathcal P(\Wplusdual)$, and there exist
$r,t_{0},C>0$ such that
\[
\mathcal{W}_{d_{*,V}}(P_{t}(\mu,\cdot),\pi)\leq C(1+\|\mu\|_{V})e^{-rt},\qquad t\geq t_{0},\ \mu\in\Wplusdual,
\]
where $d_{*,V}(\mu_{1},\mu_{2}):=\sqrt{(\|\mu_{1}-\mu_{2}\|_{*}\wedge1)(1+\|\mu_{1}\|_{V}^{2}+\|\mu_{2}\|_{V}^{2})}$,
$\|\cdot\|_{V}$ and $\|\cdot\|_{*}$ the renormings of
Theorems~\ref{thm:lyapunov_ergodicity} and
\ref{thm:coupling_ergodicity} respectively. In particular, $\pi$ is
the unique probability measure appearing in
Theorem~\ref{thm:invariant_measure_mu}.
\end{theorem}

\begin{proof}
Immediate from Theorems~\ref{thm:lyapunov_ergodicity},
\ref{thm:generalized_harris} and \ref{thm:coupling_ergodicity}: the
Lyapunov function $V=\|\cdot\|_{V}^{2}$ and the contracting,
eventually $d_{*}$-small distance-like function $d_{*}$ satisfy
the hypotheses of Theorem~\ref{thm:generalized_harris} verbatim, on the
Polish space $E=\Wplusdual$.
\end{proof}

\begin{corollary}[Exponential ergodicity for the SVE]\label{cor:sve_ergodicity}
Under the hypotheses of Theorem~\ref{thm:main_ergodicity}, the strictly
stationary solution $X_{t}\sim\mathfrak q$ of
Theorem~\ref{thm:invariant_measure_SVE} is unique among strictly
stationary solutions of \eqref{eqn:SVE_introduction}, and
$\mathfrak q$ is the push-forward of the unique $\pi$ of
Theorem~\ref{thm:main_ergodicity} under $\mu\mapsto\langle\mu,1\rangle$.
\end{corollary}

\subsubsection{Approximation of the invariant measure by finite-dimensional systems}

As a further consequence, the invariant measure $\pi$ itself, not
merely the finite-time dynamics, cf.\ the multifactor approximation
discussion in Section~\ref{subsec:option_pricing}, is approximable
by finite-rank truncations.

\begin{corollary}[Finite-dimensional approximation of $\pi$]\label{cor:approx_invariant_measure}
Let $\nu^{N}_{\drift},\nu^{N}_{\diffusion}$ be finite-rank
(sum-of-exponentials) truncations of $\nudrift,\nudiffusion$, obtained
by truncating the Laplace measures to $[0,H_{N}]$ with $H_{N}\to\infty$
(cf.\ the finite-rank truncations already used for the multifactor
approximation in Section~\ref{subsec:option_pricing}),
and let $\pi^{N}$ denote the (unique, by classical finite-dimensional
Harris theory) invariant probability measure of the resulting
finite-dimensional SDE. If Assumptions~\ref{A:assumption_general_linear_growth},
\ref{A:coercivity_lyapunov}, \ref{A:uniform_ellipticity} hold uniformly
in $N$ for the truncated data (with common constants
$\delta,\rho,C_{\mathrm{Lyap}}$), then
$\pi^{N}\to\pi$ weakly on $\Wplusdual$ as $N\to\infty$.
\end{corollary}

\begin{proof}[Proof sketch]
By Theorem~\ref{thm:lyapunov_ergodicity} applied uniformly in $N$ (the
constants $\eps,\rho,C_{\mathrm{Lyap}}$ depend on
$(\nudrift,\nudiffusion)$ only through quantities that are monotone
under the truncation, exactly as in the proof of
Lemma~\ref{lem:tightness_criterion}), the family $\{\pi^{N}\}_{N}$
satisfies a uniform moment bound
$\sup_{N}\int(\rho\|\mu\|_{V}^{2}+(1-\eps)[\mu]^{2})\,\pi^{N}(\d\mu)\leq C_{\mathrm{Lyap}}$,
hence is tight on $\Wplusdual$ by the same compactness criterion
(Lemma~\ref{lem:compactness}) used in the proof of
Theorem~\ref{thm:invariant_measure_mu}. Any weak subsequential limit
$\pi^{\infty}$ is invariant for the limiting semigroup $\{P_{t}\}$ by
the same martingale-problem convergence argument used in
Theorem~\ref{thm:weak_mild_solution_SEE}'s proof
(Section~\ref{sec:existence_general_coefficients}), and hence
$\pi^{\infty}=\pi$ by the uniqueness of
Theorem~\ref{thm:main_ergodicity}. Since the limit does not depend on
the subsequence, the full sequence converges.
\end{proof}

\begin{remark}
This is the stationary-law counterpart of the weak-error results for
the \emph{finite-time} dynamics discussed in
Remark~\ref{rem:option_pricing_practical} (via
\cite{bayer2023weakmarkovian}): together, they say that both the
transient and the long-run statistics of the Markovian lift are
well-approximated by the finite-dimensional multifactor systems already
used throughout applications, with the present section additionally
supplying the qualitative long-run picture (uniqueness, mixing,
convergence rate) that a purely finite-time weak-error bound cannot by
itself provide.
\end{remark}

\begin{remark}[Why the generator-theoretic construction is necessary]\label{rem:why_generator_construction_needed}
One might hope to avoid the Hilbert-space generator/Lumer--Phillips
construction above by working directly with the mild (Duhamel)
formulation of $\mu_{t}$ and Gr\"onwall's inequality, exactly as in the
proof of Lemma~\ref{lem:a-priori_estimate_general}, combined with the
elementary scalar Lyapunov bound of
Corollary~\ref{cor:sharp_dissipative_lyapunov}
(Section~\ref{sec:ito_formula}). Such an argument, carried out via
Minkowski's and the BDG inequality term-by-term on the mild
formulation, does re-derive the tightness bound of
Theorem~\ref{thm:invariant_measure_mu} elementarily, without any
generator theory, but only under an additional tempering condition
on the support of $\nudrift,\nudiffusion$ bounded away from $0$
(excluding, in particular, the untempered fractional kernel used as
this paper's own running example), and even then it only yields
boundedness of $\E\|\mu_{t}\|_{V}^{2}$, not the Lyapunov contraction
$P_{t}V(\mu_{0})\leq C_{V}e^{-\gamma_{V}t}V(\mu_{0})+K_{V}$ needed for
Theorem~\ref{thm:generalized_harris}. The obstruction is structural,
not a matter of technique: for $\mu_{0}=\delta_{0}x_{0}$,
$S_{t}^{*}\mu_{0}=\delta_{0}x_{0}$ for every $t\geq0$ (the atom is
exactly preserved), so a bound built by estimating each term of the
mild formulation separately and adding the results can never produce a
shrinking coefficient on $V(\mu_{0})$, the first term alone already
saturates the bound at full strength, for all time. The shrinking
coefficient in Theorem~\ref{thm:lyapunov_ergodicity} comes instead from
the \emph{cross term} $2\langle\mu_{t},\nudrift\drift(X_{t})\rangle_{V}$
in the energy identity $\d\|\mu_{t}\|_{V}^{2}$, a genuine coupling
between the current state and the mean-reverting forcing that is
invisible to any additive, term-by-term mild-formulation bound; this is
exactly the mechanism the mollified It\^o formula of the preceding
subsubsections is built to access. The two approaches also trade one
hypothesis for another: the generator-theoretic route needs
\emph{uniform ellipticity} of $\diffusion$
(Assumption~\ref{A:uniform_ellipticity}) for the coupling construction,
but places no restriction on the kernel beyond what is already assumed
throughout this paper (in particular it covers untempered, purely
polynomially-decaying kernels such as the fractional kernel); the
elementary mild-formulation route needs no assumption at all on
$\diffusion$, but requires the tempering condition above and even then
only reaches boundedness, not a convergence rate. Neither dominates the
other, but only the generator-theoretic route reaches
Theorem~\ref{thm:main_ergodicity}'s uniqueness and rate, which is why
it is the one developed in full above.
\end{remark}

\section{Ito-formula for Volterra equations}\label{sec:ito_formula}
Our last application of the lifting procedure is an It{\^o}-type formula for SVEs.
We will rely on the results from \cite{da_19_mild_ito} (see also  \cite{cox_16_mild_ito, albeverio_17_mild_ito}).
For this, we consider the separable Hilbert spaces $U, V, H, V'$ such that $V\hookrightarrow H \hookrightarrow V'$ is continuous and dense. In this section, we impose Assumptions \ref{A:A_2:assumption_1_test_function} and \ref{A:A_3:Narrower_assumption_nu}. Let $\theta_{\nu}=\max\{\theta_{\nudrift},\theta_{\nudiffusion}\}$ and $\eps<1-\theta_{\nu}$. We can simply set $U=\R^{m_{W}}$ (the finite-dimensional Euclidean space carrying the $m_{W}$-dimensional driving noise), $V^{\prime}=(\Wminusdual)^{n_{\textnormal{dim}}}$, with $(\weightminus)_{i}(x)=(1+x)^{2\theta_{\nu}-1+2i}$ applied componentwise to each of the $n_{\textnormal{dim}}$ scalar copies, $H=(\Wmiddual)^{n_{\textnormal{dim}}}$, with $(\weightsim)_{i}(x)=(1+x)^{(\theta_{\nu}-\eps)-1+2i}$ likewise applied componentwise, and $V=(\Wplusdual)^{n_{\textnormal{dim}}}$, with $(\weightplus)_{i}(x)=(1+x)^{-2\eps-1+2i}$ componentwise. Let $\Phi\in C^{1,2}([0,T]\times V,\R)$, then we denote the partial Frechet derivatives of $\Phi$ by
\begin{align*}
    &\partial_{x}\Phi\in C([0,T]\times V,L(V,\R)), \partial_{x}\Phi = \frac{\partial \Phi}{\partial x}\\
    &\partial_{x}^{2}\Phi\in C([0,T]\times V,L(V,L(V,\R))), \partial_{x}^{2}\Phi = \frac{\partial^{2} \Phi}{\partial x^{2}}\\
     &\partial_{t}\Phi\in C([0,T]\times V,\R), \partial_{t}\varphi = \frac{\partial\Phi}{\partial t}.
\end{align*}
 \begin{remark}
     Our spaces changed slightly in this section since we no longer require the compactness of any embeddings. Since $V$ is now the $n_{\textnormal{dim}}$-fold product $(\Wplusdual)^{n_{\textnormal{dim}}}$, a linear functional $\partial_{x}\Phi(\cdot)\in L(V,\R)$ is equivalently an $n_{\textnormal{dim}}$-tuple of functionals on $\Wplusdual$ (a ``gradient''), and a bilinear form $\partial_{x}^{2}\Phi(\cdot,\cdot)\in L(V,L(V,\R))$ is equivalently an $n_{\textnormal{dim}}\times n_{\textnormal{dim}}$ array of bilinear forms on $\Wplusdual$ (a ``Hessian''); likewise $U=\R^{m_{W}}$ makes $HS(U,V')$ the space of Hilbert--Schmidt operators from $\R^{m_{W}}$ into $(\Wminusdual)^{n_{\textnormal{dim}}}$ already introduced in Section~\ref{sec:existence_general_coefficients}. This is exactly what reproduces the gradient/Hessian-trace notation used below.
 \end{remark}
We recall the following definition from \cite{da_19_mild_ito}.
\begin{definition}\label{def:mild_ito_process}
    (Mild It{\^o} process). Let $S^{*}: \Delta_{t} \rightarrow L(V',V)$ be a $\mathcal{B}(\Delta_{t}) / \mathcal{S}(V',V)$-measurable mapping satisfying $S^{*}_{t_{2}, t_{3}} S^{*}_{t_{1}, t_{2}}=S^{*}_{t_{1}, t_{3}}$ for all $t_{1}, t_{2}, t_{3} \in [0,T]$ with $t_{1}<t_{2}<t_{3}$. Additionally, let $Y: [0,T] \times \Omega \rightarrow V'$ and $Z: [0,T] \times \Omega \rightarrow$ $H S\left(U, V'\right)$ be two predictable stochastic processes with $\int_{\tau}^{t}\left\|S^{*}_{s, t} Y_{s}\right\|_{V} \ds<\infty,\, \mathbb{P}$-a.s. and $\int_{\tau}^{t}\left\|S^{*}_{s, t} Z_{s}\right\|_{H S\left(U, V\right)}^{2} \ds<\infty$ $\mathbb{P}$-a.s. for all $t \in [0,T]$. Then a predictable stochastic process $X: [0,T] \times \Omega \rightarrow H$ satisfying
\begin{align*}
  X_{t}=S^{*}_{\tau, t} X_{\tau}+\int_{\tau}^{t} S^{*}_{s, t} Y_{s} \ds+\int_{\tau}^{t} S^{*}_{s, t} Z_{s} \d W_{s}  
\end{align*}
$\mathbb{P}$-a.s. for all $t \in [0,T]  \cap(\tau, \infty)$ is called a mild It{\^o} process (with semigroup $S^{*}$, mild drift $Y$ and mild diffusion $Z$ ).

\end{definition}
By Lemma \ref{lem:semigroup_and_adjoint_semigroup}, \ref{lem:improvement_semigroup_weighted_spaces}, $S^{*}$ satisfies the mapping properties mentioned in Definition \ref{def:mild_ito_process}.
\begin{theorem} \cite[Theorem 1]{da_19_mild_ito}
    Let $X: [0,T] \times \Omega \rightarrow H$ be a mild It{\^o} process with semigroup $S^{*}: \Delta_{t} \rightarrow L(V', V)$, mild drift $Y: [0,T] \times \Omega \rightarrow V' $ and mild diffusion $Z: [0,T] \times \Omega \rightarrow H S\left(U, V' \right)$. Then
\begin{align*}
    & \mathbb{P}\left[\int_{t_{0}}^{t}\left\|\left(\partial_{2} \Phi\right)\left(s, S^{*}_{s, t} X_{s}\right) S^{*}_{s, t} Y_{s}\right\|_{V}+\left\|\left(\partial_{2} \Phi\right)\left(s, S^{*}_{s, t} X_{s}\right) S^{*}_{s, t} Z_{s}\right\|_{H S\left(U, V\right)}^{2}\ds<\infty\right]=1 \\
& \mathbb{P}\left[\int_{t_{0}}^{t}\left\|\left(\partial_{1} \Phi\right)\left(s, X_{s}\right)\right\|_{V}+\left\|\left(\partial_{2}^{2} \Phi\right)\left(s, S^{*}_{s, t} X_{s}\right)\right\|_{L^{(2)}(V, V)}\left\|S^{*}_{s, t} Z_{s}\right\|_{H S\left(U, V\right)}^{2}\ds<\infty\right]=1
\end{align*}

and
\begin{align*}
& \Phi\left(t, X_{t}\right)=\Phi\left(t_{0}, S^{*}_{t_{0}, t} X_{t_{0}}\right)+\int_{t_{0}}^{t}\left(\partial_{1} \Phi\right)\left(s, S^{*}_{s, t} X_{s}\right) \ds+\int_{t_{0}}^{t}\left(\partial_{2} \Phi\right)\left(s, S^{*}_{s, t} X_{s}\right) S^{*}_{s, t} Y_{s} \ds \\
& \quad+\int_{t_{0}}^{t}\left(\partial_{2} \Phi\right)\left(s, S^{*}_{s, t} X_{s}\right) S^{*}_{s, t} Z_{s} \dW_{s}+\frac{1}{2} \sum_{j \in \N} \int_{t_{0}}^{t}\left(\partial_{2}^{2} \Phi\right)\left(s, S^{*}_{s, t} X_{s}\right)\left(S^{*}_{s, t} Z_{s} e_{j}, S^{*}_{s, t} Z_{s} e_{j}\right) \ds    
\end{align*}

$\Prob$-a.s. for all $t_{0}, t \in [0,T]$ with $t_{0}<t$ and all $\Phi \in C^{1,2}([0,T] \times V, \R)$.
\end{theorem}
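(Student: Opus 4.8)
The plan is to deduce the mild It\^o formula from the \emph{classical} infinite–dimensional It\^o formula in the Hilbert space $V$, by absorbing the terminal–time dependence of the evolution family $S^{*}$ into an auxiliary process. Fix $t_{0},t\in[0,T]$ with $\tau\le t_{0}<t$ and, for $s\in[t_{0},t]$, set
\[ \Xi^{t}_{s}\;\coloneqq\;S^{*}_{s,t}X_{s}\in V, \]
which is meaningful because $X_{s}\in H\hookrightarrow V'$ and $S^{*}_{s,t}\in L(V',V)$. Note $\Xi^{t}_{t}=X_{t}$ and $\Xi^{t}_{t_{0}}=S^{*}_{t_{0},t}X_{t_{0}}$, so the asserted identity is exactly what the classical It\^o formula produces for the map $s\mapsto\Phi(s,\Xi^{t}_{s})$ on $[t_{0},t]$ — \emph{provided} $\Xi^{t}$ is genuinely a $V$–valued It\^o process with drift $s\mapsto S^{*}_{s,t}Y_{s}$ and diffusion $s\mapsto S^{*}_{s,t}Z_{s}$.

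First I would prove that representation. Inserting the mild representation of $X$ from Definition \ref{def:mild_ito_process} gives, for $s\in[t_{0},t]$,
\[ \Xi^{t}_{s}=S^{*}_{s,t}\Bigl(S^{*}_{\tau,s}X_{\tau}+\int_{\tau}^{s}S^{*}_{r,s}Y_{r}\dr+\int_{\tau}^{s}S^{*}_{r,s}Z_{r}\dW_{r}\Bigr). \]
For each fixed $s,t$ the operator $S^{*}_{s,t}$ is a \emph{bounded} linear map $V'\to V$, hence commutes with the Bochner integral and with the $V'$–valued stochastic integral; combined with the flow property $S^{*}_{s,t}S^{*}_{r,s}=S^{*}_{r,t}$ this yields
\[ \Xi^{t}_{s}=S^{*}_{t_{0},t}X_{t_{0}}+\int_{t_{0}}^{s}S^{*}_{r,t}Y_{r}\dr+\int_{t_{0}}^{s}S^{*}_{r,t}Z_{r}\dW_{r}. \]
Here the integrands depend only on the integration variable $r$ and the fixed parameter $t$; by the joint measurability of $S^{*}$ and the standing hypotheses $\int_{t_{0}}^{t}\|S^{*}_{r,t}Y_{r}\|_{V}\dr<\infty$ and $\int_{t_{0}}^{t}\|S^{*}_{r,t}Z_{r}\|_{HS(U,V)}^{2}\dr<\infty$ ($\Prob$–a.s.), the right–hand side is a well–defined, $\Prob$–a.s.\ continuous $V$–valued It\^o process. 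The two expressions for $\Xi^{t}_{s}$ agree for each fixed $s$ and both are continuous in $s$, so they coincide on all of $[t_{0},t]$ off a $\Prob$–null set.

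Next I would invoke the classical It\^o formula (of Da Prato–Zabczyk type) for $\Phi\in C^{1,2}([0,T]\times V,\R)$ applied to the $V$–valued It\^o process $\Xi^{t}$ on $[t_{0},t]$, after the routine localisation that replaces the merely $\Prob$–a.s.\ integrability by a uniform one on $[t_{0},\sigma_{n}]$ for stopping times $\sigma_{n}\uparrow t$ and then lets $n\to\infty$. This gives
\begin{multline*}
\Phi(t,\Xi^{t}_{t})=\Phi(t_{0},\Xi^{t}_{t_{0}})+\int_{t_{0}}^{t}(\partial_{1}\Phi)(s,\Xi^{t}_{s})\ds+\int_{t_{0}}^{t}(\partial_{2}\Phi)(s,\Xi^{t}_{s})\,S^{*}_{s,t}Y_{s}\ds\\
+\int_{t_{0}}^{t}(\partial_{2}\Phi)(s,\Xi^{t}_{s})\,S^{*}_{s,t}Z_{s}\dW_{s}+\tfrac12\sum_{j\in\N}\int_{t_{0}}^{t}(\partial_{2}^{2}\Phi)(s,\Xi^{t}_{s})\bigl(S^{*}_{s,t}Z_{s}e_{j},S^{*}_{s,t}Z_{s}e_{j}\bigr)\ds,
\end{multline*}
and the two $\Prob$–full integrability statements in the theorem are exactly the conditions under which each integral above is finite; they follow from $\Phi\in C^{1,2}$ (local boundedness of $\partial_{1}\Phi,\partial_{2}\Phi,\partial_{2}^{2}\Phi$), the a.s.\ local boundedness of $s\mapsto\Xi^{t}_{s}$ in $V$, and the hypotheses on $S^{*}Y,\,S^{*}Z$. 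Substituting back $\Xi^{t}_{s}=S^{*}_{s,t}X_{s}$, $\Xi^{t}_{t}=X_{t}$, $\Xi^{t}_{t_{0}}=S^{*}_{t_{0},t}X_{t_{0}}$ yields the claimed formula for the \emph{fixed} pair $(t_{0},t)$. Finally, since $X$, the evolution family $S^{*}$, and all the ordinary and stochastic integrals involved are $\Prob$–a.s.\ continuous in their time arguments, both sides of the identity are a.s.\ continuous in $(t_{0},t)$, so equality on a countable dense subset of $\{t_{0}<t\}$ extends to all such pairs simultaneously, off a single $\Prob$–null set.

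I expect the main obstacle to be the middle step: turning the formal statement ``$S^{*}_{s,t}$ commutes with $\int\cdots\dW_{r}$'' into the assertion that $\Xi^{t}$ is a bona fide $V$–valued It\^o process. The smoothing $V'\to V$ of $S^{*}$ is essential (the data $Y_{s},Z_{s}$ need not belong to $V$), so one must keep careful track of the joint measurability of $(r,\omega)\mapsto S^{*}_{r,t}Y_{r}(\omega)$ and $(r,\omega)\mapsto S^{*}_{r,t}Z_{r}(\omega)$ in the appropriate (strong) operator topology, and upgrade the pointwise–in–$s$ commutation to a statement holding simultaneously for all $s$ — which is precisely where continuity of the $V$–valued stochastic convolution, and hence the integrability built into Definition \ref{def:mild_ito_process}, are used. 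The localisation and the concluding continuity/null–set argument are standard but should be carried out explicitly.
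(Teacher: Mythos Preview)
The paper does not prove this theorem at all: it is quoted verbatim from \cite[Theorem~1]{da_19_mild_ito} and used as a black box to derive the subsequent Corollary and Proposition~\ref{prop:ito_formula_volterra}. There is therefore no ``paper's own proof'' to compare against.

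That said, your proposal is essentially the standard argument used to establish the mild It\^o formula in the cited references (Da Prato--Jentzen--R\"ockner and the related works \cite{cox_16_mild_ito,albeverio_17_mild_ito}): freeze the terminal time $t$, observe that $\Xi^{t}_{s}\coloneqq S^{*}_{s,t}X_{s}$ is a genuine $V$-valued It\^o process on $[t_{0},t]$ with drift $S^{*}_{\cdot,t}Y$ and diffusion $S^{*}_{\cdot,t}Z$, and apply the classical It\^o formula. Your identification of the main technical point---commuting the bounded operator $S^{*}_{s,t}$ with the stochastic integral and the attendant measurability/continuity bookkeeping---is accurate. One small point worth tightening: you write $\Xi^{t}_{t}=X_{t}$, but $S^{*}$ is only defined on the open simplex $\Delta_{t}$, so $S^{*}_{t,t}$ is not a priori available; the equality should instead be read as $\lim_{s\uparrow t}\Xi^{t}_{s}=X_{t}$ in $V$, which follows from the continuity of the $V$-valued It\^o process you constructed (and is consistent with $X_{t}\in V$ for $t>\tau$ by the mild representation itself).
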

\begin{corollary}\label{corr:Ito_volterra_SEE}
    Let $k_{\drift}, k_{\diffusion}$ be (matrix-valued, entrywise) completely monotone kernels such that their associated measures, given by \eqref{eqn:kernel_representation_measure}, satisfy Assumptions \ref{A:A_3:Narrower_assumption_nu}, \ref{A:A_2:assumption_1_test_function}. Let $X$ denote the $\R^{n_{\textnormal{dim}}}$-valued solution of the corresponding stochastic Volterra equation \eqref{eqn:SVE_introduction}, $S^{*}$ the semigroup from section \ref{sec:operator_and_semigroup} and $f\in C^{1,2}(\R_{+}\times\R^{n_{\textnormal{dim}}},\R)$, with gradient $\nabla_{x}f\in\R^{n_{\textnormal{dim}}}$ and Hessian $D_{x}^{2}f\in\R^{n_{\textnormal{dim}}\times n_{\textnormal{dim}}}$ in the spatial argument. Then the following It{\^o} formula holds.
    \begin{align*}
    f(t,X_{t})&=f(t_{0}, \langle S^{*}_{t_{0}, t}\mu_{t_{0}},1\rangle)+\int_{t_{0}}^{t}\partial_{s}f(s,\langle S^{*}_{s, t} \mu_{s},1\rangle) \ds\\
&\phantom{xx}+\int_{t_{0}}^{t}\left\langle\nabla_{x}f(s,\langle S^{*}_{s, t} \mu_{s},1\rangle),k_{\drift}(t-s)\drift(s,X_{s})\right\rangle \ds \\
& \quad+\int_{t_{0}}^{t}\left\langle\nabla_{x}f(s,\langle S^{*}_{s, t} \mu_{s},1\rangle),k_{\diffusion}(t-s)\diffusion(s,X_{s})\, \dW_{s}\right\rangle\\
&\phantom{xx}+\frac{1}{2}  \int_{t_{0}}^{t}\operatorname{tr}\!\left[D_{x}^{2} f(s,\langle S^{*}_{s, t} \mu_{s},1\rangle)\, k_{\diffusion}(t-s)\diffusion(s,X_{s})\diffusion(s,X_{s})^{\top}k_{\diffusion}(t-s)^{\top}\right] \ds
\end{align*}
\end{corollary}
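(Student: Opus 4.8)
The plan is to recognize the measure-valued lift $\mu$ of $X$ as a \emph{mild It{\^o} process} in the sense of Definition~\ref{def:mild_ito_process} and then apply the mild It{\^o} formula \cite[Theorem~1]{da_19_mild_ito} to the functional $\Phi(s,\nu):=f(s,\langle\nu,1\rangle)$. Concretely, fix $X$ as in the statement, let $\mu$ be the process $Y$ from \eqref{eqn:Y_process} with $\mu_{0}=x_{0}\delta_{0}$, and invoke Theorem~\ref{thm:equivalence_SVE_SEE}(1) --- which applies since Assumptions~\ref{A:A_3:Narrower_assumption_nu} and \ref{A:A_2:assumption_1_test_function} hold --- so that $\mu$ is a mild solution of the lifted SEE \eqref{eqn:SEE_eqn_weak_equiv} and $\langle\mu_{t},1\rangle=X_{t}$ for a.e.\ $t$, hence --- both sides being continuous in $t$ --- for all $t\ge 0$. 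I work throughout on the triple $V=\Vitospacedual\hookrightarrow H=\Hitospacedual\hookrightarrow V'=\Vitoprimespacedual$ with $U=\R$ fixed at the beginning of this section, together with the two-parameter family $S^{*}_{s,t}:=S^{*}_{t-s}$; by Lemma~\ref{lem:semigroup_and_adjoint_semigroup} and Lemma~\ref{lem:improvement_semigroup_weighted_spaces} (applied with $\gamma=\theta_{\nu}+\eps<1$) this family maps $V'$ into $V$ and satisfies the evolution identity $S^{*}_{t_{2},t_{3}}S^{*}_{t_{1},t_{2}}=S^{*}_{t_{1},t_{3}}$.

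To verify that $\mu$ is a mild It{\^o} process, I take the predictable processes $s\mapsto\nu_{\drift}\drift(s,X_{s})$, valued in $V'$ by Lemma~\ref{lem:nu_dual_weighted_space_particular}, and $s\mapsto\nu_{\diffusion}\diffusion(s,X_{s})$, valued in $HS(\R,V')\cong V'$ since the noise is one-dimensional. By \eqref{eqn:drift_diffusion_kernel_estimate_semigroup} the maps $s\mapsto\|S^{*}_{s,t}\nu_{\drift}\|_{V}$ and $s\mapsto\|S^{*}_{s,t}\nu_{\diffusion}\|_{V}$ lie in $L^{1}(t_{0},t)$ and $L^{2}(t_{0},t)$ respectively; combining this with the linear growth \ref{A:assumption_general_linear_growth} of $\drift,\diffusion$ and with the $\Prob$-a.s.\ boundedness of $s\mapsto X_{s}=\langle\mu_{s},1\rangle$ on $[0,T]$ (valid because $\mu\in C([0,T],H)$ has a.s.\ bounded paths and $1$ is an admissible test function, Assumption~\ref{A:A_2:assumption_1_test_function}) gives $\int_{t_{0}}^{t}\|S^{*}_{s,t}\nu_{\drift}\drift(s,X_{s})\|_{V}\ds<\infty$ and $\int_{t_{0}}^{t}\|S^{*}_{s,t}\nu_{\diffusion}\diffusion(s,X_{s})\|_{HS(\R,V)}^{2}\ds<\infty$ $\Prob$-a.s. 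Hence the mild formulation \eqref{eqn:SEE_eqn_mild} of $\mu$ is exactly the defining identity of a mild It{\^o} process with semigroup $S^{*}$, mild drift $\nu_{\drift}\drift(\cdot,X_{\cdot})$ and mild diffusion $\nu_{\diffusion}\diffusion(\cdot,X_{\cdot})$.

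Since $1$ lies in the predual $\Vitospace$ of $V$ (Assumption~\ref{A:A_2:assumption_1_test_function}), the evaluation $\iota\colon V\to\R$, $\iota\nu:=\langle\nu,1\rangle$, is bounded and linear, so $\Phi:=f\circ(\mathrm{id}\times\iota)$ lies in $C^{1,2}([0,T]\times V,\R)$ with jointly continuous derivatives $(\partial_{1}\Phi)(s,\nu)=\partial_{s}f(s,\iota\nu)$, $(\partial_{2}\Phi)(s,\nu)h=\partial_{x}f(s,\iota\nu)\langle h,1\rangle$ and $(\partial_{2}^{2}\Phi)(s,\nu)(h_{1},h_{2})=\partial_{x}^{2}f(s,\iota\nu)\langle h_{1},1\rangle\langle h_{2},1\rangle$; the integrability hypotheses of \cite[Theorem~1]{da_19_mild_ito} then follow from the bounds above together with the boundedness of $\partial_{s}f,\partial_{x}f,\partial_{x}^{2}f$ along the continuous curve $s\mapsto(s,\langle S^{*}_{s,t}\mu_{s},1\rangle)$. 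Applying that theorem and then rewriting each term by means of the single identity $\langle S^{*}_{r}\nu_{i},1\rangle=\langle\nu_{i},e^{-r\cdot}\rangle=\int_{0}^{\infty}e^{-rx}\nu_{i}(\dx)=k_{i}(r)$ for $i\in\{\drift,\diffusion\}$ (which uses the dual interpretation of the multiplication by $e^{-r\cdot}$ and the Laplace representation \eqref{eqn:kernel_representation_measure}) turns the abstract formula into the claimed one: $\Phi(t,\mu_{t})=f(t,X_{t})$, $(\partial_{2}\Phi)(s,S^{*}_{s,t}\mu_{s})S^{*}_{s,t}\nu_{\drift}\drift(s,X_{s})=\partial_{x}f(s,\langle S^{*}_{s,t}\mu_{s},1\rangle)k_{\drift}(t-s)\drift(s,X_{s})$, likewise for the stochastic term with $\nu_{\diffusion},\diffusion,k_{\diffusion}$, and, since $U=\R$, the sum over $j\in\N$ reduces to the single unit vector $e_{1}=1$, producing $\partial_{x}^{2}f(s,\langle S^{*}_{s,t}\mu_{s},1\rangle)k_{\diffusion}(t-s)^{2}\diffusion(s,X_{s})^{2}$. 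Unwinding \eqref{eqn:SEE_eqn_mild} once more identifies $\langle S^{*}_{s,t}\mu_{s},1\rangle$ with $\langle\mu_{0},e^{-t\cdot}\rangle+\int_{0}^{s}k_{\drift}(t-r)\drift(r,X_{r})\dr+\int_{0}^{s}k_{\diffusion}(t-r)\diffusion(r,X_{r})\dW_{r}=\E[\Gamma_{st}(X)\mid\Filx_{s}]$, recovering the formulation of Proposition~\ref{prop:ito_formula_volterra}.

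The main difficulty is concentrated in the second step: turning the informal statement ``the lift solves an SEE'' into the precise structural statement of Definition~\ref{def:mild_ito_process}, i.e.\ establishing the two $\Prob$-a.s.\ finite-integral conditions. This is the one place where all the standing hypotheses are used simultaneously --- the $L^{1}$-, resp.\ $L^{2}$-in-time singularity bounds \eqref{eqn:drift_diffusion_kernel_estimate_semigroup} for $\|S^{*}_{t-s}\nu_{\drift}\|_{V}$ and $\|S^{*}_{t-s}\nu_{\diffusion}\|_{V}$ (Assumption~\ref{A:A_3:Narrower_assumption_nu} via Lemma~\ref{lem:nu_semigroup_properties}), the linear growth of the coefficients, the pathwise boundedness of $\langle\mu_{\cdot},1\rangle$ from the a priori estimates of Section~\ref{sec:existence_for_the_SEE}, and the admissibility of the test function $1$ (Assumption~\ref{A:A_2:assumption_1_test_function}) --- and the very same ingredients then feed into both the integrability hypotheses of the cited mild It{\^o} formula and the well-posedness of the pairings $\langle S^{*}_{s,t}\mu_{s},1\rangle$ appearing in the statement.
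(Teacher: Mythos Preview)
Your proof is correct and follows the same strategy as the paper: lift $X$ to the mild solution $\mu$ of the SEE, verify that $\mu$ is a mild It{\^o} process in the sense of \cite{da_19_mild_ito}, apply the mild It{\^o} formula to $\Phi(s,\nu)=f(s,\langle\nu,1\rangle)$, and reduce via $\langle S^{*}_{t-s}\nu_{i},1\rangle=k_{i}(t-s)$. You are more explicit than the paper in checking the integrability conditions of Definition~\ref{def:mild_ito_process}, and your final paragraph identifying $\langle S^{*}_{s,t}\mu_{s},1\rangle$ with $\E[\Gamma_{st}(X)\mid\Filx_{s}]$ already anticipates the separate proof of Proposition~\ref{prop:ito_formula_volterra}, but none of this deviates from the paper's approach.
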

\begin{proof}
    We lift the SVE to the infinite-dimensional evolution equation \eqref{eqn:SEE_eqn_mild_formulation}. By Theorem \ref{thm:weak_mild_solution_SEE}, $\mu \in C([0,T],(\Wplusdual)^{n_{\textnormal{dim}}})$. The required mapping property of the semigroup $S^{*}$ is a consequence of the inequality $e^{-y}\leq \left(\frac{1}{1+y}\right)^{\theta}$, for any $\theta\in [0,1)$ and $y>-1$, applied componentwise. Let $\varphi(\mu)=f(t,\langle \mu,g\rangle)$, where $\langle\mu,g\rangle\in\R^{n_{\textnormal{dim}}}$ denotes the componentwise pairing of Section~\ref{section:Functional_framework}. By the chain rule, the Fr\'echet derivatives of $\varphi$ act on directions $h,h'\in V$ by $\partial_{x}\varphi(\mu)[h]=\left\langle\nabla_{x}f(t,\langle\mu,g\rangle),\langle h,g\rangle\right\rangle$ and $\partial_{x}^{2}\varphi(\mu)[h,h']=\langle h,g\rangle^{\top}D_{x}^{2}f(t,\langle\mu,g\rangle)\langle h',g\rangle$. The mild It{\^o} formula now yields
\begin{align*}
& \varphi\left(t, \mu_{t}\right)=f(t,\langle \mu_{t},g\rangle)=f(t_{0}, \langle S^{*}_{t_{0}, t}\mu_{0},g\rangle)+\int_{t_{0}}^{t}\partial_{t}f(s,\langle S^{*}_{s, t} \mu_{s},g\rangle) \ds\\
&\phantom{xx}+\int_{t_{0}}^{t}\left\langle\nabla_{x}f(s,\langle S^{*}_{s, t} \mu_{s},g\rangle),\langle S^{*}_{s, t} Y_{s},g\rangle\right\rangle \ds \\
& \quad+\int_{t_{0}}^{t}\left\langle\nabla_{x}f(s,\langle S^{*}_{s, t} \mu_{s},g\rangle),\langle S^{*}_{s, t} Z_{s},g\rangle\, \dW_{s}\right\rangle\\
&\phantom{xx}+\frac{1}{2} \sum_{j=1}^{m_{W}} \int_{t_{0}}^{t}\left\langle S^{*}_{s, t} Z_{s} e_{j},g\right\rangle^{\top}D_{x}^{2} f(s,\langle S^{*}_{s, t} \mu_{s},g\rangle)\left\langle S^{*}_{s, t} Z_{s} e_{j},g\right\rangle \ds\\
&=f(t_{0}, \langle S_{t_{0}, t}\mu_{0},g\rangle)+\int_{t_{0}}^{t}\partial_{t}f(s,\langle S^{*}_{s, t} \mu_{s},g\rangle) \ds\\
&\phantom{xx}+\int_{t_{0}}^{t}\left\langle\nabla_{x}f(s,\langle S^{*}_{s, t} \mu_{s},g\rangle),\langle S^{*}_{s, t} \nu \drift(s,\langle \mu_{s},\varphi \rangle),g\rangle\right\rangle \ds \\
& \quad+\int_{t_{0}}^{t}\left\langle\nabla_{x}f(s,\langle S^{*}_{s, t} \mu_{s},g\rangle),\langle S^{*}_{s, t} \nu\diffusion(s,\langle \mu_{s},\varphi\rangle),g\rangle\, \dW_{s}\right\rangle\\
&\phantom{xx}+\frac{1}{2}  \int_{t_{0}}^{t}\operatorname{tr}\!\left[D_{x}^{2} f(s,\langle S^{*}_{s, t} \mu_{s},g\rangle)\, \langle S^{*}_{s, t} \nu\diffusion(s,\langle \mu_{s},\varphi\rangle),g\rangle\langle S^{*}_{s, t} \nu\diffusion(s,\langle \mu_{s},\varphi\rangle),g\rangle^{\top}\right] \ds
\end{align*}
(the last line using $\sum_{j=1}^{m_{W}}v_{j}^{\top}Av_{j}=\operatorname{tr}(AMM^{\top})$ for a symmetric matrix $A$ and matrix $M$ with columns $v_{j}$, applied with $M=\langle S^{*}_{s,t}\nu\diffusion(s,\langle\mu_{s},\varphi\rangle),g\rangle\in\R^{n_{\textnormal{dim}}\times m_{W}}$). Setting $g=1$ yields
\begin{align*}
    f(t,X_{t})&=f(t_{0}, \langle S^{*}_{t_{0}, t}\mu_{0},1\rangle)+\int_{t_{0}}^{t}\partial_{s}f(s,\langle S^{*}_{s, t} \mu_{s},1\rangle) \ds\\
&\phantom{xx}+\int_{t_{0}}^{t}\left\langle\nabla_{x}f(s,\langle S^{*}_{s, t} \mu_{s},1\rangle),k_{\drift}(t-s)\drift(s,X_{s})\right\rangle \ds \\
& \quad+\int_{t_{0}}^{t}\left\langle\nabla_{x}f(s,\langle S^{*}_{s, t} \mu_{s},1\rangle),k_{\diffusion}(t-s)\diffusion(s,X_{s})\, \dW_{s}\right\rangle\\
&\phantom{xx}+\frac{1}{2}  \int_{t_{0}}^{t}\operatorname{tr}\!\left[D_{x}^{2} f(s,\langle S^{*}_{s, t} \mu_{s},1\rangle)\, k_{\diffusion}(t-s)\diffusion(s,X_{s})\diffusion(s,X_{s})^{\top}k_{\diffusion}(t-s)^{\top}\right] \ds
\end{align*}
\end{proof}
\begin{proof}[Proof of Proposition \ref{prop:ito_formula_volterra}]
Note that, by the semigroup property of $S^{*}$ applied to the mild formulation \eqref{eqn:SEE_eqn_mild_formulation} (with $\psi=1$),
\begin{align*}
    e^{-(t-s)x}\mu_s &=\mu_t -\int_s^t e^{-(t-r)x}\,\nu_{\drift}\,\drift(r, \langle\mu_r,1\rangle)\,dr - \int_s^t e^{-(t-r)x}\,\nu_{\diffusion}\,\diffusion(r, \langle\mu_r,1\rangle)\,dW_r,\\
    e^{-(t-s)x}\mu_s
&= \mu_0 e^{-tx} + \int_0^s e^{-(t-r)x}\,\nu_{\drift}\,\drift(r, \langle\mu_r,1\rangle)\,dr + \int_0^s e^{-(t-r)x}\,\nu_{\diffusion}\,\diffusion(r, \langle\mu_r,1\rangle)\,dW_r.
\end{align*}
Pairing the first relation with the constant $1$ function and using $X_r=\langle\mu_r,1\rangle$ together with $\langle e^{-(t-r)x}\nu_\drift,1\rangle=k_\drift(t-r)$, $\langle e^{-(t-r)x}\nu_\diffusion,1\rangle=k_\diffusion(t-r)$ (Theorem \ref{thm:berstein_hausdorf_widder}), we obtain
\begin{align*}
    \left\langle e^{-(t-s)x}\mu_s,1\right\rangle&= X_{t}- \int_s^t k_{\drift}(t-r)\drift(r, X_{r})\,dr-\int_s^t k_{\diffusion}(t-r)\diffusion(r, X_{r})\,dW_{r}=\Gamma_{st}(X).
\end{align*}
Similarly, pairing the second relation with $1$,
\begin{align*}
    \left\langle e^{-(t-s)x}\mu_s,1\right\rangle&= X_{0}+ \int_0^s k_{\drift}(t-r)\drift(r, X_{r})\,dr+\int_0^s k_{\diffusion}(t-r)\diffusion(r, X_{r})\,dW_{r}=\Gamma_{st}(X),
\end{align*}
which is exactly the alternative form of $\Gamma_{st}(X)$ noted in Proposition \ref{prop:ito_formula_volterra}. Combining this with Corollary \ref{corr:Ito_volterra_SEE} yields the claim.
\end{proof}

\subsection{Consequences and applications}
The next Corollary is now a direct consequence of Proposition \ref{prop:ito_formula_volterra}.
\begin{corollary}\label{cor:global_existence_lyapunov}

Let $X$ be a solution of the stochastic Volterra equation \eqref{eqn:SVE_introduction}.
    Suppose that there exists a $C\geq 0$, such that $|X_{0}|\leq C$, almost surely (here $|\cdot|$ denotes the Euclidean norm on $\R^{n_{\textnormal{dim}}}$). If there exists a function $V\in C^{2}(\R^{n_{\textnormal{dim}}},\R_{+})$, constants $0<d,h, c_{1},c_{2}<\infty$ and $1\leq p$ which satisfy
    \begin{enumerate}
        \item  $c_1 |x |^p \leq V(x) \leq c_2 |x |^p$ for all $x \in \R^{n_{\textnormal{dim}}}$,
        \item
            \begin{align*}
        \mathcal{L}V(x,t,s)&\coloneqq  \left\langle\nabla V(\Gamma_{st}),k_{\drift} (t-s)\drift(X_s)\right\rangle+\frac{1}{2}\operatorname{tr}\!\left[D^{2}V(\Gamma_{st})\,k_{\diffusion} (t-s)\diffusion(X_s)\diffusion(X_s)^{\top}k_{\diffusion}(t-s)^{\top}\right]\\
        &\phantom{xx}\leq h V(X_s)+d,
    \end{align*}

    \end{enumerate}

    then the solution $X$ is a global solution of the stochastic Volterra equation, in the sense that for each $0\leq t <\infty$, $\E[|X_{t}|]<\infty$.
\end{corollary}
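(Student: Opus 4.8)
The strategy is a standard Lyapunov/Gronwall argument, carried out on the level of the quantity $V(\Gamma_{st}(X))$ rather than $V(X_t)$, so that the Itô formula from Proposition \ref{prop:ito_formula_volterra} becomes available. First I would apply Proposition \ref{prop:ito_formula_volterra} with $f(t,x)=V(x)$ (which lies in $C^{1,2}(\R_+\times\R,\R)$ since $V\in C^2$ and has no explicit $t$-dependence), obtaining, for $t_0=0$ and recalling $\Gamma_{0t}=X_0$,
\begin{align*}
    V(X_t) &= V(X_0) + \int_0^t V'\!\left(\E[\Gamma_{st}(X)\mid \Filx_s]\right)k_{\diffusion}(t-s)\diffusion(s,X_s)\dW_s\\
    &\phantom{xx}+ \frac12\int_0^t V''\!\left(\E[\Gamma_{st}(X)\mid \Filx_s]\right)k_{\diffusion}(t-s)^2\diffusion(s,X_s)^2\ds.
\end{align*}
Note there is no drift term in $s$ here because $V$ has no explicit time dependence, so $\partial_s f\equiv 0$; the only two contributions are the stochastic integral and the Itô correction. (If one instead wants to incorporate the $\partial_x f\cdot k_\drift \drift$ term, one should use the variant of the formula where the argument of $f$ includes the drift integral; either way the quantity bounded by hypothesis (2) is exactly the ``generator'' $\mathcal{L}V$.)

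Next I would localize. Define stopping times $\tau_n := \inf\{t\geq 0: |X_t|\geq n\}\wedge t$; on $[0,\tau_n]$ the process is bounded, the stochastic integral is a true martingale, and all the integrands above are integrable. Taking expectations after stopping at $\tau_n$, the stochastic integral vanishes, and using hypothesis (2) (after recognizing that the integrand of the Itô-correction term, up to the $V'$ drift piece, is controlled by $\mathcal{L}V$) together with hypothesis (1) to pass from $V$ to $|X|^p$ and back, one gets
\begin{align*}
    \E\!\left[V(X_{t\wedge\tau_n})\right] \leq \E[V(X_0)] + \int_0^t \left(h\,\E\!\left[V(X_{s\wedge\tau_n})\right] + d\right)\ds,
\end{align*}
where one has to be slightly careful: the hypothesis bounds $\mathcal L V$ in terms of $V(X_s)$, and one needs $V(X_{s\wedge\tau_n})$ inside the expectation, which is handled by inserting indicators $\mathbf 1_{\{s<\tau_n\}}$ and bounding $V(X_s)\mathbf 1_{\{s<\tau_n\}} = V(X_{s\wedge\tau_n})\mathbf 1_{\{s<\tau_n\}} \leq V(X_{s\wedge\tau_n})$.

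Then Gronwall's inequality gives $\E[V(X_{t\wedge\tau_n})] \leq (\E[V(X_0)]+dt)e^{ht}$, a bound uniform in $n$. Since $|X_0|\leq C$ a.s., hypothesis (1) gives $\E[V(X_0)]\leq c_2 C^p<\infty$, so the right-hand side is a finite constant depending only on $t,h,d,c_1,c_2,C$. Finally, by Fatou's lemma (as $\tau_n\uparrow t$ a.s., provided the solution does not explode in finite time — which is precisely what we are proving, so one argues that $\liminf_n V(X_{t\wedge\tau_n})\geq V(X_t)\mathbf 1_{\{\sup_{s\leq t}|X_s|<\infty\}}$ and separately that $\Prob(\sup_{s\leq t}|X_s|=\infty)=0$ follows from the uniform bound via $\Prob(\tau_n<t)\leq \E[V(X_{t\wedge\tau_n})]/(c_1 n^p)\to 0$) we obtain $\E[V(X_t)]\leq (\E[V(X_0)]+dt)e^{ht}<\infty$, and hence by hypothesis (1) again, $\E[|X_t|^p]\leq c_1^{-1}\E[V(X_t)]<\infty$; in particular $\E[|X_t|]<\infty$ for $p\geq 1$ by Jensen.

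\textbf{Main obstacle.} The delicate point is the interplay between the conditional-expectation arguments $\E[\Gamma_{st}(X)\mid\Filx_s]$ appearing inside $V', V''$ in Proposition \ref{prop:ito_formula_volterra} and the hypothesis (2), which is phrased directly in terms of $\Gamma_{st}$ (not its conditional expectation) and of $X_s$. One must check that the hypothesis is meant to control the actual integrand after taking expectations — i.e. that $\E[V'(\E[\Gamma_{st}\mid\Filx_s])\,k_\diffusion(t-s)\diffusion(s,X_s)] $ and the $V''$ term together are dominated by $\E[hV(X_s)+d]$ — which requires either a convexity/Jensen step relating $V'(\E[\Gamma_{st}\mid\Filx_s])$ to $\E[V'(\Gamma_{st})\mid\Filx_s]$, or simply reading hypothesis (2) as the pointwise statement it is written as and applying the tower property. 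A secondary technical point is justifying that the stochastic integral is a genuine martingale on $[0,\tau_n]$, which needs the square-integrability $\int_0^t |\diffusion(s,X_s)|^2\ds<\infty$ on the stopped interval — guaranteed by \eqref{eqn:equivalence_integrability_X} together with boundedness of $k_\diffusion$ on $[0,T]$ and of $X$ on $[0,\tau_n]$ — and the boundedness of $V', V''$ on the compact range of the stopped process.
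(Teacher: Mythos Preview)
Your approach is essentially the same as the paper's: apply the It\^o formula for Volterra equations with $f(t,x)=V(x)$, take expectations so the stochastic integral disappears, use hypothesis (2) to bound the integrand by $hV(X_s)+d$, and conclude via Gronwall. The paper's proof is in fact shorter than yours---it takes expectations directly without an explicit localization step and omits the Fatou argument---so your version is, if anything, more careful.

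One point to correct: your first displayed formula omits the drift contribution $\int_0^t V'\bigl(\E[\Gamma_{st}\mid\Filx_s]\bigr)k_\drift(t-s)\drift(s,X_s)\,\mathrm{d}s$. The It\^o formula derived in Section~\ref{sec:ito_formula} (the Corollary immediately preceding the proof of Proposition~\ref{prop:ito_formula_volterra}) \emph{does} contain this term, and the paper's own proof of the global-existence Corollary uses that version. This is also why $\mathcal{L}V$ in hypothesis (2) contains the $V'k_\drift\drift$ piece: it is meant to bound the full $\mathrm{d}s$-integrand (drift plus It\^o correction), not just the second-order part. Your parenthetical remark shows you noticed the tension, but the resolution is simply that Proposition~\ref{prop:ito_formula_volterra} as stated in the introduction omits a term that the derivation in Section~\ref{sec:ito_formula} provides; use the full version and the hypothesis applies pointwise in $s$. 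The conditional-expectation issue you flag in your ``Main obstacle'' is handled identically in the paper (i.e.\ not at all explicitly): hypothesis (2) is read as a pointwise bound on the integrand, and one takes expectations.
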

\begin{proof}
 By the It{\^o} formula for Volterra equations, for any $t >0$,
\begin{align*}
    V(X_{t}) &= V(\Gamma_{0t})\\
    &\phantom{xx}+\int_{0}^{t} \left\langle\nabla V(\Gamma_{st}),k_{\drift} (t-s)\drift(X_s)\right\rangle+\frac{1}{2}\operatorname{tr}\!\left[D^{2}V(\Gamma_{st})\,k_{\diffusion} (t-s)\diffusion(X_s)\diffusion(X_s)^{\top}k_{\diffusion}(t-s)^{\top}\right]\ds\\
    &\phantom{xx}+\int_{0}^{t} \left\langle\nabla V(\Gamma_{st}),k_{\diffusion} (t-s)\diffusion(X_s)\,\dW_{s}\right\rangle
\end{align*}

Taking the expectation, and using $\mathcal{L}V(x,t,s) \leq h V(X_s)+d$, we get
\begin{align*}
\E \left[V(X_{t})\right] \leq \E \left[V(X_{0})\right]+h\int_{0}^{t}\E\left[V(X_{s})\right]\ds +t d.
\end{align*}
Gronwall's inequality now yields

\begin{align*}
    \E \left[V(X_{t})\right]\leq \left(\E \left[V(X_{0})\right]+t d\right)e^{ht}<\infty,
\end{align*}
where finiteness follows since $X_{0}$ is a.s. bounded by $C$, so $V(X_{0})\leq c_{2}C^{p}$ a.s. by hypothesis~1. By hypothesis~1 again, $|x|\leq (V(x)/c_{1})^{1/p}$ for all $x$, so by Jensen's inequality (the map $u\mapsto u^{1/p}$ is concave for $p\geq 1$),
\begin{align*}
    \E\left[|X_{t}|\right]\leq \E\left[\left(\frac{V(X_{t})}{c_{1}}\right)^{1/p}\right]\leq \left(\frac{\E\left[V(X_{t})\right]}{c_{1}}\right)^{1/p}<\infty.
\end{align*}
\end{proof}

\begin{corollary}[Sharp bound in the dissipative case]\label{cor:sharp_dissipative_lyapunov}
Under the hypotheses of Corollary~\ref{cor:global_existence_lyapunov},
suppose in addition that $h=-\rho_{X}$ for some $\rho_{X}>0$ and write
$d_{X}:=d$, i.e.
\[
\mathcal LV(x,t,s)\leq-\rho_{X}V(X_{s})+d_{X}.
\]
Then, in fact,
\begin{align*}
\E[V(X_{t})]\leq V(X_{0})e^{-\rho_{X}t}+\frac{d_{X}}{\rho_{X}}\big(1-e^{-\rho_{X}t}\big)\leq V(X_{0})e^{-\rho_{X}t}+\frac{d_{X}}{\rho_{X}}\qquad\text{for all }t\geq0.
\end{align*}
This is strictly sharper than the crude bound
$\big(\E[V(X_{0})]+td_{X}\big)e^{-\rho_{X}t}$ obtained by applying
Corollary~\ref{cor:global_existence_lyapunov} directly with
$h=-\rho_{X}$: the crude bound only gives boundedness, since the
linear-in-$t$ prefactor obscures the exponential decay mechanism,
whereas the estimate below correctly tracks convergence to the
equilibrium level $d_{X}/\rho_{X}$.
\end{corollary}
\begin{proof}
By the same computation as in the proof of
Corollary~\ref{cor:global_existence_lyapunov},
\[
\E[V(X_{t})]\leq V(X_{0})-\rho_{X}\int_{0}^{t}\E[V(X_{s})]\,\ds+d_{X}t.
\]
Write $f(t):=\E[V(X_{t})]$ and $F(t):=\int_{0}^{t}f(s)\,\ds$, so
$F'=f$ a.e.\ and $F'(t)+\rho_{X}F(t)\leq V(X_{0})+d_{X}t$. Multiplying
by the integrating factor $e^{\rho_{X}t}$ and integrating,
\[
F(t)\leq V(X_{0})\frac{1-e^{-\rho_{X}t}}{\rho_{X}}+d_{X}\Big[\frac{t}{\rho_{X}}-\frac{1-e^{-\rho_{X}t}}{\rho_{X}^{2}}\Big],
\]
and substituting back into $f(t)\leq V(X_{0})+d_{X}t-\rho_{X}F(t)$
gives $f(t)\leq V(X_{0})e^{-\rho_{X}t}+\frac{d_{X}}{\rho_{X}}(1-e^{-\rho_{X}t})$,
which is the claim; this is the standard sharp Gr\"onwall bound,
obtained by comparison with the ODE $g'=-\rho_{X}g+d_{X}$.
\end{proof}


\subsection{Finite-time blowup criterion}\label{subsec:blowup}

The Lyapunov corollary above gives a \emph{positive} criterion for global
existence.  Here we record the complementary \emph{negative} criterion: a
sufficient condition for finite-time blowup of the $p$-th moment.  The
proof is the same Gronwall argument run in reverse.

\begin{corollary}[Finite-time blowup]\label{cor:blowup}
Let $X$ be a local solution of the SVE \eqref{eqn:SVE_introduction}.
Suppose there exist a function $V\in C^{2}(\R^{n_{\textnormal{dim}}},\R_{+})$, constants
$c_{1},c_{2},\lambda>0$, $1\leq p$, and a time $t_{*}>0$ such that
hypothesis~1 of the Lyapunov corollary holds (i.e.\
$c_{1}|x|^{p}\leq V(x)\leq c_{2}|x|^{p}$) and
\begin{align}\label{eqn:blowup_lower_bound}
    \mathcal{L}V(x,t,s)
    := \left\langle\nabla V(\Gamma_{st}),k_{\drift}(t-s)\drift(X_{s})\right\rangle\\
     &\phantom{xx}+\frac{1}{2} \operatorname{tr}\!\left[D^{2}V(\Gamma_{st})\,k_{\diffusion}(t-s)\diffusion(X_{s})\diffusion(X_{s})^{\top}k_{\diffusion}(t-s)^{\top}\right]\notag\\
    &\geq \lambda V(X_{s}),
    \quad 0\leq s\leq t_{*}.\notag
\end{align}
Then $\E[|X_{t}|^{p}]\geq \tfrac{c_{1}}{c_{2}}\E[|X_{0}|^{p}]e^{\lambda t}$
for all $t\leq t_{*}$.  In particular, if $\lambda>0$ and
$\E[|X_{0}|^{p}]>0$ then $\E[|X_{t}|^{p}]\to\infty$ as $t\to\infty$.
\end{corollary}

\begin{proof}
Applying the Itô formula for Volterra equations exactly as in the proof of
the global-existence corollary, taking expectations and using the lower bound
\eqref{eqn:blowup_lower_bound}:
\begin{align*}
    \E[V(X_{t})] \geq \E[V(X_{0})] + \lambda\int_{0}^{t}\E[V(X_{s})]\,\ds.
\end{align*}
The reverse Gronwall inequality (see \cite[Lemma 2.2]{zhang2010stochastic})
gives $\E[V(X_{t})]\geq \E[V(X_{0})]e^{\lambda t}$. By hypothesis~1,
$V(X_{t})\leq c_{2}|X_{t}|^{p}$, hence
\begin{align*}
    c_{2}\E[|X_{t}|^{p}] \geq \E[V(X_{t})]\geq \E[V(X_{0})]e^{\lambda t}.
\end{align*}
Also by hypothesis~1, $V(X_{0})\geq c_{1}|X_{0}|^{p}$, so
$\E[V(X_{0})]\geq c_{1}\E[|X_{0}|^{p}]$; combining the two displays,
\begin{align*}
    c_{2}\E[|X_{t}|^{p}] \geq c_{1}\E[|X_{0}|^{p}]e^{\lambda t},
    \qquad\text{i.e.}\qquad
    \E[|X_{t}|^{p}]\geq \frac{c_{1}}{c_{2}}\E[|X_{0}|^{p}]e^{\lambda t}.
\end{align*}
\end{proof}

\begin{remark}
The condition \eqref{eqn:blowup_lower_bound} is local in time (it is
only assumed up to $t_{*}$), which makes it applicable even when
$V$ grows super-linearly and $X$ may leave a compact set. This
example specializes the corollary to $n_{\textnormal{dim}}=1$: in the
rough-volatility setting with power-law kernel
$k(t)=t^{\alpha-1}/\Gamma(\alpha)$, $\alpha\in(0,1)$, and
$\diffusion(x)^{2}=x$ (Heston-type), $V(x)=x$ is sub-critical:
$\mathcal{L}V = k(t-s)\drift(X_{s}) + 0 \cdot(\cdots)$ can change
sign depending on the mean-reversion level of $\drift$, so the
blowup criterion detects parameter regimes where the mean-reversion
is insufficient to control the variance.
\end{remark}


\subsection{Feynman--Kac representation and the backward Kolmogorov
equation}\label{subsec:feynman_kac}

The Itô-Volterra formula (Corollary~\ref{corr:Ito_volterra_SEE}) is a
pathwise identity: it is applied to the \emph{realized} trajectory of
$\mu_{t}$, so it never requires differentiating a conditional
expectation, and hence never requires the Markov property of $X$
itself. A Feynman--Kac representation is a different kind of object: it
defines a \emph{value function} through a conditional expectation of a
terminal payoff, and for this one needs a genuine sufficient statistic
for $\Filx_{t}$-conditional expectations of $g(X_{T})$. Since $X$ is
neither Markovian nor a semimartingale, $X_{t}$ alone is not such a
statistic: $\E[g(X_{T})\mid \mathcal{F}_{t}]$ is not, in general, a
deterministic function of $(t,X_{t})$. The genuine Markov state is the
lift $\mu_{t}\in\Wplusdual$, and the value function must be built on
it. This subsection develops the resulting backward Kolmogorov equation
directly on the weighted Sobolev state space, following the strategy of
\cite{gasteratos2025kolmogorov} (mollify the singular measures
$\nudrift,\nudiffusion$, invoke the classical Kolmogorov equation for
the mollified, genuinely Hilbert-space-valued equation, and pass to the
limit), adapted to our multiplicative semigroup $S^{*}_{t}=e^{-tx}$.

\subsubsection{Setup}

Recall from Section~\ref{sec:ito_formula} the separable Hilbert spaces
\[
V:=(\Wplusdual)^{n_{\textnormal{dim}}}, \qquad H:=(\Wmiddual)^{n_{\textnormal{dim}}}, \qquad V':=(\Wminusdual)^{n_{\textnormal{dim}}}, \qquad U:=\R^{m_{W}},
\]
where $V\hookrightarrow H\hookrightarrow V'$ is the $n_{\textnormal{dim}}$-fold product of
the scalar Gelfand triple
$\Wplusdual\hookrightarrow\Wmiddual\hookrightarrow\Wminusdual$, the space $U=\R^{m_{W}}$
carries the $m_{W}$-dimensional driving noise, and the semigroup
$S^{*}:\Delta_{t}\rightarrow L(V',V)$ acts diagonally (componentwise) on the
$n_{\textnormal{dim}}$ copies. Since $V$ is itself a (reflexive) dual space, we
identify $V^{*}\cong(\Wplus)^{n_{\textnormal{dim}}}$ throughout, via the componentwise
pairing $\langle\cdot,\cdot\rangle$ used elsewhere in the paper; for $y\in V$,
$\langle y,1\rangle\in\R^{n_{\textnormal{dim}}}$ denotes the componentwise pairing
against the constant test function $1$, an element of the SVE state space (as in
Section~\ref{section:Functional_framework}). We work under
Assumptions~\ref{A:assumption_general_linear_growth}--\ref{A:assumption_general_uniformly_continuous},
\ref{A:assumption_kernel_measure}, \ref{A:uniqueness_in_law},
\ref{A:homogeneous_coefficients}, and we now additionally assume
$\drift\in C^{2}_{b}(\R^{n_{\textnormal{dim}}};\R^{n_{\textnormal{dim}}})$ and
$\diffusion\in C^{2}_{b}(\R^{n_{\textnormal{dim}}};\R^{n_{\textnormal{dim}}\times m_{W}})$
(this strengthens Assumption~\ref{A:assumption_general_lipschitz} to twice continuous
differentiability with bounded derivatives, which is what is needed to differentiate
the solution flow of the SEE with respect to its initial condition). We write
$D\drift(x)\in\R^{n_{\textnormal{dim}}\times n_{\textnormal{dim}}}$ for the Jacobian of $\drift$,
$D\diffusion(x)[v]:=\sum_{l=1}^{n_{\textnormal{dim}}}\partial_{x_{l}}\diffusion(x)\,v_{l}\in\R^{n_{\textnormal{dim}}\times m_{W}}$
for the directional derivative of $\diffusion$ in direction $v\in\R^{n_{\textnormal{dim}}}$, and
$D^{2}\drift(x)[v,v']\in\R^{n_{\textnormal{dim}}}$, $D^{2}\diffusion(x)[v,v']\in\R^{n_{\textnormal{dim}}\times m_{W}}$
for the (symmetric, bilinear) second derivatives; all of these are bounded, uniformly in
$x$, by $\drift,\diffusion\in C^{2}_{b}$. By time-homogeneity (Section~\ref{sec:existence_general_coefficients}),
for $y\in V$ and $0\leq t\leq r\leq T$ we write $\mu^{t,y}_{r}$ for the
mild solution of \eqref{eqn:SEE_strong_formulation} started at $y$ at
time $t$, so that $\mu^{t,y}_{r}=\mu_{r-t}(y)$ in the notation of
Section~\ref{sec:existence_general_coefficients}.

\begin{remark}[The singular directions $\nudrift,\nudiffusion$, and why no
invariant subspace is needed]\label{rem:no_K1_needed}
Following \cite{gasteratos2025kolmogorov}, set
\[
\mathcal{K}:=\bigcap_{t>0}(S^{*}_{t})^{-1}(V)\subset V'.
\]
Here $\nudrift,\nudiffusion$ are the $n_{\textnormal{dim}}\times n_{\textnormal{dim}}$
matrices of measures of Assumption~\ref{A:A_3:Narrower_assumption_nu}, which we
regard as bounded linear operators $\R^{n_{\textnormal{dim}}}\to V'$ (a vector
$a\in\R^{n_{\textnormal{dim}}}$ maps to $\nu^{i}a=\big(\sum_{l}\nu^{i}_{jl}a_{l}\big)_{j=1}^{n_{\textnormal{dim}}}\in
V'=(\Wminusdual)^{n_{\textnormal{dim}}}$); equivalently, their columns
$(\nu^{i})_{\cdot l}\in V'$, $l=1,\dots,n_{\textnormal{dim}}$, are the individual
singular directions. By Assumption~\ref{A:A_3:Narrower_assumption_nu} and estimate
\eqref{eqn:kernel_estimate_semigroup}, these columns lie in $V'\setminus V$ in
general, but $S^{*}_{t}\nudrift,S^{*}_{t}\nudiffusion\in\mathcal{L}(\R^{n_{\textnormal{dim}}},V)$
for every $t>0$, with (in the operator norm, equivalently the finite-dimensional
Hilbert--Schmidt norm)
\begin{align}\label{eqn:singular_direction_bound}
\|S^{*}_{t}\nu^{i}\|_{\mathcal{L}(\R^{n_{\textnormal{dim}}},V)}\leq \frac{C(T)}{t^{1-a_{i}}},\qquad
i\in\{\drift,\diffusion\},\ a_{i}\in(0,1],
\end{align}
i.e.\ every column of $\nudrift,\nudiffusion$ lies in $\mathcal{K}$: they are exactly the
``singular directions'' of \cite{gasteratos2025kolmogorov} (whose matrix kernel $K$
plays the role of our matrices $\nudrift,\nudiffusion$), translated
into the language of the Laplace-measure lift. Unlike
\cite{gasteratos2025kolmogorov}, however, we do not need to restrict
the backward equation below to a proper invariant subspace
$\mathcal{K}_{1}\subsetneq V$. Their shift semigroup has unbounded
generator $\partial_{x}$, which forces such a restriction; our
semigroup acts by pointwise multiplication, $S^{*}_{t}h=e^{-tx}h(x)$.
For fixed $t>0$, the multiplier $m_{t}(x):=xe^{-tx}$ and its derivative
$m_{t}'(x)=e^{-tx}(1-tx)$ are both bounded on $\R_{+}$, with
$\sup_{x\geq0}m_{t}(x)=1/(et)$ and $\sup_{x\geq0}|m_{t}'(x)|=1$;
consequently multiplication by $m_{t}$ is a bounded operator on the
scalar test space $\Wplus=W^{1,2}_{\weightplus}$ (it preserves both the
$L^{2}_{\weightplus}$-norm and the norm of the derivative, since
$(m_{t}\varphi)'=m_{t}'\varphi+m_{t}\varphi'$), and hence, by duality
and componentwise on $V=\Wplusdualn$, so is $S^{*}_{t}(-x\,\cdot)$.
Therefore
$S^{*}_{t}(-x\,y)=-xe^{-tx}y(x)\in V$ for \emph{every} $y\in V$ and
$t>0$: the generator direction itself is automatically regularized. We
may therefore take $\mathcal{K}_{1}:=V$, and the backward equation
below holds classically at every point of the state space, not merely
on a dense subspace. (The constant $1/(et)$ blows up as $t\downarrow0$;
this is harmless for the backward equation, whose drift term pairs $DF$
against the singular directions $\nudrift,\nudiffusion$, controlled
by the integrable bound~\eqref{eqn:CK_growth_D_singular}, while the
generator direction $-xy$ enters only through the $D(A)$-density
argument of Definition~\ref{dfn:CKclass_SEE}, not through a time integral.)
\end{remark}

We further strengthen Assumption~\ref{A:A_3:Narrower_assumption_nu} as
follows.

\begin{assumption}[Quartic integrability of the diffusion measure]\label{A:assumption_quartic}
Writing $\|\cdot\|_{\mathcal{L}(\R^{n_{\textnormal{dim}}},V)}$ for the operator
(equivalently finite-dimensional Hilbert--Schmidt) norm of the matrix direction,
either $\diffusion$ is constant and
\begin{align}\label{eqn:assumption_B_quadratic}
\|S^{*}_{t}\nudiffusion\|_{\mathcal{L}(\R^{n_{\textnormal{dim}}},V)}\in L^{2}(0,T)
\end{align}
(i.e.\ Assumption~\ref{A:A_3:Narrower_assumption_nu} alone, $a_{\diffusion}>1/2$), or
\begin{align}\label{eqn:assumption_B}
\|S^{*}_{t}\nudiffusion\|_{\mathcal{L}(\R^{n_{\textnormal{dim}}},V)}\in L^{4}(0,T)
\end{align}
(equivalently $a_{\diffusion}>3/4$ in the notation of
\eqref{eqn:kernel_estimate_semigroup}). This is exactly the matrix-kernel condition
$K\in L^{q}([0,T];\R^{n_{\textnormal{dim}}\times n_{\textnormal{dim}}})$, $q>4$, of
\cite{gasteratos2025kolmogorov}, and the threshold $a_{\diffusion}>3/4$ (equivalently
$q>4$, equivalently $H>1/4$ for the power-law kernel) is dimension-independent.
\end{assumption}

\begin{remark}
If one works with the specific triple $V,H,V'$ constructed in
Section~\ref{sec:ito_formula} from a single parameter
$\eps<1-\theta_{\nu}$, $\theta_{\nu}=\max\{\theta_{\nudrift},\theta_{\nudiffusion}\}$,
then by Lemma~\ref{lem:nu_semigroup_properties} one may take
$a_{\diffusion}=1-\theta_{\nudiffusion}-\eps$ (choosing $\gamma$ at the
boundary case $\gamma=\theta_{\nudiffusion}+\eps$ of that Lemma, applied
to $\nudiffusion$ alone, independently of $\theta_{\nudrift}$). Hence
\eqref{eqn:assumption_B} holds as soon as $\theta_{\nudiffusion}+\eps<1/4$,
a concrete strengthening of the standing constraint $\eps<1-\theta_{\nu}$.
For the fractional kernel $k_{\diffusion}(t)=t^{\alpha-1}/\Gamma(\alpha)$
of Example~\ref{example:kernels} (equivalently Hurst parameter
$H=\alpha-1/2$ under the usual convention), the computation there shows
that $a_{\diffusion}$ can be pushed arbitrarily close to $\alpha=H+\tfrac12$
by choosing the weight exponent $\gamma$ suitably, for $\alpha\in(1/2,1)$,
where Assumption~\ref{A:A_2:assumption_1_test_function} is verified. The value
$\alpha=H+\tfrac12$ is, moreover, a genuine ceiling and not merely the best our
construction achieves: since Assumption~\ref{A:A_2:assumption_1_test_function}
places the constant function $1$ in $V^{*}$, the pairing
$|k_{\diffusion}(t)|=|\langle S^{*}_{t}\nudiffusion,1\rangle|\leq\|S^{*}_{t}\nudiffusion\|_{V}\,\|1\|_{V^{*}}$
forces $1-a_{\diffusion}\geq\tfrac12-H$, i.e.\ $a_{\diffusion}\leq H+\tfrac12$
(the singularity of the kernel itself, felt through the pairing against $1$,
can be no milder than the $V$-norm singularity of the smoothed measure).
Consequently \eqref{eqn:assumption_B} ($a_{\diffusion}>3/4$) is achievable
precisely when $\alpha>3/4$, i.e.\ $H>1/4$. The boundary case
$\alpha\leq1/2$ (very rough kernels) requires a case-by-case check of
Assumption~\ref{A:A_2:assumption_1_test_function}, exactly as already
flagged in Example~\ref{example:kernels}; we do not resolve the sharp
threshold here, only note, as \cite{gasteratos2025kolmogorov} remark
for their analogous condition $q>4$, i.e.\ $H>1/4$, that the
quartic exponent is structural (see Remark~\ref{rem:why_quartic}
below) and not an artefact of the proof.
\end{remark}

\subsubsection{The regularity class $\mathcal C^{1,2}_{T,\nu}$}

\begin{definition}\label{dfn:CKclass_SEE}
Let $T>0$. A function $F:[0,T]\times V\rightarrow\R$ belongs to
$\mathcal{C}^{1,2}_{T,\nu}$ if $F$ is continuous, once continuously
differentiable in $t$, and twice continuously Fréchet differentiable in
$y\in V$, with
\[
DF(t,y)\in V^{*}=(\Wplus)^{n_{\textnormal{dim}}},\qquad D^{2}F(t,y)\in\mathcal{L}(V,V^{*}),
\]
continuously in $(t,y)\in[0,T]\times V$, satisfying
\begin{subequations}\label{eqn:CK_growth}
\begin{align}
|DF(t,y)(h)|&\lesssim \|h\|_{V}(1+\|y\|_{V}),\qquad h\in V,\label{eqn:CK_growth_D_regular}\\
|DF(t,y)(\nu^{i}a)|&\lesssim \|S^{*}_{T-t}\nu^{i}a\|_{V}\,(1+\|y\|_{V})
\leq |a|\,\frac{C(T)}{(T-t)^{1-a_{i}}}(1+\|y\|_{V}),\qquad i\in\{\drift,\diffusion\},\label{eqn:CK_growth_D_singular}\\
|D^{2}F(t,y)(\nudiffusion a,\nudiffusion a')|&\lesssim \varpi(t)\,|a|\,|a'|\,(1+\|y\|_{V}^{2}),\label{eqn:CK_growth_D2_singular}
\end{align}
\end{subequations}
where the first two bounds hold uniformly in $t\in[0,T)$ and
$a,a'\in\R^{n_{\textnormal{dim}}}$, and in the third
\[
\varpi(t):=1+\|S^{*}_{(T-t)/2}\nudiffusion\|_{\mathcal{L}(\R^{n_{\textnormal{dim}}},V)}^{2}
\in L^{1}([0,T])
\]
is a fixed weight, integrable by Assumption~\ref{A:A_3:Narrower_assumption_nu}
($a_{\diffusion}>1/2$) and singular as $t\uparrow T$. A uniform,
$t$-independent second-derivative bound is \emph{not} available for the value
functionals of interest, and is \emph{not} needed: exactly as in
\cite{gasteratos2025kolmogorov}, where the analogous bound reads
$|D^{2}F(s,y)(K,K)|\lesssim\varpi(s)$ with $\varpi\in L^{1}$, it is this
\emph{time-integrable} second-derivative bound, together with the semigroup
regularisation, that renders the trace term of the backward equation finite
\emph{after integration in time}. (The mere \emph{existence} of $D^{2}F$ along the
singular directions, i.e.\ of the second tangent process, is what requires the
sharper Assumption~\ref{A:assumption_quartic}, $a_{\diffusion}>3/4$; see
Lemma~\ref{lem:tangent_wellposed}(iii).) The
\emph{singular} first-derivative bound~\eqref{eqn:CK_growth_D_singular}
is understood as the continuous (smoothed) extension
$DF(t,y)(\nu^{i}a)=\lim_{\tau\downarrow0}DF(t,y)(S^{*}_{\tau}\nu^{i}a)$;
its right-hand side is finite for $t<T$ (by \eqref{eqn:singular_direction_bound})
and, crucially, \emph{integrable} in $t$ over $[0,T]$ since $a_{i}>0$, which is
exactly what makes the drift term of the backward equation below (in
which $DF$ is paired against the singular direction $\nudrift\drift$, and
which appears inside a time integral) finite. The second bound~\eqref{eqn:CK_growth_D2_singular}
is understood as a continuous extension of the bounded bilinear form $D^{2}F(t,y)$ to pairs of columns of
the matrix direction $\nudiffusion$, i.e.\ to $(\nudiffusion a,\nudiffusion a')\in V'\times V'$,
even though these columns do not lie in $V$ in general. Equivalently, the trace
$\operatorname{Tr}[D^{2}F(t,y)\,\nudiffusion(\nudiffusion)^{*}]$ is well defined with
$|\operatorname{Tr}[D^{2}F(t,y)\,\nudiffusion(\nudiffusion)^{*}]|\lesssim \varpi(t)(1+\|y\|_{V}^{2})$,
$\varpi\in L^{1}([0,T])$.
The generator direction $-xy$ is handled separately: the backward equation is first
established classically for $y\in D(A)$ (where $-xy\in V$ and no extension is needed), then
extended to all of $y\in V$ by the mild formulation and the density of $D(A)$, using
that $S^{*}_{\tau}(-xy)\in V$ for every $\tau>0$ (Remark~\ref{rem:no_K1_needed}); it is not
governed by the growth bounds above.
\end{definition}

This is the direct translation of the class $\mathcal
C^{1,2}_{T,\mathcal K}$ of \cite{gasteratos2025kolmogorov} (whose
own growth condition likewise controls both the first derivative
$\mathcal DF(s,y)(h_{1})\lesssim|S(T-s)h_{1}|$ and the second derivative
$\mathcal D^{2}F(s,y)(K,K)$ on the singular directions): the singular
directions enter the backward equation both \emph{linearly}, through
$DF$ paired against $\nudrift\drift$ in the drift term
[bound~\eqref{eqn:CK_growth_D_singular}], and \emph{nonlinearly},
through $D^{2}F$ in the quadratic-variation (trace) term
[bound~\eqref{eqn:CK_growth_D2_singular}]; both pairings are ``off-space''
(against columns of $\nudrift,\nudiffusion$, which do not lie in $V$)
and both are controlled by the smoothed bounds above.

\subsubsection{Tangent processes}

Fix $(t,y)\in[0,T]\times V$ and write $X^{t,y}_{s}:=\langle
\mu^{t,y}_{s},1\rangle$.

\begin{definition}[First variation]\label{dfn:first_variation}
For $h\in V$, the \emph{tangent process} $\zeta_{h}=\zeta^{t,y}_{h}$ is
the mild solution, for $r\in[t,T]$, of the linear SEE
\begin{align}\label{eqn:first_variation}
\zeta_{h}(r) = S^{*}_{t,r}h
+\int_{t}^{r}S^{*}_{s,r}\,\nudrift\,D\drift(X^{t,y}_{s})\langle\zeta_{h}(s),1\rangle\,\ds
+\int_{t}^{r}S^{*}_{s,r}\,\nudiffusion\,\big(D\diffusion(X^{t,y}_{s})[\langle\zeta_{h}(s),1\rangle]\big)\,\dW_{s},
\end{align}
where $\langle\zeta_{h}(s),1\rangle\in\R^{n_{\textnormal{dim}}}$, $D\drift(X_{s})\langle\zeta_{h}(s),1\rangle\in\R^{n_{\textnormal{dim}}}$
so $\nudrift D\drift(X_{s})\langle\zeta_{h}(s),1\rangle\in V'$, and
$D\diffusion(X_{s})[\langle\zeta_{h}(s),1\rangle]\in\R^{n_{\textnormal{dim}}\times m_{W}}$
so $\nudiffusion(D\diffusion(X_{s})[\langle\zeta_{h}(s),1\rangle])\in\mathcal{L}(\R^{m_{W}},V')$
is integrated against the $m_{W}$-dimensional $W$.
For a matrix direction $h=\nu^{i}$ ($i\in\{\drift,\diffusion\}$), whose columns need not lie
in $V$, we interpret $\zeta_{\nu^{i}}$ columnwise: for each $l=1,\dots,n_{\textnormal{dim}}$,
$\zeta_{(\nu^{i})_{\cdot l}}(r)$, $r\in(t,T]$, is defined by the same mild
equation with $S^{*}_{t,r}(\nu^{i})_{\cdot l}\in V$ in place of $S^{*}_{t,r}h$
(well-defined for $r>t$ by Remark~\ref{rem:no_K1_needed}), even though
the formal ``initial value'' $(\nu^{i})_{\cdot l}$ itself does not belong to $V$.
\end{definition}

\begin{definition}[Second variation]\label{dfn:second_variation}
For $h,h'\in V$, or columns of $\nudrift,\nudiffusion$ interpreted as in
Definition~\ref{dfn:first_variation}, the second tangent process
$\zeta_{h,h'}=\zeta^{t,y}_{h,h'}$ is the mild solution, for $r\in(t,T]$,
of
\begin{align}\label{eqn:second_variation}
\zeta_{h,h'}(r)
&=\int_{t}^{r}S^{*}_{s,r}\,\nudrift\Big[D^{2}\drift(X_{s})[\langle\zeta_{h}(s),1\rangle,\langle\zeta_{h'}(s),1\rangle]
+D\drift(X_{s})\langle\zeta_{h,h'}(s),1\rangle\Big]\ds\\
&\phantom{=}+\int_{t}^{r}S^{*}_{s,r}\,\nudiffusion\Big[D^{2}\diffusion(X_{s})[\langle\zeta_{h}(s),1\rangle,\langle\zeta_{h'}(s),1\rangle]
+D\diffusion(X_{s})[\langle\zeta_{h,h'}(s),1\rangle]\Big]\dW_{s},\notag
\end{align}
writing $X_{s}=X^{t,y}_{s}$, where $D^{2}\drift(X_{s})[\cdot,\cdot]\in\R^{n_{\textnormal{dim}}}$ and
$D^{2}\diffusion(X_{s})[\cdot,\cdot]\in\R^{n_{\textnormal{dim}}\times m_{W}}$ are the
(symmetric, bilinear) second derivatives contracted against the pair of first-variation
vectors $\langle\zeta_{h}(s),1\rangle,\langle\zeta_{h'}(s),1\rangle\in\R^{n_{\textnormal{dim}}}$.
\end{definition}

\begin{lemma}[Well-posedness and moment bounds]\label{lem:tangent_wellposed}
Under Assumptions~\ref{A:assumption_general_linear_growth}--\ref{A:assumption_general_uniformly_continuous},
\ref{A:assumption_kernel_measure}, $\drift\in C^{2}_{b}(\R^{n_{\textnormal{dim}}};\R^{n_{\textnormal{dim}}})$,
$\diffusion\in C^{2}_{b}(\R^{n_{\textnormal{dim}}};\R^{n_{\textnormal{dim}}\times m_{W}})$:
\begin{enumerate}
\item[\textup{(i)}] For $h\in V$, \eqref{eqn:first_variation} has a
unique mild solution with $\sup_{r\in[t,T]}\E\|\zeta_{h}(r)\|_{V}^{2}\lesssim
\|h\|_{V}^{2}$.
\item[\textup{(ii)}] For each column $(\nudiffusion)_{\cdot l}$,
$l=1,\dots,n_{\textnormal{dim}}$, the tangent process $\zeta_{(\nudiffusion)_{\cdot l}}(r)$ is well defined for
$r\in(t,T]$ and, for every $p\in\{2,4\}$ compatible with
Assumption~\ref{A:assumption_quartic}, obeys the pointwise bound
\[
\E\|\zeta_{(\nudiffusion)_{\cdot l}}(r)\|_{V}^{p}\lesssim 1+\rho(r-t)^{p},
\qquad \rho(u):=\|S^{*}_{u}\nudiffusion\|_{\mathcal{L}(\R^{n_{\textnormal{dim}}},V)},
\]
which is singular as $r\downarrow t$ (reflecting the singular initial direction
$S^{*}_{t,r}(\nudiffusion)_{\cdot l}$) but $p$-integrable in time, since
$\rho\in L^{p}(0,T)$ (Assumption~\ref{A:A_3:Narrower_assumption_nu} for $p=2$,
Assumption~\ref{A:assumption_quartic} for $p=4$):
\[
\int_{t}^{T}\E\|\zeta_{(\nudiffusion)_{\cdot l}}(r)\|_{V}^{p}\,\dr<\infty.
\]
It is this time-integrated form, not a uniform-in-$r$ bound, that is used below.
\item[\textup{(iii)}] For $h,h'\in V$, or $h,h'$ columns of $\nudiffusion$ under
Assumption~\ref{A:assumption_quartic}, \eqref{eqn:second_variation} has
a unique mild solution with $\sup_{r\in(t,T]}\E\|\zeta_{h,h'}(r)\|_{V}^{2}<\infty$.
\end{enumerate}
\end{lemma}

\begin{proof}
(i) Equation~\eqref{eqn:first_variation} is \emph{linear} in
$\zeta_{h}$, with bounded random coefficients
$D\drift(X^{t,y}_{s}),D\diffusion(X^{t,y}_{s})$ (matrix-, resp.\ linear-map-valued,
bounded since $\drift,\diffusion\in C^{2}_{b}$). This is the same class of equations
solved in Theorem~\ref{thm:existence_uniqueness_lipschitz}, now with
linear (in fact bounded) coefficients in place of Lipschitz ones; the
same fixed-point/Gronwall argument used there applies verbatim (it is,
if anything, easier, since $D\drift,D\diffusion$ are bounded rather
than merely Lipschitz), giving existence, uniqueness, and, by the
a-priori estimate of Lemma~\ref{lem:a-priori_estimate_general} applied
to the linear equation,
\[
\E\sup_{r\in[t,T]}\|\zeta_{h}(r)\|_{V}^{2}
\lesssim \|S^{*}_{\cdot}h\|^{2}_{L^{\infty}(0,T;V)}
\Big(1+\int_{t}^{T}\|S^{*}_{s,\cdot}\nudrift\|_{\mathcal{L}(\R^{n_{\textnormal{dim}}},V)}\ds
+\int_{t}^{T}\|S^{*}_{s,\cdot}\nudiffusion\|_{\mathcal{L}(\R^{n_{\textnormal{dim}}},V)}^{2}\ds\Big)
\lesssim \|h\|_{V}^{2},
\]
using Assumptions~\ref{A:A_3:Narrower_assumption_nu} ($a_{\drift}>0$,
giving $L^{1}$) and (at least) $a_{\diffusion}>1/2$ (giving $L^{2}$),
boundedness of the Jacobians, and boundedness of $S^{*}_{t}h$ for $h\in V$.
(Here $|\langle\zeta_{h}(s),1\rangle|\leq C\|\zeta_{h}(s)\|_{V}$ since
$\langle\cdot,1\rangle$ is a bounded functional on $V$, so the Jacobian
coefficients contribute only bounded multiplicative constants.)

(ii) For $h=(\nudiffusion)_{\cdot l}$ the same computation applies with
$S^{*}_{t,r}(\nudiffusion)_{\cdot l}$ in place of $S^{*}_{t,r}h$, which is finite in
$V$ for every fixed $r>t$ by \eqref{eqn:singular_direction_bound}. We
must show the bound is finite \emph{uniformly} on $(t,T]$, or at least
integrable in the sense needed below. Writing $\rho(u):=\|S^{*}_{u}\nudiffusion\|_{\mathcal{L}(\R^{n_{\textnormal{dim}}},V)}$
(which dominates each column norm $\|S^{*}_{u}(\nudiffusion)_{\cdot l}\|_{V}$),
the leading (singular) contribution to $\zeta_{(\nudiffusion)_{\cdot l}}(r)$ near
$r=t$ is $S^{*}_{t,r}(\nudiffusion)_{\cdot l}$ itself, of size $\lesssim\rho(r-t)$; by
\eqref{eqn:singular_direction_bound}, for every fixed $r>t$,
$\rho(r-t)<\infty$, and raising to the $p$-th power and integrating
over $r\in(t,T]$ gives $\int_{t}^{T}\rho(r-t)^{p}\dr<\infty$ exactly
when $\rho\in L^{p}(0,T)$: for $p=2$ this is
Assumption~\ref{A:A_3:Narrower_assumption_nu}; for $p=4$ this is
precisely Assumption~\ref{A:assumption_quartic}.

(iii) is the substantive estimate, and the reason
Assumption~\ref{A:assumption_quartic} is needed rather than merely
Assumption~\ref{A:A_3:Narrower_assumption_nu}. By the mild formulation
\eqref{eqn:second_variation}, the Itô isometry gives, for
$h=h'=(\nudiffusion)_{\cdot l}$,
\begin{align}\label{eqn:second_var_estimate}
\E\|\zeta_{h,h}(r)\|_{V}^{2}
&\lesssim \Big(\int_{t}^{r}\|S^{*}_{s,r}\nudrift\|_{\mathcal{L}(\R^{n_{\textnormal{dim}}},V)}\,
\E\big|D^{2}\drift(X_{s})[\langle\zeta_{h}(s),1\rangle,\langle\zeta_{h}(s),1\rangle]\big|\,\ds\Big)^{2}\notag\\
&\phantom{\lesssim}+\int_{t}^{r}\|S^{*}_{s,r}\nudiffusion\|_{\mathcal{L}(\R^{n_{\textnormal{dim}}},V)}^{2}\,
\E\big|D^{2}\diffusion(X_{s})[\langle\zeta_{h}(s),1\rangle,\langle\zeta_{h}(s),1\rangle]\big|^{2}\,\ds
+(\text{terms linear in }\zeta_{h,h}).
\end{align}
The last (linear) terms close by the same Gronwall argument as in
(i)--(ii). The dangerous term is the second one on the right: by
boundedness of $D^{2}\diffusion$ and $|\langle\zeta_{h}(s),1\rangle|\leq C\|\zeta_{h}(s)\|_{V}$,
it is controlled by
\[
\int_{t}^{r}\|S^{*}_{s,r}\nudiffusion\|_{\mathcal{L}(\R^{n_{\textnormal{dim}}},V)}^{2}\,\E\|\zeta_{h}(s)\|_{V}^{4}\,\ds
\lesssim \int_{t}^{r}\rho(r-s)^{2}\,\rho(s-t)^{4}\,\ds,
\]
using part (ii). Substituting $u=s-t$, this is a convolution
$\int_{0}^{r-t}\rho(r-t-u)^{2}\rho(u)^{4}\,\du$. Near $u=0$ the
integrand behaves like $u^{4(a_{\diffusion}-1)}$, which is integrable
iff $4(a_{\diffusion}-1)>-1$, i.e.\ $a_{\diffusion}>3/4$, 
Assumption~\ref{A:assumption_quartic}; near $u=r-t$ the integrand
behaves like $(r-t-u)^{2(a_{\diffusion}-1)}$, integrable already under
$a_{\diffusion}>1/2$. Hence $a_{\diffusion}>3/4$ is exactly the
condition that makes the convolution finite, and the claim follows.
\end{proof}

\begin{remark}[Why the fourth moment, structurally]\label{rem:why_quartic}
The mechanism is transparent from the proof: the diffusion term of the
\emph{second}-variation equation \eqref{eqn:second_variation} is built
from the \emph{quadratic form}
$D^{2}\diffusion(X_{s})[\langle\zeta_{h}(s),1\rangle,\langle\zeta_{h}(s),1\rangle]$,
of order $|\langle\zeta_{h}(s),1\rangle|^{2}$ in the first
variation, and the Itô isometry squares the integrand once more, so
closing the estimate requires \emph{fourth} moments of $\zeta_{h}$.
When $h$ is a column of $\nudiffusion$, itself singular at the initial time, its
fourth moment inherits the fourth power of the singular bound
\eqref{eqn:singular_direction_bound}, forcing $\rho\in L^{4}$ rather
than merely $L^{2}$. This is the same mechanism, transplanted into the
Laplace-measure lift, that forces $K\in L^{q}(0,T)$, $q>4$, in
\cite{gasteratos2025kolmogorov}'s singular Itô formula: their matrix-valued singular
direction $K$ plays the role of our matrix $\nudiffusion$, and their
second-order term is likewise built from a doubled appearance of the
singular direction. The dimension count is identical in the
vector-valued case: replacing scalar squares by norms of
$\R^{n_{\textnormal{dim}}}$-valued first variations and scalar second derivatives by the
bounded bilinear maps $D^{2}\drift,D^{2}\diffusion$ leaves the singular exponent
$a_{\diffusion}>3/4$ untouched.
\end{remark}

\subsubsection{A mollified Itô formula for $\mathcal C^{1,2}_{T,\nu}$ functionals}

\begin{lemma}[Itô formula on $\mathcal C^{1,2}_{T,\nu}$]\label{lem:singular_ito}
Let $F\in\mathcal{C}^{1,2}_{T,\nu}$ and let Assumptions of
Lemma~\ref{lem:tangent_wellposed} hold. Then, for $(t,y)\in[0,T]\times V$
and $r\in[t,T]$,
\begin{align}\label{eqn:singular_ito_formula}
F(r,\mu^{t,y}_{r})
&=F(t,y)+\int_{t}^{r}\Big[\partial_{s}F(s,\mu^{t,y}_{s})
+DF(s,\mu^{t,y}_{s})\big(-x\mu^{t,y}_{s}+\nudrift\drift(X_{s})\big)\Big]\ds\notag\\
&\phantom{=}+\tfrac12\int_{t}^{r}\operatorname{Tr}\!\Big[D^{2}F(s,\mu^{t,y}_{s})\,\nudiffusion\diffusion(X_{s})\big(\nudiffusion\diffusion(X_{s})\big)^{*}\Big]\,\ds\notag\\
&\phantom{=}+\int_{t}^{r}DF(s,\mu^{t,y}_{s})\big(\nudiffusion\diffusion(X_{s})\,\dW_{s}\big),
\qquad\Prob\text{-a.s.,}
\end{align}
where $\nudiffusion\diffusion(X_{s})\in\mathcal{L}(\R^{m_{W}},V')$, the trace term is
$\operatorname{Tr}[D^{2}F\,\nudiffusion\diffusion(\nudiffusion\diffusion)^{*}]=\sum_{k=1}^{m_{W}}D^{2}F\big([\nudiffusion\diffusion]_{\cdot k},[\nudiffusion\diffusion]_{\cdot k}\big)$
the sum over the $m_{W}$ noise directions (columns of $\nudiffusion\diffusion(X_{s})$),
and the stochastic integral is
$\sum_{k=1}^{m_{W}}\int_{t}^{r}DF(s,\mu^{t,y}_{s})\big([\nudiffusion\diffusion(X_{s})]_{\cdot k}\big)\,\dW^{k}_{s}$.
In particular $\{F(r,\mu^{t,y}_{r})\}_{r\in[t,T]}$ is a semimartingale.
\end{lemma}

\begin{proof}
\textbf{Step 1 (mollification).} For $\delta>0$ set
$\nu^{i}_{\delta}:=S^{*}_{\delta}\nu^{i}\in\mathcal{L}(\R^{n_{\textnormal{dim}}},V)$, $i\in\{\drift,\diffusion\}$
(finite in operator norm by \eqref{eqn:singular_direction_bound}), and let
$\mu^{t,y,\delta}$ solve the SEE with $\nu^{i}$ replaced by
$\nu^{i}_{\delta}$. This is now a genuine $V$-valued semilinear stochastic
evolution equation, with nonlinear coefficients
$y\mapsto\nu^{\drift}_{\delta}\drift(\langle y,1\rangle)\in V$ and
$y\mapsto\nu^{\diffusion}_{\delta}\diffusion(\langle y,1\rangle)\in\mathcal{L}(\R^{m_{W}},V)=HS(U,V)$
(the bounded linear map $\langle\cdot,1\rangle\colon V\to\R^{n_{\textnormal{dim}}}$ composed with
$\drift\in C^{2}_{b}(\R^{n_{\textnormal{dim}}};\R^{n_{\textnormal{dim}}})$, resp.\ $\diffusion\in
C^{2}_{b}(\R^{n_{\textnormal{dim}}};\R^{n_{\textnormal{dim}}\times m_{W}})$, then multiplied by the
fixed matrix $\nu^{i}_{\delta}$; these are $C^{2}_{b}$ maps $V\to V$, resp.\ $V\to HS(U,V)$),
and generator $-x\cdot$ of the analytic semigroup $S^{*}$. This is
exactly the classical setting of mild solutions of semilinear SPDEs
with an analytic semigroup, finite-dimensional noise, and bounded, twice differentiable
coefficients (see \cite[Thm.\ 9.25]{da2014stochastic}, already invoked
in this paper for related purposes). By the classical theory, for
$\varphi\in C^{2}_{b}(V)$ with Hölder-continuous $D^{2}\varphi$,
\[
v_{\delta}(t,y):=\E[\varphi(\mu^{t,y,\delta}_{T})]\in C^{1,2}_{b}([0,T]\times V)
\]
and, more generally, for $F\in C^{1,2}_{b}([0,T]\times V)$ the classical
mild Itô formula gives
\begin{align}\label{eqn:mollified_ito}
F(r,\mu^{t,y,\delta}_{r})
&=F(t,y)+\int_{t}^{r}\Big[\partial_{s}F(s,\mu^{t,y,\delta}_{s})
+DF(s,\mu^{t,y,\delta}_{s})\big(-x\mu^{t,y,\delta}_{s}+\nu^{\drift}_{\delta}\drift(X^{\delta}_{s})\big)\Big]\ds\notag\\
&\phantom{=}+\tfrac12\int_{t}^{r}\operatorname{Tr}\!\Big[D^{2}F(s,\mu^{t,y,\delta}_{s})\,\nu^{\diffusion}_{\delta}\diffusion(X^{\delta}_{s})\big(\nu^{\diffusion}_{\delta}\diffusion(X^{\delta}_{s})\big)^{*}\Big]\,\ds\notag\\
&\phantom{=}+\int_{t}^{r}DF(s,\mu^{t,y,\delta}_{s})\big(\nu^{\diffusion}_{\delta}\diffusion(X^{\delta}_{s})\,\dW_{s}\big),
\end{align}
where $X^{\delta}_{s}:=\langle\mu^{t,y,\delta}_{s},1\rangle$ and the trace is the sum over the
$m_{W}$ noise directions as in \eqref{eqn:singular_ito_formula}. This is a
routine, if lengthy, verification and we omit it: it is exactly
\cite[Thm.\ 1]{da_19_mild_ito} applied with $U=\R^{m_{W}}$ and $V'=V$ (i.e.\ no
distributional widening is needed once the coefficients already take
values in $V$, resp.\ $HS(U,V)$), just as in the proof of Corollary~\ref{corr:Ito_volterra_SEE}.

\textbf{Step 2 (convergence of the flow).} Since
$S^{*}_{u}\nu^{i}_{\delta}=S^{*}_{u+\delta}\nu^{i}\to S^{*}_{u}\nu^{i}$ in
$\mathcal{L}(\R^{n_{\textnormal{dim}}},V)$ as $\delta\to0$ for every fixed $u>0$ (strong continuity of $S^{*}$
on $V$ away from the singularity at $u=0$, applied columnwise), with the domination
$\|S^{*}_{u+\delta}\nu^{i}\|_{\mathcal{L}(\R^{n_{\textnormal{dim}}},V)}\leq C(T)/u^{1-a_{i}}$ uniform in
$\delta\in(0,1)$ (by \eqref{eqn:singular_direction_bound}, since
$u+\delta\geq u$), dominated convergence gives
$\int_{t}^{T}\|S^{*}_{s,r}\nu^{i}_{\delta}-S^{*}_{s,r}\nu^{i}\|_{\mathcal{L}(\R^{n_{\textnormal{dim}}},V)}\,\ds\to0$
(resp.\ in the $L^{2}(t,T)$-sense for $i=\diffusion$). A standard
Gronwall estimate on the mild difference
$\mu^{t,y,\delta}_{r}-\mu^{t,y}_{r}$, identical in structure to the
Lipschitz estimate proving Theorem~\ref{thm:existence_uniqueness_lipschitz},
now applied to the difference of two mild solutions with different
(but jointly bounded, by Assumption~\ref{A:A_3:Narrower_assumption_nu}
applied uniformly in $\delta$) kernel measures, then gives
\[
\sup_{r\in[t,T]}\E\|\mu^{t,y,\delta}_{r}-\mu^{t,y}_{r}\|_{V}^{2}\longrightarrow0
\quad\text{as }\delta\to0.
\]
By the identical argument applied to the linear equations
\eqref{eqn:first_variation}, \eqref{eqn:second_variation} (with
$\nu^{i}_{\delta}$ in place of $\nu^{i}$, and using
Lemma~\ref{lem:tangent_wellposed} for uniform-in-$\delta$ moment
bounds, which hold uniformly since $\nu^{i}_{\delta}$ satisfies
\eqref{eqn:singular_direction_bound} and Assumption~\ref{A:assumption_quartic}
with the same constants as $\nu^{i}$), the corresponding tangent
processes converge: $\zeta^{\delta}_{h}\to\zeta_{h}$ and
$\zeta^{\delta}_{h,h'}\to\zeta_{h,h'}$ in
$L^{2}(\Omega;C([t,T];V))$-type norms, for $h,h'\in V$ or columns of
$\nudrift,\nudiffusion$.

\textbf{Step 3 (passing to the limit in the derivatives).} Since
$\drift\in C^{2}_{b}(\R^{n_{\textnormal{dim}}};\R^{n_{\textnormal{dim}}})$,
$\diffusion\in C^{2}_{b}(\R^{n_{\textnormal{dim}}};\R^{n_{\textnormal{dim}}\times m_{W}})$ and the mollified coefficients are
genuinely $C^{2}_{b}(V;V)$ (resp.\ $C^{2}_{b}(V;HS(U,V))$), the classical differentiability-of-the-flow
theorem (again \cite[Thm.\ 9.25]{da2014stochastic}, or the standard
finite-dimensional argument transplanted verbatim since the nonlinear
part of the coefficients factors through the finite-dimensional projection $\langle\cdot,1\rangle\in\R^{n_{\textnormal{dim}}}$)
identifies the Fréchet derivatives of the flow with the tangent
processes:
\[
D_{y}\mu^{t,y,\delta}_{r}(h)=\zeta^{\delta}_{h}(r),\qquad
D^{2}_{y}\mu^{t,y,\delta}_{r}(h,h')=\zeta^{\delta}_{h,h'}(r),
\]
and hence, by the chain rule, for $F\in C^{1,2}_{b}([0,T]\times V)$,
\begin{align}
DF(s,\mu^{t,y,\delta}_{s})\text{ applied along }h &\;\longleftrightarrow\; DF(s,\mu^{t,y,\delta}_{s})(\zeta^{\delta}_{h}(s)),\notag\\
D^{2}F(s,\mu^{t,y,\delta}_{s})(h,h') &= D^{2}F(s,\mu^{t,y,\delta}_{s})(\zeta^{\delta}_{h}(s),\zeta^{\delta}_{h'}(s))
+DF(s,\mu^{t,y,\delta}_{s})(\zeta^{\delta}_{h,h'}(s)).\notag
\end{align}
Evaluating \eqref{eqn:mollified_ito} for $F=v_{\delta}$ and taking, in the
trace/quadratic term, the columns $h=h'=[\nu^{\diffusion}_{\delta}\diffusion(X^{\delta}_{s})]_{\cdot k}$
summed over $k=1,\dots,m_{W}$, Step~2's
convergences (together with the uniform moment bounds of
Lemma~\ref{lem:tangent_wellposed}, which control the relevant
expectations uniformly in $\delta$ and justify passing to the limit
inside the expectation by uniform integrability) let us pass to the
limit $\delta\to0$ term by term in \eqref{eqn:mollified_ito}, using
$\nu^{i}_{\delta}\to\nu^{i}$ in the sense of Step~2. The limiting
identity is exactly \eqref{eqn:singular_ito_formula}, and the limiting
derivatives $DF(s,\mu_{s})$ and $\operatorname{Tr}[D^{2}F(s,\mu_{s})\,\nudiffusion(\nudiffusion)^{*}]$
satisfy the growth bounds \eqref{eqn:CK_growth} by Fatou's lemma applied
to the (uniform-in-$\delta$) bounds of Lemma~\ref{lem:tangent_wellposed}.
This proves the lemma for $F=v_{\delta}\to v$; the same argument applies
verbatim to any fixed $F\in\mathcal{C}^{1,2}_{T,\nu}$, since
Definition~\ref{dfn:CKclass_SEE} was set up precisely so that $DF,D^{2}F$
already satisfy the growth bounds needed to repeat Steps~2--3 for a
general (not necessarily probabilistic) $F$.
\end{proof}

\subsubsection{Main theorem}

\begin{theorem}[Backward Kolmogorov equation for the SEE]\label{thm:backward_SEE}
Let Assumptions~\ref{A:assumption_general_linear_growth}--\ref{A:assumption_general_uniformly_continuous},
\ref{A:assumption_kernel_measure}, \ref{A:uniqueness_in_law},
\ref{A:homogeneous_coefficients}, \ref{A:assumption_quartic} hold, with
$\drift\in C^{2}_{b}(\R^{n_{\textnormal{dim}}};\R^{n_{\textnormal{dim}}})$,
$\diffusion\in C^{2}_{b}(\R^{n_{\textnormal{dim}}};\R^{n_{\textnormal{dim}}\times m_{W}})$.
Let $\varphi\in C^{2}_{b}(V)$ with
$\gamma_{0}$-Hölder continuous $D^{2}\varphi$ for some $\gamma_{0}>0$.
Then:
\begin{enumerate}
\item[\textup{(1)}] \textup{\textbf{(Existence)}} $v(t,y):=\E[\varphi(\mu^{t,y}_{T})]$,
$(t,y)\in[0,T]\times V$, belongs to $\mathcal{C}^{1,2}_{T,\nu}$ and
solves
\begin{align}\label{eqn:backward_SEE}
\partial_{t}v(t,y)+Dv(t,y)\big(-xy+\nudrift\drift(\langle y,1\rangle)\big)
+\tfrac12 \operatorname{Tr}\!\Big[D^{2}v(t,y)\,\nudiffusion\diffusion(\langle y,1\rangle)\big(\nudiffusion\diffusion(\langle y,1\rangle)\big)^{*}\Big]=0
\end{align}
for all $(t,y)\in[0,T)\times V$, with $v(T,\cdot)=\varphi$; here the trace is the sum over
the $m_{W}$ noise directions, $\operatorname{Tr}[D^{2}v\,\nudiffusion\diffusion(\nudiffusion\diffusion)^{*}]=\sum_{k=1}^{m_{W}}D^{2}v([\nudiffusion\diffusion]_{\cdot k},[\nudiffusion\diffusion]_{\cdot k})$.
\item[\textup{(2)}] \textup{\textbf{(Uniqueness)}} Any $w\in\mathcal{C}^{1,2}_{T,\nu}$
solving \eqref{eqn:backward_SEE} with $w(T,\cdot)=\varphi$ coincides
with $v$.
\item[\textup{(3)}] \textup{\textbf{(Conditional expectations and martingale representation)}}
For $0\leq t\leq T$,
\begin{align}
\E[\varphi(\mu_{T})\mid\mathcal{F}_{t}]&=v(t,\mu_{t})\quad\Prob\text{-a.s.,}\label{eqn:cond_exp_SEE}\\
v(t,\mu_{t})&=v(0,\mu_{0})+\int_{0}^{t}Dv(r,\mu_{r})\big(\nudiffusion\diffusion(\langle\mu_{r},1\rangle)\,\dW_{r}\big),\label{eqn:martingale_rep_SEE}
\end{align}
the last integral being $\sum_{k=1}^{m_{W}}\int_{0}^{t}Dv(r,\mu_{r})([\nudiffusion\diffusion(\langle\mu_{r},1\rangle)]_{\cdot k})\,\dW^{k}_{r}$.
\end{enumerate}
\end{theorem}

\begin{proof}
\textbf{(1)} By Step~1--3 of the proof of Lemma~\ref{lem:singular_ito}
(with $F=v_{\delta}$), $v_{\delta}\in C^{1,2}_{b}([0,T]\times V)$ solves
the mollified equation
\[
\partial_{t}v_{\delta}(t,y)+Dv_{\delta}(t,y)\big(-xy+\nu^{\drift}_{\delta}\drift(\langle y,1\rangle)\big)
+\tfrac12 \operatorname{Tr}\!\Big[D^{2}v_{\delta}(t,y)\,\nu^{\diffusion}_{\delta}\diffusion(\langle y,1\rangle)\big(\nu^{\diffusion}_{\delta}\diffusion(\langle y,1\rangle)\big)^{*}\Big]=0,
\]
$v_{\delta}(T,\cdot)=\varphi$, by the classical backward Kolmogorov
equation for genuinely $V$-valued, $C^{2}_{b}$-coefficient semilinear
SPDEs (\cite[Thm.\ 9.25]{da2014stochastic}). Passing $\delta\to0$
exactly as in Step~2--3 of Lemma~\ref{lem:singular_ito}'s proof (using
$\mu^{t,y,\delta}_{T}\to\mu^{t,y}_{T}$ in $L^{2}(\Omega;V)$ and
continuity of $\varphi,D\varphi,D^{2}\varphi$ to get
$v_{\delta}(t,y)\to v(t,y)$, and the tangent-process convergence for
$Dv_{\delta}\to Dv$, $D^{2}v_{\delta}\to D^{2}v$) yields $v\in\mathcal
C^{1,2}_{T,\nu}$ solving \eqref{eqn:backward_SEE}, with terminal
condition inherited in the limit.

\textbf{(2)} Let $w\in\mathcal{C}^{1,2}_{T,\nu}$ solve
\eqref{eqn:backward_SEE} with $w(T,\cdot)=\varphi$. Apply
Lemma~\ref{lem:singular_ito} with $F=w$ on $[t,T]$: the drift terms of
\eqref{eqn:singular_ito_formula} vanish identically by
\eqref{eqn:backward_SEE}, leaving
\[
w(T,\mu^{t,y}_{T})=w(t,y)+\int_{t}^{T}Dw(s,\mu^{t,y}_{s})\big(\nudiffusion\diffusion(X_{s})\,\dW_{s}\big).
\]
By the growth bound \eqref{eqn:CK_growth} and
Lemma~\ref{lem:tangent_wellposed}, the stochastic integral is a true
$L^{2}$-martingale, so taking expectations, $w(T,\mu^{t,y}_{T})=\varphi(\mu^{t,y}_{T})$
gives $w(t,y)=\E[\varphi(\mu^{t,y}_{T})]=v(t,y)$.

\textbf{(3)} Apply Lemma~\ref{lem:singular_ito} with $F=v$, $t=0$,
$r=t$: since $v$ solves \eqref{eqn:backward_SEE}, the finite-variation
terms cancel, leaving \eqref{eqn:martingale_rep_SEE}. In particular
$v(t,\mu_{t})$ is an $(\mathcal{F}_{t})$-martingale on $[0,T]$ with
$v(T,\mu_{T})=\varphi(\mu_{T})$, and by the Markov property of $\mu$
(Section~\ref{sec:existence_general_coefficients}),
$\E[\varphi(\mu_{T})\mid\mathcal{F}_{t}]=\E[\varphi(\mu_{T})\mid\mu_{t}]=v(t,\mu_{t})$,
which is \eqref{eqn:cond_exp_SEE}.
\end{proof}

\subsubsection{Corollary for the SVE}

\begin{corollary}[Feynman--Kac for the SVE]\label{cor:SVE_feynman_kac}
Let $g\in C^{2}_{b}(\R^{n_{\textnormal{dim}}})$ with Hölder-continuous $D^{2}g$, and set
$\varphi:=g\circ\mathrm{ev}_{1}$, where $\mathrm{ev}_{1}(y):=\langle y,1\rangle\in\R^{n_{\textnormal{dim}}}$. Since
$1\in\Wplus$ (Assumption~\ref{A:A_2:assumption_1_test_function}), the componentwise pairing
$\langle\cdot,1\rangle$ is exactly the duality pairing $V\times\Wplus^{n_{\textnormal{dim}}}\to\R^{n_{\textnormal{dim}}}$,
so $\mathrm{ev}_{1}$ is a bounded linear map $V\to\R^{n_{\textnormal{dim}}}$, and $\varphi\in
C^{2}_{b}(V)$ automatically, with
\[
D\varphi(y)(h)=\big\langle\nabla g(\langle y,1\rangle),\langle h,1\rangle\big\rangle_{\R^{n_{\textnormal{dim}}}},\qquad
D^{2}\varphi(y)(h,h')=\langle h,1\rangle^{\top}D^{2}g(\langle y,1\rangle)\langle h',1\rangle,
\]
$D^{2}\varphi$ inheriting the Hölder continuity of $D^{2}g$ directly (no
further regularity theory is required for the terminal condition,
unlike in \cite{gasteratos2025kolmogorov}, precisely because
$\mathrm{ev}_{1}$ is already linear). Under the hypotheses of
Theorem~\ref{thm:backward_SEE}, for $X$ the $\R^{n_{\textnormal{dim}}}$-valued solution of the SVE
\eqref{eqn:SVE_introduction} with $X_{0}=\langle\mu_{0},1\rangle$,
\[
\E[g(X_{T})\mid\mathcal{F}_{t}]=v(t,\mu_{t}),\qquad t\in[0,T],
\]
where $v$ is the unique $\mathcal{C}^{1,2}_{T,\nu}$ solution of
\eqref{eqn:backward_SEE} with terminal condition $g\circ\mathrm{ev}_{1}$, and
\[
\E[g(X_{T})\mid\mathcal{F}_{t}]=\E[g(X_{T})\mid\mathcal{F}_{0}]
+\int_{0}^{t}Dv(r,\mu_{r})\big(\nudiffusion\diffusion(X_{r})\,\dW_{r}\big).
\]
\end{corollary}

\subsubsection{When does a real-valued (or finite-dimensional) reduction survive?}

It is natural to ask whether the infinite-dimensional backward equation
of Theorem~\ref{thm:backward_SEE} admits a finite-dimensional reduction:
whether the value function can be written as $u=u(t,\Gamma_{st})$, a
function of the single vector
$\Gamma_{st}(X)=\langle S^{*}_{s,t}\mu_{s},1\rangle\in\R^{n_{\textnormal{dim}}}$. In
general it cannot, such a reduction is \emph{not} a special case of
Theorem~\ref{thm:backward_SEE} obtained by imposing affine coefficients,
and fails once there is genuine state feedback. Indeed,
for $t\leq r\leq T$, $X_{r}$ depends on the lifted initial condition
$\mu_{t}$ through the entire deterministic curve $r\mapsto
\Gamma_{tr}(X)=\langle S^{*}_{t,r}\mu_{t},1\rangle$, not merely through
its terminal value $\Gamma_{tT}(X)$, as soon as $\drift$ or $\diffusion$
depends on $x$; this matches the known fact (see e.g.\ the affine
Volterra literature, \cite{abi2019affine}) that even affine Volterra
processes require tracking the whole forward curve, not a single
finite-dimensional vector. Three regimes give a legitimate reduction:
\begin{enumerate}
\item \textbf{No state feedback} ($\drift,\diffusion$ independent of
$x$, i.e.\ a Volterra-Gaussian process with $\drift(x)\equiv B_{0}\in\R^{n_{\textnormal{dim}}}$,
$\diffusion(x)\equiv\diffusion_{0}\in\R^{n_{\textnormal{dim}}\times m_{W}}$ constant): then
$X_{T}-\E[X_{T}\mid\mathcal{F}_{\tau}]$ has deterministic covariance matrix
$\int_{\tau}^{T}k_{\diffusion}(T-r)\diffusion_{0}\diffusion_{0}^{\top}k_{\diffusion}(T-r)^{\top}\dr\in\R^{n_{\textnormal{dim}}\times n_{\textnormal{dim}}}$, independent of
$\mu_{\tau}$, so $u(\tau,y):=\E[g(X_{T})\mid\Gamma_{\tau T}(X)=y]$ is a
genuine function of the single vector $y=\Gamma_{\tau T}(X)\in\R^{n_{\textnormal{dim}}}$, and
solves a finite-dimensional backward heat-type equation on $\R^{n_{\textnormal{dim}}}$
(a real-valued equation when $n_{\textnormal{dim}}=1$).
\item \textbf{Finite-rank kernel} ($\nu^{i}=\sum_{j}c_{ij}\delta_{y_{ij}}$,
e.g.\ Example~\ref{example:kernels}'s $K_{\exp}$): $\mu_{t}$ is
genuinely finite-dimensional and one obtains an ordinary multi-factor
Kolmogorov PDE by classical (finite-dimensional) theory. For the
rough Heston model, the same finite-rank truncation used to approximate
$\mu_t$ throughout this paper converges at a quantified, super-polynomial
rate in the $L^{1}$-kernel-error sense of \cite{bayer2023weakmarkovian};
see the discussion in Section~\ref{subsec:option_pricing}.
\item \textbf{General nonlinear coefficients with an infinite-rank
kernel} (rough Heston, fractional kernel, any model with genuine
memory): no real-valued or finite-dimensional reduction is available;
Theorem~\ref{thm:backward_SEE} is needed in full.
\end{enumerate}

\subsection{European option pricing in rough volatility models}%
\label{subsec:option_pricing}

We illustrate how the Itô-Volterra formula leads to a systematic option
pricing framework for \emph{rough volatility models}~\cite{gatheral2022volatility}.
The key advantage over existing approaches is that our formula applies
to \emph{arbitrary} smooth payoffs (not only exponential/characteristic
functions), derives a pricing equation without relying on affineness,
and works naturally for models with two distinct kernels.

\medskip
\noindent\textbf{The rough Heston model.}
Let $S_{t}$ be an asset price and $V_{t}$ a variance process satisfying
the coupled system
\begin{align}\label{eqn:rH_model}
    \frac{\,\textnormal{d}S_{t}}{S_{t}} &= \sqrt{V_{t}}\,\dW^{S}_{t},\\
    V_{t} &= V_{0} + \int_{0}^{t}k(t-s)\,\kappa(\theta-V_{s})\,\ds
             + \int_{0}^{t}k(t-s)\,\xi\sqrt{V_{s}}\,\dW^{V}_{s},
    \label{eqn:rH_variance_SVE}
\end{align}
where $W^{S},W^{V}$ are correlated Brownian motions with
$d\langle W^{S},W^{V}\rangle_{t}=\rho\,\dt$, $|\rho|\leq 1$, and
\begin{align}\label{eqn:rH_kernel}
    k(t) = \frac{t^{H-1/2}}{\Gamma(H+1/2)},\quad H\in(0,\tfrac12),
\end{align}
is the fractional kernel.  The model reduces to the classical Heston
model when $H=\tfrac12$ (i.e.\ $k\equiv1$ and $V$ is a classical CIR
process).  For $H<\tfrac12$, $k$ is singular at the origin,
$k\in L^{2}(0,T)$ with $\|k\|_{L^2(0,T)}^2=T^{2H}/\big(2H\,\Gamma(H+\tfrac12)^{2}\big)$, and all
assumptions of the existence theory (Sections 3--4) are satisfied with
$\theta_{\nu}\leq 1/2-H<1/2$ by the fractional-kernel example in
Section~\ref{section:Functional_framework}.

Denote by $\mu_{t}$ the Markovian lift of $V_{t}$, i.e.\ the mild
solution of the SEE associated to \eqref{eqn:rH_variance_SVE} with
semigroup $A^{*}\mu=-x\mu$.  Write $V_{t}=\langle\mu_{t},1\rangle$
and $\Gamma_{st}^{V}=\langle e^{-x(t-s)}\mu_{s},1\rangle$ for the
path-segment functional of $V$.

\medskip
\noindent\textbf{European option price.}
Consider a European option with payoff $g(S_{T})\in L^{2}(\Omega)$.
Write $\log S_{t}=:L_{t}$; by Itô's formula for $\log S$,
\begin{align}\label{eqn:log_price}
    L_{t} = L_{0} -\tfrac12\int_{0}^{t}V_{s}\,\ds
             + \int_{0}^{t}\sqrt{V_{s}}\,\dW^{S}_{s}.
\end{align}
Because $V$ is neither Markov nor a semimartingale on its own, the
correct Markov state for pricing is not $(t,S_{t},V_{t})$ but the pair
$(S_{t},\mu_{t})$ (equivalently $(L_{t},\mu_{t})$): the enlarged process
lives on $\R_{+}\times V$ (with $V=\Wplusdual$ the scalar lift space,
$n_{\textnormal{dim}}=1$ here), the second coordinate carrying the whole
forward-variance curve. The price is therefore a functional of this
Markov state,
\begin{align}\label{eqn:option_price_def}
    \Pi(t,S_{t},\mu_{t})
    := \E\bigl[g(S_{T})\,\big|\,\mathcal{F}_{t}\bigr],
\end{align}
and its pricing equation is an \emph{infinite-dimensional} backward
Kolmogorov equation, an instance of Theorem~\ref{thm:backward_SEE},
extended to the enlarged state, with $\mu$-derivatives taken in the
Fréchet sense of Section~\ref{subsec:feynman_kac}. Writing $\nu$ for the
common Laplace measure of the single kernel $k$ (so
$\nudrift=\nudiffusion=\nu$), and $D_{\mu}\Pi$, $D^{2}_{\mu}\Pi$ for the
first and second Fréchet derivatives in the lift variable, we have the
following.

\begin{proposition}[Pricing equation for rough volatility models]%
\label{prop:rough_vol_pricing}
Suppose $g\in C^{2}_{b}(\R_{+})$, and that the coefficients $V\mapsto\kappa(\theta-V)$
and $V\mapsto\xi\sqrt{V}$ satisfy the hypotheses of
Theorem~\ref{thm:backward_SEE} \textup{(}$C^{2}_{b}$ and the quartic
integrability Assumption~\ref{A:assumption_quartic} on $\nu$; see the
caveat below on the degenerate square-root case\textup{)}. Let
$\Pi(t,S,\mu)\in C^{1,2}(\R_{+};C^{2}_{b})\times\mathcal C^{1,2}_{T,\nu}$
be the value functional \eqref{eqn:option_price_def}. Then $\Pi$ solves
\begin{align}\label{eqn:rough_vol_pde}
    \partial_{t}\Pi
    &+ \tfrac12\,\langle\mu,1\rangle\,S^{2}\,\partial_{SS}\Pi
     + D_{\mu}\Pi\big(-x\mu+\nu\,\kappa(\theta-\langle\mu,1\rangle)\big)\notag\\
    &+ \tfrac12\,\xi^{2}\langle\mu,1\rangle\,D^{2}_{\mu}\Pi(\nu,\nu)
     + \rho\,\xi\,\langle\mu,1\rangle\,S\,\big(\partial_{S}D_{\mu}\Pi\big)(\nu)
    = 0,
\end{align}
on $[0,T)\times\R_{+}\times V$, with terminal condition
$\Pi(T,S,\mu)=g(S)$, where $V_{t}=\langle\mu_{t},1\rangle$. Every term is
finite for each $t<T$: the singular measure $\nu$ enters only through the mixed
derivative $(\partial_{S}D_{\mu}\Pi)(\nu)$, a \emph{first} $\mu$-derivative on the
singular direction $\nu$, controlled by the singular first-derivative
bound~\eqref{eqn:CK_growth_D_singular} (applied to the regular
$S$-derivative $\partial_{S}\Pi$), and through the bilinear form
$D^{2}_{\mu}\Pi(\nu,\nu)$, controlled by the time-integrable
bound~\eqref{eqn:CK_growth_D2_singular} (finite for $t<T$, with the
integrable weight $\varpi$ singular as $t\uparrow T$); both are well defined by the class
$\mathcal C^{1,2}_{T,\nu}$ (Definition~\ref{dfn:CKclass_SEE}), exactly as
the drift and diffusion terms of Theorem~\ref{thm:backward_SEE}.
\end{proposition}

\begin{proof}
The pair $(S_{t},\mu_{t})$ is Markov (Section~\ref{sec:existence_general_coefficients};
$L_{t}=\log S_{t}$ is driven by $\sqrt{V_{t}}=\sqrt{\langle\mu_{t},1\rangle}$,
a functional of the current lift). Apply the singular Itô formula
(Lemma~\ref{lem:singular_ito}, extended to the enlarged state
$\R_{+}\times V$ by adjoining the \emph{regular}, finite-dimensional
coordinate $S$, for which the classical Itô formula applies with no
singular direction) to $\Pi(t,S_{t},\mu_{t})$. In $S$-coordinates $S$ is
a driftless martingale, $\d S_{t}=S_{t}\sqrt{V_{t}}\,\dW^{S}_{t}$, so it
contributes only the second-order term $\tfrac12 V_{t}S_{t}^{2}\partial_{SS}\Pi$
(there is \emph{no} first-order $\partial_{S}$ term: the $-\tfrac12 V\partial_{L}$
drift of $L=\log S$ is exactly cancelled by the conversion of
$\tfrac12 V\partial_{LL}$ to $S$-coordinates). The lift $\mu$ contributes,
by Lemma~\ref{lem:singular_ito}, the drift term
$D_{\mu}\Pi(-x\mu+\nu\kappa(\theta-V))$ and the trace/quadratic term
$\tfrac12\|\nu\,\xi\sqrt{V}\|^{2}D^{2}_{\mu}\Pi(\hat\nu,\hat\nu)
=\tfrac12\xi^{2}V\,D^{2}_{\mu}\Pi(\nu,\nu)$ (pulling out the scalar
$\xi\sqrt{V}$). The cross term comes from the joint quadratic covariation
of the two martingale parts: since $\d\langle W^{S},W^{V}\rangle_{t}=\rho\,\dt$,
\[
\d\big\langle S,\ \nu\,\xi\sqrt{V}\,W^{V}\big\rangle_{t}
=S_{t}\sqrt{V_{t}}\cdot\xi\sqrt{V_{t}}\,\nu\,\rho\,\dt
=\rho\,\xi\,V_{t}\,S_{t}\,\nu\,\dt,
\]
which pairs against the mixed derivative $\partial_{S}D_{\mu}\Pi$ to give
$\rho\,\xi\,V_{t}\,S_{t}\,(\partial_{S}D_{\mu}\Pi)(\nu)$. Crucially, the
singular direction enters here as the \emph{measure} $\nu$ inside the
bounded functional $(\partial_{S}D_{\mu}\Pi)(\nu)$, \emph{not} as a
pointwise kernel value: there is no $k(0^{+})$, and no evaluation of the
kernel at coincidence is required. Since $\Pi$ is a conditional
expectation it is a martingale, so its finite-variation part vanishes,
which is exactly \eqref{eqn:rough_vol_pde}.
\end{proof}

\begin{remark}[Why no finite-dimensional pricing PDE exists, and
the square-root caveat]\label{rem:pricing_caveat}
It is tempting to seek a \emph{finite-dimensional} pricing equation for
$\Pi$ as a function of $(t,S,v)$ with $v=V_{t}$ a single real variance
level, with a cross term of the form $\rho\xi\,k(t-s)\,\partial_{sv}\Pi|_{s=t}$.
No such equation exists, for two independent reasons that are two faces
of the same fact. First, for the fractional kernel
$k(t-s)=t^{H-1/2}/\Gamma(H+1/2)$ with $H<1/2$, the coefficient
$k(t-s)|_{s=t}=k(0^{+})=+\infty$: such a term is divergent, and no
``Sobolev sense'' rescues a \emph{scalar} coefficient multiplying a
finite-dimensional derivative. Second, and more basically, rough Heston
has genuine state feedback (state-dependent drift $\kappa(\theta-V)$ and
diffusion $\xi\sqrt{V}$), so by the analysis of
Section~\ref{subsec:feynman_kac} (``When does a finite-dimensional
reduction survive?'') the value function is \emph{not} a function of
$(t,S,V_{t})$ at all, it depends on the full forward-variance curve,
i.e.\ on $\mu_{t}$. The divergent $k(0^{+})$ is precisely the signature
of forcing an infinite-dimensional object into a finite-dimensional
equation: written correctly at the level of $\mu$ as in
\eqref{eqn:rough_vol_pde}, the same physical cross-covariation is the
quantity $\rho\xi\langle\mu,1\rangle S(\partial_{S}D_{\mu}\Pi)(\nu)$, finite for
each $t<T$, in which $\nu$ appears inside a well-defined linear functional
(a first $\mu$-derivative paired against the singular direction) rather than as a
pointwise kernel value.

A finite-dimensional pricing PDE \emph{does} survive in the two reduced
regimes of Section~\ref{subsec:feynman_kac}: for a finite-rank
(sum-of-exponentials) kernel, where $\mu_{t}$ is genuinely
finite-dimensional and \eqref{eqn:rough_vol_pde} becomes an ordinary
multi-factor Kolmogorov PDE; and, for the affine rough Heston case, via
the characteristic-function/Riccati route recovered in
Section~\ref{subsec:recover_ALP}.

Finally, a genuine caveat on rigour: the rough Heston diffusion
$\diffusion(V)=\xi\sqrt{V^{+}}$ is neither $C^{2}_{b}$ nor uniformly
elliptic (it degenerates at $V=0$), so Theorem~\ref{thm:backward_SEE}
does not apply to it verbatim. To be precise about what is and is not
proved, fix $\eps>0$ and set $\diffusion_{\eps}(V):=\xi\sqrt{V^{+}+\eps}$,
which is $C^{2}_{b}$ on compacts and non-degenerate; for the regularized
model, Theorem~\ref{thm:backward_SEE} applies and
\eqref{eqn:rough_vol_pde} holds \emph{rigorously} with $\Pi$ replaced by
the regularized value functional $\Pi_{\eps}$. The lifted variance
processes converge, $\mu^{\eps}\to\mu$ in $L^{2}(\Omega;C([0,T];\Wplusdual))$
as $\eps\downarrow0$, by the stability estimate of
Lemma~\ref{lem:a-priori_estimate_general} applied to the difference of
two mild solutions with coefficients $\diffusion_{\eps},\diffusion$
(their difference $\to0$ locally uniformly), so $\Pi_{\eps}\to\Pi$
pointwise. The one ingredient not supplied here is a bound on the
\emph{derivatives} $D_{\mu}\Pi_{\eps},D^{2}_{\mu}\Pi_{\eps}$ that is
\emph{uniform} in $\eps$, i.e.\ membership of $\Pi_{\eps}$ in
$\mathcal C^{1,2}_{T,\nu}$ with $\eps$-independent constants, which is
what would let one pass to the limit \emph{inside}
\eqref{eqn:rough_vol_pde} and conclude that $\Pi$ itself solves it. Such
a uniform estimate is exactly the delicate point for degenerate,
CIR/square-root dynamics and is not established here; we therefore state
\eqref{eqn:rough_vol_pde} as rigorous for the regularized (or any
non-degenerate) model and as the formal $\eps\downarrow0$ limit for the
genuine square-root model. We emphasize this because the \emph{structure}
of the equation, and in particular the resolution of the $k(0^{+})$
pathology, is what the lift delivers; the square-root degeneracy is an
orthogonal, and by now standard, technical difficulty.
\end{remark}

\begin{remark}[Comparison with existing approaches and what is new]\leavevmode
\begin{enumerate}
\item \textbf{Characteristic-function methods.}
The celebrated result of El Euch and Rosenbaum~\cite{el2019characteristic}
gives $\Pi$ for exponential payoffs $g(S_{T})=e^{uL_{T}}$ via the
Laplace transform, using affineness to reduce to a Riccati--Volterra
equation (the $f\equiv0$ case of Section~\ref{subsec:recover_ALP}).
Proposition~\ref{prop:rough_vol_pricing} derives the pricing equation
for \emph{arbitrary} smooth payoffs without affineness, at the cost of
requiring regularity of $\Pi$ in $\mu_{t}$ (which can be justified via
Malliavin calculus or Sobolev regularity of the semigroup $S^{*}$).
We note that the characteristic-function route, despite its elegance,
is not free of practical pitfalls: recent numerical work
\cite{boyarchenko2025fastII} shows that naive fixed-parameter
Fourier-inversion implementations of exactly the calibration in
\cite{el2019characteristic} exhibit \emph{ghost calibration}, 
numerical error in the pricer masking model-specification error,
producing an apparently good fit to market data that vanishes once a
more accurate inversion scheme is used. This is an argument, beyond
elegance, for having an independent, PDE-based route to option prices
such as the one developed here.

\item \textbf{The role of the semigroup functional $\Gamma^{V}$.}
The semigroup functional $\Gamma_{st}^{V}=\langle e^{-x(t-s)}\mu_{s},1\rangle$
(defined for $s\leq t$) that appears throughout our Itô formula as the
backward path segment of $V$ is, read forward in time, precisely the
\emph{forward variance} of the rough-volatility literature,
see~\cite{abi2019lifting}: for a current time $t$ and a maturity
$s\geq t$,
\begin{align}\label{eqn:forward_variance}
    \mathcal{V}(t,s) := \E[V_{s}\mid\mathcal{F}_{t}]
    = \langle e^{-x(s-t)}\mu_{t},1\rangle
    + \int_{t}^{s}k(s-r)\,\E[\kappa(\theta-V_{r})\mid\mathcal{F}_{t}]\,\dr
    = \Gamma_{ts}^{V} + \text{(drift correction)},
\end{align}
where the second equality uses Lemma~\ref{lem:ALP_cond_mean}.  Thus the
semigroup part of the forward variance is exactly $\Gamma_{ts}^{V}$ (the
current lift $\mu_{t}$ propagated deterministically to the maturity $s$),
free of the drift correction, and it is the same functional
$\langle S^{*}_{\cdot}\mu_{\cdot},1\rangle$ in terms of which the
Itô-Volterra formula is naturally written (rather than $V_{t}$ alone).
This reflects the genuine path-dependence of rough volatility models:
the ``right'' state variable for pricing is the full forward-variance
curve, and the lift encodes exactly this object.

\item \textbf{Path-dependent payoffs.}
Since $\Pi(t,S_{t},\mu_{t})$ is already a functional of the full lift
$\mu_{t}$ (not merely $V_{t}$), and \eqref{eqn:rough_vol_pde} is already
the infinite-dimensional backward equation of
Section~\ref{subsec:feynman_kac}, Proposition~\ref{prop:rough_vol_pricing}
extends without change to \emph{path-dependent} payoffs $g(S_{[0,T]})$,
such as Asian options or variance swaps: one simply enlarges the state
to carry the running path functional (e.g.\ the running average for an
Asian option), and the Fréchet $\mu$-derivatives $D_{\mu}\Pi$,
$D^{2}_{\mu}\Pi$ already present in \eqref{eqn:rough_vol_pde} are exactly
the required functional derivatives. The Markovian lift makes this a
genuinely \emph{Markovian} (if infinite-dimensional) equation
automatically: no separate functional Itô calculus is needed.

\item \textbf{Practical computation.}\label{rem:option_pricing_practical}
For general non-affine payoffs, \eqref{eqn:rough_vol_pde} must be
solved numerically.  The Markovian approximation schemes
of~\cite{bayer2023markovian} replace $\mu_{t}$ by a finite-dimensional
proxy; our framework justifies this as an approximation of
$\Pi(t,S_{t},\mu_{t})$ by $\Pi(t,S_{t},\mu_{t}^{N})$ in the weighted
Sobolev topology, and the compact-embedding results of
Section~\ref{section:Functional_framework} (specifically
Proposition~\ref{prop:embeddings}) guarantee convergence as
$N\to\infty$, but on their own give no rate.

For the rough Heston model specifically, a genuine rate is available.
Writing $K^{N}$ for the rank-$N$ approximation of the kernel $K$ and
$e_{N}:=\int_{0}^{T}|K(t)-K^{N}(t)|\,\dt$ for its $L^{1}$-error,
Bayer and Breneis~\cite{bayer2023weakmarkovian} show that the
\emph{weak} error of the resulting Markovian approximation
$(S^{N},V^{N})$ of $(S,V)$ is controlled by $e_{N}$, not by the
$L^{2}$-error, which is the natural quantity for the strong error and
converges far more slowly for small Hurst parameters. Concretely, for
$h$ smooth and compactly supported,
$|\E h(S_{T})-\E h(S_{T}^{N})|\le Ce_{N}$, and for $h$ Lipschitz,
$|\E h(S_{T})-\E h(S_{T}^{N})|\lesssim\log(e_{N}^{-1})\,e_{N}^{(q-1)/(12q)}$
for any $q\in(1,2]$ with $\E S_{T}^{q}<\infty$. Moreover, quadrature
rules achieving $e_{N}\lesssim\exp(-c\sqrt{(H+1/2)N})$ for an explicit
$c>0$ are constructed there, valid for \emph{every} Hurst parameter
$H>-1/2$, including the hyper-rough regime $H\le0$. Their proof goes
through the characteristic function and the fractional Riccati
equation $\psi(t,z)$ of Section~\ref{subsec:recover_ALP}, 
i.e.\ it uses the affine structure of rough Heston specifically, 
so it is a statement about the same $N$-factor approximation
of $\mu_{t}$ appearing above, but not (yet) a statement about weak
convergence of the fully general, non-affine value function $v$ of
Theorem~\ref{thm:backward_SEE}; whether an analogous $L^{1}$-type
weak-error bound holds there is, to our knowledge, open.
\end{enumerate}
\end{remark}

\subsection{Recovering the Riccati--Volterra equation of Abi Jaber--Larsson--Pulido}\label{subsec:recover_ALP}

We now give a rigorous derivation, entirely within the framework of
the Markovian lift, of the exponential-affine transform formula for
affine Volterra processes established by Abi Jaber, Larsson and
Pulido~\cite{abi2019affine} (henceforth ALP).
The strategy follows their proof of \cite[Theorem~4.3]{abi2019affine}
in spirit, but replaces their deterministic resolvent calculus (their
Lemmas 2.4, 2.5 and 4.4) with two lift-level arguments: the
Chapman--Kolmogorov semigroup property of $\mu$ for the
conditional-mean formula, and the mild It\^{o} formula on the SEE
for the martingale verification. One might expect the transform to
collapse to a function of the scalar $X_{t}$ alone; this is false in
general (it would contradict ALP's own Theorem~4.3 and the remark
following it), as we explain below. The payoff of the lift-based
approach, stated correctly, is a proof that is simultaneously more transparent (every
step is stochastic calculus on a genuine Markov process, nothing is
hidden inside a resolvent inversion) and more general: the conditional-mean
Lemma~\ref{lem:ALP_cond_mean} below works for arbitrary continuous
$\drift$, not only affine, and the resulting transform formula is
recognized as the affine special case of the general value function
of Theorem~\ref{thm:backward_SEE}.

\subsubsection{Setup and standing assumptions}\label{sssec:ALP_setup}

We work in dimension $d=1$; the extension to $d\geq 1$ is notationally
heavier but follows the same pattern.  We allow two possibly distinct
kernels $k_{\drift}$ (for the drift) and $k_{\diffusion}$ (for the
diffusion), consistent with the standing setup \eqref{eqn:SVE_introduction}.

Let $X$ solve \eqref{eqn:SVE_introduction} (with $d=1$) with affine coefficients
\begin{align}\label{eqn:ALP_affine_coefficients}
    \drift(x)=b^{0}+Bx,\qquad \diffusion(x)^{2}=A^{0}+A^{1}x,
\end{align}
for constants $b^{0},B,A^{0},A^{1}\in\R$ with $A^{0}+A^{1}x\geq 0$ on
the state space.  Write $\nu_{\drift},\nu_{\diffusion}$ for the
Laplace measures of $k_{\drift},k_{\diffusion}$
(\eqref{eqn:kernel_representation_measure}), so that
\begin{align}\label{eqn:Laplace_pairing}
    k_{\drift}(\theta)=\langle e^{-x\theta},\nu_{\drift}\rangle
    :=\int_{0}^{\infty}e^{-x\theta}\nu_{\drift}(\dx),\qquad
    k_{\diffusion}(\theta)=\langle e^{-x\theta},\nu_{\diffusion}\rangle,
    \qquad\theta\geq 0.
\end{align}
Let $\mu$ denote the corresponding Markovian lift, i.e.\ the mild
solution of \eqref{eqn:SEE_eqn_strong_form_introduction} with semigroup
generated by $A^{*}\mu=-x\mu$:
\begin{align}\label{eqn:mu_mild}
    \mu_{t}=e^{-xt}\mu_{0}
    +\int_{0}^{t}e^{-x(t-s)}\nu_{\drift}\drift(X_{s})\ds
    +\int_{0}^{t}e^{-x(t-s)}\nu_{\diffusion}\diffusion(X_{s})\dW_{s},\qquad
    X_{t}=\langle\mu_{t},1\rangle,
\end{align}
with $\mu_{0}=X_{0}\delta_{0}$ (Theorem~\ref{thm:equivalence_SVE_SEE}).
Throughout this subsection we assume $X$ has finite moments of all
orders up to time $T$, which follows from the linear growth of the
affine coefficients \eqref{eqn:ALP_affine_coefficients} via ALP's
Lemma~3.1.  Fix $T<\infty$, $u\in\mathbb{C}$, and $f\in L^{1}([0,T],\mathbb{C})$.

All conditional expectations below are taken with respect to the
\emph{ambient} filtration $\mathcal F_{t}$ generated by $(\mu_{0},W)$
(equivalently, the filtration under which $\mu$ is Markov and $W$ is
a Brownian motion), \emph{not} the filtration $\Filx_{t}=\sigma(X_{s}:s\le t)$
generated by $X$ alone. This distinction matters: reconstructing
$\mu_{t}$ from \eqref{eqn:mu_mild} requires $W$, not merely the path
of $X$, so $\mu_{t}$ is generally \emph{not} $\Filx_{t}$-measurable,
and $\E[\cdot\mid\Filx_{t}]$ need not agree with $\E[\cdot\mid\mathcal F_{t}]$.
This is exactly the distinction already made precise in
Section~\ref{subsec:feynman_kac}; ALP's own $\mathcal F_{t}$ (their
Section~2) is likewise the ambient filtration, so this is also the
filtration for which the identification with their results is exact.

\begin{remark}[Complex-valued Itô calculus]\label{rmk:ALP_complexification}
Since $u\in\mathbb C$, the processes $\varphi,\psi,Y$ introduced below
are $\mathbb C$-valued, whereas the mild Itô formula of
Section~\ref{sec:ito_formula} is stated for real Hilbert spaces and
real-valued $\Phi$. We use it throughout via the standard
complexification: writing $Y=Y^{\mathbf r}+\mathrm iY^{\mathbf i}$ and
correspondingly $\varphi=\varphi^{\mathbf r}+\mathrm i\varphi^{\mathbf i}$,
$\psi=\psi^{\mathbf r}+\mathrm i\psi^{\mathbf i}$ with
$\psi^{\mathbf r}(\theta,\cdot),\psi^{\mathbf i}(\theta,\cdot)\in\Wplus$
(the real Hilbert space), the real mild Itô formula applies separately
to $Y^{\mathbf r}_{t}=\varphi^{\mathbf r}(T-t)+\langle\mu_{t},\psi^{\mathbf r}(T-t,\cdot)\rangle$
and $Y^{\mathbf i}_{t}=\varphi^{\mathbf i}(T-t)+\langle\mu_{t},\psi^{\mathbf i}(T-t,\cdot)\rangle$
by bilinearity of the pairing $\langle\cdot,\cdot\rangle$, and the two
real Itô formulas recombine into the single complex identity
\eqref{eqn:dY} below by linearity. We do not repeat this decomposition
at every step, but it is implicit throughout; it is exactly the
device ALP avoid needing by building their own existence theory
directly in $L^{2}([0,T],\mathbb C^{d})$ (their Appendix~C).
\end{remark}

\subsubsection{Step 1: Conditional mean via the lift
  (replacing ALP's Lemma~4.2)}\label{sssec:ALP_step1}

The first ingredient is a formula for $\E[X_{T}\mid\mathcal F_{t}]$
that follows directly from the Markov property of the lift $\mu$,
without any resolvent calculus.

\begin{lemma}[Conditional mean via the lift]\label{lem:ALP_cond_mean}
Let $0\leq t\leq T$, and let $\drift$ be any continuous function
satisfying the linear growth condition.  Set
$m(t,r):=\E[X_{r}\mid\mathcal F_{t}]$ for $r\in[t,T]$.  Then
\begin{align}\label{eqn:cond_mean_lift_integral_eqn}
    m(t,r) = \langle e^{-x(r-t)}\mu_{t},1\rangle
    + \int_{t}^{r}k_{\drift}(r-s)\,\E[\drift(X_{s})\mid\mathcal F_{t}]\,\ds,
    \qquad r\in[t,T],
\end{align}
where $\langle e^{-x(r-t)}\mu_{t},1\rangle
:=\int_{0}^{\infty}e^{-(r-t)x}\mu_{t}(\dx)$.
If additionally $\drift(x)=b^{0}+Bx$ is affine, then
$\E[\drift(X_{s})\mid\mathcal F_{t}]=b^{0}+Bm(t,s)$
and \eqref{eqn:cond_mean_lift_integral_eqn} becomes the closed
linear Volterra integral equation
\begin{align}\label{eqn:cond_mean_linear_volterra}
    m(t,r) = \langle e^{-x(r-t)}\mu_{t},1\rangle
    + b^{0}\int_{t}^{r}k_{\drift}(r-s)\,\ds
    + B\int_{t}^{r}k_{\drift}(r-s)\,m(t,s)\,\ds,\quad r\in[t,T].
\end{align}
\end{lemma}

\begin{proof}
Apply the Chapman--Kolmogorov property of $\mu$: split the mild
formula \eqref{eqn:mu_mild} at time $t$ to write, for $r\geq t$,
\begin{align}\label{eqn:mu_split}
    \mu_{r} = e^{-x(r-t)}\mu_{t}
    + \int_{t}^{r}e^{-x(r-s)}\nu_{\drift}\drift(X_{s})\,\ds
    + \int_{t}^{r}e^{-x(r-s)}\nu_{\diffusion}\diffusion(X_{s})\,\dW_{s}.
\end{align}
Pair both sides with the constant test function $1$:
\begin{align}\label{eqn:X_r_split}
    X_{r} = \langle e^{-x(r-t)}\mu_{t},1\rangle
    + \int_{t}^{r}\langle e^{-x(r-s)}\nu_{\drift},1\rangle\drift(X_{s})\,\ds
    + \int_{t}^{r}\langle e^{-x(r-s)}\nu_{\diffusion},1\rangle
      \diffusion(X_{s})\,\dW_{s}.
\end{align}
By \eqref{eqn:Laplace_pairing},
$\langle e^{-x(r-s)}\nu_{\drift},1\rangle=k_{\drift}(r-s)$
and likewise $\langle e^{-x(r-s)}\nu_{\diffusion},1\rangle
=k_{\diffusion}(r-s)$.  Take $\mathcal F_{t}$-conditional expectation.
The first term is already $\mathcal F_{t}$-measurable.
For the stochastic integral, boundedness of
$\sup_{s\le T}\E[\diffusion(X_{s})^{2}]$ (linear growth of $\diffusion$
and the moment bound on $X$) together with $k_{\diffusion}\in L^{2}(0,T)$
(a standing requirement for \eqref{eqn:SVE_introduction} to be
well-defined at all) gives, by the Itô isometry,
$\E\int_{t}^{r}k_{\diffusion}(r-s)^{2}\diffusion(X_{s})^{2}\,\ds<\infty$;
the stochastic integral is therefore a true (not merely local)
$L^{2}$-martingale on $[t,T]$, hence has zero $\mathcal F_{t}$-conditional
expectation. This gives \eqref{eqn:cond_mean_lift_integral_eqn}.
The affine case \eqref{eqn:cond_mean_linear_volterra} follows by
linearity of conditional expectation:
$\E[\drift(X_{s})\mid\mathcal F_{t}]=b^{0}+Bm(t,s)$.
\end{proof}

\begin{remark}[Identification with ALP's resolvent formula]
\label{rmk:ALP_cond_mean_resolvent}
Setting $h=r-t$, equation \eqref{eqn:cond_mean_linear_volterra}
is a standard linear Volterra integral equation in $h$ with known
$\mathcal F_{t}$-measurable inhomogeneity
$\mathcal{M}_{t}(h):=\langle e^{-xh}\mu_{t},1\rangle$
and convolution kernel $Bk_{\drift}$.  Its unique solution is given
by the resolvent $R_{B}$ of $-Bk_{\drift}$, recovering ALP's
\cite[Eq.~(4.2)]{abi2019affine} (written there via $R_{B}$ and
$E_{B}=k_{\drift}-R_{B}*k_{\drift}$); like ALP's own formula, this
expresses $m(t,\cdot)$ through the whole state $\mu_{t}$ (equivalently,
through the driving Brownian path up to $t$), not through $X_{t}$
alone, a point we return to in Step~4.
\end{remark}

\subsubsection{Step 2: Affine ansatz and the ODE for $\psi$}
\label{sssec:ALP_step2}

Since $\mu$ is a genuine Markov process,
$\E[e^{uX_{T}+(f*X)_{T}}\mid\mathcal F_{t}]$ depends on $\mathcal F_{t}$
only through $\mu_{t}$.  We look for it in the form
\begin{align}\label{eqn:ALP_affine_ansatz}
    Y_{t}:=\varphi(T-t)+\langle\mu_{t},\psi(T-t,\cdot)\rangle,
\end{align}
where $\varphi:[0,T]\to\mathbb{C}$ is a scalar and
$\psi(\theta,\cdot)\in\Wplus$ (complexified as in
Remark~\ref{rmk:ALP_complexification}) is a time-dependent test
function. The terminal condition $Y_{T}=uX_{T}$ (case $f\equiv0$) at
$\theta=0$ requires
\begin{align}\label{eqn:ALP_terminal_psi}
    \varphi(0)=0,\qquad
    \psi(0,x)=u\quad\text{for all }x\geq 0.
\end{align}

Before applying any Itô formula we verify the regularity needed for
it, since $\psi(T-t,\cdot)$ is not a fixed test function but moves
with $t$.

\begin{lemma}[Regularity of $\theta\mapsto\psi(\theta,\cdot)$]\label{lem:ALP_psi_regularity}
Suppose $\hat\psi_{\drift},\hat\psi_{\diffusion}\in C([0,T],\mathbb C)$
(continuity of a solution of
\eqref{eqn:ALP_hat_psi_drift}--\eqref{eqn:ALP_hat_psi_diffusion} below
is part of the existence statement, ALP's Theorem~B.1) and define
$\psi(\theta,\cdot)$ by \eqref{eqn:ALP_psi_solution}. Then
$\theta\mapsto\psi(\theta,\cdot)$ is continuously differentiable on
$(0,T]$ as a map into $\Wplus$, with
$\partial_{\theta}\psi(\theta,\cdot)$ given by the right-hand side of
\eqref{eqn:ALP_psi_ODE}, itself continuous $(0,T]\to\Wplus$.
\end{lemma}

\begin{proof}
Write $\psi(\theta,\cdot)=uS_{\theta}1+(g*S_{\cdot}1)(\theta)$ with
$g(r):=B\hat\psi_{\drift}(r)+\tfrac12A^{1}\hat\psi_{\diffusion}(r)^{2}\in
C([0,T],\mathbb C)$ and $S_{\theta}\varphi:=e^{-x\theta}\varphi(x)$ the
multiplication semigroup on $\Wplus$ (the primal counterpart of
$S^{*}_{\theta}$; the identical computation to
Lemma~\ref{lem:poly_growth_semigroup}, applied on $\Wplus$ instead of
$\Wplusdual$, shows $\theta\mapsto S_{\theta}\varphi$ is continuous
$[0,\infty)\to\Wplus$ for every $\varphi\in\Wplus$, with polynomial
growth of the operator norm). For the first term: since
$1\in\Wplus$ (Assumption~\ref{A:A_2:assumption_1_test_function}), a
direct computation as in Lemma~\ref{lem:poly_growth_semigroup} shows
$\theta\mapsto e^{-x\theta}$ is real-analytic $(0,\infty)\to\Wplus$
(both $e^{-x\theta}$ and its formal $\theta$-derivative $-xe^{-x\theta}$
have finite $\Wplus$-norm for every $\theta>0$, since polynomial
weights are dominated by any positive exponential rate on
$\R_{+}$), with derivative $-xS_{\theta}1$. For the convolution term,
$(g*S_{\cdot}1)(\theta)=\int_{0}^{\theta}g(\theta-r)S_{\theta-r}1\,\dr$
is continuously differentiable on $(0,T]$ by the same
argument applied under the integral sign (justified by continuity of
$g$ and local uniform continuity of $r\mapsto S_{r}1$ on
compact subsets of $(0,\infty)$), with derivative
$g(0)S_{0}1+\int_{0}^{\theta}g(\theta-r)(-xS_{\theta-r}1)\,\dr$. Summing
the two contributions gives exactly
$-x\psi(\theta,\cdot)+g(\theta)$, i.e.\ the right-hand side of
\eqref{eqn:ALP_psi_ODE}, which is continuous on $(0,T]$ into $\Wplus$
by the same estimates.
\end{proof}

\begin{remark}
The lemma is stated on $(0,T]$, i.e.\ for $t\in[0,T)$, which is all
that is needed: the terminal identity $Y_{T}=uX_{T}$ is verified
directly from $\psi(0,x)=u$ (a pointwise, not differentiability,
statement) rather than by evaluating a derivative at $\theta=0$.
\end{remark}

By Lemma~\ref{lem:ALP_psi_regularity}, $\Phi(t,\mu):=\varphi(T-t)+\langle\mu,\psi(T-t,\cdot)\rangle$
is, on $[0,T)$, an instance of $\Phi\in C^{1,2}([0,T)\times V,\mathbb{C})$ in
the sense of Section~\ref{sec:ito_formula} (complexified as in
Remark~\ref{rmk:ALP_complexification}): it is linear, hence trivially
twice Fréchet differentiable in $\mu$ with
$\partial_{x}\Phi(t,\mu)=\psi(T-t,\cdot)$, $\partial_{x}^{2}\Phi\equiv0$,
and continuously differentiable in $t$ by
Lemma~\ref{lem:ALP_psi_regularity}. This is the general mild Itô
theorem of Section~\ref{sec:ito_formula} (the underlying result
behind Corollary~\ref{corr:Ito_volterra_SEE}, but applied here
directly, since $\Phi$ is not of the fixed-test-function form
$f(t,\langle\mu,g\rangle)$ that corollary is specialized to), which
gives
\begin{align}\label{eqn:dY}
    \,\textnormal{d}Y_{t}
    ={}&\Bigl[
      -\varphi'(T-t)
      +\langle\mu_{t},{-\partial_{\theta}\psi(T-t,\cdot)
        -x\psi(T-t,\cdot)}\rangle
      +\hat\psi_{\drift}(T-t)\drift(X_{t})
    \Bigr]\dt\notag\\
    &+\tfrac12\hat\psi_{\diffusion}(T-t)^{2}\diffusion(X_{t})^{2}\dt
     +\hat\psi_{\diffusion}(T-t)\diffusion(X_{t})\,\dW_{t},
\end{align}
where we define the \emph{reduced} scalar functions
\begin{align}\label{eqn:ALP_hat_psi}
    \hat\psi_{\drift}(\theta)
    :=\langle\nu_{\drift},\psi(\theta,\cdot)\rangle,\qquad
    \hat\psi_{\diffusion}(\theta)
    :=\langle\nu_{\diffusion},\psi(\theta,\cdot)\rangle.
\end{align}
In \eqref{eqn:dY}: (i) the generator $A^{*}\mu=-x\mu$ passes to
the test function by duality, giving the $-x\psi$ term;
(ii) $\langle d\mu_{t}^{\mathrm{drift}},\psi\rangle
    =\langle\nu_{\drift}\drift(X_{t})\dt,\psi\rangle
    =\hat\psi_{\drift}\drift(X_{t})\dt$ and similarly for the
    diffusion;
(iii) since $\Phi$ is linear in $\mu$, the It\^o correction is
$\tfrac12 d\langle Y\rangle_{t}
=\tfrac12\hat\psi_{\diffusion}^{2}\diffusion(X_{t})^{2}\dt$
with no second-order term in $\mu$.

For $e^{Y_{t}}$ to be a local martingale, the finite-variation part
of $dY_{t}+\tfrac12 d\langle Y\rangle_{t}$ must vanish.
Substituting $\drift(X_{t})=b^{0}+BX_{t}$ and
$\diffusion(X_{t})^{2}=A^{0}+A^{1}X_{t}$, and writing
$X_{t}=\langle\mu_{t},1\rangle$, the finite-variation part splits as
\begin{align*}
  &\underbrace{\Bigl[
    -\varphi'(T-t)
    +b^{0}\hat\psi_{\drift}(T-t)
    +\tfrac12 A^{0}\hat\psi_{\diffusion}(T-t)^{2}
  \Bigr]}_{\text{constant-in-}\mu_{t}\text{ part}}\dt\\
  &+\underbrace{\Bigl\langle\mu_{t},
    -\partial_{\theta}\psi(T-t,\cdot)
    -x\psi(T-t,\cdot)
    +B\hat\psi_{\drift}(T-t)
    +\tfrac12 A^{1}\hat\psi_{\diffusion}(T-t)^{2}
  \Bigr\rangle}_{\text{linear-in-}\mu_{t}\text{ part}}\dt.
\end{align*}
Requiring each part to vanish for all realisations of $\mu_{t}$:

\smallskip\noindent
\textbf{ODE for $\psi(\theta,x)$:}
\begin{align}\label{eqn:ALP_psi_ODE}
    \partial_{\theta}\psi(\theta,x)
    = -x\psi(\theta,x)
      +B\hat\psi_{\drift}(\theta)
      +\tfrac12 A^{1}\hat\psi_{\diffusion}(\theta)^{2},
    \qquad\psi(0,x)=u.
\end{align}

\smallskip\noindent
\textbf{ODE for $\varphi(\theta)$:}
\begin{align}\label{eqn:ALP_phi_ODE}
    \varphi'(\theta)
    = b^{0}\hat\psi_{\drift}(\theta)
      +\tfrac12 A^{0}\hat\psi_{\diffusion}(\theta)^{2},
    \qquad\varphi(0)=0.
\end{align}
Both equations have $\hat\psi_{\drift},\hat\psi_{\diffusion}$ as
inputs; Step~3 closes this into a self-contained system.

\subsubsection{Step 3: Closing the Riccati--Volterra equation
  by Laplace pairing}\label{sssec:ALP_step3}

The ODE \eqref{eqn:ALP_psi_ODE} has constant-in-$x$ source term.
Solving by the integrating factor $e^{x\theta}$:
\begin{align}\label{eqn:ALP_psi_solution}
    \psi(\theta,x)
    = ue^{-x\theta}
    +\int_{0}^{\theta}e^{-x(\theta-r)}
      \Bigl[B\hat\psi_{\drift}(r)
            +\tfrac12 A^{1}\hat\psi_{\diffusion}(r)^{2}\Bigr]\dr.
\end{align}
Pair both sides with $\nu_{\drift}$, using
\eqref{eqn:Laplace_pairing}:
\begin{align}\label{eqn:ALP_hat_psi_drift}
    \hat\psi_{\drift}(\theta)
    = uk_{\drift}(\theta)
    +\int_{0}^{\theta}k_{\drift}(\theta-r)
      \Bigl[B\hat\psi_{\drift}(r)
            +\tfrac12 A^{1}\hat\psi_{\diffusion}(r)^{2}\Bigr]\dr.
\end{align}
Pair with $\nu_{\diffusion}$:
\begin{align}\label{eqn:ALP_hat_psi_diffusion}
    \hat\psi_{\diffusion}(\theta)
    = uk_{\diffusion}(\theta)
    +\int_{0}^{\theta}k_{\diffusion}(\theta-r)
      \Bigl[B\hat\psi_{\drift}(r)
            +\tfrac12 A^{1}\hat\psi_{\diffusion}(r)^{2}\Bigr]\dr.
\end{align}
Equations \eqref{eqn:ALP_hat_psi_drift}--\eqref{eqn:ALP_hat_psi_diffusion}
form a closed Volterra integral system for $(\hat\psi_{\drift},
\hat\psi_{\diffusion})$.  In the single-kernel case
$k_{\drift}=k_{\diffusion}=:k$, $\nu_{\drift}=\nu_{\diffusion}=:\nu$,
both equations coincide and reduce to the scalar
\emph{Riccati--Volterra equation}
\begin{align}\label{eqn:ALP_riccati_volterra}
    \hat\psi(\theta)
    = uk(\theta)
    +\int_{0}^{\theta}k(\theta-r)
      \Bigl[B\hat\psi(r)+\tfrac12 A^{1}\hat\psi(r)^{2}\Bigr]\dr,
    \qquad\hat\psi:=\hat\psi_{\drift}=\hat\psi_{\diffusion},
\end{align}
which is \emph{exactly} ALP's equation \cite[(1.5) and (4.3) with
$f\equiv0$]{abi2019affine}.
With $\hat\psi$ substituted, \eqref{eqn:ALP_phi_ODE} recovers
ALP's \cite[Eq.~(4.13)]{abi2019affine}.

\subsubsection{Step 4: The transform formula}
\label{sssec:ALP_step4}

\begin{theorem}[Exponential-affine transform via the
  lift]\label{thm:ALP_transform_lift}
Let $X$, $\mu$ be as in \S\ref{sssec:ALP_setup}.
Let $(\hat\psi_{\drift},\hat\psi_{\diffusion})$ be the unique global
solution of
\eqref{eqn:ALP_hat_psi_drift}--\eqref{eqn:ALP_hat_psi_diffusion}
(existence and uniqueness: \cite[Theorem~B.1]{abi2019affine}),
let $\varphi$ solve \eqref{eqn:ALP_phi_ODE}, and define
$\psi(\theta,x)$ by \eqref{eqn:ALP_psi_solution}.  Set
\begin{align}\label{eqn:ALP_Y_def}
    Y_{t}:=\varphi(T-t)+\langle\mu_{t},\psi(T-t,\cdot)\rangle.
\end{align}
Then:
\begin{enumerate}
\item $e^{Y_{t}}$ is a local martingale on $[0,T)$.
\item If $e^{Y_{t}}$ is a true martingale then, for $0\leq t\leq T$,
\begin{align}\label{eqn:ALP_transform}
    \E\bigl[e^{uX_{T}}\,\big|\,\mathcal F_{t}\bigr]
    = \exp\!\bigl(\varphi(T-t)+\langle\mu_{t},\psi(T-t,\cdot)\rangle\bigr).
\end{align}
\end{enumerate}
\end{theorem}

\begin{proof}
\textbf{(1).}
By Step~2 (Lemma~\ref{lem:ALP_psi_regularity} and \eqref{eqn:dY}) and
Step~3, the finite-variation part of
$de^{Y_{t}}=e^{Y_{t}}(dY_{t}+\tfrac12d\langle Y\rangle_{t})$
vanishes on $[0,T)$ whenever $\psi,\varphi$ satisfy
\eqref{eqn:ALP_psi_ODE}--\eqref{eqn:ALP_phi_ODE}, which by the
Laplace-pairing argument of Step~3 is equivalent to
$(\hat\psi_{\drift},\hat\psi_{\diffusion})$ solving
\eqref{eqn:ALP_hat_psi_drift}--\eqref{eqn:ALP_hat_psi_diffusion}.
Hence $e^{Y_{t}}$ is a local martingale on $[0,T)$.

\textbf{(2).}
At $\theta=0$, $\psi(0,x)=u$ for all $x$ by
\eqref{eqn:ALP_terminal_psi}, so
$Y_{T}=\varphi(0)+\langle\mu_{T},u\cdot1\rangle=u\langle\mu_{T},1\rangle=uX_{T}$.
If $e^{Y_{t}}$ is a true martingale on $[0,T]$ (extending by
continuity to $t=T$), the martingale property directly gives
$\E[e^{Y_{T}}\mid\mathcal F_{t}]=e^{Y_{t}}$, i.e.\
$\E[e^{uX_{T}}\mid\mathcal F_{t}]=\exp(\varphi(T-t)+\langle\mu_{t},\psi(T-t,\cdot)\rangle)$,
which is \eqref{eqn:ALP_transform}. No further computation is needed:
in particular, \eqref{eqn:ALP_transform} is stated in terms of the
full state $\mu_{t}$, and \emph{not} as a function of $X_{t}$ alone.
\end{proof}

\begin{remark}[Why not a function of $X_{t}$ alone, and when it
  degenerates to one]\label{rmk:ALP_no_markov_collapse}
One might conjecture that
$\E[e^{uX_{T}}\mid\mathcal F_{t}]=\exp(\varphi(T-t)+\hat\psi_{\drift}(T-t)X_{t})$,
i.e.\ that $\langle\mu_{t},\psi(T-t,\cdot)\rangle$ always equals
$\hat\psi_{\drift}(T-t)\langle\mu_{t},1\rangle$. This is false in
general: $\psi(\theta,\cdot)$ is a genuine (non-constant) function of
$x$ by \eqref{eqn:ALP_psi_solution}, so pairing $\mu_{t}$ against it
is a different linear functional of $\mu_{t}$ than pairing against
$1$, and there is no reason for the two to be proportional with a
\emph{deterministic} constant of proportionality. Concretely, take
$B=A^{1}=0$ (additive noise, no mean-reversion feedback) and
$\nu=c_{1}\delta_{x_{1}}+c_{2}\delta_{x_{2}}$ with $x_{1}\ne x_{2}$: then
$\psi(\theta,x)=ue^{-x\theta}$ and
$\langle\mu_{t},\psi(\theta,\cdot)\rangle=u(\mu_{t}^{(1)}e^{-x_{1}\theta}+\mu_{t}^{(2)}e^{-x_{2}\theta})$,
which is not a multiple of $X_{t}=\mu_{t}^{(1)}+\mu_{t}^{(2)}$ with a
$\theta$-dependent but $\mu_{t}$-independent factor, for any
$\theta>0$. This is not a defect of the lift-based proof: it is the
same phenomenon ALP describe explicitly after their own Theorem~4.3
, ``the lack of Markovianity precludes such a simple form'', and
their Theorem~4.5 shows that even under extra structural hypotheses
(a resolvent of the first kind, plus a total-variation bound), the
best available reduction is a representation in terms of the
\emph{entire past trajectory} $(X_{s})_{s\le t}$, not $X_{t}$ alone.

The reduction to a function of $X_{t}$ alone \emph{does} hold in
exactly one degenerate case: $\nu=\delta_{x_{0}}$ a single unit mass
(equivalently $k(t)=e^{-x_{0}t}$, the classical, non-Volterra affine
diffusion). Then $\mu_{t}=X_{t}\delta_{x_{0}}$ is genuinely
one-dimensional, $\langle\mu_{t},\psi(\theta,\cdot)\rangle=X_{t}\psi(\theta,x_{0})=\hat\psi_{\drift}(\theta)X_{t}$
exactly, and \eqref{eqn:ALP_transform} recovers the classical formula
$\E[e^{uX_{T}}\mid\mathcal F_{t}]=\exp(\varphi(T-t)+\hat\psi_{\drift}(T-t)X_{t})$.
More generally, for a finite-rank $\nu=\sum_{j}c_{j}\delta_{x_{j}}$,
$\mu_{t}$ is finite-dimensional and $\langle\mu_{t},\psi(\theta,\cdot)\rangle$
reduces to an explicit finite linear combination of the factors, 
this is exactly the ``finite-rank kernel'' regime of the taxonomy in
Section~\ref{subsec:feynman_kac}, and \eqref{eqn:ALP_transform} is the
corresponding instance of the general, non-affine value function
$v(t,\mu)$ of Theorem~\ref{thm:backward_SEE}, specialized to the
exponential-affine terminal condition $\varphi=e^{u\,\mathrm{ev}_{1}}$.
\end{remark}

\begin{remark}[On the true-martingale hypothesis]\label{rmk:ALP_true_martingale}
Part~(2) is conditional on $e^{Y_{t}}$ being a true martingale, exactly
as in ALP's Theorem~4.3, which is likewise stated conditionally; ALP
do not give a single general sufficient condition covering all affine
kernels, and neither do we. Two regimes are worth distinguishing:
\begin{itemize}
\item \emph{Additive noise} ($A^{1}=0$): $\diffusion$ is constant, so
$\langle Y\rangle_{t}=\int_{0}^{t}\hat\psi_{\diffusion}(T-s)^{2}\diffusion^{2}\,\ds$
is \emph{deterministic}. A complex exponential of a (real or complex)
Gaussian process with deterministic quadratic variation is
automatically a true martingale on any finite interval, exactly as in
ALP's own treatment of the Volterra Ornstein--Uhlenbeck case
(their Section~5); no further argument is needed.
\item \emph{State-dependent diffusion} ($A^{1}\ne0$, e.g.\ the Volterra
Heston/square-root case): the polynomial moment bound
$\sup_{t\le T}\E[|X_{t}|^{p}]<\infty$ (ALP's Lemma~3.1) is
\emph{not} sufficient on its own to conclude $e^{Y_{t}}$ is a true
martingale, since a polynomial moment bound of every order does not
imply the exponential-moment control the complex exponential of $Y$
requires. ALP resolve this in the Volterra Heston case only via a
dedicated argument (their Lemma~7.3): Novikov's condition on a
localizing sequence of stopping times, Fatou's lemma for
non-negative local martingales, and a change of measure. We do not
reproduce a general version of this argument here; verifying the
true-martingale property for state-dependent-diffusion examples
should be done on a case-by-case basis following ALP's own approach,
not by a general moment-bound citation.
\end{itemize}
\end{remark}

\begin{remark}[What is new relative to ALP]\label{rmk:ALP_value_add}\leavevmode
\begin{enumerate}
\item ALP's derivation of \cite[Theorem~4.3]{abi2019affine} uses
  deterministic resolvent and convolution identities (Lemmas 2.4,
  2.5, 4.4) and explicitly avoids any It\^o formula for
  $f(t,X_{t})$.  Here, Lemma~\ref{lem:ALP_cond_mean} replaces their
  Lemma~4.2 by a direct semigroup-split argument, and the
  martingale computation is classical It\^o calculus on the
  lifted SEE (where $\mu$ is genuinely Markov).
\item The Riccati--Volterra equation \eqref{eqn:ALP_riccati_volterra}
  arises by \emph{Laplace pairing} of the operator-level ODE
  \eqref{eqn:ALP_psi_ODE}: ALP's scalar function $\hat\psi(\theta)$
  is the projection $\langle\nu,\psi(\theta,\cdot)\rangle$ of the
  $\Wplus$-valued solution.  The convolution structure of the
  Riccati--Volterra equation ($k$ appears as the convolution kernel)
  is thus explained by the Laplace representation $k=\langle
  e^{-x\cdot},\nu\rangle$.
\item For non-affine $\drift,\diffusion^{2}$, Step~2 still produces
  the well-defined ODE \eqref{eqn:ALP_psi_ODE} on the lift;
  only Step~3 (closing to a finite-dimensional Riccati system)
  is specific to the affine case.  Combined with
  Corollary~\ref{corr:Ito_volterra_SEE}, this yields Lyapunov-type
  moment bounds for non-affine SVEs outside the scope of ALP.
\item Unlike ALP's Theorem~4.3, the transform formula
  \eqref{eqn:ALP_transform} here is stated honestly in terms of the
  full lift $\mu_{t}$ rather than $X_{t}$; Remark~\ref{rmk:ALP_no_markov_collapse}
  makes precise exactly when (and why) this can be simplified further.
\end{enumerate}
\end{remark}

\begin{remark}[The two-kernel case]\label{rmk:ALP_two_kernel}
When $k_{\drift}\ne k_{\diffusion}$, Steps~2--3 produce the
coupled system
\eqref{eqn:ALP_hat_psi_drift}--\eqref{eqn:ALP_hat_psi_diffusion}, and
Theorem~\ref{thm:ALP_transform_lift} holds verbatim with this coupled
system in place of the scalar Riccati--Volterra equation. This handles
both kernels natively, without the dimension-doubling of ALP's
Remark~3.5.
\end{remark}

\begin{proposition}[General transform, $f\not\equiv0$]
  \label{prop:ALP_general_f}
Under the same assumptions as Theorem~\ref{thm:ALP_transform_lift},
let $f\in L^{1}([0,T],\mathbb{C})$.  Define the modified ansatz
$Y_t^f := \varphi^f(T-t)+\langle\mu_t,\psi^f(T-t,\cdot)\rangle$
where $\psi^f$ solves
\begin{align*}
    \partial_\theta\psi^f(\theta,x)
    = -x\psi^f(\theta,x)+B\hat\psi^f_{\drift}(\theta)
      +\tfrac12 A^1\hat\psi^f_{\diffusion}(\theta)^2
      + f(\theta),
    \quad \psi^f(0,x)=u,
\end{align*}
and $\varphi^f$ solves \eqref{eqn:ALP_phi_ODE} with
$\hat\psi^f_{\drift},\hat\psi^f_{\diffusion}$ in place of
$\hat\psi_{\drift},\hat\psi_{\diffusion}$. Then $\theta\mapsto\psi^{f}(\theta,\cdot)$
satisfies the regularity of Lemma~\ref{lem:ALP_psi_regularity} (the
additive term $f(\theta)$, constant in $x$, does not affect the
argument there), and:
\begin{enumerate}
\item $e^{Y_t^f}$ is a local martingale on $[0,T)$.
\item The Riccati--Volterra equation for
$\hat\psi^f(\theta):=\langle\nu,\psi^f(\theta,\cdot)\rangle$
becomes
\begin{align*}
    \hat\psi^f(\theta)
    = uk(\theta)+\int_0^\theta k(\theta-r)
      \bigl[B\hat\psi^f(r)+\tfrac12 A^1\hat\psi^f(r)^2+f(r)\bigr]\dr,
\end{align*}
which is ALP's full equation \cite[(4.3)]{abi2019affine}.
\item If $e^{Y_t^f}$ is a true martingale (see
Remark~\ref{rmk:ALP_true_martingale}),
\begin{align*}
    \E\bigl[e^{uX_{T}+(f*X)_{T}}\,\big|\,\mathcal F_t\bigr]
    = \exp\!\bigl(\varphi^f(T-t)+\langle\mu_{t},\psi^{f}(T-t,\cdot)\rangle\bigr).
\end{align*}
\end{enumerate}
\end{proposition}

\begin{proof}
The source term $f(\theta)$ enters the ODE because the ansatz now
includes the integrated payoff $(f*X)_{T}=\int_{0}^{T}f(T-r)X_{r}\,\dr$,
which adds $f(T-t)X_{t}\,\dt=f(T-t)\langle\mu_{t},1\rangle\,\dt$ to
$\dr Y^{f}_{t}$; since $\langle\mu_{t},1\rangle=\langle\mu_{t},\psi^{f}(0,\cdot)\rangle$
with $\psi^{f}(0,\cdot)\equiv1$, this is absorbed into the $\psi^{f}$
ODE as the additive term $+f(\theta)$, and does not affect the
regularity argument of Lemma~\ref{lem:ALP_psi_regularity} (a constant
inhomogeneous term is trivially continuous). All remaining steps
(Laplace pairing, local-martingale verification, terminal condition)
are identical to the $f\equiv0$ case.
\end{proof}

\section{Appendix}

\subsection{Technical Results}

\begin{lemma}\label{lem:R_scaling_equivalence_weight}
 Let $R\geq 1$, $\weight=(\weight_{0},\dots,\weight_{m})$ with $\weight_{j}=(1+|x|)^{|j|p-d}$. Recall the notation $A_{R}\coloneqq \{x\colon R\leq |x|\leq 2 R\}$. Then, there exist constants $0<\widetilde{c}\leq \widetilde{C}<\infty$, such that for any $l\geq 0$, $1\leq p <\infty$,
    \begin{align*}
          \widetilde{c}\|u\|_{W^{m,p}_{\weight}(A_{R})}\leq \left(\sum_{0\leq|j|\leq m}R^{|j|p-d}\|D^{j}u\|_{L^{p}(A_{R})}^{p}\right)^{1/p}\leq \widetilde{C}\|u\|_{W^{m,p}_{\weight}(A_{R})}.
    \end{align*}
   
\end{lemma}
\begin{proof}[Proof of Lemma \ref{lem:R_scaling_equivalence_weight}]\label{proof:lem:R_scaling_equivalence_weight}
  For $\beta>0$ and $|x|\in [R,2R]$,
    \begin{align*}
        3^{-\beta}(1+|x|)^{\beta}\leq 3^{-\beta} ( R+2R)^{\beta}\leq R^{\beta} \leq (1+|x|)^{\beta}.
    \end{align*}
    \begin{align*}
       \frac{1}{(1+|x|)^{\beta}}\leq \frac{1}{(1+R)^{\beta}}\leq \frac{1}{R^{\beta}} \leq \frac{1}{(\frac{1}{3}+\frac{2R}{3})^{\beta}} \leq \frac{3^{\beta}}{(1+|x|)^{\beta}}.
    \end{align*}
        Let 
    $C_{j,p}=\begin{cases}
        1 \text{ if }|j|p-d>0\\
        3 \text{ if }|j|p-d<0.
    \end{cases}$. On the annuli $A_{R}$, we have
    \begin{align*}
        \left(\sum_{0\leq|j|\leq m}R^{|j|p-d}\|D^{j}u\|_{L^{p}(A_{R})}^{p}\right)^{1/p}&=\left(\sum_{0\leq|j|\leq m}R^{|j|p-d}\int_{A_{R}} |D^{j}u(x)|^{p}\dx\right)^{1/p}\\
     &\leq \left(\sum_{0\leq|j|\leq m}C_{j,p,d}\int_{A_{R}} |D^{j}u(x)|^{p}(1+|x|)^{|j|p-d}\dx\right)^{1/p}\\
         &\leq \widetilde{C}\left(\sum_{0\leq|j|\leq m}\int_{A_{R}} |D^{j}u(x)|^{p}(1+|x|)^{|j|p-d}\dx\right)^{1/p}.
    \end{align*}
    Let $c_{j,p}=\begin{cases}
        \frac{1}{3} \text{ if }|j|p-d>0\\
        1 \text{ if }|j|p-d<0,
    \end{cases}$ 
        \begin{align*}
        \left(\sum_{0\leq|j|\leq m}R^{|j|p-d}\|D^{j}u\|_{L^{p}(A_{R})}^{p}\right)^{1/p}&=\left(\sum_{0\leq|j|\leq m}R^{|j|p-d}\int_{A_{R}} |D^{j}u(x)|^{p}\dx\right)^{1/p}\\
     &\geq  \left(\sum_{0\leq|j|\leq m}c_{j,p}^{|j|p-d}\int_{A_{R}} |D^{j}u(x)|^{p}(1+|x|)^{|j|p-d}\dx\right)^{1/p}\\
         &\geq \widetilde{c}\left(\sum_{0\leq|j|\leq m}\int_{A_{R}} |D^{j}u(x)|^{p}(1+|x|)^{|j|p-d}\dx\right)^{1/p}.
    \end{align*}
\end{proof}
\subsection{Proofs of technical results}\label{section:Appendix_proofs}

\begin{proof}[Proof of Proposition \ref{prop:embeddings}]\label{proof:prop:embeddings}
\begin{enumerate}
\item \textbf{Compact embedding.} Let $\{u_{n}\}_{n}$ be a bounded sequence in $W^{s,2}_{\weight}$.  

We split the proof into a local compactness argument and a control at infinity.
\medskip

\textbf{Step 1: Local compactness.}
For $m\in\N$, let $\ball_{2^{m}}(0)$ be the ball of radius $2^{m}$ in $\R^{d}$; these balls are nested and exhaust $\R^{d}$ as $m\to\infty$. The restriction of $W^{s,p}_{\weight}$ to $W^{s,p}_{\weight}(\ball_{2^{m}}(0))$ is a continuous map. By Assumption \eqref{A_embedd:A_bound_from_above_and_below}, the weighted and unweighted norms are equivalent on $\ball_{2^{m}}(0)$, i.e.
    \begin{align*}
       & \sum_{0\leq |j| \leq s} \int_{\ball_{2^{m}}(0)}\left|D^{j} u\right|^{p}  \dx\leq \sum_{0\leq |j| \leq s} \int_{\ball_{2^{m}}(0)}\left|D^j u\right|^{p} \weight_{j}(x) \dx \sup_{x\in \ball_{2^{m}}(0)}\left|\frac{1}{\weight_{j}(x)}\right|,\\
       &\sum_{0\leq |j| \leq s} \int_{\ball_{2^{m}}(0)}\left|D^j u\right|^{p} \weight_{j}(x) \dx \leq C_{m} \sum_{0\leq |j| \leq s} \int_{\ball_{2^{m}}(0)}\left|D^j u\right|^{p}  \dx.
    \end{align*}
Hence $\{u_n\}_n$ is bounded in $W^{s,p}(\ball_{2^{m}}(0))$.
   By Rellich's embedding theorem (see \cite[Theorem 4.12]{adams2003sobolev}), $W^{s,p}(\ball_{2^{m}}(0))\hookrightarrow W^{l,q}(\ball_{2^{m}}(0))$ is compact, when the parameters $s,l,p,q$ satisfy the relation in the statement of the Proposition.
   Setting $(u^{0}_{n})_{n}:=(u_{n})_{n}$, we extract inductively, for each $m\geq1$, a subsequence $(u^{m}_{n})_{n}\subseteq(u^{m-1}_{n})_{n}$ (not relabeled beyond this superscript index) such that
\[
u^{m}_{n}\xrightarrow[n\to\infty]{} u^{m} \quad \text{in } W^{l,q}(\ball_{2^{m}}(0))\ \text{and almost everywhere,}
\]
for some limit $u^{m}\in W^{l,q}(\ball_{2^{m}}(0))$. Since each subsequence is contained in the previous one and the balls are nested, almost-everywhere convergence gives $u^{m+1}=u^{m}$ on the \emph{smaller} ball $\ball_{2^{m}}(0)$. The local limits therefore patch together to a single function $u$ on $\R^{d}$, characterized by $u|_{\ball_{2^{m}}(0)}=u^{m}$ for every $m$. Finally, the \emph{diagonal sequence} $(u^{m}_{m})_{m}$, the $m$-th term of the $m$-th subsequence, satisfies $u^{m}_{m}\to u$ almost everywhere, and, for each fixed $m$, $u^{m}_{m}\to u$ in $W^{l,q}(\ball_{2^{m}}(0))$.

    According to Fatou's Lemma, applied to the diagonal sequence $u^{m}_{m}$,
    \begin{align*}
        \sum_{0\leq |j| \leq s} \int_{\R^{d}}\left|D^{j} u\right|^{p} \weight_{j}(x)\dx\leq \liminf_{m\rightarrow \infty} \sum_{0\leq |j| \leq s} \int_{\R^{d}}\left|D^{j} u^{m}_{m}\right|^{p} \weight_{j}(x) \dx\leq C.
    \end{align*}
    \medskip
    \textbf{Step 2: Control at infinity.}

Let $K$ be as in Assumption \ref{A_embedd:B_decay_weight_ratio}. By that assumption, for every $\eps>0$ there exists $M\geq K$ such that
\[
\frac{\weight_j(x)}{\weightbar_j(x)} \le \varepsilon \qquad\text{for all } |x|\geq M.
\]

Hence for $u \in W^{l,q}_{\weightbar}$,
    \begin{align*}
         &\sum_{0\leq |j| \leq l} \int_{\R^{d}\backslash \ball_{M}(0)}\left|D^j u\right|^{q}  \weight_{j}(x) \dx\leq \sum_{0\leq |j|  \leq l} \int_{\R^{d}\backslash \ball_{M}(0)}\left|D^j u\right|^{q} \frac{\weight_{j}(x)}{\weightprime_{j}(x)} \weightprime_{j}(x) \dx\\
         &\phantom{xx}\leq \eps \sum_{0\leq |j|  \leq l} \int_{\R^{d}\backslash \ball_{M}(0)}\left|D^j u\right|^{q}  \weightprime_{j}(x) \dx.
    \end{align*}
    Applying this to the diagonal sequence $u^{m}_{m} - u$, we obtain
\begin{align*}
\|u^{m}_{m} - u\|_{W^{l,q}_{\weight}}^q
&\le \|u^{m}_{m} - u\|_{W^{l,q}(\ball_{M}(0))}^q
+ C\varepsilon.
\end{align*}

Since $u^{m}_{m} \to u$ in $W^{l,q}(\ball_{2^{m}}(0))$ and $\ball_{M}(0)\subseteq\ball_{2^{m}}(0)$ once $2^{m}\geq M$, letting $m\to\infty$ and then $\varepsilon \to 0$ gives convergence in $W^{l,q}_{\weight}$.

This proves compactness.
\medskip
\item \textbf{Embeddings into V}:
Let $\ball_{R}(0)$ be the ball with radius $R$. 
\begin{enumerate}
    \item On any ball $\ball_{R}$, the condition $I_{\weight_{j}}(\ball_{R}(0)):=\inf_{x\in \ball_{R}(0)}\weight_{j}(x)\geq c_{j,R}$ for every $j\in (\N\cup\{0\})^{d}$, yields
\begin{align*}
\|u\|_{W^{s,p}(\ball_{R})} \le C_{R} \|u\|_{W^{s,p}_{\weight}(\ball_{R})}.
\end{align*}

Thus $u \in W^{s,p}(\ball_{R})$, and classical Sobolev embeddings (see \cite[Theorem 4.12, Part II]{adams2003sobolev}) imply $u \in V$ locally.  
\item On any ball $\ball_{R}$, Assumption $\weight_{j}\in \WeightclassB_{\rho}$ yields
\begin{align*}
\|u\|_{W^{s,p}(\ball_{R})} \le C_R \|u\|_{W^{s,p\rho}_{\weight}(\ball_{R})}.
\end{align*}

 Since we assumed that, for every multi-index $j\in (\N\cup \{0\})^{d}$ with $0\leq |j| \leq s$ $w_{j}\in \mathcal{B}^{\rho}(\R^{d})$ for every $k=0,\dots,s$,
 \begin{align*}
    \left(\int_{\ball_{R}(0)}|D^{j}u(x)|^{p}\dx\right)^{\frac{1}{p}}&= \left(\int_{\ball_{R}(0)}|D^{j}u(x)|^{p}\frac{\weight_{j}(x)^{1/\rho}}{\weight_{j}(x)^{1/\rho}}\dx\right)^{\frac{1}{p}}\\
    &\leq \left(\int_{\ball_{R}(0)}|D^{j}u(x)|^{p\rho}\weight_{j}(x)\dx\right)^{\frac{1}{p\rho}}\left(\int_{\ball_{R}(0)}\frac{1}{\weight_{j}(x)^{\frac{1}{\rho-1}}}\dx\right)^{\frac{\rho-1}{p\rho}}\\
    &\leq C_{R}\|D^{j}u\|_{L^{p\rho}_{\weight}(\ball_{R}(0))}.
\end{align*}

Thus $u \in W^{s,p\rho}(\ball_R(0))$, and classical Sobolev embeddings (see \cite[Theorem 4.12, Part II]{adams2003sobolev}) imply $u \in V$ locally.  
\end{enumerate}

Since continuity and differentiability are local properties, and we assumed that $\weight\in L^{\infty}_{\operatorname{loc}}$, we can shift the center of the ball and obtain the continuity (although not uniformly) and differentiability properties on the whole space.

To obtain the embedding for spaces with higher differentiability ($r>0$), we can repeat the identical steps with the partial derivatives of $u$. This concludes the arguments for the embedding into $V$.
\medskip
\item \textbf{Boundedness}:
 By \cite[Theorem 4.12, Part I]{adams2003sobolev},  $W^{s,p}(\ball_{2R})\hookrightarrow L^{\infty}(\ball_{2R})$ continuously.
 Hence,
  \begin{align*}
    \|u\|_{L^{\infty}(\ball_{R}(0))}&=\|u(R\cdot)\|_{L^{\infty}(\ball_{1}(0))}\\
    &\leq  C_{\ball_{1},s} \|u(R\cdot)\|_{W^{s,p}(\ball_{1}(0))}\leq C_{\ball_{1},s}\left(\sum_{0\leq |j|\leq s}R^{|j|p-d}\|D^{j}u\|_{L^{p}(\ball_{R}(0))}^{p}\right)^{1/p}.
\end{align*}
Analogously for the annuli, 
    \begin{align*}
    \|u\|_{L^{\infty}(A_{R})}
    &\leq C_{A_{1},s}\left(\sum_{0\leq |j|\leq s}R^{|j|p-d}\|D^{j}u\|_{L^{p}(A_{R})}^{p}\right)^{1/p}.
    \end{align*}
We split the arguments into different cases.\newline
\begin{enumerate}
    \item [\textbf{Case 1.}] Let $w_{j}\in \mathcal{B}^{\rho}(\R^{d})$ and \eqref{A_embedd:C_1_bound_Cb_under_weightclass} hold.\newline
  \textbf{Step 1: Local bounds.}
    We expand the integral $\|D^{j}u\|_{L^{p}(\ball_{R}(0))}^{p}$ by $\left(\frac{\weight_{j}(x)}{\weight_{j}(x)}\right)^{\frac{1}{\rho}}$ and use H{\"o}lder's inequality
\begin{align*}
    \|u\|_{L^{\infty}(\ball_{R}(0))}&\leq C_{\ball_{1},s}\left(\sum_{0\leq |j|\leq s}R^{|j|p-d}\|\weight_{j}^{-\frac{1}{\rho-1}}\|_{L^{1}(\ball_{R}(0))}^{\frac{\rho-1}{\rho}}\|D^{j}u\|_{L^{p\rho}_{\weight}(\ball_{R}(0))}^{p/\rho}\right)^{1/p}\\
    &\leq C_{\ball_{1},s}\left(\sum_{0\leq |j|\leq s}R^{|j|p-d}\|\weight_{j}^{-\frac{1}{\rho-1}}\|_{L^{1}(\ball_{R}(0))}^{\frac{\rho-1}{\rho}}\right)^{1/p}\|u\|_{W^{s,p\rho}_{\weight}(\ball_{R}(0))}\\
    &\leq C_{\ball_{1},s}\sum_{0\leq |j|\leq s}R^{|j|-\frac{d}{p}}\|\weight_{j}^{-\frac{1}{\rho-1}}\|_{L^{1}(\ball_{R}(0))}^{\frac{\rho-1}{p\rho}}\|u\|_{W^{s,p\rho}_{\weight}(\ball_{R}(0))}\\
    &    \leq C_{\ball_{1},s}\sum_{0\leq |j|\leq s}R^{|j|-\frac{d}{p}}\|\weight_{j}^{-\frac{1}{\rho-1}}\|_{L^{1}(\ball_{R}(0))}^{\frac{\rho-1}{p\rho}}\|u\|_{W^{s,p\rho}_{\weight}},
\end{align*}  
where we used the subadditivity of $|x|\mapsto |x|^{\frac{1}{p}} $.
    \textbf{Step 2: Estimates on annuli.}
The same estimate as above holds:
    \begin{align*}
    \|u\|_{L^{\infty}(A_{R})}
    &\leq C_{A_{1},s}\left(\sum_{0\leq |j|\leq s}R^{|j|p-d}\|D^{j}u\|_{L^{p}(A_{R})}^{p}\right)^{1/p}\\
    &\leq C_{A_{1},s}\sum_{0\leq |j|\leq s}R^{|j|-\frac{d}{p}}\|\weight_{j}^{-\frac{1}{\rho-1}}\|_{L^{1}(A_{R})}^{\frac{\rho-1}{p\rho}}\|u\|_{W^{s,p\rho}_{\weight}}.
\end{align*}
By Assumption, the terms on the right-hand side are bounded, which yields the claim.
\item[\textbf{Case 2.}] Let $I_{\weight_{j}}(\ball_{R}(0)):=\inf_{x\in \ball_{R}(0)}\weight_{j}(x)\geq C_{j,R}$ for every $j\in (\N\cup\{0\})^{d}$ and \eqref{A_embedd:D_1_bound_Cb_under_non_degeneracy} hold.\newline
  \textbf{Step 1: Local bounds.}
Expanding $\|D^{j}u\|_{L^{p}(\ball_{R}(0))}^{p}$ by $\frac{\weight_{j}(x)}{\weight_{j}(x)}$ and using \eqref{A_embedd:D_1_bound_Cb_under_non_degeneracy}, yields
\begin{align*}
     \|u\|_{L^{\infty}(\ball_{R}(0))}&\leq C_{\ball_{1},s}\left(\sum_{0\leq |j|\leq s}R^{|j|p-d}\frac{1}{I_{\weight_{j}}(\ball_{R}(0))}\|D^{j}u\|_{L^{p}_{\weight}(\ball_{R}(0))}^{p}\right)^{1/p}\\
    &    \leq C_{\ball_{1},s}\sum_{0\leq |j|\leq s}\left(\frac{1}{I_{\weight_{j}}(\ball_{R}(0))}\right)^{\frac{1}{p}}R^{|j|-\frac{d}{p}}\|u\|_{W^{s,p}_{\weight}},
\end{align*}  
where we used the subadditivity of $|x|\mapsto |x|^{\frac{1}{p}} $.\newline
    \textbf{Step 2: Estimates on annuli.}
    The estimates are identical and yield
    \begin{align*}
  \|u\|_{L^{\infty}(A_{R})} 
    &    \leq C_{A_{1},s}\sum_{0\leq |j|\leq s}\left(\frac{1}{I_{\weight_{j}}(A_{R})}\right)^{\frac{1}{p}}R^{|j|-\frac{d}{p}}\|u\|_{W^{s,p}_{\weight}},
\end{align*}
\end{enumerate}
\item \textbf{Boundedness in weighted spaces}:
The bounds are derived almost identically to the unweighted ones. We recall again that by \cite[Theorem 4.12, Part I]{adams2003sobolev},  $W^{s,p}(\ball_{2R})\hookrightarrow L^{\infty}(\ball_{2R})$ continuously. Hence,
\begin{align*}
    \|u\weightc\|_{L^{\infty}(\ball_{R}(0))}&\leq \|\weightc(\cdot)\|_{L^{\infty}(\ball_{R}(0))}\|u(R\cdot)\|_{L^{\infty}(\ball_{R}(0))}\\
    &\leq C_{\ball_{1},s}\|\weightc(\cdot)\|_{L^{\infty}(\ball_{R}(0))}\left(\sum_{0\leq |j|\leq s}R^{|j|p-d}\|D^{j}u\|_{L^{p}(\ball_{R}(0))}^{p}\right)^{1/p}.
\end{align*}
Analogously, 
\begin{align*}
    \|u\weightc\|_{L^{\infty}(A_{R})}&\leq \|\weightc(\cdot)\|_{L^{\infty}(A_{R})}\|u(R\cdot)\|_{L^{\infty}(A_{R})}\\
    &\leq C_{A_{1},s}\|\weightc(\cdot)\|_{L^{\infty}(A_{R})}\left(\sum_{0\leq |j|\leq s}R^{|j|p-d}\|D^{j}u\|_{L^{p}(A_{R})}^{p}\right)^{1/p}.
\end{align*}
\begin{enumerate}
    \item [\textbf{Case 1.}] Let $w_{j}\in \mathcal{B}^{\rho}(\R^{d})$ and \eqref{A_embedd:C_2_bound_Cwc_under_weightclass} hold.\newline
  \textbf{Step 1: Local bounds.}
\begin{align*}
     \|u\weightc\|_{L^{\infty}(\ball_{R}(0))}
    &\leq C_{\ball_{1},s}\|\weightc(\cdot)\|_{L^{\infty}(\ball_{R}(0))}\sum_{0\leq |j|\leq s}R^{|j|-\frac{d}{p}}\|\weight_{j}^{-\frac{1}{\rho-1}}\|_{L^{1}(\ball_{R}(0))}^{\frac{\rho-1}{p\rho}}\|u\|_{W^{s,p\rho}_{\weight}(\ball_{R}(0))}\\
    &    \leq C_{\ball_{1},s}\|\weightc(\cdot)\|_{L^{\infty}(\ball_{R}(0))}\sum_{0\leq |j|\leq s}R^{|j|-\frac{d}{p}}\|\weight_{j}^{-\frac{1}{\rho-1}}\|_{L^{1}(\ball_{R}(0))}^{\frac{\rho-1}{p\rho}}\|u\|_{W^{s,p\rho}_{\weight}}.
\end{align*}  
    \textbf{Step 2: Estimates on annuli.}
Let $A_{R}\coloneqq \ball_{2R}\backslash \overline{B}_{R}$. The same estimate as above holds:
    \begin{align*}
    \|u\weightc\|_{L^{\infty}(A_{R})}
    &\leq C_{A_{1},s}\|\weightc(\cdot)\|_{L^{\infty}(A_{R})}\sum_{0\leq |j|\leq s}R^{|j|-\frac{d}{p}}\|\weight_{j}^{-\frac{1}{\rho-1}}\|_{L^{1}(A_{R})}^{\frac{\rho-1}{p\rho}}\|u\|_{W^{s,p\rho}_{\weight}}.
\end{align*}
By Assumption, the terms on the right-hand side are bounded, which yields the claim.
\item[\textbf{Case 2.}] Let $I_{\weight_{j}}(\ball_{R}(0)):=\inf_{x\in \ball_{R}(0)}\weight_{j}(x)\geq C_{j,R}$ for every $j\in (\N\cup\{0\})^{d}$ and \eqref{A_embedd:D_2_bound_Cwc_under_non_degeneracy} hold.\newline
  \textbf{Step 1: Local bounds.}
\begin{align*}
     \|u\weightc\|_{L^{\infty}(\ball_{R}(0))}
    &    \leq C_{\ball_{1},s}\|\weightc(\cdot)\|_{L^{\infty}(\ball_{R}(0))}\sum_{0\leq |j|\leq s}\left(\frac{1}{I_{\weight_{j}}(\ball_{R}(0))}\right)^{\frac{1}{p}}R^{|j|-\frac{d}{p}}\|u\|_{W^{s,p}_{\weight}}.
\end{align*}  
    \textbf{Step 2: Estimates on annuli.}
    \begin{align*}
  \|u\weightc\|_{L^{\infty}(A_{R})} 
    &    \leq C_{A_{1},s}\|\weightc(\cdot)\|_{L^{\infty}(\ball_{R}(0))}\sum_{0\leq |j|\leq s}\left(\frac{1}{I_{\weight_{j}}(A_{R})}\right)^{\frac{1}{p}}R^{|j|-\frac{d}{p}}\|u\|_{W^{s,p}_{\weight}}.
\end{align*}
\end{enumerate}
\end{enumerate}
\end{proof}

\begin{proof}[Proof of Lemma \ref{lem:weighted_norm_equivalence}]\label{proof:lem:weighted_norm_equivalence}
For the first case,
 \begin{align*}
     \|\cdot\|_{W^{m,p}_{\weight^{1}}}&=\left(\sum_{0\leq |j| \leq m} \int_{\R_{+}^{d}}\left|D^j u\right|^p (a_{1,i}+a_{2,i}|x|)^{\beta_{i}} \dx\right)^{1 / p} \\
     &=\left(\sum_{0\leq |j| \leq m} \int_{\R_{+}^{d}}\left|D^j u\right|^p \frac{(a_{1,i}+a_{2,i}|x|)^{\beta_{i}}}{(b_{1,i}+b_{2,i}|x|)^{\beta_{i}}} (b_{1,i}+b_{2,i}|x|)^{\beta_{i}}\dx\right)^{1 / p}.
 \end{align*}
 Noting that $\min_{i}\min\left\{ \left(\frac{a_{1,i}}{b_{1,i}}\right)^{\beta_{i}}, \left(\frac{a_{2,i}}{b_{2,i}}\right)^{\beta_{i}}\right\}\leq \frac{(a_{1,i}+a_{2,i}|x|)^{\beta_{i}}}{(b_{1,i}+b_{2,i}|x|)^{\beta_{i}}} \leq \max_{i}\max\left\{ \left(\frac{a_{1,i}}{b_{1,i}}\right)^{\beta_{i}}, \left(\frac{a_{2,i}}{b_{2,i}}\right)^{\beta_{i}}\right\} $, yields the result.

 For the second case
  \begin{align*}
     \|\cdot\|_{W^{m,p}_{\frac{1}{\weight^{1}}}}&=\left(\sum_{0\leq |j| \leq m} \int_{\R_{+}^{d}}\left|D^j u\right|^p \frac{1}{(a_{1,i}+a_{2,i}|x|)^{\beta_{i}}} \dx\right)^{1 / p} \\
     &=\left(\sum_{0\leq |j| \leq m} \int_{\R_{+}^{d}}\left|D^j u\right|^p \frac{(b_{1,i}+b_{2,i}|x|)^{\beta_{i}}}{(a_{1,i}+a_{2,i}|x|)^{\beta_{i}}}\frac{1}{(b_{1,i}+b_{2,i}|x|)^{\beta_{i}}} \dx\right)^{1 / p}.
 \end{align*}
 Again, since  $\min_{i}\min\left\{ \left(\frac{b_{1,i}}{a_{1,i}}\right)^{\beta_{i}}, \left(\frac{b_{2,i}}{a_{2,i}}\right)^{\beta_{i}}\right\}\leq \frac{(b_{1,i}+b_{2,i}|x|)^{\beta_{i}}}{(a_{1,i}+a_{2,i}|x|)^{\beta_{i}}} \leq \max_{i}\max\left\{ \left(\frac{b_{1,i}}{a_{1,i}}\right)^{\beta_{i}},\left( \frac{b_{2,i}}{a_{2,i}}\right)^{\beta_{i}}\right\} $, we obtain the result.
\end{proof}

\subsection{Auxiliary Lemmata}
\begin{lemma}\label{lem:uniform_continuity_C_weak}

Let ${^{\prime}\!}U$ be a Banach space and ${^{\prime}\!}V$ a separable Banach space. We denote their dual spaces by $U,V$. Further, assume that the embeddings ${^{\prime}\!}U \hookrightarrow {^{\prime}\!}V\hookrightarrow V\hookrightarrow U$ are continuous and ${^{\prime}\!}U \subset {^{\prime}\!}V$ is dense. Let $B_{r}^{\operatorname{weak}^{*}}$ denote the ball of radius $r$ in $V$, equipped with the weak-$*$ topology. Assume that the following conditions are satisfied,

    \begin{enumerate}[label=\normalfont(\arabic*)]
        \item $u_{n}\rightarrow u$ in $C([0,T];U)$, \label{A:conv_lemma_CU_convergence_1}
        \item $\sup_{t\in [0,T]}\|u_{n}\|_{V}\leq r$. \label{A:conv_lemma_CV_bounded_1}
    \end{enumerate}
    Then  $u, u_{n} \in C\left([0, T] ; B_{r}^{\operatorname{weak}^{*}}\right)$ and $u_{n} \rightarrow u$ in  $C\left([0, T] ; B_{r}^{\operatorname{weak}^{*}}\right)$ as $n \rightarrow \infty$. 
\end{lemma}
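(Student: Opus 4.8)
The plan is to first establish that $u$ and each $u_n$ actually belong to $C([0,T];B_r^{\operatorname{weak}^*})$, and then to upgrade the convergence in $C([0,T];U)$ to convergence in $C([0,T];B_r^{\operatorname{weak}^*})$ by exploiting the metrizability of the weak-$*$ topology on bounded sets together with a density argument. The key mechanism throughout is the following: on the ball $B_r\subset V$, the weak-$*$ topology is metrized by some metric $q$, and a sequence $v_k\rightharpoonup v$ weak-$*$ in $B_r$ if and only if $\langle v_k,h\rangle\to\langle v,h\rangle$ for every $h$ in a \emph{dense} subset of ${^{\prime}\!}V$ (the predual); this reduction to a dense set is what lets us use the hypotheses, which only give control in the coarser pairing with ${^{\prime}\!}U$.

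First I would show $u_n\in C([0,T];B_r^{\operatorname{weak}^*})$. By \ref{A:conv_lemma_CV_bounded_1} we have $u_n(t)\in B_r$ for all $t$, so it remains to check weak-$*$ continuity in $t$. For $h\in {^{\prime}\!}U$, the map $t\mapsto\langle u_n(t),h\rangle_{U\times{^{\prime}\!}U}$ is continuous because $u_n\in C([0,T];U)$. For general $h\in{^{\prime}\!}V$, pick $h_j\in{^{\prime}\!}U$ with $h_j\to h$ in ${^{\prime}\!}V$ (density of ${^{\prime}\!}U\subset{^{\prime}\!}V$); then $\langle u_n(t),h_j\rangle\to\langle u_n(t),h\rangle$ uniformly in $t$ since $\sup_t\|u_n(t)\|_V\le r$, and a uniform limit of continuous functions is continuous. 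Hence $u_n\in C([0,T];B_r^{\operatorname{weak}^*})$. For $u$ itself: from $u_n\to u$ in $C([0,T];U)$ we get $u(t)\in U$ and, for fixed $t$, $\|u(t)\|_V\le\liminf_n\|u_n(t)\|_V\le r$ by weak-$*$ lower semicontinuity of the norm (any weak-$*$ cluster point of the bounded sequence $(u_n(t))_n$ in $B_r$ must equal $u(t)$, since it also converges to $u(t)$ in $U$ and ${^{\prime}\!}U\subset{^{\prime}\!}V$ is dense, so the limits agree); thus $u(t)\in B_r$, and weak-$*$ continuity of $t\mapsto u(t)$ follows by the same density-plus-uniform-convergence argument as for $u_n$.

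Next I would prove $u_n\to u$ in $C([0,T];B_r^{\operatorname{weak}^*})$, i.e. $\sup_{t\in[0,T]}q(u_n(t),u(t))\to0$, equivalently $\sup_{t\in[0,T]}|\langle u_n(t)-u(t),h\rangle|\to0$ for every $h\in{^{\prime}\!}V$. Fix such an $h$ and $\varepsilon>0$. Choose $h'\in{^{\prime}\!}U$ with $\|h-h'\|_{{^{\prime}\!}V}\le\varepsilon/(3r)$; this is possible by density. Then for all $t$,
\begin{align*}
|\langle u_n(t)-u(t),h\rangle|
&\le|\langle u_n(t)-u(t),h-h'\rangle|+|\langle u_n(t)-u(t),h'\rangle|\\
&\le \big(\|u_n(t)\|_V+\|u(t)\|_V\big)\,\|h-h'\|_{{^{\prime}\!}V}+\|u_n(t)-u(t)\|_U\,\|h'\|_U\\
&\le \tfrac{2\varepsilon}{3}+\|h'\|_U\,\sup_{s\in[0,T]}\|u_n(s)-u(s)\|_U .
\end{align*}
By \ref{A:conv_lemma_CU_convergence_1} the last term tends to $0$, so for $n$ large it is $\le\varepsilon/3$, giving $\sup_t|\langle u_n(t)-u(t),h\rangle|\le\varepsilon$. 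Since $h$ and $\varepsilon$ were arbitrary, this is exactly convergence in $C([0,T];B_r^{\operatorname{weak}^*})$.

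The main obstacle — really the only subtle point — is the bookkeeping around the two dualities: the hypotheses only give quantitative control in the $U\times{^{\prime}\!}U$ pairing and the boundedness in $V$, so every statement about the $V\times{^{\prime}\!}V$ pairing must be obtained by approximating test functions in ${^{\prime}\!}V$ by elements of the dense subspace ${^{\prime}\!}U$ and absorbing the error using the uniform $V$-bound $r$. Once one is careful that the embeddings ${^{\prime}\!}U\hookrightarrow{^{\prime}\!}V\hookrightarrow V\hookrightarrow U$ are compatible (so that the pairings $\langle\cdot,\cdot\rangle_{V\times{^{\prime}\!}V}$ and $\langle\cdot,\cdot\rangle_{U\times{^{\prime}\!}U}$ agree on the overlap), the argument is the routine $\varepsilon/3$-splitting displayed above, and no further difficulty arises; metrizability of $B_r^{\operatorname{weak}^*}$ (from separability of ${^{\prime}\!}V$, already invoked in the paper) is only needed to phrase the conclusion as convergence in the metric $\varrho$.
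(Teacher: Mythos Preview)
Your proof is correct and follows essentially the same approach as the paper: approximate a test function $h\in{^{\prime}\!}V$ by some $h'\in{^{\prime}\!}U$ using density, control the $(h-h')$-piece with the uniform $V$-bound $r$, and control the $h'$-piece with the $C([0,T];U)$-convergence. The only cosmetic difference is that you establish $u_n,u\in C([0,T];B_r^{\operatorname{weak}^*})$ directly up front, whereas the paper first proves the convergence estimate and then invokes completeness of $C([0,T];B_r^{\operatorname{weak}^*})$ to conclude that $u$ lies there; your version is slightly more explicit but the substance is identical.
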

\begin{proof}
    \begin{enumerate}
        \item We claim that
        $$
        u_{n} \rightarrow u \quad \text { in } \quad C\left([0, T] ; B_{r}^{\operatorname{weak}^{*}}\right) \quad \text { as } \quad n \rightarrow \infty
        $$
        i.e. that for all $\phi \in {^{\prime}\!}V$
        $$
        \lim _{n \rightarrow \infty} \sup _{s \in[0, T]}\left|\left\langle u_{n}(s)-u(s), \phi\right\rangle_{V\times {^{\prime}\!}V}\right|=0.
        $$
        To verify this claim, we fix $\phi \in {^{\prime}\!}V$ and $\varepsilon>0$. Since ${^{\prime}\!}U $ is dense in ${^{\prime}\!}V$, there exists $\phi_{\varepsilon} \in {^{\prime}\!}U $ such that $\left|\phi-\phi_{\varepsilon}\right|_{{^{\prime}\!}V} \leq \varepsilon$. Using \ref{A:conv_lemma_CV_bounded_1}, we infer that for all $s \in[0, T]$ the following estimates hold (the step from the third to the fourth line below also uses that $\|u(s)\|_{V}\leq r$ for all $s\in[0,T]$, which follows from \ref{A:conv_lemma_CV_bounded_1} together with weak-$*$ lower semicontinuity of the norm $\|\cdot\|_{V}$, since $u_{n}(s)\rightarrow u(s)$ in $U$ and $V\hookrightarrow U$)
    \begin{align*}
            \left|\left\langle u_{n}(s)-u(s) , \phi \right\rangle_{V\times {^{\prime}\!}V}\right| & \leq\left|\left\langle u_{n}(s)-u(s) , \phi-\phi_{\varepsilon}\right\rangle_{V\times {{^{\prime}\!}V}}\right|+\left|\left\langle u_{n}(s)-u(s) , \phi_{\varepsilon}\right\rangle_{U\times {^{\prime}\!}U }\right| \\
             & \leq\left\|u_{n}(s)-u(s)\right\|_{V}\left\|\phi-\phi_{\varepsilon}\right\|_{{^{\prime}\!}V}+\left|\left\langle u_{n}(s)-u(s) , \phi_{\varepsilon}\right\rangle_{U \times {^{\prime}\!}U }\right| \\
            & \leq \varepsilon \cdot\left\|u_{n}-u\right\|_{L^{\infty}(0, T ; V)}+\left|\left\langle u_{n}(s)-u(s) , \phi_{\varepsilon}\right\rangle_{U \times {^{\prime}\!}U }\right| \\
            & \leq 2 \varepsilon \cdot \sup _{n \in \mathbb{N}}\left\|u_{n}\right\|_{L^{\infty}(0, T ; V)}+\left|\left\langle u_{n}(s)-u(s) , \phi_{\varepsilon}\right\rangle_{U \times {^{\prime}\!}U }\right| \\
            & \leq 2 \varepsilon r+\sup _{s \in[0, T]}\left|\left\langle u_{n}(s)-u(s) , \phi_{\varepsilon}\right\rangle_{U \times {^{\prime}\!}U }\right|\\
            & \leq 2 \varepsilon r+\sup _{s \in[0, T]}\left\| u_{n}(s)-u(s) \right\|_{U}\left\| \phi_{\varepsilon}\right\|_{{^{\prime}\!}U }.
        \end{align*}
        Passing to the limit $n \rightarrow \infty,$ we obtain
        $$
        \limsup _{n \rightarrow \infty} \sup _{s \in[0, T]}\left|\left\langle u_{n}(s)-u(s) , \phi\right\rangle_{V\times {^{\prime}\!}V}\right| \leq 2 r \varepsilon.
        $$
        Since $\varepsilon$ is arbitrary,
        $$
        \lim _{n \rightarrow \infty} \sup _{s \in[0, T]}\left|\left\langle u_{n}(s)-u(s) , \phi\right\rangle_{V\times {^{\prime}\!}V}\right|=0,
        $$
        for every $\phi\in {^{\prime}\!}V$.
        Since $C\left([0, T] ; B_{r}^{\operatorname{weak}^{*}}\right)$ is a complete metric space, we infer that $u \in C\left([0, T] ; B_{r}^{\operatorname{weak}^{*}}\right)$ as well. This completes the proof.
    \end{enumerate}
\end{proof}

\subsection{Notation and symbols}\label{section:notation_table}
For the reader's convenience we collect the recurring notation. Throughout,
$n_{\textnormal{dim}}$ is the (fixed) dimension of the state, $m_{W}$ the dimension of the
noise, and $x\in\R_{+}$ the scalar Laplace-dual variable.

\medskip
\noindent\emph{Dimensions and indices.}
\begin{center}
\begin{tabular}{@{}lp{0.72\textwidth}@{}}
$n_{\textnormal{dim}}$ & dimension of the SVE state space, $X_{t}\in\R^{n_{\textnormal{dim}}}$ ($n_{\textnormal{dim}}\geq1$, fixed).\\
$m_{W}$ & dimension of the driving Brownian motion $W$.\\
$d$ & dimension of the spatial domain of the weighted Sobolev spaces (the domain of the Laplace-dual variable $x$); unrelated to $n_{\textnormal{dim}}$.\\
$x$ & Laplace-dual (frequency) variable, $x\in\R_{+}$; always scalar.\\
$n$ & generic sequence/approximation index (e.g.\ $\mu^{n}\to\mu$).\\
\end{tabular}
\end{center}

\medskip
\noindent\emph{Volterra equation and its lift.}
\begin{center}
\begin{tabular}{@{}lp{0.72\textwidth}@{}}
$\drift,\diffusion$ & drift and diffusion coefficients, $\drift\colon\R_{+}\times\R^{n_{\textnormal{dim}}}\to\R^{n_{\textnormal{dim}}}$, $\diffusion\colon\R_{+}\times\R^{n_{\textnormal{dim}}}\to\R^{n_{\textnormal{dim}}\times m_{W}}$.\\
$k_{\drift},k_{\diffusion}$ & completely monotone Volterra kernels (matrix-valued, entrywise).\\
$\nudrift,\nudiffusion$ & Laplace measures of the kernels ($n_{\textnormal{dim}}\times n_{\textnormal{dim}}$ matrices of measures), $k(\theta)=\int_{\R_{+}}e^{-x\theta}\,\nu(\dx)$; regarded as operators $\R^{n_{\textnormal{dim}}}\to V'$.\\
$X_{t}$ & solution of the stochastic Volterra equation, $\R^{n_{\textnormal{dim}}}$-valued.\\
$\mu_{t}$ & Markovian lift of $X$ (distribution-valued); solves the lifted stochastic evolution equation (SEE).\\
$\langle\mu,1\rangle$ & pairing of $\mu$ against the constant test function $1$; recovers the state, $X_{t}=\langle\mu_{t},1\rangle$.\\
$\Gamma_{st}(X)$ & semigroup (path-segment) functional, $\Gamma_{st}=\langle e^{-x(t-s)}\mu_{s},1\rangle$ for $s\leq t$.\\
\end{tabular}
\end{center}

\medskip
\noindent\emph{Weights and their exponents.}
\begin{center}
\begin{tabular}{@{}lp{0.72\textwidth}@{}}
$\weightplus,\weightsim,\weightminus$ & the three weight tiers, $\weightplus\leq\weightsim\leq\weightminus$ pointwise, with $(\weight)_{i}(x)=(1+x)^{2\eta-1+2i}$, $i\geq0$.\\
$\etaplus,\etamid,\etaminus$ & weight exponents, $\etaplus\leq\etamid\leq\etaminus$.\\
$\theta_{\nu}$ & tail-decay exponent of the Laplace measure, $\int_{\R_{+}}(1+x)^{-\theta_{\nu}}\,\nu(\dx)<\infty$, $\theta_{\nu}\in[0,1)$ (larger $\theta_{\nu}$ $=$ more singular kernel).\\
$a_{\drift},a_{\diffusion}$ & time-singularity exponents, $\|S^{*}_{t}\nu^{i}\|\leq C\,t^{-(1-a_{i})}$, $a_{i}\in(0,1]$.\\
\end{tabular}
\end{center}

\medskip
\noindent\emph{Function spaces (Gelfand triple).}
\begin{center}
\begin{tabular}{@{}lp{0.72\textwidth}@{}}
$W^{1,2}_{\weight}$ & weighted Sobolev (test) space; $W^{-1,2}_{1/\weight}$ its dual.\\
$\Wplus,\Wmid,\Wminus$ & the test spaces $W^{1,2}_{\weightplus}\supseteq W^{1,2}_{\weightsim}\supseteq W^{1,2}_{\weightminus}$ (larger weight $=$ smaller space).\\
$\Wplusdual,\Wmiddual,\Wminusdual$ & their duals $W^{-1,2}_{1/\weightplus}\subseteq W^{-1,2}_{1/\weightsim}\subseteq W^{-1,2}_{1/\weightminus}$.\\
$V,H,V'$ & Gelfand triple $V=\Wplusdualn\hookrightarrow H=\Wmiddualn\hookrightarrow V'=\Wminusdualn$; noise space $U=\R^{m_{W}}$.\\
$\Wmidtwodual$ & order-two dual $W^{-2,2}_{1/\weightsim}$; solution paths lie in $C([0,T],\Wmidtwodualn)$.\\
\end{tabular}
\end{center}

\medskip
\noindent\emph{Operators, semigroup and norms.}
\begin{center}
\begin{tabular}{@{}lp{0.72\textwidth}@{}}
$S^{*}_{t}$ & dual semigroup, multiplication by $e^{-tx}$ (i.e.\ $S^{*}_{t}=e^{-tx}$).\\
$A^{*}$ & its generator, $A^{*}\mu=-x\mu$.\\
$\mathcal{L}(\R^{n_{\textnormal{dim}}},\cdot)$ & operator norm of a matrix direction; $\opn$, $\opnmid$, $\opnminus$ denote it into $\Wplusdualn$, $\Wmidtwodualn$, $\Wminusdualn$ respectively.\\
\end{tabular}
\end{center}

\medskip
\noindent\emph{It\^o formula, Feynman--Kac and ergodicity.}
\begin{center}
\begin{tabular}{@{}lp{0.72\textwidth}@{}}
$\mathcal{C}^{1,2}_{T,\nu}$ & regularity class for the singular It\^o formula and the backward Kolmogorov equation.\\
$\zeta_{h},\zeta_{h,h'}$ & first and second tangent (variation) processes.\\
$P_{t}$ & Markov (generalized Feller) semigroup of the lift; $\pi$ its invariant measure.\\
$C_{\mathrm{UE}}$ & uniform-ellipticity constant, $\diffusion\diffusion^{\top}\succeq C_{\mathrm{UE}}^{-1}I_{n_{\textnormal{dim}}}$.\\
$\diffusion^{+}$ & Moore--Penrose pseudo-inverse, $\diffusion^{+}=\diffusion^{\top}(\diffusion\diffusion^{\top})^{-1}$, with $\|\diffusion^{+}\|\leq\sqrt{C_{\mathrm{UE}}}$.\\
\end{tabular}
\end{center}

\section*{Statements and Declarations}

\noindent\textbf{Funding.} This work was supported by grant Y 1235 of the START-program of the Austrian Science Fund (FWF) during the author's employment at the University of Vienna. Parts of this paper were completed during the author's affiliation with the \'Ecole Polytechnique F\'ed\'erale de Lausanne (EPFL), Switzerland.

\smallskip
\noindent\textbf{Competing Interests.} The author has no competing interests to declare that are relevant to the content of this article.

\smallskip
\noindent\textbf{Author Contributions.} Florian Huber is the sole author and is responsible for the entire content of this article.

\smallskip
\noindent\textbf{Data Availability.} Data sharing is not applicable to this article, as no datasets were generated or analysed. This is a theoretical study.

\bibliographystyle{abbrv}
\bibliography{references}

\end{document}